\def\bilin#1#2{\left\langle#1,\,#2\right\rangle}
 \newtheorem{theorem}{Theorem}[section]
 \newtheorem{proposition}[theorem]{Proposition}
 \newtheorem{corollary}[theorem]{Corollary}
 \newtheorem{lemma}[theorem]{Lemma}
 \newtheorem{definition}[theorem]{Definition}
 \newtheorem{example}[theorem]{Example}
 \newtheorem{remark}[theorem]{Remark}
\numberwithin{equation}{section}
\newenvironment{proof}{\smallskip\par{\sc Proof.}\enspace}%
 {{\unskip\nobreak\hfil\penalty50\hskip2em
          \hbox{}\nobreak\hfil{\rule[-1pt]{5pt}{10pt}}
          \parfillskip=0pt\finalhyphendemerits=0
          \par\medskip}} 
\begin{document}

\vspace*{.3in}

\begin{center}
\LARGE
{\sf Stochastic Partial Differential Equations
Associated with Pseudo-Differential Operators
and Hilbert Space-Valued Gaussian Processes
}
\end{center}

\bigskip
\begin{center}
Un Cig Ji \\
Department of Mathematics\\
Institute for Industrial and Applied Mathematics\\
Chungbuk National University\\
Cheongju 28644, Korea \\
\texttt{E-Mail:uncigji@chungbuk.ac.kr }
\end{center}

\begin{center}
Jae Hun Kim \\
Department of Mathematics\\
Chungbuk National University\\
Cheongju 28644, Korea \\
\texttt{E-Mail:jaehunkim@chungbuk.ac.kr }
\end{center}

\begin{abstract}
In this paper, we prove the unique existence and investigate the $L^{p}$-regularity of solutions
to stochastic partial differential equations in Hilbert spaces associated with pseudo-differential operators,
driven by Hilbert space-valued Gaussian processes that satisfy certain regularity conditions 
for the covariance kernels of the Gaussian processes.
For our purposes, we develop an $L^{p}$-regularity framework for the solutions
to the stochastic partial differential equations associated with pseudo-differential operators.
As the main tools, we establish the $p$-th moment maximal inequality for stochastic integrals
with respect to a Hilbert space-valued Gaussian process
and a Littlewood-Paley type inequality for Banach space-valued functions.
Additionally, during our study, we improved the sufficient conditions for Fourier multipliers 
and examined the covariance kernels for Gaussian processes.
\end{abstract}

\bigskip
\noindent
{\bf Mathematics Subject Classifications (2020):} 60H15, 60G15, 47G30

\bigskip
\noindent
{\bfseries Keywords:}
Gaussian process, Malliavin calculus, $p$-th moment maximal inequality, generalized Littlewood-Paley inequality,
pseudo differential operator, stochastic partial differential equations

\newpage
\tableofcontents

\bigskip
\noindent
\section{Introduction}
Since the stochastic partial differential equations (SPDEs) introduced by Walsh \cite{Walsh 1986},
the theory of SPDEs has been developed extensively with broad applications.
The study of $L^{p}$-theory for SPDEs in the Sobolev space
was initiated by Krylov \cite{Krylov 1994,Krylov 1996,Krylov 1999,Krylov 2006}
dealing with the SPDEs associated with a sequence of independent one-dimensional Wiener processes,
and then the $L^{p}$-theory for SPDEs has been studied by several authors
\cite{Balan 2009,Balan 2011,Chen 2015,Han 2021, I. Kim 2019} and references cited therein.
In \cite{Balan 2009,Balan 2011}, the author established $L^{2}$ and $L^{p}$-theory ($p\ge2$)
for the stochastic heat equation associated with a sequence of fractional Brownian motions (fBm), respectively.
The stochastic heat equations driven by general single Gaussian noise
have been studied by several authors in \cite{Balan 2008, Bardina 2010, Hu 2015,Huang 2020,Mocioalca 2007}.

On the other hand, SPDEs driven by Hilbert space-valued Wiener process has been studied by many authors
(see, e.g., \cite{Da Prato 2014, Kruse 2014, Liu 2010, Rockner 2024, Prevot 2007}, and the references cited theirin).
In \cite{Duncan 2006, Duncan 2002,Grecksch 1999, Grecksch 2009,  Maslowski 2003},
the authors considered the stochastic integration and differential equations with respect to infinite dimensional fractional Brownian motion (fBm). More precisely, in \cite{Duncan 2002}, the authors deal with stochastic integration for deterministic operator-valued integrands, and they investigate some properties of the solution of a linear stochastic differential equation in Hilbert space with fBm using the semigroup approach. Since then, in \cite{Duncan 2006}, the authors extended the stochastic integral constructed in \cite{Duncan 2002} to random integrands, using Malliavin calculus. More precisely, in \cite{Duncan 2006}, the authors constructed certain suitable spaces for the Malliavin derivative 
and defined the stochastic integral (that is, the Skorohod integral) as the adjoint operator of Malliavin derivative.

Let $(K,\bilin{\cdot}{\cdot}_{K})$ and $(U,\bilin{\cdot}{\cdot}_{U})$ be real separable Hilbert spaces, $Q$ be a positive self-adjoint trace class operator on $U$, and let $U_{0}=Q^{\frac{1}{2}}(U)$.
Let $\boldsymbol{\beta}=\{\boldsymbol{\beta}_{t}\}_{t\in [0,T]}$ (for $0<T<\infty$) be a $Q$-Gaussian process satisfying that
\[
\mathbb{E}[\bilin{\boldsymbol{\beta}_{t}}{u}_{U}\bilin{\boldsymbol{\beta}_{s}}{v}_{U}]
=R(t,s)\bilin{Qu}{v}_{U}
\]
for a kernel function $R$.
Throughout this paper, we assume that
\begin{itemize}
  \item [{\upshape \textbf{(R1)}}] the partial derivative $\frac{\partial^{2}R}{\partial t \partial s}(t,s)$ exists and
\begin{align*}
\frac{\partial^{2}R}{\partial t \partial s}(t,s)\geq 0 \quad \text{for all }t,s\in [0,T],
\end{align*}
\item [{\upshape \textbf{(R2)}}]there exists $\boldsymbol{r}\geq 1$ such that the integral operator $K_{R}$ with the kernel $R$, defined by
\begin{align*}
\left(K_{R} f\right)(t)=\int_{0}^{T}f(s)\frac{\partial^{2}R}{\partial t \partial s}(t,s)ds,\quad t\in [0,T]
\end{align*}
for real valued bounded measurable function $f$ on $[0,T]$, is a bounded operator from $L^{\boldsymbol{r}}([0,T])$ into $L^{\boldsymbol{s}}([0,T])$
for the conjugate number $\boldsymbol{s}$ of $\boldsymbol{r}$ ($\frac{1}{\boldsymbol{r}}+\frac{1}{\boldsymbol{s}}=1$),
i.e.,
\begin{align*}
\|K_{R} f\|_{L^{\boldsymbol{s}}([0,T])}\leq C_{R}\|f\|_{L^{\boldsymbol{r}}([0,T])}
\end{align*}
for some $C_{R}>0$ (see Section \ref{sec: examples} for examples).
\end{itemize}
Motivated by \cite{Duncan 2006}, in this paper, we will define the $K$-valued stochastic integral $\int_{0}^{T}\Phi_{s}\delta\boldsymbol{\beta}_{s}$ for $K\otimes U_{0}\cong \mathcal{L}_{\rm HS}(U_{0},K)$-valued integrand $\Phi$, where $\mathcal{L}_{\rm HS}(U_{0},K)$ denotes the space of all Hilbert-Schmidt operators from $U_{0}$ into $K$.

For a complex-valued measurable function $\psi$ defined on $[0,\infty)\times \mathbb{R}^{d}$,
and for each $t\geq 0$, the pseudo-differential operator $L_{\psi}(t)$ with the symbol $\psi$
is defined by
\begin{align*}
L_{\psi}(t)f(x)=\mathcal{F}^{-1}(\psi(t,\xi)\mathcal{F}f(\xi))(x)
\end{align*}
for suitable functions $f$ defined on $\mathbb{R}^{d}$,
where $\mathcal{F}$ and $\mathcal{F}^{-1}$ are the Fourier transform
and Fourier inverse transform, respectively (see \eqref{eqn:FTand FIT}).

In this paper,
we prove the unique existence of a solution of the following SPDE in $K$:
\begin{equation}\label{eqn :SPDE with pseudo diff op}
du(t,x)=(L_{\psi}(t)u(t,x)+f(t,x))dt+g(t,x)\delta\boldsymbol{\beta}_{t},
\end{equation}
For this purpose, we will use two main tools.

One of the main tools is the $p$-th moment maximal inequality for the stochastic integral $\int_{0}^{T}\Phi_{s}\delta\boldsymbol{\beta}_{s}$.
More precisely, under certain conditions, there exists a constant $C>0$ such that
for any $u\in\mathbb{D}_{\boldsymbol{\beta}}^{1,p}(| \mathcal{H}_{K\otimes U_{0}}|) $ (see \eqref{eqn:D beta absolute}),
\begin{align}\label{eqn : cor of max.ineq for single gaussian process-0}
\mathbb{E}\sup_{t\leq T}\left\|\int_{0}^{t}u_{s}\delta \boldsymbol{\beta}_{s}\right\|_{K}^{p}\nonumber
&\leq C\left\{\mathbb{E}\left(\int_{0}^{T}\|u_{s}\|_{K\otimes U_{0}}^{q}ds\right)^{\frac{p}{q}}\right.\\
&\,\,\left.+\mathbb{E}\left(\int_{0}^{T}\left(\int_{0}^{T}\|D_{\theta}^{\boldsymbol{\beta}}u_{s}\|_{K\otimes U_{0}\otimes U_{0}}^{\boldsymbol{r}}d\theta\right)^{\frac{q}{\boldsymbol{r}}}ds\right)^{\frac{p}{q}}\right\}
\end{align}
if the right hand side is finite,
where $D^{\boldsymbol{\beta}}$ is the Malliavin derivative with respect to $\boldsymbol{\beta}$.
In \cite{Balan 2011}, the author proved the $p$-th moment maximal inequality
for the case of Hilbert space valued fBm with Hurst parameter $1/2<H<1$
(see Corollary 2.3 in \cite{Balan 2011}), in such case, $\boldsymbol{r}$ is corresponding to $1/H$.

Another one of the main tools is the generalized Littlewood-Paley type inequality.
For each $t>s$, we define the operator $\mathcal{T}_{\psi}(t,s)$ by
\begin{equation*}
\mathcal{T}_{\psi}(t,s)f(x)=p_{\psi}(t,s,\cdot)*f(x)
\end{equation*}
for suitable functions $f$, where
\begin{align*}
p_{\psi}(t,s,x):=\mathcal{F}^{-1}\left(\exp\left(\int_{s}^{t}\psi(r,\xi)dr\right)\right)(x)
\end{align*}
if the right hand side exists, and so we have
\begin{align*}
\mathcal{T}_{\psi}(t,s)f(x)
=\mathcal{F}^{-1}\left(\exp\left(\int_{s}^{t}\psi(r,\xi)dr\right)\mathcal{F}f(\xi)\right)(x),
\end{align*}
which will be used to represent the unique solution of \eqref{eqn :SPDE with pseudo diff op}
and appeared in the generalized Littlewood-Paley type inequalities \eqref{ineq:LP-ineq intro} and \eqref{ineq: 2nd LP ineq intro}.

Recently, in \cite{Ji-Kim 2025-1}, the authors proved the following inequality
under certain conditions: for any $q\geq 2$ and $p\geq q$, there exists a constant $C>0$
such that for any $f\in L^{p}((a,b)\times \mathbb{R}^{d};K)$,
\begin{align}\label{ineq:LP-ineq intro}
&\int_{\mathbb{R}^{d}}\int_{a}^{b}\left(\int_{a}^{t}(t-s)^{\frac{q\gamma_{\psi_{1}}}{\gamma_{\psi_{2}}}-1}
\|L_{\psi_{1}}
\mathcal{T}_{\psi_{2}}(t,s)f(s,\cdot)(x)\|_{K}^{q}ds\right)^{\frac{p}{q}}dtdx\nonumber\\
&\qquad\leq C\int_{a}^{b}\int_{\mathbb{R}^{d}} \|f(t,x)\|_{K}^{p}dxdt,
\end{align}
where $\gamma_{\psi_{1}},\gamma_{\psi_{2}}$ are corresponding to the orders of the symbols $\psi_{1}$ and $\psi_{2}$, respectively.
In Theorem \ref{cor: 2nd LP ineq}, we will prove the following inequality, which is an extension of
the inequality \eqref{ineq:LP-ineq intro}
to the case of  Banach space $L^{r}(\mathbb{R};H)$-valued functions:
for any $r\geq 1$, $q\geq \max\{2,r\}$ and $p\geq q$, there exists a constant $C>0$
such that for any $f\in C_{\rm c}^{\infty}((a,b)\times \mathbb{R}^{d};L^{r}(\mathbb{R};H))$,
\begin{align}\label{ineq: 2nd LP ineq intro}
&\int_{\mathbb{R}^{d}}\int_{a}^{b}\left[\int_{a}^{t}(t-s)^{\frac{q\gamma_{\psi_{1}}}{\gamma_{\psi_{2}}}-1}\left(\int_{\mathbb{R}}
\|L_{\psi_{1}}
\mathcal{T}_{\psi_{2}}(t,s)f(s,\cdot,\theta)(x)\|_{H}^{\boldsymbol{r}}d\theta\right)^{\frac{q}{\boldsymbol{r}}} ds\right]^{\frac{p}{q}}dtdx\nonumber\\
&\qquad\leq C_{1}\int_{a}^{b}\left[\int_{\mathbb{R}}\left(\int_{\mathbb{R}^{d}}
\|f(t,x,\theta)\|_{H}^{p}dx\right)^{\frac{\boldsymbol{r}}{p}}d\theta\right]^{\frac{p}{\boldsymbol{r}}}dt
\end{align}
if the right hand side is finite.
In \cite{Krylov 1994}, the author generalized the Littlewood-Paley inequality 
to the case of Hilbert space-valued functions (Theorem 1.1 in \cite{Krylov 1994}).
After, in \cite{Balan 2011}, the author extended Theorem 1.1 in \cite{Krylov 1994} to the case of $L^{1/H}((\alpha,\beta);V)$-valued functions
(Theorem A.2 in \cite{Balan 2011}),
where $H$ is the Hurst index with $1/2<H<1$ of fBm and $V$ is a Hilbert space.  
In Theorem 3.1 of \cite{I. Kim K.-H. Kim 2016}, the authors extend Theorem 1.1 in \cite{Krylov 1994} to the case of operator $\mathcal{T}_{\psi}(t,s)$.

This paper is organized as follows.
In Section \ref{sec: Malliavin}, we give some basic definitions and results of the Malliavin calculus and we prove the $p$-th moment maximal inequality with respect to the $Q$-Gaussian processes $\boldsymbol{\beta}$.
In Section \ref{sec:Fourier multi}, we examine the Fourier multipliers on $L^{p}(\mathbb{R}^{d};K)$ for a Hilbert space $K$.
In Section \ref{sec:stochastic Banach}, we introduce some stochastic Banach spaces and study their properties.
In Section \ref{sec: GLPI}, we prove the generalized Littlewood-Paley type inequalities.
In Section \ref{sec: main result}, we prove the existence and uniqueness of a solution of the equation
given in \eqref{eqn :SPDE with pseudo diff op}.
In Section \ref{sec: examples}, we discuss some examples of $Q$-Gaussian process $\boldsymbol{\beta}$
for which Assumptions \textbf{(R1)} and \textbf{(R2)} hold.
In Appendix \ref{sec:A Proof of LPI-Banach space}, we prove Theorem \ref{cor: 2nd LP ineq}.
Finally, in Appendix \ref{sec: potential}, we prove Lemma \ref{lem:L psi potential}.

\section{Malliavin Calculus}\label{sec: Malliavin}

Throughout this paper, $(\Omega,\mathcal{F}, P)$ is a complete probability space.
In this section, we study the Malliavin calculus for Gaussian processes in a separable Hilbert space.
Let $(U,\bilin{\cdot}{\cdot}_{U})$ be a real separable Hilbert space and $Q$ be a positive, self-adjoint and trace class operator on $U$.
Then motivated by the definition of $Q$-fractional Brownian motion in \cite{Duncan 2002}
(see also \cite{Alos-Mazet-Nualart2001} and \cite{Da Prato 2014}), we define a $Q$-Gaussian process as follows.

\begin{definition}
\upshape
A $U$-valued stochastic process $\boldsymbol{\beta}=\{\boldsymbol{\beta}_{t}\,|\,t\in [0,T]\}$
is called a $Q$-Gaussian process with the covariance kernel $R:[0,T]^{2}\to\mathbb{R}$ if it satisfies
\begin{itemize}
  \item [(i)]$\boldsymbol{\beta}_{0}=0$,
  \item [(ii)]$\boldsymbol{\beta}$ has continuous trajectories,
  \item [(iii)]for any $u,v\in U$, $\bilin{\boldsymbol{\beta}_{t}}{u}_{U}$ is a real valued Gaussian process and
      \begin{align}\label{eqn: kernel R}
      \mathbb{E}[\bilin{\boldsymbol{\beta}_{t}}{u}_{U}\bilin{\boldsymbol{\beta}_{s}}{v}_{U}]
      =R(t,s)\bilin{Qu}{v}_{U}.
      \end{align}
\end{itemize}
From \eqref{eqn: kernel R}, it is obvious that $R$ is symmetric.
\end{definition}

Let $\boldsymbol{\beta}$ be a $U$-valued $Q$-Gaussian process with the covariance kernel $R$.
Then we can see that there exist an orthonormal sequence $\{e_{j}\}_{j\in J}$ in $U$ and a sequence $\{\lambda_{j}\}_{j\in J}$
of positive real numbers such that $Qe_{j}=\lambda_{j}e_{j}$ for all $j\in J$, $\sum_{j\in J}\lambda_j<\infty$ and
$\boldsymbol{\beta}$ has the expression:
\begin{align*}
\boldsymbol{\beta}_{t}=\sum_{j\in J}\sqrt{\lambda_{j}}\beta_{j}(t)e_{j},\quad \beta_{j}(t)=\frac{1}{\sqrt{\lambda_{j}}}\bilin{\boldsymbol{\beta}_{t}}{e_{j}}_{U},
\end{align*}
where $\{\{\beta_{j}(t)\}_{t\ge0}\}_{j\in J}$ becomes a family of independent, identically distributed Gaussian random variables.
In fact, for any $j,k\in J$, we have
\begin{align*}
\mathbb{E}[\beta_{j}(t)\beta_{k}(s)]
=\frac{1}{\sqrt{\lambda_{j}\lambda_{k}}}
        \mathbb{E}[\bilin{\boldsymbol{\beta}_{t}}{e_{j}}_{U}\bilin{\boldsymbol{\beta}_{s}}{e_{k}}_{U}]
=R(t,s)\delta_{jk}.
\end{align*}
For the proof, we refer to \cite{Da Prato 2014}.

From now on, to avoid trivial complications we assume that $Q$ is strictly positive, i.e., $Qu>0$ if $u\ne0$,
and then it holds that $J=\mathbb{N}$ and $\{e_j\}_{j=1}^{\infty}$ is a complete orthonormal basis of $U$.
Let $U_{0}=Q^{\frac{1}{2}}(U)$ and let $Q^{-\frac{1}{2}}$ be the pseudo inverse of $Q^{\frac{1}{2}}$
(see, e.g., \cite{Prevot 2007}).
If we define the inner product
\begin{align*}
 \bilin{u}{v}_{U_{0}}=\bilin{Q^{-\frac{1}{2}}u}{Q^{-\frac{1}{2}}v}_{U}\qquad \text{for}\quad u,v\in U_{0},
\end{align*}
then $(U_{0},\bilin{\cdot}{\cdot}_{U_{0}})$ is a separable Hilbert space
and $\left\{\sqrt{\lambda_{j}}e_{j}\right\}_{j=1}^{\infty}$ is an orthonormal basis of $U_{0}$.

Let $\mathcal{E}$ be the linear span of all indicator functions of the form $1_{[0,t]},\,t\in [0,T]$.
We define the inner product
$\bilin{\cdot}{\cdot}_{\mathcal{H}}$ on $\mathcal{E}$ satisfying that
\begin{align*}
\bilin{1_{[0,t]}}{1_{[0,s]}}_{\mathcal{H}}=R(t,s).
\end{align*}
We define the reproducing kernel Hilbert space $\mathcal{H}$
by the closure of $\mathcal{E}$ with respect to the inner product $\bilin{\cdot}{\cdot}_{\mathcal{H}}$.
Put
\begin{align*}
\mathcal{E}_{U_{0}}=\left\{\phi :[0,T]\rightarrow U_{0} \,\left|\, \phi(t)=\sum_{i=1}^{m}1_{(t_{i-1},t_{i}]}(t)\varphi_{i},\, 0\leq t_{0}<\cdots <t_{m}\leq T,\, \varphi_{i}\in U_{0}\right.\right\}.
\end{align*}
Let $u\in U_0$ and $t\ge0$ be given. Then for any $m,n\in\mathbb{N}$, we obtain that
\begin{align*}
\mathbb{E}\left[\left|\sum_{j=m}^{n}\sqrt{\lambda_j}\beta_{j}(t)\bilin{u}{e_j}_{U_0}\right|^2\right]
&=R(t,t)\sum_{j=m}^{n}\bilin{u}{\sqrt{\lambda_j}e_j}_{U_0}^2\\
&\to 0\text{ as } m,n\to\infty,
\end{align*}
which implies that the inner product $\bilin{u}{\boldsymbol{\beta}_t}_{U_0}$ is well-defined as follows:
\begin{align*}
\bilin{u}{\boldsymbol{\beta}_t}_{U_0}
=\sum_{j=1}^{\infty}\sqrt{\lambda_j}\beta_{j}(t)\bilin{u}{e_j}_{U_0},
\end{align*}
where the series converges in $L^2(\Omega,P)$.
For $\phi=\sum_{i=1}^{m}1_{(t_{i-1},t_{i}]}\varphi_{i}\in \mathcal{E}_{U_{0}}$, we define
\begin{align*}
\int_{0}^{T}\phi(s)d\boldsymbol{\beta}_{s}
=\sum_{i=1}^{m} \bilin{\varphi_{i}}{\boldsymbol{\beta}_{t_{i}}-\boldsymbol{\beta}_{t_{i-1}}}_{U_{0}},
\end{align*}
and then by direct computation, we see that for any $\phi,\psi\in \mathcal{E}_{U_{0}}$,
\begin{align*}
\mathbb{E}\left[\int_{0}^{T}\phi(s)d\boldsymbol{\beta}_{s}\int_{0}^{T}\psi(s)d\boldsymbol{\beta}_{s}\right]
=\int_{0}^{T}\int_{0}^{T}\bilin{\phi(s)}{\psi(t)}_{U_{0}}\frac{\partial^{2}R}{\partial s\partial t}(s,t)dsdt.
\end{align*}
Define the inner product $\bilin{\cdot}{\cdot}_{\mathcal{H}_{U_{0}}}$ on $\mathcal{E}_{U_{0}}$ by
\begin{align*}
\bilin{\phi}{\psi}_{\mathcal{H}_{U_{0}}}
=\int_{0}^{T}\int_{0}^{T}\bilin{\phi(s)}{\psi(t)}_{U_{0}}\frac{\partial^{2}R}{\partial s\partial t}(s,t)dsdt,
\quad \phi,\psi\in \mathcal{E}_{U_0},
\end{align*}
and let $\mathcal{H}_{U_{0}}$ be the completion of $\mathcal{E}_{U_{0}}$
with respect to $\bilin{\cdot}{\cdot}_{\mathcal{H}_{U_{0}}}$.
Note that $\mathcal{H}_{U_{0}}$ is unitarily isomorphic to $\mathcal{H}\otimes U_{0}$.
For $h\in\mathcal{H}_{U_{0}}$ we denote $\boldsymbol{\beta}(h)=\int_{0}^{T}h(s)d\boldsymbol{\beta}_{s}=\int_{0}^{T}\bilin{h(s)}{d\boldsymbol{\beta}_{s}}_{U_{0}}$,
which is equal to
\begin{align}\label{eqn: series repn for Wiener integral}
\sum_{j=1}^{\infty}\sqrt{\lambda_{j}}\int_{0}^{T}\bilin{h(s)}{e_{j}}_{U_{0}}d\beta_{j}(s),
\end{align}
where $\int_{0}^{T}\bilin{h(s)}{e_{j}}_{U_{0}}d\beta_{j}(s)$ is an analogue of the Wiener integral (see \cite{Tudor2005}).
The process $\boldsymbol{\beta}=\{\boldsymbol{\beta}(\phi)\}_{\phi\in\mathcal{H}_{U_{0}}}$
is a centered Gaussian process with the covariance kernel:
\begin{align}\label{covariance of gaussian process B(phi)}
\mathbb{E}[\boldsymbol{\beta}(\phi)\boldsymbol{\beta}(\psi)]=\bilin{\phi}{\psi}_{\mathcal{H}_{U_{0}}}, \quad\phi,\psi\in \mathcal{H}_{U_{0}}.
\end{align}

If $V_{1},V_{2}$ are separable Hilbert spaces, we denote by $\mathcal{H}_{V_{1}}\otimes \mathcal{H}_{V_{2}}$
the completion of $\mathcal{E}_{V_{1}}\otimes \mathcal{E}_{V_{2}}$ with respect to the inner product $\bilin{\cdot}{\cdot}_{\mathcal{H}_{V_{1}}\otimes \mathcal{H}_{V_{2}}}$ defined by
\begin{align}\label{eqn: inner product on HV otimes HV}
\bilin{\phi}{\psi}_{\mathcal{H}_{V_{1}}\otimes \mathcal{H}_{V_{2}}}
:=\int_{[0,T]^4}\bilin{\phi(s,\theta)}{\psi(t,\eta)}_{V_{1}\otimes V_{2}}
    \frac{\partial^2 R}{\partial s \partial t}(s,t)\frac{\partial^2 R}{\partial \theta \partial \eta}(\theta,\eta)d\theta d\eta dsdt.
\end{align}

If $V$ is a Banach space, then we define $|\mathcal{H}_{V}|$ by the completion of $\mathcal{E}_{V}$ with respect to the norm
\begin{align*}
\|\phi\|^{2}_{|\mathcal{H}_{V}|}:=\int_{0}^{T}\int_{0}^{T}\|\phi(s)\|_{V}\|\phi(t)\|_{V}\frac{\partial^2 R}{\partial s \partial t}(s,t)dsdt.
\end{align*}
In the case of $V=\mathbb{R}$, we write $|\mathcal{H}|=|\mathcal{H}_{V}|$.
For Banach spaces $V_{1}$ and $V_{2}$, we denote $|\mathcal{H}_{V_{1}}|\otimes |\mathcal{H}_{V_{2}}|$
the space of all strongly measurable functions $\phi : [0,T]^{2}\rightarrow V_{1}\otimes V_{2}$
with $\|\phi\|_{|\mathcal{H}_{V_{1}}|\otimes |\mathcal{H}_{V_{2}}|}<\infty$,
where
\begin{align*}
\|\phi\|^2_{|\mathcal{H}_{V_{1}}|\otimes |\mathcal{H}_{V_{2}}|}
:=\int_{[0,T]^4}\|\phi(s,\theta)\|_{V_{1}\otimes V_{2}}\|\phi(t,\eta)\|_{V_{1}\otimes V_{2}}\frac{\partial^2 R}{\partial s \partial t}(s,t)
  \frac{\partial^2 R}{\partial \theta \partial \eta}(\theta,\eta)d\theta d\eta dsdt.
\end{align*}
If $V$, $V_{i}$ for $i=1,2$ are Hilbert spaces, then by direct computation, we see that
\begin{align}
\|\phi\|_{\mathcal{H}_{V}}&\leq \|\phi\|_{\vert\mathcal{H}_{V}\vert},\label{ineq: H norm leq absolute H norm}\\
\|\phi\|_{\mathcal{H}_{V_{1}}\otimes \mathcal{H}_{V_{2}}}&\leq \|\phi\|_{\vert\mathcal{H}_{V_{1}}\vert\otimes \vert\mathcal{H}_{V_{2}}\vert}\label{ineq: Hx H norm leq absolute H x absolute H norm}.
\end{align}

For the covariance kernel $R$, we define the integral kernel operator $K_{R}$ by
\begin{align*}
K_{R}f(t)=\int_{0}^{T}f(s)\frac{\partial^{2}R}{\partial s\partial t}(s,t)ds
\end{align*}
for any real valued measurable function $f$ on $[0,T]$ whenever the integral is well-defined.

\begin{remark}
\upshape
(i) \enspace
Let $1<p,r<\infty$ and let $s$ be the conjugate number of $r$, i.e., $\frac{1}{r}+\frac{1}{s}=1$.
If $\frac{\partial^{2}R}{\partial s\partial t}\in L^{p}([0,T],L^{s}([0,T]))$,
then the integral kernel operator $K_{R}$ is bounded from $L^{r}([0,T])$ into $L^{p}([0,T])$.
In fact, by H\"{o}lder's inequality, we obtain that
      \begin{align*}
      \|K_{R}f\|_{L^{p}([0,T])}
      &=\left(\int_{0}^{T}\left\vert\int_{0}^{T}f(s)\frac{\partial^{2}R}{\partial s\partial t}(s,t)ds\right\vert^{p}dt\right)^{\frac{1}{p}}\\
      &\leq C\|f\|_{L^{r}([0,T])} ~~\text{ with }~~
\quad C=\left(\int_{0}^{T}\left(\int_{0}^{T}
            \left\vert\frac{\partial^{2}R}{\partial s\partial t}(s,t)\right\vert^{s}ds\right)^{\frac{p}{s}}dt\right)^{\frac{1}{p}}.
      \end{align*}
Therefore, the condition \textbf{(R2)} (in Introduction) with $\boldsymbol{r}=r$ and $\boldsymbol{s}=p$ is satisfied.

(ii) \enspace
If $\frac{\partial^{2}R}{\partial s \partial t}\in L^{\infty}([0,T]^{2})$,
then for any $f\in L^{1}([0,T])$, we have
  \begin{align*}
  \|K_{R}f\|_{L^{\infty}([0,T])}\leq \left\|\frac{\partial^{2}R}{\partial s \partial t}\right\|_{L^{\infty}([0,T]^{2})}\|f\|_{L^{1}([0,T])},
  \end{align*}
  i.e., the integral kernel operator $K_{R}$ is bounded from $L^{1}([0,T])$ into $L^{\infty}([0,T])$.
Therefore, the condition \textbf{(R2)} (in Introduction)  with $\boldsymbol{r}=1$ and $\boldsymbol{s}=\infty$ is satisfied.

\end{remark}

Now, we discuss some basic notions of the Malliavin calculus with respect to an isonormal Gaussian process $\{\boldsymbol{\beta}(\phi)\}_{\phi\in\mathcal{H}_{U_{0}}}$.
For the study for the Malliavin calculus with respect to a Wiener process, we refer to \cite{Nualart 2006}.

Let $K$ be a real separable Hilbert space and let $C_{b}^{\infty}(\mathbb{R}^{d})$
be the space of all infinitely differentiable bounded functions on $\mathbb{R}^{d}$
such that all partial derivatives are bounded.
Let $\mathcal{C}_{\boldsymbol{\beta}}(K)$ be the subspace of $L^{2}(\Omega;K)$ of elements $F$ of the form
\begin{align*}
F=\sum_{j=1}^{m}f_{j}(\boldsymbol{\beta}(\phi_{j1}),\cdots,\boldsymbol{\beta}(\phi_{jn_{j}}))k_{j},
\end{align*}
where $m,n_{1},\cdots,n_{m}\in\mathbb{N}$, $k_{1}\cdots,k_{m}\in K$,
\begin{align*}
f_{j}\in C_{b}^{\infty}(\mathbb{R}^{n_{j}}), \quad\phi_{jl}\in \mathcal{H}_{U_{0}}, \quad j=1,\cdots,m.
\end{align*}
In the case of $K=\mathbb{R}$, we write $\mathcal{C}_{\boldsymbol{\beta}}=\mathcal{C}_{\boldsymbol{\beta}}(\mathbb{R})$.

For $F\in\mathcal{C}_{\boldsymbol{\beta}}(K)$, the Malliavin derivative is defined by
\begin{align*}
D^{\boldsymbol{\beta}}F
&=\sum_{j=1}^{m}\sum_{l=1}^{n_{j}}
   \frac{\partial f_{j}}{\partial x_{l}}(\boldsymbol{\beta}(\phi_{j1}),\cdots,\boldsymbol{\beta}(\phi_{jn_{j}}))k_{j}\otimes \phi_{jl},\\
D_{t}^{\boldsymbol{\beta}}F
&=\sum_{j=1}^{m}\sum_{l=1}^{n_{j}}
  \frac{\partial f_{j}}{\partial x_{l}}(\boldsymbol{\beta}(\phi_{j1}),\cdots,\boldsymbol{\beta}(\phi_{jn_{j}}))\phi_{jl}(t)k_{j}
\end{align*}
and, for all $\phi\in \mathcal{H}_{U_{0}}$,
\begin{align*}
D_{\phi}^{\boldsymbol{\beta}}F
:=D^{\boldsymbol{\beta}}F(\phi)
:=\sum_{j=1}^{m}\sum_{l=1}^{n_{j}}\frac{\partial f_{j}}{\partial x_{l}}(\boldsymbol{\beta}(\phi_{j1}),\cdots,\boldsymbol{\beta}(\phi_{jn_{j}}))
 \bilin{\phi_{jl}}{\phi}_{\mathcal{H}_{U_{0}}}k_{j}.
\end{align*}
Then it holds that $D^{\boldsymbol{\beta}}F\in L^{2}(\Omega; K\otimes\mathcal{H}_{U_{0}})$
and $D^{\boldsymbol{\beta}}F(\phi)\in L^{2}(\Omega; K)$.
The operators $D^{\boldsymbol{\beta}}$ is a (unbounded) closable operator
from $L^{2}(\Omega;K)$ into $L^{2}(\Omega;K\otimes\mathcal{H}_{U_{0}})$.
Also, for fixed $\phi\in\mathcal{H}_{U_{0}}$,
$D_{\phi}^{\boldsymbol{\beta}}=D^{\boldsymbol{\beta}}(\cdot)(\phi)$ is a closable operator
from $L^{2}(\Omega;K)$ into $L^{2}(\Omega;K)$ (see, e.g., \cite{Grorud 1992}).

For $p\geq 1$, we denote by $\mathbb{D}_{\boldsymbol{\beta}}^{1,p}(K)$
the closure of $\mathcal{C}_{\boldsymbol{\beta}}(K)$ with respect to the norm
\begin{align*}
\|F\|_{\mathbb{D}_{\boldsymbol{\beta}}^{1,p}(K)}^{p}=\mathbb{E}\|F\|_{K}^{p}+\mathbb{E}\|D^{\boldsymbol{\beta}}F\|_{K\otimes \mathcal{H}_{U_{0}}}^{p}.
\end{align*}

\begin{definition}\label{def: skorohod integral}
\upshape
Let $\boldsymbol{\beta}$ be a $U$-valued $Q$-Gaussian process with the covariance kernel $R$.
\begin{itemize}
  \item [(i)] Let $u\in L^{2}(\Omega;K\otimes\mathcal{H}_{U_{0}})$. We say that $u\in\mathrm{Dom}(\delta^{\boldsymbol{\beta}})$ if there exists a positive constant $C$ such that:
      \[
      \left|\mathbb{E}\left[\bilin{D^{\boldsymbol{\beta}}F}{u}_{K\otimes\mathcal{H}_{U_{0}}}\right]\right|\leq C(\mathbb{E}\left[\|F\|_{K}^{2}\right])^{\frac{1}{2}}, \quad F\in\mathcal{C}_{\boldsymbol{\beta}}(K).
      \]
  \item [(ii)] For $u\in\mathrm{Dom}(\delta^{\boldsymbol{\beta}})$ the map $F\mapsto \mathbb{E}\left[\bilin{D^{\boldsymbol{\beta}}F}{u}_{K\otimes\mathcal{H}_{U_{0}}}\right]$ is linear and continuous from $\mathcal{C}_{\boldsymbol{\beta}}(K)$ into $\mathbb{R}$ for the topology of $L^{2}(\Omega;K)$.
      Then by the Riesz representation theorem, there exists a unique element,
      denoted by $\delta^{\boldsymbol{\beta}}(u)$ and called the Skorohod integral of $u$, in $L^{2}(\Omega;K)$ such that
\begin{align*}
\mathbb{E}\left[\bilin{F}{\delta^{\boldsymbol{\beta}}(u)}_{K}\right]
=\mathbb{E}\left[\bilin{D^{\boldsymbol{\beta}}F}{u}_{K\otimes\mathcal{H}_{U_{0}}}\right],\quad F\in\mathcal{C}_{\boldsymbol{\beta}}(K).
\end{align*}
\end{itemize}
\end{definition}

\begin{remark}
\upshape
The Skorohod integral with respect to the $Q$-Gaussian process defined in Definition \ref{def: skorohod integral}
is motivated by the Skorohod integrals studied in \cite{Grorud 1992} and \cite{Duncan 2006} with respect to the $Q$-Wiener process
and $Q$-fractional Brownian motion in a Hilbert space, respectively.
\end{remark}

Note that $\delta^{\boldsymbol{\beta}}$ is a closed operator
because it is the adjoint of the densely defined (unbounded) operator $D^{\boldsymbol{\beta}}$. For $u\in\mathrm{Dom}(\delta^{\boldsymbol{\beta}})$, we use the notation:
\[
\delta^{\boldsymbol{\beta}}(u)=\int_{0}^{T}u_{t}\delta\boldsymbol{\beta}_{t}.
\]

\begin{lemma}
\begin{itemize}
  \item [\rm{(i)}] For any $F\in\mathcal{C}_{\boldsymbol{\beta}}$ and $\phi\in \mathcal{H}_{U_{0}}$, it holds that
\begin{align}\label{eqn:IF of QD}
\mathbb{E}\left[\boldsymbol{\beta}(\phi)F\right]=\mathbb{E}\left[D_{\phi}^{\boldsymbol{\beta}}F\right].
\end{align}
  \item [\rm{(ii)}] For any $F,G\in\mathcal{C}_{\boldsymbol{\beta}}(K)$ and $\phi\in \mathcal{H}_{U_{0}}$, it holds that
\begin{align}
\mathbb{E}\left[\bilin{D_{\phi}^{\boldsymbol{\beta}}F}{G}_{K}\right]
&=\mathbb{E}\left[\boldsymbol{\beta}(\phi)\bilin{F}{G}_{K}-\bilin{F}{D_{\phi}^{\boldsymbol{\beta}}G}_{K}\right],
\label{eqn:QD}\\
\delta^{\boldsymbol{\beta}}(G\otimes\phi)
&=\boldsymbol{\beta}(\phi)G-D_{\phi}^{\boldsymbol{\beta}}G.\label{eqn:Skorohod integral of G-x-phi}
\end{align}
\end{itemize}
\end{lemma}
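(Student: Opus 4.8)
The plan is to prove the three identities by the standard Malliavin-calculus bootstrap: first establish the scalar integration-by-parts formula \eqref{eqn:IF of QD} directly on the generators of $\mathcal{C}_{\boldsymbol{\beta}}$ using the Gaussian density, then derive the Leibniz-type rule \eqref{eqn:QD} from it, and finally read off the Skorohod integral formula \eqref{eqn:Skorohod integral of G-x-phi} from \eqref{eqn:QD} and the defining duality in Definition \ref{def: skorohod integral}.

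For part (i), I would take $F=f(\boldsymbol{\beta}(\phi_{1}),\dots,\boldsymbol{\beta}(\phi_{n}))$ with $f\in C_b^\infty(\mathbb{R}^n)$ and $\phi_1,\dots,\phi_n\in\mathcal{H}_{U_0}$. After a linear change of coordinates we may assume the $\phi_i$ are chosen so that $\bilin{\phi_i}{\phi_j}_{\mathcal{H}_{U_0}}=\delta_{ij}$; then $(\boldsymbol{\beta}(\phi_1),\dots,\boldsymbol{\beta}(\phi_n))$ is a standard Gaussian vector in $\mathbb{R}^n$ by \eqref{covariance of gaussian process B(phi)}. Expanding $\phi=\sum_i \bilin{\phi}{\phi_i}_{\mathcal{H}_{U_0}}\phi_i + \phi^\perp$, the variable $\boldsymbol{\beta}(\phi^\perp)$ is independent of $F$ and mean zero, so only the in-span part contributes. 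The identity $\mathbb{E}[\boldsymbol{\beta}(\phi)F]=\mathbb{E}[D_\phi^{\boldsymbol{\beta}}F]$ then reduces to the one-dimensional Gaussian integration-by-parts $\int x_i f\, d\gamma_n = \int \partial_{x_i} f\, d\gamma_n$ summed against the weights $\bilin{\phi}{\phi_i}_{\mathcal{H}_{U_0}}$, recalling that $D_\phi^{\boldsymbol{\beta}}F=\sum_i \partial_{x_i}f(\boldsymbol{\beta}(\phi_1),\dots)\bilin{\phi_i}{\phi}_{\mathcal{H}_{U_0}}$ by definition. Both sides are linear in $F$, so this extends to all of $\mathcal{C}_{\boldsymbol{\beta}}$.

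For part (ii), I first note that for $F,G\in\mathcal{C}_{\boldsymbol{\beta}}(K)$ the scalar product $\bilin{F}{G}_{K}$ lies in $\mathcal{C}_{\boldsymbol{\beta}}$ (a finite sum of products $f_j g_k$ of smooth functions of finitely many $\boldsymbol{\beta}(\phi)$'s, times the scalars $\bilin{k_j}{k_k}_K$, which is again $C_b^\infty$ composed with the $\boldsymbol{\beta}(\phi)$'s provided one is careful that products of $C_b^\infty$ functions are $C_b^\infty$). Applying \eqref{eqn:IF of QD} to $\bilin{F}{G}_{K}$ and using the Leibniz rule for $D_\phi^{\boldsymbol{\beta}}$ — namely $D_\phi^{\boldsymbol{\beta}}\bilin{F}{G}_{K}=\bilin{D_\phi^{\boldsymbol{\beta}}F}{G}_{K}+\bilin{F}{D_\phi^{\boldsymbol{\beta}}G}_{K}$, which follows termwise from the product rule for ordinary partial derivatives — gives $\mathbb{E}[\boldsymbol{\beta}(\phi)\bilin{F}{G}_{K}]=\mathbb{E}[\bilin{D_\phi^{\boldsymbol{\beta}}F}{G}_{K}]+\mathbb{E}[\bilin{F}{D_\phi^{\boldsymbol{\beta}}G}_{K}]$, which rearranges to \eqref{eqn:QD}. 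For \eqref{eqn:Skorohod integral of G-x-phi}, I would verify that $G\otimes\phi\in\mathrm{Dom}(\delta^{\boldsymbol{\beta}})$ and that $\boldsymbol{\beta}(\phi)G-D_\phi^{\boldsymbol{\beta}}G$ satisfies the defining relation $\mathbb{E}[\bilin{F}{\delta^{\boldsymbol{\beta}}(G\otimes\phi)}_{K}]=\mathbb{E}[\bilin{D^{\boldsymbol{\beta}}F}{G\otimes\phi}_{K\otimes\mathcal{H}_{U_0}}]$; since $\bilin{D^{\boldsymbol{\beta}}F}{G\otimes\phi}_{K\otimes\mathcal{H}_{U_0}}=\bilin{D_\phi^{\boldsymbol{\beta}}F}{G}_{K}$ by bilinearity of the tensor inner product, this is precisely \eqref{eqn:QD} with the roles of $F$ and $G$ exchanged.

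The main obstacle is bookkeeping rather than anything deep: one must be careful that the change of variables reducing $\{\phi_i\}$ to an orthonormal family inside $\mathcal{H}_{U_0}$ is legitimate (Gram–Schmidt plus the fact that $f$ composed with a linear map stays in $C_b^\infty$), and that all the smooth-function algebra stays inside $C_b^\infty$ so the objects remain in $\mathcal{C}_{\boldsymbol{\beta}}(K)$; the independence-splitting argument using $\phi^\perp$ also needs the elementary fact that jointly Gaussian components with zero covariance are independent, which is exactly what \eqref{covariance of gaussian process B(phi)} supplies. Once these are in place the computations are the classical ones and present no genuine difficulty.
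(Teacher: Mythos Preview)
Your proposal is correct and follows essentially the same route as the paper: the paper proves (i) by citing Lemma 1.2.1 in Nualart (which is exactly the Gram--Schmidt/Gaussian integration-by-parts argument you sketch), derives \eqref{eqn:QD} from (i) via the Leibniz rule for $D_\phi^{\boldsymbol{\beta}}$, and obtains \eqref{eqn:Skorohod integral of G-x-phi} from \eqref{eqn:QD} using the identity $\bilin{D^{\boldsymbol{\beta}}F}{G\otimes\phi}_{K\otimes\mathcal{H}_{U_0}}=\bilin{D_\phi^{\boldsymbol{\beta}}F}{G}_{K}$. Your write-up simply spells out details the paper leaves implicit.
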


\begin{proof}
The proof of (i) is a simple modification of Lemma 1.2.1 in \cite{Nualart 2006}.
The proof of \eqref{eqn:QD} is immediate from the derivation property of $D_{\phi}^{\boldsymbol{\beta}}$ and (i).
The proof of \eqref{eqn:Skorohod integral of G-x-phi} is immediate from \eqref{eqn:QD}
by applying the identity
\begin{align*}
\mathbb{E}\left[\bilin{D^{\boldsymbol{\beta}}F}{G\otimes \phi}_{K\otimes\mathcal{H}_{U_{0}}}\right]
&=\mathbb{E}\left[\bilin{D_{\phi}^{\boldsymbol{\beta}}F}{G}_{K}\right].
\end{align*}
\end{proof}

Let $K$ and $U$ be Hilbert spaces.
Then we identify $K\otimes U$ with $\mathcal{L}_{\rm HS}(U,K)$
whose identification is given by the map
\begin{align*}
\kappa_{K,U}:K\otimes U\ni k\otimes u\mapsto \kappa_{K,U}(k\otimes u)=k\otimes u^{*}\in\mathcal{L}_{\rm HS}(U,K),
\end{align*}
where
\begin{align*}
(k\otimes u^{*})(v)=\langle u,v\rangle k,\quad v\in U.
\end{align*}
For a notational convenience, we write $\kappa_{K}:=\kappa_{K,K}$.
Then for any $k,h\in K$ and $u,v\in U$,
it is obvious that
\begin{align*}
\kappa_{K,U}(k\otimes u)\left(\kappa_{K,U}(h\otimes v)\right)^{*}
&=\left(k\otimes u^{*}\right)\left(v\otimes h^{*}\right)
=\langle u,v\rangle k\otimes h^{*},
\end{align*}
which implies that
\begin{align*}
\langle u,v\rangle k\otimes h
&=\kappa_{K}^{-1}\left(\langle u,v\rangle k\otimes h^{*}\right)
=\kappa_{K}^{-1}\left(\kappa_{K,U}(k\otimes u)\left(\kappa_{K,U}(h\otimes v)\right)^{*}\right).
\end{align*}
Also, we define the operator $S_{U}$ on $K\otimes U\otimes U$ by
\begin{align*}
S_{U}(k\otimes u_{1}\otimes u_{2})=k\otimes u_{2}\otimes u_{1}.
\end{align*}
Since $\mathbb{D}_{\boldsymbol{\beta}}^{1,2}(K\otimes \mathcal{H}_{U_{0}})\subset L^{2}(\Omega;K\otimes \mathcal{H}_{U_{0}})$,
for each $v\in \mathbb{D}_{\boldsymbol{\beta}}^{1,2}(K\otimes \mathcal{H}_{U_{0}})$,
$\kappa_{K,\mathcal{H}_{U_{0}}}(v)$ is considered as an element in
$L^{2}(\Omega;\mathcal{L}_{\rm HS}(\mathcal{H}_{U_{0}},K))$.

\begin{proposition}\label{prop: isometry formula of skorohod}
We have $\mathbb{D}_{\boldsymbol{\beta}}^{1,2}(K\otimes \mathcal{H}_{U_{0}})\subset\mathrm{Dom}(\delta^{\boldsymbol{\beta}})$
and $\delta^{\boldsymbol{\beta}}$ is a bounded linear operator
from $\mathbb{D}_{\boldsymbol{\beta}}^{1,2}(K\otimes \mathcal{H}_{U_{0}})$ into $L^2(\Omega;K)$.
Furthermore, for any $u,v\in\mathbb{D}_{\boldsymbol{\beta}}^{1,2}(K\otimes \mathcal{H}_{U_{0}})$, we have
\begin{align}
\mathbb{E}[\delta^{\boldsymbol{\beta}}(u)\otimes\delta^{\boldsymbol{\beta}}(v)]
&=\mathbb{E}[\kappa_{K}^{-1}(\kappa_{K,\mathcal{H}_{U_{0}}}(u)\kappa_{K,\mathcal{H}_{U_{0}}}(v)^{*})]\nonumber\\
 & \quad+\mathbb{E}\left[\kappa_{K}^{-1}\left(\kappa_{K,\mathcal{H}_{U_{0}}^{\otimes 2}}(D^{\boldsymbol{\beta}}u)
    \left[\kappa_{K,\mathcal{H}_{U_{0}}^{\otimes 2}}\left(S_{\mathcal{H}_{U_{0}}}\left(D^{\boldsymbol{\beta}}v\right)\right)\right]^{*}\right)\right],
    \label{eqn: expectation of tensor product of Skorohod}\\
    \mathbb{E}\left[\bilin{\delta^{\boldsymbol{\beta}}(u)}{\delta^{\boldsymbol{\beta}}(v)}_{K}\right]
&=\mathbb{E}\left[\bilin{u}{v}_{K\otimes\mathcal{H}_{U_{0}}}\right]+\mathbb{E}
\left[\bilin{D^{\boldsymbol{\beta}}u}
{S_{\mathcal{H}_{U_{0}}}\left(D^{\boldsymbol{\beta}}v\right)}_{K\otimes\mathcal{H}_{U_{0}}^{\otimes 2}}\right],
\label{eqn: expectation of inner product of Skorohod}\\
    \mathbb{E}\left[\|\delta^{\boldsymbol{\beta}}(u)\|_{K}^{2}\right]
    &=\mathbb{E}\left[\|u\|_{K\otimes\mathcal{H}_{U_{0}}}^{2}\right]
    +\mathbb{E}\left[\bilin{D^{\boldsymbol{\beta}}u}{S_{\mathcal{H}_{U_{0}}}
    \left(D^{\boldsymbol{\beta}}u\right)}_{K\otimes\mathcal{H}_{U_{0}}^{\otimes 2}}\right].
    \label{eqn: expectation of norm square of Skorohod}
\end{align}
\end{proposition}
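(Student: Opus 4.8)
The plan is to establish the three identities first for \emph{simple} integrands $u=\sum_{j}G_{j}\otimes\phi_{j}$ and $v=\sum_{k}H_{k}\otimes\psi_{k}$, with $G_{j},H_{k}\in\mathcal{C}_{\boldsymbol{\beta}}(K)$ and $\phi_{j},\psi_{k}\in\mathcal{H}_{U_{0}}$, and then to pass to the general case by density together with the closedness of $\delta^{\boldsymbol{\beta}}$. For the density step I would note that a typical element $f(\boldsymbol{\beta}(\phi_{1}),\dots,\boldsymbol{\beta}(\phi_{n}))\xi$ of $\mathcal{C}_{\boldsymbol{\beta}}(K\otimes\mathcal{H}_{U_{0}})$ is approximated in $\|\cdot\|_{\mathbb{D}_{\boldsymbol{\beta}}^{1,2}(K\otimes\mathcal{H}_{U_{0}})}$ by replacing $\xi\in K\otimes\mathcal{H}_{U_{0}}$ by finite sums of elementary tensors $k\otimes\chi$ (the boundedness of $f$ and of its first partials bounds both the $L^{2}$-part and the derivative part of the norm), so finite sums $\sum_{j}G_{j}\otimes\phi_{j}$ are dense in $\mathbb{D}_{\boldsymbol{\beta}}^{1,2}(K\otimes\mathcal{H}_{U_{0}})$. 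By \eqref{eqn:Skorohod integral of G-x-phi}, each $G_{j}\otimes\phi_{j}$ lies in $\mathrm{Dom}(\delta^{\boldsymbol{\beta}})$ with $\delta^{\boldsymbol{\beta}}(G_{j}\otimes\phi_{j})=\boldsymbol{\beta}(\phi_{j})G_{j}-D_{\phi_{j}}^{\boldsymbol{\beta}}G_{j}\in L^{2}(\Omega;K)$, so every simple $u$ does too.

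The computational heart is the evaluation of $\mathbb{E}[\delta^{\boldsymbol{\beta}}(G\otimes\phi)\otimes\delta^{\boldsymbol{\beta}}(H\otimes\psi)]$. Using \eqref{eqn:Skorohod integral of G-x-phi} and expanding the tensor product of $\boldsymbol{\beta}(\phi)G-D_{\phi}^{\boldsymbol{\beta}}G$ against $\boldsymbol{\beta}(\psi)H-D_{\psi}^{\boldsymbol{\beta}}H$ gives four terms, which I would reduce by iterated use of the Gaussian integration-by-parts identity \eqref{eqn:IF of QD} (which holds verbatim for $V$-valued functionals in $\mathcal{C}_{\boldsymbol{\beta}}(V)$), the Leibniz rule for $D_{\phi}^{\boldsymbol{\beta}}$ applied to the bilinear map $(a,b)\mapsto a\otimes b$, the identity $D_{\phi}^{\boldsymbol{\beta}}\boldsymbol{\beta}(\psi)=\bilin{\phi}{\psi}_{\mathcal{H}_{U_{0}}}$, and the symmetry $D_{\phi}^{\boldsymbol{\beta}}D_{\psi}^{\boldsymbol{\beta}}=D_{\psi}^{\boldsymbol{\beta}}D_{\phi}^{\boldsymbol{\beta}}$ on $\mathcal{C}_{\boldsymbol{\beta}}(K)$. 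After these integrations by parts the single-$\boldsymbol{\beta}$ cross terms cancel against first-derivative terms and, via the symmetry of the second derivative, the mixed second-derivative term is removed, leaving exactly $\bilin{\phi}{\psi}_{\mathcal{H}_{U_{0}}}\mathbb{E}[G\otimes H]+\mathbb{E}[D_{\psi}^{\boldsymbol{\beta}}G\otimes D_{\phi}^{\boldsymbol{\beta}}H]$. It then remains to check, from the explicit formulas for $\kappa_{K},\kappa_{K,\mathcal{H}_{U_{0}}},\kappa_{K,\mathcal{H}_{U_{0}}^{\otimes 2}}$ and $S_{\mathcal{H}_{U_{0}}}$, that these two expressions are precisely the two terms on the right of \eqref{eqn: expectation of tensor product of Skorohod} for $u=G\otimes\phi$, $v=H\otimes\psi$; here the only delicate point is keeping track of which copy of $\mathcal{H}_{U_{0}}$ carries the original tensor factor and which carries the new derivation variable, and the swap $S_{\mathcal{H}_{U_{0}}}$ is exactly what lines them up. By bilinearity \eqref{eqn: expectation of tensor product of Skorohod} then holds for all simple $u,v$; \eqref{eqn: expectation of inner product of Skorohod} follows by composing with $\kappa_{K}$ and taking the trace, which is legitimate because $\mathbb{E}[\delta^{\boldsymbol{\beta}}(u)\otimes\delta^{\boldsymbol{\beta}}(v)]$ is trace class (dominated by $\mathbb{E}[\|\delta^{\boldsymbol{\beta}}(u)\|_{K}\|\delta^{\boldsymbol{\beta}}(v)\|_{K}]$), and \eqref{eqn: expectation of norm square of Skorohod} is the case $u=v$.

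Finally, applying \eqref{eqn: expectation of norm square of Skorohod} to a simple $u$ and using that $S_{\mathcal{H}_{U_{0}}}$ is an isometry, Cauchy--Schwarz yields $\mathbb{E}\|\delta^{\boldsymbol{\beta}}(u)\|_{K}^{2}\le\mathbb{E}\|u\|_{K\otimes\mathcal{H}_{U_{0}}}^{2}+\mathbb{E}\|D^{\boldsymbol{\beta}}u\|_{K\otimes\mathcal{H}_{U_{0}}^{\otimes 2}}^{2}=\|u\|_{\mathbb{D}_{\boldsymbol{\beta}}^{1,2}(K\otimes\mathcal{H}_{U_{0}})}^{2}$. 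Since $\delta^{\boldsymbol{\beta}}$ is closed (being the adjoint of the densely defined $D^{\boldsymbol{\beta}}$), this bound on the dense set of simple integrands forces $\mathbb{D}_{\boldsymbol{\beta}}^{1,2}(K\otimes\mathcal{H}_{U_{0}})\subset\mathrm{Dom}(\delta^{\boldsymbol{\beta}})$ with $\delta^{\boldsymbol{\beta}}$ bounded into $L^{2}(\Omega;K)$, and all three identities pass to the limit because every term on both sides is continuous in $(u,v)$ for the $\mathbb{D}_{\boldsymbol{\beta}}^{1,2}$-topology (the derivative-coupling term being controlled by $\mathbb{E}[\|D^{\boldsymbol{\beta}}u\|\,\|D^{\boldsymbol{\beta}}v\|]$). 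I expect the real work to be confined to the second paragraph: carrying out the iterated integrations by parts without dropping terms, and matching the two surviving terms with the operator expressions while respecting the ordering conventions on the $\mathcal{H}_{U_{0}}$-factors; the density reduction and the boundedness estimate are then routine.
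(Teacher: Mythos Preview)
Your proposal is correct and follows essentially the same route as the paper: compute on elementary tensors using \eqref{eqn:Skorohod integral of G-x-phi} and iterated Gaussian integration by parts \eqref{eqn:IF of QD} with the Leibniz rule and the symmetry of second directional derivatives, reduce to the two surviving terms, take traces for \eqref{eqn: expectation of inner product of Skorohod}, set $u=v$ for \eqref{eqn: expectation of norm square of Skorohod}, apply Cauchy--Schwarz (using that $S_{\mathcal{H}_{U_{0}}}$ is an isometry) to get the $\mathbb{D}^{1,2}$-bound, and then extend by density and closedness of $\delta^{\boldsymbol{\beta}}$. The only cosmetic difference is that the paper works with the slightly more special building blocks $u=Fk\otimes\phi$ with scalar $F\in\mathcal{C}_{\boldsymbol{\beta}}$ and fixed $k\in K$, whereas you take $G\in\mathcal{C}_{\boldsymbol{\beta}}(K)$ directly; since $\mathcal{C}_{\boldsymbol{\beta}}(K)$ is by definition the linear span of such $Fk$, the two computations coincide after an extra line of bilinearity.
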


\begin{proof}
The proof of the equality given in \eqref{eqn: expectation of tensor product of Skorohod}
is a simple modification of the proof of Proposition 3.2 in \cite{Grorud 1992}.
In fact, for any
\begin{align}\label{eqn:u and v}
u=Fk_1\otimes\phi_1,\quad v=Gk_2\otimes\phi_2\in \mathcal{C}_{\boldsymbol{\beta}}(K\otimes \mathcal{H}_{U_{0}})
\end{align}
with $F,G\in \mathcal{C}_{\boldsymbol{\beta}}$, $k_1,k_2\in K$ and $\phi_1,\phi_2\in \mathcal{H}_{U_{0}}$,
by applying \eqref{eqn:Skorohod integral of G-x-phi} and \eqref{eqn:IF of QD},
we obtain that
\begin{align*}
\mathbb{E}\left[\delta^{\boldsymbol{\beta}}(u)\otimes\delta^{\boldsymbol{\beta}}(v)\right]
&=\mathbb{E}\left[\left(\boldsymbol{\beta}(\phi_1)F-D_{\phi_1}^{\boldsymbol{\beta}}F\right)
  \left(\boldsymbol{\beta}(\phi_2)G-D_{\phi_2}^{\boldsymbol{\beta}}G\right)k_1\otimes k_2\right]\\
&=\mathbb{E}\left[\bilin{\phi_1}{\phi_2}_{\mathcal{H}_{U_{0}}}FG\right]k_1\otimes k_2\\
&\qquad +\mathbb{E}\left[D_{\phi_2}^{\boldsymbol{\beta}}\left(D_{\phi_1}^{\boldsymbol{\beta}}\left(FG\right)\right)
-D_{\phi_1}^{\boldsymbol{\beta}}\left[F(D_{\phi_2}^{\boldsymbol{\beta}}G)\right]
-D_{\phi_2}^{\boldsymbol{\beta}}\left[(D_{\phi_1}^{\boldsymbol{\beta}}F)G\right] \right.\\
&\qquad\left. +(D_{\phi_1}^{\boldsymbol{\beta}}F)(D_{\phi_2}^{\boldsymbol{\beta}}G)\right]k_1\otimes k_2\\
&=\mathbb{E}\left[\bilin{\phi_1}{\phi_2}_{\mathcal{H}_{U_{0}}}FG
+(D_{\phi_2}^{\boldsymbol{\beta}}F)(D_{\phi_1}^{\boldsymbol{\beta}}G)\right]k_1\otimes k_2.
\end{align*}
Thus \eqref{eqn: expectation of tensor product of Skorohod} proved for $u,v$ having form given in \eqref{eqn:u and v}.
Therefore, for any $u,v\in \mathcal{C}_{\boldsymbol{\beta}}(K\otimes \mathcal{H}_{U_{0}})$
having the form given in \eqref{eqn:u and v},
the equality given in \eqref{eqn: expectation of inner product of Skorohod}
can be obtained by taking the trace from \eqref{eqn: expectation of tensor product of Skorohod},
and the equality given in \eqref{eqn: expectation of norm square of Skorohod}
is obtained by taking $u=v$ in \eqref{eqn: expectation of inner product of Skorohod}.
Therefore, for any $u\in \mathcal{C}_{\boldsymbol{\beta}}(K\otimes \mathcal{H}_{U_{0}})$,
by applying \eqref{eqn: expectation of norm square of Skorohod}
and Cauchy-Schwarz inequality, we obtain that
\begin{align*}
\mathbb{E}[\|\delta^{\boldsymbol{\beta}}(u)\|_{K}^{2}]
&\leq \mathbb{E}\left[\|u\|_{K\otimes\mathcal{H}_{U_{0}}}^{2}\right]
  +\mathbb{E}\left[\|D^{\boldsymbol{\beta}}u\|_{K\otimes\mathcal{H}_{U_{0}}^{\otimes 2}}
    \left\|S_{\mathcal{H}_{U_{0}}}\left(D^{\boldsymbol{\beta}}u\right)
            \right\|_{K\otimes\mathcal{H}_{U_{0}}^{\otimes 2}}\right]\\
&=\mathbb{E}\left[\|u\|_{K\otimes\mathcal{H}_{U_{0}}}^{2}\right]
   +\mathbb{E}\left[\|D^{\boldsymbol{\beta}}u\|_{K\otimes\mathcal{H}_{U_{0}}^{\otimes 2}}^{2}\right]\\
&=\|u\|_{\mathbb{D}_{\boldsymbol{\beta}}^{1,2}(K\otimes\mathcal{H}_{U_{0}})}^{2},
\end{align*}
from which we see that $\delta^{\boldsymbol{\beta}}$ is a bounded linear operator
from $\mathcal{C}_{\boldsymbol{\beta}}(K\otimes \mathcal{H}_{U_{0}})$ into $L^2(\Omega;K)$.
Therefore, $\delta^{\boldsymbol{\beta}}$ can be extended to $\mathbb{D}_{\boldsymbol{\beta}}^{1,2}(K\otimes \mathcal{H}_{U_{0}})$
as a bounded linear operator into $L^2(\Omega;K)$,
and so $\mathbb{D}_{\boldsymbol{\beta}}^{1,2}(K\otimes \mathcal{H}_{U_{0}})\subset\mathrm{Dom}(\delta^{\boldsymbol{\beta}})$.
Then by applying the approximation method, for any $u,v\in\mathbb{D}_{\boldsymbol{\beta}}^{1,2}(K\otimes \mathcal{H}_{U_{0}})$,
the equalities given in \eqref{eqn: expectation of tensor product of Skorohod}--\eqref{eqn: expectation of norm square of Skorohod}
can be proved.
\end{proof}

As the Meyer's inequality for processes in a Hilbert space (see \cite{Sugita 1985}),
the following proposition shows that the Skorohod integral $\delta^{\boldsymbol{\beta}}$ is a bounded operator from $\mathbb{D}_{\boldsymbol{\beta}}^{1,p}(K\otimes \mathcal{H}_{U_{0}})$ into $L^{p}(\Omega;K)$.

\begin{proposition}\label{prop:Lp boundedness of Skorohod integral}
Let $p> 1$. Then there exists a constant $C>0$ depending on $p$
such that
\begin{equation}\label{eqn:ineq for continuity of skorohod integral}
\mathbb{E}\left[\|\delta^{\boldsymbol{\beta}}(u)\|_{K}^{p}\right]
\leq C\left(\|\mathbb{E}[u]\|_{K\otimes \mathcal{H}_{U_{0}}}^{p}
+\mathbb{E}\left[\|D^{\boldsymbol{\beta}}u\|_{K\otimes\mathcal{H}_{U_{0}}^{\otimes 2}}^{p}\right]\right)
\end{equation}
for any $u\in\mathbb{D}_{\boldsymbol{\beta}}^{1,p}(K\otimes \mathcal{H}_{U_{0}})$.
\end{proposition}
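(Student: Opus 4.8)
The plan is to reduce to smooth integrands, split off the deterministic part of $u$, and treat the remaining centered part by Meyer's inequality for the isonormal Gaussian process $\{\boldsymbol{\beta}(\phi)\}_{\phi\in\mathcal{H}_{U_{0}}}$. First I would note that, by Jensen's inequality $\|\mathbb{E}[u]\|_{K\otimes\mathcal{H}_{U_{0}}}\leq(\mathbb{E}\|u\|_{K\otimes\mathcal{H}_{U_{0}}}^{p})^{1/p}$, the right-hand side of \eqref{eqn:ineq for continuity of skorohod integral} is dominated by $C\|u\|_{\mathbb{D}_{\boldsymbol{\beta}}^{1,p}(K\otimes\mathcal{H}_{U_{0}})}^{p}$; since $\mathcal{C}_{\boldsymbol{\beta}}(K\otimes\mathcal{H}_{U_{0}})$ is dense in $\mathbb{D}_{\boldsymbol{\beta}}^{1,p}(K\otimes\mathcal{H}_{U_{0}})$ by definition, it suffices to establish \eqref{eqn:ineq for continuity of skorohod integral} for $u\in\mathcal{C}_{\boldsymbol{\beta}}(K\otimes\mathcal{H}_{U_{0}})$, the general case following by continuous extension of $\delta^{\boldsymbol{\beta}}$. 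For such $u$ I set $\bar u=\mathbb{E}[u]\in K\otimes\mathcal{H}_{U_{0}}$ and $v=u-\bar u$, so that $\mathbb{E}[v]=0$, $D^{\boldsymbol{\beta}}v=D^{\boldsymbol{\beta}}u$, and $\delta^{\boldsymbol{\beta}}(u)=\delta^{\boldsymbol{\beta}}(\bar u)+\delta^{\boldsymbol{\beta}}(v)$ by linearity; it is then enough to bound the two summands separately.

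For the deterministic part, writing $\bar u=\sum_{i}k_{i}\otimes\phi_{i}$ with $k_{i}\in K$ and $\phi_{i}\in\mathcal{H}_{U_{0}}$, the identity \eqref{eqn:Skorohod integral of G-x-phi} applied with constant $G$ gives $\delta^{\boldsymbol{\beta}}(\bar u)=\sum_{i}\boldsymbol{\beta}(\phi_{i})k_{i}$, which is a $K$-valued centered Gaussian random variable. By \eqref{eqn: expectation of norm square of Skorohod} (with $D^{\boldsymbol{\beta}}\bar u=0$) one has $\mathbb{E}\|\delta^{\boldsymbol{\beta}}(\bar u)\|_{K}^{2}=\|\bar u\|_{K\otimes\mathcal{H}_{U_{0}}}^{2}$, and since all moments of a Hilbert-space-valued Gaussian are mutually equivalent (Fernique's theorem, or the Kahane--Khintchine inequalities), $\mathbb{E}\|\delta^{\boldsymbol{\beta}}(\bar u)\|_{K}^{p}\leq c_{p}\|\bar u\|_{K\otimes\mathcal{H}_{U_{0}}}^{p}=c_{p}\|\mathbb{E}[u]\|_{K\otimes\mathcal{H}_{U_{0}}}^{p}$.

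For the centered part, the point is Meyer's inequality: I must show $\mathbb{E}\|\delta^{\boldsymbol{\beta}}(v)\|_{K}^{p}\leq C\,\mathbb{E}\|D^{\boldsymbol{\beta}}v\|_{K\otimes\mathcal{H}_{U_{0}}^{\otimes2}}^{p}$ for centered $v\in\mathcal{C}_{\boldsymbol{\beta}}(K\otimes\mathcal{H}_{U_{0}})$. I would argue this through the Ornstein--Uhlenbeck semigroup $\{P_{t}\}$ attached to $\boldsymbol{\beta}$ (Mehler's formula) and its generator $L=-\delta^{\boldsymbol{\beta}}D^{\boldsymbol{\beta}}$: using the commutation relations $D^{\boldsymbol{\beta}}P_{t}=e^{-t}P_{t}D^{\boldsymbol{\beta}}$, $D^{\boldsymbol{\beta}}\delta^{\boldsymbol{\beta}}=\delta^{\boldsymbol{\beta}}D^{\boldsymbol{\beta}}+I$ and $L\delta^{\boldsymbol{\beta}}=\delta^{\boldsymbol{\beta}}(L-I)$, one writes $\delta^{\boldsymbol{\beta}}(v)$, which is automatically centered, through Riesz-transform-type operators acting on $D^{\boldsymbol{\beta}}v$, and the required $L^{p}(\Omega)$-boundedness of these operators is exactly the content of Meyer's inequalities. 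In the present vector-valued framework the classical scalar statement (see \cite{Sugita 1985}, \cite{Nualart 2006}) transfers: the semigroup $P_{t}$ acts only on the probability variable, so the Littlewood--Paley--Stein square-function argument applies verbatim to $K$- and $\mathcal{H}_{U_{0}}$-valued functionals; alternatively one expands in orthonormal bases of $K$ and $\mathcal{H}_{U_{0}}$ and invokes the scalar inequality together with the $L^{2}$ identity \eqref{eqn: expectation of norm square of Skorohod}. This yields $\mathbb{E}\|\delta^{\boldsymbol{\beta}}(v)\|_{K}^{p}\leq C\,\mathbb{E}\|D^{\boldsymbol{\beta}}u\|_{K\otimes\mathcal{H}_{U_{0}}^{\otimes2}}^{p}$.

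Finally, combining the two estimates via $\mathbb{E}\|\delta^{\boldsymbol{\beta}}(u)\|_{K}^{p}\leq 2^{p-1}\bigl(\mathbb{E}\|\delta^{\boldsymbol{\beta}}(\bar u)\|_{K}^{p}+\mathbb{E}\|\delta^{\boldsymbol{\beta}}(v)\|_{K}^{p}\bigr)$ gives \eqref{eqn:ineq for continuity of skorohod integral} on $\mathcal{C}_{\boldsymbol{\beta}}(K\otimes\mathcal{H}_{U_{0}})$, and the density/extension argument of the first paragraph propagates it to all of $\mathbb{D}_{\boldsymbol{\beta}}^{1,p}(K\otimes\mathcal{H}_{U_{0}})$. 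I expect the main obstacle to be the third step, namely making rigorous the passage from the $L^{2}$ identity \eqref{eqn: expectation of norm square of Skorohod} to the $L^{p}$ bound, i.e. the vector-valued Meyer inequality for the $\mathcal{H}_{U_{0}}$-indexed isonormal process; the remaining ingredients are either the already-proved $L^{2}$ isometry of Proposition~\ref{prop: isometry formula of skorohod}, elementary Gaussian moment comparison, or routine density arguments.
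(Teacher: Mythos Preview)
Your proposal is correct and follows precisely the standard route that the paper invokes: the paper's proof is just a reference to Theorem~3.3 in \cite{Sugita 1985} and Proposition~1.5.4 in \cite{Nualart 2006}, and those arguments proceed exactly as you outline---split $u$ into its deterministic mean (handled by Gaussian moment equivalence) and its centered part (handled by Meyer's inequality via the Ornstein--Uhlenbeck semigroup), then extend by density. Your identification of the vector-valued Meyer inequality as the substantive step is accurate, and indeed Sugita's paper is cited precisely because it supplies the Hilbert-space-valued version.
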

\begin{proof}
The proof is a modification of the proof of Theorem 3.3 in \cite{Sugita 1985}
(see also Proposition 1.5.4 in \cite{Nualart 2006}).
\end{proof}

We denote by $\mathbb{D}_{\boldsymbol{\beta}}^{1,p}(|\mathcal{H}_{K\otimes U_{0}}|)$
the set of all elements $u\in\mathbb{D}_{\boldsymbol{\beta}}^{1,p}(\mathcal{H}_{K\otimes U_{0}})$
such that $u\in  |\mathcal{H}_{K\otimes U_{0}}|$ a.s.,
$D^{\boldsymbol{\beta}}u \in |\mathcal{H}_{U_{0}}|\otimes|\mathcal{H}_{K\otimes U_{0}}|$ a.s. and $\|u\|_{\mathbb{D}_{\boldsymbol{\beta}}^{1,p}(|\mathcal{H}_{K\otimes U_{0}}|)}<\infty$, where
\begin{align}\label{eqn:D beta absolute}
\|u\|_{\mathbb{D}_{\boldsymbol{\beta}}^{1,p}(|\mathcal{H}_{K\otimes U_{0}}|)}^{p}:=\mathbb{E}\|u\|_{|\mathcal{H}_{K\otimes U_{0}}|}^{p}+\mathbb{E}\|D^{\boldsymbol{\beta}}u\|_{ |\mathcal{H}_{U_{0}}|\otimes|\mathcal{H}_{K\otimes U_{0}}|}^{p}.
\end{align}
\begin{proposition}\label{prop: relation boldbeta  and betaj}
Let $\Phi\in\mathbb{D}_{\boldsymbol{\beta}}^{1,2}(|\mathcal{H}_{K\otimes U_{0}}|)$.
Then for all $t\in [0,T]$, it holds that
\begin{align}
D_{s}^{\boldsymbol{\beta}}\Phi_{t}
&=\sum_{j=1}^{\infty}(D_{s}^{\beta_{j}}\Phi_{t})\otimes\sqrt{\lambda_{j}}e_{j},\label{eqn:MD of Phi-t}\\
\|D_{s}^{\boldsymbol{\beta}}\Phi_{t}\|_{K\otimes U_{0}\otimes U_{0}}^{2}
&=\sum_{j=1}^{\infty}\|D_{s}^{\beta_{j}}\Phi_{t}\|_{K\otimes U_{0}}^{2}\label{eqn: norm of D bold beta}.
\end{align}
Moreover, it holds that
\begin{equation}\label{ineq: beta j and boldbeta}
\sum_{j=1}^{\infty}\mathbb{E}\|D^{\beta_{j}}\Phi\|_{|\mathcal{H}|\otimes|\mathcal{H}_{K\otimes U_{0}}|}^{2}
\leq \mathbb{E}\|D^{\boldsymbol{\beta}}\Phi\|_{|\mathcal{H}_{U_{0}}|\otimes|\mathcal{H}_{K\otimes U_{0}}|}^{2}
\end{equation}
\end{proposition}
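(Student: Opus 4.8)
The plan is to prove the pointwise identity \eqref{eqn:MD of Phi-t} first, deduce \eqref{eqn: norm of D bold beta} from it by Parseval, and then obtain \eqref{ineq: beta j and boldbeta} by the Cauchy--Schwarz inequality in the summation index together with Tonelli's theorem.

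To set up, I would record the relation between the two isonormal pictures. Writing $\mathcal{H}_{U_{0}}\cong\mathcal{H}\otimes U_{0}$ and using the orthonormal basis $\{\sqrt{\lambda_{j}}e_{j}\}_{j\ge1}$ of $U_{0}$, any $\phi\in\mathcal{H}_{U_{0}}$ decomposes as $\phi=\sum_{j}\phi^{(j)}\otimes(\sqrt{\lambda_{j}}e_{j})$ with $\phi^{(j)}=\bilin{\phi(\cdot)}{\sqrt{\lambda_{j}}e_{j}}_{U_{0}}\in\mathcal{H}$, and then $\boldsymbol{\beta}(\phi)=\sum_{j}\beta_{j}(\phi^{(j)})$; in particular $\beta_{j}(g)=\boldsymbol{\beta}(g\otimes\sqrt{\lambda_{j}}e_{j})$, so every cylinder functional of the $\beta_{j}$'s belongs to $\mathcal{C}_{\boldsymbol{\beta}}$. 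For a cylinder functional $F=f(\boldsymbol{\beta}(\phi_{1}),\dots,\boldsymbol{\beta}(\phi_{n}))w$ (with $w$ in a separable Hilbert space $V$), the chain rule gives $D^{\boldsymbol{\beta}}F=\sum_{l}\partial_{l}f(\cdots)\,w\otimes\phi_{l}$ and $D^{\beta_{j}}F=\sum_{l}\partial_{l}f(\cdots)\,w\otimes\phi_{l}^{(j)}$; expanding each $\phi_{l}$ in the basis $\{\sqrt{\lambda_{j}}e_{j}\}$ and regrouping the sum yields the operator identity $D^{\boldsymbol{\beta}}F=\sum_{j}(D^{\beta_{j}}F)\otimes\sqrt{\lambda_{j}}e_{j}$ on cylinder functionals (applied with $V=\mathcal{H}_{K\otimes U_{0}}$).

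Next, for $\Phi\in\mathbb{D}_{\boldsymbol{\beta}}^{1,2}(|\mathcal{H}_{K\otimes U_{0}}|)\subset\mathbb{D}_{\boldsymbol{\beta}}^{1,2}(\mathcal{H}_{K\otimes U_{0}})$ I would take cylinder functionals $\Phi^{(n)}$ with $\Phi^{(n)}\to\Phi$ in $L^{2}(\Omega;\mathcal{H}_{K\otimes U_{0}})$ and $D^{\boldsymbol{\beta}}\Phi^{(n)}\to D^{\boldsymbol{\beta}}\Phi$ in $L^{2}(\Omega;\mathcal{H}_{U_{0}}\otimes\mathcal{H}_{K\otimes U_{0}})$. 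Projection onto the $j$-th coordinate of the differentiation-direction $U_{0}$-factor is a contraction, so the $j$-th components of $D^{\boldsymbol{\beta}}\Phi^{(n)}$ converge in $L^{2}$; by the cylinder identity these components equal $D^{\beta_{j}}\Phi^{(n)}$, and since $D^{\beta_{j}}$ is closable this forces $\Phi\in\mathbb{D}_{\beta_{j}}^{1,2}(\mathcal{H}_{K\otimes U_{0}})$ with $D^{\beta_{j}}\Phi$ equal to that limit. Hence $D^{\boldsymbol{\beta}}\Phi=\sum_{j}(D^{\beta_{j}}\Phi)\otimes\sqrt{\lambda_{j}}e_{j}$ with convergence in $L^{2}$, and passing to strongly measurable representatives and evaluating at $(s,t)$ gives \eqref{eqn:MD of Phi-t}. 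Since $\{\sqrt{\lambda_{j}}e_{j}\}$ is orthonormal in $U_{0}$, Parseval's identity in $K\otimes U_{0}\otimes U_{0}$ then yields \eqref{eqn: norm of D bold beta} at once.

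Finally, for \eqref{ineq: beta j and boldbeta} I would write out $\|D^{\beta_{j}}\Phi\|_{|\mathcal{H}|\otimes|\mathcal{H}_{K\otimes U_{0}}|}^{2}$ and $\|D^{\boldsymbol{\beta}}\Phi\|_{|\mathcal{H}_{U_{0}}|\otimes|\mathcal{H}_{K\otimes U_{0}}|}^{2}$ as fourfold integrals over $[0,T]^{4}$ against the density $\frac{\partial^{2}R}{\partial s\partial t}(s,t)\,\frac{\partial^{2}R}{\partial\theta\partial\eta}(\theta,\eta)$, which is nonnegative by Assumption \textbf{(R1)}. Tonelli's theorem then permits interchanging $\sum_{j}$ with the integral, and the Cauchy--Schwarz inequality in the index $j$ together with \eqref{eqn: norm of D bold beta} gives, pointwise in $(s,t,\theta,\eta)$,
$\sum_{j}\|D_{s}^{\beta_{j}}\Phi_{\theta}\|_{K\otimes U_{0}}\|D_{t}^{\beta_{j}}\Phi_{\eta}\|_{K\otimes U_{0}}\le\|D_{s}^{\boldsymbol{\beta}}\Phi_{\theta}\|_{K\otimes U_{0}\otimes U_{0}}\|D_{t}^{\boldsymbol{\beta}}\Phi_{\eta}\|_{K\otimes U_{0}\otimes U_{0}}$;
integrating and taking expectations yields the claim (and, as a byproduct, the finiteness needed to make the left side meaningful). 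The main obstacle is the limiting argument of the third paragraph---identifying the coordinatewise $L^{2}$-limit with $D^{\beta_{j}}\Phi$ and thereby making sense of the pointwise-in-$(s,t)$ identity \eqref{eqn:MD of Phi-t}---whereas Assumption \textbf{(R1)} is exactly what legitimizes the Tonelli and Cauchy--Schwarz manipulations in the last step.
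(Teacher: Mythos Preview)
Your proposal is correct and follows essentially the same route as the paper: establish \eqref{eqn:MD of Phi-t} on cylinder functionals by expanding each $h_{l}\in\mathcal{H}_{U_{0}}$ along the basis $\{\sqrt{\lambda_{j}}e_{j}\}$, deduce \eqref{eqn: norm of D bold beta} by orthonormality, and obtain \eqref{ineq: beta j and boldbeta} by Cauchy--Schwarz in $j$ under the nonnegative kernel (Assumption \textbf{(R1)}). The only difference is that the paper carries out the computation for a single tensor $\Phi=F\phi$ and leaves the passage to general $\Phi\in\mathbb{D}_{\boldsymbol{\beta}}^{1,2}(|\mathcal{H}_{K\otimes U_{0}}|)$ implicit, whereas you spell out the closability/approximation argument explicitly; this extra care is harmless and, if anything, makes your version cleaner.
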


\begin{proof}
Let $\Phi=F\phi$ with $F=f(\boldsymbol{\beta}(h_{1}),\cdots,\boldsymbol{\beta}(h_{n}))\in\mathcal{C}_{\boldsymbol{\beta}}$ and $\phi\in\mathcal{H}_{K\otimes U_{0}}$, and let $t\in [0,T]$ be given.
Then by applying \eqref{eqn: series repn for Wiener integral},
we obtain that
\begin{align*}
D_{s}^{\boldsymbol{\beta}}\Phi_{t}
&=\sum_{i=1}^{n}\frac{\partial f}{\partial x_{i}}(\boldsymbol{\beta}(h_{1}),\cdots,\boldsymbol{\beta}(h_{n}))\phi(t)\otimes h_{i}(s)\\
&=\sum_{j=1}^{\infty}\left(\sum_{i=1}^{n}
   \frac{\partial f}{\partial x_{i}}(\boldsymbol{\beta}(h_{1}),\cdots,\boldsymbol{\beta}(h_{n}))
    \sqrt{\lambda_{j}}\bilin{h_{i}(s)}{e_{j}}_{U_{0}}\right)\phi(t)\otimes\sqrt{\lambda_{j}}e_{j}\\
&=\sum_{j=1}^{\infty}(D_{s}^{\beta_{j}}\Phi_{t})\otimes\sqrt{\lambda_{j}}e_{j},
\end{align*}
from which we have \eqref{eqn:MD of Phi-t} and we obtain that
\begin{align*}
\|D_{s}^{\boldsymbol{\beta}}\Phi_{t}\|_{K\otimes U_{0}\otimes U_{0}}^{2}
&=\sum_{j,l=1}^{\infty}\bilin{(D_{s}^{\beta_{j}}\Phi_{t})\otimes\sqrt{\lambda_{j}}e_{j}}
                             {(D_{s}^{\beta_{l}}\Phi_{t})\otimes\sqrt{\lambda_{l}}e_{l}}_{K\otimes U_{0}\otimes U_{0}}\\
&=\sum_{j=1}^{\infty}\|D_{s}^{\beta_{j}}\Phi_{t}\|_{ K\otimes U_{0}}^{2}.
\end{align*}
Therefore, we prove the equality given in \eqref{eqn: norm of D bold beta}.
By applying the Cauchy-Schwarz inequality, we obtain that
\begin{align*}
&\sum_{j=1}^{\infty}\mathbb{E}\|D^{\beta_{j}}\Phi\|_{|\mathcal{H}|\otimes |\mathcal{H}_{K\otimes U_{0}}|}^{2}\\
&=\mathbb{E}\int_{[0,T]^{4}}\sum_{j=1}^{\infty}\|D_{s}^{\beta_{j}}\Phi_{\theta}\|_{K\otimes U_{0}}\|D_{t}^{\beta_{j}}\Phi_{\eta}\|_{ K\otimes U_{0}}\frac{\partial^{2}R}{\partial s\partial t}(s,t)\frac{\partial^{2}R}{\partial \theta\partial \eta}(\theta,\eta)dsdtd\theta d\eta\\
&\leq \mathbb{E}\int_{[0,T]^{4}}\left(\sum_{j=1}^{\infty}\|D_{s}^{\beta_{j}}\Phi_{\theta}\|_{K\otimes  U_{0}}^{2}\right)^{\frac{1}{2}}\left(\sum_{j=1}^{\infty}\|D_{t}^{\beta_{j}}\Phi_{\eta}\|_{K\otimes  U_{0}}^{2}\right)^{\frac{1}{2}}\frac{\partial^{2}R}{\partial s\partial t}(s,t)\frac{\partial^{2}R}{\partial \theta\partial \eta}(\theta,\eta)dsdtd\theta d\eta\\
&=\mathbb{E}\int_{[0,T]^{4}}\|D_{s}^{\boldsymbol{\beta}}\Phi_{\theta}\|_{K\otimes U_{0}\otimes U_{0}}^{2}\|D_{t}^{\boldsymbol{\beta}}\Phi_{\eta}\|_{K\otimes U_{0}\otimes U_{0}}^{2}\frac{\partial^{2}R}{\partial s\partial t}(s,t)\frac{\partial^{2}R}{\partial \theta\partial \eta}(\theta,\eta)dsdtd\theta d\eta\\
&=\mathbb{E}\|D^{\boldsymbol{\beta}}\Phi\|_{|\mathcal{H}_{U_{0}}|\otimes |\mathcal{H}_{ K\otimes U_{0}}|}^{2},
\end{align*}
which implies \eqref{ineq: beta j and boldbeta}.
\end{proof}

\begin{remark}
\rm
{
By the inequality \eqref{ineq: beta j and boldbeta}, for all $j\in\mathbb{N}$ we have
\begin{align*}
\|\Phi\|_{\mathbb{D}_{\beta_{j}}^{1,2}(|\mathcal{H}_{K\otimes U_{0}}|)}^{2}
&=\mathbb{E}\|\Phi\|_{|\mathcal{H}_{K\otimes U_{0}}|}^{2}+\mathbb{E}\|D^{\beta_{j}}\Phi\|_{|\mathcal{H}|\otimes|\mathcal{H}_{K\otimes U_{0}}|}^{2}\\
&\leq \mathbb{E}\|\Phi\|_{|\mathcal{H}_{K\otimes U_{0}}|}^{2}+\mathbb{E}\|D^{\boldsymbol{\beta}}\Phi\|_{|\mathcal{H}_{U_{0}}|\otimes |\mathcal{H}_{K\otimes U_{0}}|}^{2}\\
&=\|\Phi\|_{\mathbb{D}_{\boldsymbol{\beta}}^{1,2}(|\mathcal{H}_{K\otimes U_{0}}|)}^{2}.
\end{align*}
}
\end{remark}

\begin{theorem}
Let $\Phi\in\mathbb{D}_{\boldsymbol{\beta}}^{1,2}(|\mathcal{H}_{K\otimes U_{0}}|)$. Then
\begin{align}\label{eqn:SE of SI}
\int_{0}^{T}\Phi_{s}\delta\boldsymbol{\beta}_{s}
=\sum_{j=1}^{\infty}\int_{0}^{T}\bilin{\Phi_s}{\sqrt{\lambda_j}e_j}_{U_0}\delta\beta_{j}(s).
\end{align}
\end{theorem}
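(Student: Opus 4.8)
The plan is to realize both sides of \eqref{eqn:SE of SI} through the Skorohod operator $\delta^{\boldsymbol{\beta}}$ and its scalar analogues $\delta^{\beta_{j}}$, to establish the identity first for elementary integrands, and then to pass to a limit. Given $\Phi\in\mathbb{D}_{\boldsymbol{\beta}}^{1,2}(|\mathcal{H}_{K\otimes U_{0}}|)\subset\mathbb{D}_{\boldsymbol{\beta}}^{1,2}(\mathcal{H}_{K\otimes U_{0}})$, write $\Phi^{j}:=\bilin{\Phi}{\sqrt{\lambda_{j}}e_{j}}_{U_{0}}$ for the $j$-th component of $\Phi$ under the isometry $\mathcal{H}_{K\otimes U_{0}}\cong\mathcal{H}_{K}\otimes U_{0}$, so that $\Phi=\sum_{j}\Phi^{j}\otimes\sqrt{\lambda_{j}}e_{j}$ and $\int_{0}^{T}\bilin{\Phi_{s}}{\sqrt{\lambda_{j}}e_{j}}_{U_{0}}\delta\beta_{j}(s)=\delta^{\beta_{j}}(\Phi^{j})$. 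Contracting the $U_{0}$-slot with the unit vector $\sqrt{\lambda_{j}}e_{j}$ is a norm-one operation that commutes with $D^{\boldsymbol{\beta}}$, and, together with the Hilbert-space form of \eqref{ineq: beta j and boldbeta} (the identity $\sum_{j}\|D^{\beta_{j}}\Phi\|_{\mathcal{H}\otimes\mathcal{H}_{K\otimes U_{0}}}^{2}=\|D^{\boldsymbol{\beta}}\Phi\|_{\mathcal{H}_{U_{0}}\otimes\mathcal{H}_{K\otimes U_{0}}}^{2}$, which follows from \eqref{eqn:MD of Phi-t}--\eqref{eqn: norm of D bold beta}), it yields $\Phi^{j}\in\mathbb{D}_{\beta_{j}}^{1,2}(\mathcal{H}_{K})\subset\mathrm{Dom}(\delta^{\beta_{j}})$ with $\|\Phi^{j}\|_{\mathbb{D}_{\beta_{j}}^{1,2}(\mathcal{H}_{K})}\le\|\Phi\|_{\mathbb{D}_{\boldsymbol{\beta}}^{1,2}(\mathcal{H}_{K\otimes U_{0}})}$; hence both sides of \eqref{eqn:SE of SI} are well defined. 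Letting $P_{N}$ be the lift to the $U_{0}$-slot of the orthogonal projection of $U_{0}$ onto $\mathrm{span}\{\sqrt{\lambda_{1}}e_{1},\dots,\sqrt{\lambda_{N}}e_{N}\}$, the crux is to prove
\[
\sum_{j=1}^{N}\delta^{\beta_{j}}(\Phi^{j})=\delta^{\boldsymbol{\beta}}(P_{N}\Phi),\qquad \Phi\in\mathbb{D}_{\boldsymbol{\beta}}^{1,2}(\mathcal{H}_{K\otimes U_{0}}),\ N\in\mathbb{N},
\]
after which \eqref{eqn:SE of SI} follows on letting $N\to\infty$.

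First I would check this for $\Phi=Fk\otimes\phi$ with $F\in\mathcal{C}_{\boldsymbol{\beta}}$, $k\in K$ and $\phi\in\mathcal{H}_{U_{0}}$, whose linear span is dense in $\mathbb{D}_{\boldsymbol{\beta}}^{1,2}(\mathcal{H}_{K\otimes U_{0}})$ by the definition of that space. With $\phi^{j}\in\mathcal{H}$ the $j$-th component of $\phi$ one has $\Phi^{j}=Fk\otimes\phi^{j}$, and the product rule \eqref{eqn:Skorohod integral of G-x-phi} for $\beta_{j}$ gives $\delta^{\beta_{j}}(\Phi^{j})=\beta_{j}(\phi^{j})Fk-(D_{\phi^{j}}^{\beta_{j}}F)k$. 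Summing over $j\le N$: by \eqref{eqn: series repn for Wiener integral}, $\sum_{j\le N}\beta_{j}(\phi^{j})=\boldsymbol{\beta}(P_{N}\phi)$; and, writing $F=f(\boldsymbol{\beta}(h_{1}),\dots,\boldsymbol{\beta}(h_{n}))$, the chain-rule expression for the Malliavin derivative together with Parseval's identity in $U_{0}$ (namely $\sum_{j}\bilin{h_{i}^{j}}{\phi^{j}}_{\mathcal{H}}=\bilin{h_{i}}{\phi}_{\mathcal{H}_{U_{0}}}$) gives $\sum_{j\le N}D_{\phi^{j}}^{\beta_{j}}F=D_{P_{N}\phi}^{\boldsymbol{\beta}}F$. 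Hence
\[
\sum_{j=1}^{N}\delta^{\beta_{j}}(\Phi^{j})=\boldsymbol{\beta}(P_{N}\phi)Fk-(D_{P_{N}\phi}^{\boldsymbol{\beta}}F)k=\delta^{\boldsymbol{\beta}}(Fk\otimes P_{N}\phi)=\delta^{\boldsymbol{\beta}}(P_{N}\Phi),
\]
again by \eqref{eqn:Skorohod integral of G-x-phi}, and by linearity the identity holds on the whole dense span.

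I would then upgrade it to all of $\mathbb{D}_{\boldsymbol{\beta}}^{1,2}(\mathcal{H}_{K\otimes U_{0}})$ by continuity: the right-hand side satisfies $\|\delta^{\boldsymbol{\beta}}(P_{N}\Phi)\|_{L^{2}(\Omega;K)}\le\|P_{N}\Phi\|_{\mathbb{D}_{\boldsymbol{\beta}}^{1,2}(\mathcal{H}_{K\otimes U_{0}})}\le\|\Phi\|_{\mathbb{D}_{\boldsymbol{\beta}}^{1,2}(\mathcal{H}_{K\otimes U_{0}})}$ by Proposition \ref{prop: isometry formula of skorohod}, and the left-hand side is a finite sum of terms bounded, via the $\beta_{j}$-analogue of Proposition \ref{prop: isometry formula of skorohod} (valid by the same proof, since each $\beta_{j}$ is a scalar $Q$-Gaussian process with kernel $R$) and the estimates above, by $\|\Phi\|_{\mathbb{D}_{\boldsymbol{\beta}}^{1,2}(\mathcal{H}_{K\otimes U_{0}})}$. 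So the crux identity holds for every $\Phi\in\mathbb{D}_{\boldsymbol{\beta}}^{1,2}(\mathcal{H}_{K\otimes U_{0}})$, in particular for $\Phi\in\mathbb{D}_{\boldsymbol{\beta}}^{1,2}(|\mathcal{H}_{K\otimes U_{0}}|)$. Finally, since $P_{N}\to I$ strongly on $U_{0}$ and $D^{\boldsymbol{\beta}}$ commutes with the lift of $P_{N}$, dominated convergence in $L^{2}(\Omega)$ gives $P_{N}\Phi\to\Phi$ in $\mathbb{D}_{\boldsymbol{\beta}}^{1,2}(\mathcal{H}_{K\otimes U_{0}})$, hence $\delta^{\boldsymbol{\beta}}(P_{N}\Phi)\to\delta^{\boldsymbol{\beta}}(\Phi)$ in $L^{2}(\Omega;K)$; combining with the crux identity, $\sum_{j=1}^{N}\delta^{\beta_{j}}(\Phi^{j})$ converges in $L^{2}(\Omega;K)$ to $\delta^{\boldsymbol{\beta}}(\Phi)=\int_{0}^{T}\Phi_{s}\delta\boldsymbol{\beta}_{s}$, which is \eqref{eqn:SE of SI}.

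The step I expect to be the main obstacle is precisely this passage to the limit in $j$: the scalar integrals $\delta^{\beta_{j}}(\Phi^{j})$ are in general not mutually orthogonal in $L^{2}(\Omega)$, since each $\Phi^{j}$ may depend on all the $\beta_{k}$, so a termwise $L^{2}$-bound does not by itself give convergence of $\sum_{j}\delta^{\beta_{j}}(\Phi^{j})$; routing it through the exact finite identity $\sum_{j\le N}\delta^{\beta_{j}}(\Phi^{j})=\delta^{\boldsymbol{\beta}}(P_{N}\Phi)$ and the $L^{2}$-boundedness of $\delta^{\boldsymbol{\beta}}$ circumvents this. A secondary, purely technical point is the bookkeeping among the several tensor-Hilbert structures ($\mathcal{H}_{K\otimes U_{0}}$, $\mathcal{H}_{U_{0}}\otimes\mathcal{H}_{K\otimes U_{0}}$ and their scalar counterparts), needed to verify that contracting the $U_{0}$-slot is norm-nonincreasing and commutes with $D^{\boldsymbol{\beta}}$, and that the Hilbert-norm version of \eqref{ineq: beta j and boldbeta} holds with equality.
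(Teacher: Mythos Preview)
Your argument is correct and reaches the same conclusion, but the paper takes a shorter route. Instead of verifying the finite identity $\sum_{j\le N}\delta^{\beta_{j}}(\Phi^{j})=\delta^{\boldsymbol{\beta}}(P_{N}\Phi)$ first on smooth cylinder functionals via the product formula \eqref{eqn:Skorohod integral of G-x-phi} and then extending by density, the paper proves the term-by-term identity $\delta^{\boldsymbol{\beta}}(\Phi^{j}\otimes\sqrt{\lambda_{j}}e_{j})=\delta^{\beta_{j}}(\Phi^{j})$ for \emph{general} $\Phi$ in one stroke, by testing against $Fk$ with $F\in\mathcal{C}_{\boldsymbol{\beta}}$, $k\in K$, and using the very definition of $\delta^{\boldsymbol{\beta}}$ as the adjoint of $D^{\boldsymbol{\beta}}$: contracting the $U_{0}$-slot of $D^{\boldsymbol{\beta}}(Fk)$ with $\sqrt{\lambda_{j}}e_{j}$ produces $D^{\beta_{j}}(Fk)$, so the two dual pairings coincide. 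The series expansion $\delta^{\boldsymbol{\beta}}(\Phi)=\sum_{j}\delta^{\boldsymbol{\beta}}(\Phi^{j}\otimes\sqrt{\lambda_{j}}e_{j})$ itself is obtained directly from Parseval plus the closedness (in fact boundedness, Proposition~\ref{prop: isometry formula of skorohod}) of $\delta^{\boldsymbol{\beta}}$. This bypasses both the computation on smooth functionals and the separate density step that you carry out; your approach, on the other hand, makes the $L^{2}$-convergence of the series fully explicit and keeps closer track of the various tensor structures, which is the bookkeeping you flagged as a secondary concern.
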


\begin{proof}
By applying the Parseval's identity and the closedness of $\delta^{\boldsymbol{\beta}}$, we have
\begin{align}\label{eqn:SE of SI in proof}
\int_{0}^{T}\Phi_{s}\delta\boldsymbol{\beta}_{s}
&=\delta^{\boldsymbol{\beta}}(\Phi)
=\sum_{j=1}^{\infty}\delta^{\boldsymbol{\beta}}\left(\bilin{\Phi}{\sqrt{\lambda_j}e_j}_{U_0}\otimes \sqrt{\lambda_j}e_j\right).
\end{align}
On the other hand, for any $F\in\mathcal{C}_{\boldsymbol{\beta}}$ and $k\in K$, we obtain that
\begin{align*}
\bilin{\delta^{\boldsymbol{\beta}}\left(\bilin{\Phi}{\sqrt{\lambda_j}e_j}_{U_0}\otimes \sqrt{\lambda_j}e_j\right)}{Fk}
&=\bilin{\bilin{\Phi}{\sqrt{\lambda_j}e_j}_{U_0}\otimes \sqrt{\lambda_j}e_j}{D^{\boldsymbol{\beta}}(Fk)}\\
&=\bilin{\bilin{\Phi}{\sqrt{\lambda_j}e_j}_{U_0}}{D_{\sqrt{\lambda_j}e_j}^{\boldsymbol{\beta}}(Fk)}\\
&=\bilin{\delta^{\beta_j}\left(\bilin{\Phi}{\sqrt{\lambda_j}e_j}_{U_0}\right)}{Fk},
\end{align*}
which implies that for each $j\in\mathbb{N}$,
\begin{align*}
\delta^{\boldsymbol{\beta}}\left(\bilin{\Phi}{\sqrt{\lambda_j}e_j}_{U_0}\otimes \sqrt{\lambda_j}e_j\right)
=\delta^{\beta_j}\left(\bilin{\Phi}{\sqrt{\lambda_j}e_j}_{U_0}\right).
\end{align*}
Therefore, by \eqref{eqn:SE of SI in proof}, we have \eqref{eqn:SE of SI}.
\end{proof}

From  the Assumption \textbf{(R2)}
and the inequalities given in \eqref{ineq: H norm leq absolute H norm}
and \eqref{ineq: Hx H norm leq absolute H x absolute H norm},
by applying the H\"older's inequality, we can see that
\begin{align}
\|\phi\|_{\mathcal{H}_{U_{0}}}
&\leq C_{R}^{\frac{1}{2}}\|\phi\|_{L^{\boldsymbol{r}}([0,T];U_{0})},\label{ineq: HU norm leq Lr norm}\\
\|\psi\|_{\mathcal{H}_{U_{0}}\otimes \mathcal{H}_{K\otimes U_{0}}}
&\leq C_{R}\|\psi\|_{L^{\boldsymbol{r}}([0,T]^{2};K\otimes U_{0}\otimes U_{0})}.\label{ineq: HUxHU norm leq Lr norm}
\end{align}

The following theorem is a generalization of Theorem 2.2 in \cite{Balan 2011}
to the case of $U$-valued $Q$-Gaussian process with the covariance kernel $R$ satisfying \textbf{(R1)} and \textbf{(R2)}.

\begin{theorem}\label{thm : maximal inequality for single gaussian process}
Let $\boldsymbol{r}$ be the constant given in Assumption \textbf{(R2)}.
For any $p>\boldsymbol{r}$, there exists a constant $C>0$ depending on $p,\boldsymbol{r}$ and $T$
such that for any $0<\epsilon<\frac{1}{\boldsymbol{r}}-\frac{1}{p}$ and  $u\in\mathbb{D}_{\boldsymbol{\beta}}^{1,p}(|\mathcal{H}_{K\otimes U_{0}}|)$
for which the right hand side of (\ref{eqn: maximal inequality for single gaussian process}) is finite,
\begin{align}
\mathbb{E}\left[\sup_{t\leq T}\left\|\int_{0}^{t}u_{s}\delta \boldsymbol{\beta}_{s}\right\|_{K}^{p}\right]\nonumber
&\leq C\left\{\left(\int_{0}^{T}\|\mathbb{E}[u_{s}]\|_{K\otimes U_{0}}^{\frac{\boldsymbol{r}}{1-\epsilon \boldsymbol{r}}}ds\right)^{p\left(\frac{1}{\boldsymbol{r}}-\epsilon\right)}\right.\\
&\left.\,\,+\mathbb{E}\left(\int_{0}^{T}\left(\int_{0}^{T}\|D_{\theta}^{\boldsymbol{\beta}}u_{s}\|_{K\otimes U_{0}\otimes U_{0}}^{\boldsymbol{r}}d\theta\right)^{\frac{1}{1-\epsilon \boldsymbol{r}}}ds\right)^{p\left(\frac{1}{\boldsymbol{r}}-\epsilon\right)}\right\}.
\label{eqn: maximal inequality for single gaussian process}
\end{align}
\end{theorem}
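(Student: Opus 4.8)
The plan is to use the factorization (stochastic convolution) method of the proof of Theorem~2.2 in \cite{Balan 2011}, adapted to the present $Q$-Gaussian setting: the one-dimensional stochastic integral estimates used there are replaced by Proposition~\ref{prop:Lp boundedness of Skorohod integral} together with the embeddings \eqref{ineq: HU norm leq Lr norm}--\eqref{ineq: HUxHU norm leq Lr norm} coming from Assumption \textbf{(R2)}, and the final summation over the time variable is carried out by the Hardy--Littlewood--Sobolev fractional integration inequality.

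First I would fix the parameters. Given $p>\boldsymbol{r}$ and $0<\epsilon<\frac1{\boldsymbol{r}}-\frac1p$, put $\alpha:=\frac1p+\epsilon$; then $\epsilon<\alpha<\frac1{\boldsymbol{r}}\le1$, so $0<\alpha<1$, $\alpha p>1$, and $\gamma:=1-\alpha\boldsymbol{r}\in(0,1)$. Using the Beta-function identity $\int_s^t(t-r)^{\alpha-1}(r-s)^{-\alpha}\,dr=\pi/\sin\pi\alpha$ and a stochastic Fubini theorem for the divergence operator $\delta^{\boldsymbol{\beta}}$, I would establish the factorization
\[
\int_0^t u_s\,\delta\boldsymbol{\beta}_s=\frac{\sin\pi\alpha}{\pi}\int_0^t(t-r)^{\alpha-1}Z_r\,dr,\qquad Z_r:=\int_0^r(r-s)^{-\alpha}u_s\,\delta\boldsymbol{\beta}_s=\delta^{\boldsymbol{\beta}}\big(v^{(r)}\big),
\]
where $v^{(r)}_s:=(r-s)^{-\alpha}u_s\mathbf{1}_{[0,r]}(s)$; this representation also furnishes the continuous version of $\int_0^{\cdot}u_s\,\delta\boldsymbol{\beta}_s$ to which $\sup_{t\le T}$ refers. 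Making this interchange rigorous --- i.e.\ checking that $(r,\omega)\mapsto Z_r(\omega)$ is a well-defined, jointly measurable process with $v^{(r)}\in\mathbb{D}^{1,p}_{\boldsymbol{\beta}}$ for a.e.\ $r$ and $\int_0^T\|Z_r\|_K^p\,dr<\infty$ a.s.\ (if necessary after first reducing to a dense class of integrands and then using the a priori estimate to pass to the limit) --- is the step I expect to be the main obstacle; the rest is a chaining of standard inequalities.

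Next, since $\alpha p>1$ the conjugate exponent $p'$ satisfies $(\alpha-1)p'>-1$, so H\"older's inequality in $r$ bounds $\sup_{t\le T}\big\|\int_0^t(t-r)^{\alpha-1}Z_r\,dr\big\|_K$ pathwise by a finite constant times $\big(\int_0^T\|Z_r\|_K^p\,dr\big)^{1/p}$, whence, after taking expectations, $\mathbb{E}\big[\sup_{t\le T}\big\|\int_0^t u_s\,\delta\boldsymbol{\beta}_s\big\|_K^p\big]\le C\int_0^T\mathbb{E}\|Z_r\|_K^p\,dr$. I would then apply Proposition~\ref{prop:Lp boundedness of Skorohod integral} to $Z_r=\delta^{\boldsymbol{\beta}}(v^{(r)})$ --- noting $D^{\boldsymbol{\beta}}v^{(r)}_s=(r-s)^{-\alpha}\mathbf{1}_{[0,r]}(s)\,D^{\boldsymbol{\beta}}u_s$ since the weight is deterministic --- and bound the $\mathcal{H}_{K\otimes U_0}$- and $\mathcal{H}_{U_0}\otimes\mathcal{H}_{K\otimes U_0}$-norms of $v^{(r)}$ and $D^{\boldsymbol{\beta}}v^{(r)}$ by $L^{\boldsymbol{r}}$-norms via \eqref{ineq: HU norm leq Lr norm}--\eqref{ineq: HUxHU norm leq Lr norm}; with $h(s):=\|\mathbb{E}[u_s]\|_{K\otimes U_0}^{\boldsymbol{r}}$, $\phi(s):=\int_0^T\|D_\theta^{\boldsymbol{\beta}}u_s\|_{K\otimes U_0\otimes U_0}^{\boldsymbol{r}}\,d\theta$ and $(Jg)(r):=\int_0^r(r-s)^{-\alpha\boldsymbol{r}}g(s)\,ds$, this yields (after Fubini)
\[
\mathbb{E}\Big[\sup_{t\le T}\Big\|\int_0^t u_s\,\delta\boldsymbol{\beta}_s\Big\|_K^p\Big]\le C\Big(\|Jh\|_{L^{p/\boldsymbol{r}}([0,T])}^{p/\boldsymbol{r}}+\mathbb{E}\,\|J\phi\|_{L^{p/\boldsymbol{r}}([0,T])}^{p/\boldsymbol{r}}\Big).
\]

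Finally, $J$ is, up to a constant, the Riemann--Liouville fractional integral of order $\gamma=1-\alpha\boldsymbol{r}\in(0,1)$, so by the Hardy--Littlewood--Sobolev inequality it maps $L^a([0,T])$ boundedly into $L^{p/\boldsymbol{r}}([0,T])$ precisely when $\frac1a=\frac{\boldsymbol{r}}{p}+\gamma$ and $1<a<\frac1\gamma$; with $\alpha=\frac1p+\epsilon$ this forces $a=\frac{1}{1-\epsilon\boldsymbol{r}}$, and the admissibility condition $1<a<\frac1\gamma$ is equivalent to $0<\epsilon<\frac1{\boldsymbol{r}}-\frac1p$. Applying this estimate pathwise to $\phi$ and deterministically to $h$, and using $\boldsymbol{r}a=\frac{\boldsymbol{r}}{1-\epsilon\boldsymbol{r}}$ and $\frac{p}{\boldsymbol{r}a}=p\big(\frac1{\boldsymbol{r}}-\epsilon\big)$, the right-hand side above becomes precisely the bound claimed in \eqref{eqn: maximal inequality for single gaussian process}, with $C$ depending only on $p,\boldsymbol{r},T$ (through the Beta integral, the elementary integral $\int_0^T\rho^{(\alpha-1)p'}\,d\rho$, the Hardy--Littlewood--Sobolev constant, and a power of $T$).
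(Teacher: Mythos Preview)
Your proposal is correct and follows essentially the same route as the paper's proof: both use the factorization method of \cite{Nualart 2003,Balan 2011} (your $\alpha=\tfrac1p+\epsilon$ is just the paper's $\alpha=1-\tfrac1p-\epsilon$ renamed via $\alpha\leftrightarrow 1-\alpha$), apply Proposition~\ref{prop:Lp boundedness of Skorohod integral} to the weighted integrand, pass to $L^{\boldsymbol r}$-norms via \eqref{ineq: HU norm leq Lr norm}--\eqref{ineq: HUxHU norm leq Lr norm}, and finish with the Hardy--Littlewood(--Sobolev) fractional integration inequality. The paper similarly glosses over the stochastic Fubini justification you flag as the main obstacle.
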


\begin{proof}
By applying the inequalities \eqref{eqn:ineq for continuity of skorohod integral},
\eqref{ineq: HU norm leq Lr norm} and \eqref{ineq: HUxHU norm leq Lr norm},
the proof is similar to the proof of Theorem 4 in \cite{Nualart 2003}
(see also the proof of Theorem 2.2 in \cite{Balan 2011}).
In fact, for $\alpha=1-\frac{1}{p}-\epsilon$, by applying the constant $c_{\alpha}$ satisfying
$\frac{1}{c_{\alpha}}=\int_r^t \frac{(\theta-r)^{\alpha-1}}{(t-\theta)^{\alpha}}d\theta$ (for all $r<t$)
and the stochastic Fubini's theorem, we obtain that
\begin{align*}
\int_0^{t}u_s\delta \boldsymbol{\beta}_{s}
&=c_{\alpha}\int_0^{t} u_s\left(\int_{s}^{t}\frac{(r-s)^{\alpha-1}}{(t-r)^{\alpha}}dr\right)\delta \boldsymbol{\beta}_{s}\\
&=c_{\alpha}\int_0^{t}\frac{1}{(t-r)^{\alpha}}\left(\int_{0}^{r} (r-s)^{\alpha-1}u_s\,\delta\boldsymbol{\beta}_{s}\right) dr.
\end{align*}
Then by applying the H\"older's inequality, we can see that
\begin{align*}
\left\|\int_0^{t}u_s\delta \boldsymbol{\beta}_{s}\right\|_{K}^{p}
&\le M_{\alpha,p,T}\int_0^{t}\left\|\int_{0}^{r} (r-s)^{\alpha-1}u_s\,\delta\boldsymbol{\beta}_{s} \right\|_{K}^{p} dr
\end{align*}
for some constant $M_{\alpha,p,T}$, from which we have
\begin{align*}
\mathbb{E}\left[\sup_{0\le t\le T}\left\|\int_0^{t}u_s\delta \boldsymbol{\beta}_{s}\right\|_{K}^{p}\right]
&\le M_{\alpha,p,T}\int_0^{T}\mathbb{E}\left[\left\|\int_{0}^{r}
   (r-s)^{\alpha-1}u_s\,\delta\boldsymbol{\beta}_{s} \right\|_{K}^{p}\right] dr\\
&=M_{\alpha,p,T}\int_0^{T} \mathbb{E}\left[\|\delta^{\boldsymbol{\beta}}\left(v_r\right)\|_{K}^{p}\right] dr,
\end{align*}
where $v_r:=\boldsymbol{1}_{[0,r]}(\cdot)(r-\cdot)^{\alpha-1}u_{(\cdot)}$.
Now by applying \eqref{eqn:ineq for continuity of skorohod integral}, we have
\begin{align}\label{eqn:maximal Lp-estimate in proof}
\mathbb{E}\left[\sup_{0\le t\le T}\left\|\int_0^{t}u_s\delta \boldsymbol{\beta}_{s}\right\|_{K}^{p}\right]
&\le M_{\alpha,p,T} c_{p}(J_1+J_2),
\end{align}
where
\begin{align*}
J_1:=\int_0^{T}\|\mathbb{E}[v_r]\|_{K\otimes \mathcal{H}_{U_{0}}}^{p}dr,
\quad
J_2:=\int_0^{T}\mathbb{E}\left[\|D^{\boldsymbol{\beta}}v_x\|_{K\otimes\mathcal{H}_{U_{0}}\otimes\mathcal{H}_{U_{0}}}^{p}\right]dx.
\end{align*}
Then by applying \eqref{ineq: HU norm leq Lr norm} and \eqref{ineq: HUxHU norm leq Lr norm} to $J_1$ and $J_2$, respectively, we obtain that
\begin{align*}
J_1&\le C_{R}^{\frac{p}{2}}\int_0^{T} \left(\int_0^{x}\|\mathbb{E}[u_{s}]\|_{K\otimes U_{0}}^{\boldsymbol{r}} (x-s)^{\boldsymbol{r}(\alpha-1)}ds\right)^{\frac{p}{\boldsymbol{r}}}dx,\\
J_2&\le C_{R}^{p}\int_0^{T} \left(\int_0^{x}
  \left(\int_0^{T}\|D_{\theta}^{\boldsymbol{\beta}}u_{s}\|_{K\otimes U_{0}\otimes U_{0}}^{\boldsymbol{r}}d\theta\right)
           (x-s)^{\boldsymbol{r}(\alpha-1)}ds\right)^{\frac{p}{\boldsymbol{r}}}dx,
\end{align*}
from which by applying the Hardy–Littlewood inequality
(see p. 119 of \cite{Stein 1970} and the proof of Theorem 4 in \cite{Nualart 2003}),
and finally by applying \eqref{eqn:maximal Lp-estimate in proof},
we prove \eqref{eqn: maximal inequality for single gaussian process}.
\end{proof}

\begin{theorem}\label{thm : cor of max.ineq for single gaussian process}
Let $\boldsymbol{r}$ be the constant given in Assumption \textbf{(R2)}
and let $q\in\mathbb{R}$ with $q>\boldsymbol{r}$.
For any $p\geq q$, there exists a constant $C>0$ depending on $p,\boldsymbol{r},T$ and $q$ such that
for any $u\in\mathbb{D}_{\boldsymbol{\beta}}^{1,p}(|\mathcal{H}_{K\otimes U_{0}}|)$
for which the RHS of (\ref{eqn : cor of max.ineq for single gaussian process}) is finite,
\begin{align}\label{eqn : cor of max.ineq for single gaussian process}
\mathbb{E}\left[\sup_{t\leq T}\left\|\int_{0}^{t}u_{s}\delta \boldsymbol{\beta}_{s}\right\|_{K}^{p}\right]\nonumber
&\leq C\left\{\left(\int_{0}^{T}\|\mathbb{E}[u_{s}]\|_{K\otimes U_{0}}^{q}ds\right)^{\frac{p}{q}}\right.\\
&\,\,\left.+\mathbb{E}\left(\int_{0}^{T}\left(\int_{0}^{T}\|D_{\theta}^{\boldsymbol{\beta}}u_{s}\|_{K\otimes U_{0}\otimes U_{0}}^{\boldsymbol{r}}d\theta\right)^{\frac{q}{\boldsymbol{r}}}ds\right)^{\frac{p}{q}}\right\}.
\end{align}
In particular, we have
\begin{align}\label{eqn: cor maximal inequality for single gaussian process-0}
\mathbb{E}\left[\sup_{t\leq T}\left\|\int_{0}^{t}u_{s}\delta \boldsymbol{\beta}_{s}\right\|_{K}^{p}\right]\nonumber
&\leq C\left\{\int_{0}^{T}\|\mathbb{E}[u_{s}]\|_{K\otimes U_{0}}^{p}ds\right.\\
&\left.\,\,+\mathbb{E}\int_{0}^{T}\left(\int_{0}^{T}\|D_{\theta}^{\boldsymbol{\beta}}u_{s}\|_{K\otimes U_{0}\otimes U_{0}}^{\boldsymbol{r}}d\theta\right)^{\frac{p}{\boldsymbol{r}}}ds\right\}
\end{align}
\end{theorem}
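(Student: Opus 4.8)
The plan is to derive this corollary directly from the maximal inequality of Theorem~\ref{thm : maximal inequality for single gaussian process} by tuning the parameter $\epsilon$ appearing there so that the two exponents $\frac{\boldsymbol{r}}{1-\epsilon\boldsymbol{r}}$ and $p\bigl(\frac{1}{\boldsymbol{r}}-\epsilon\bigr)$ become $q$ and $\frac{p}{q}$, and, at the single endpoint where that exact choice is inadmissible, recovering the missing exponent by H\"older's inequality on the finite interval $[0,T]$ at no cost beyond a constant.

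First I would record the elementary bookkeeping. For $\epsilon\in\bigl(0,\tfrac1{\boldsymbol{r}}\bigr)$ put $\widetilde{q}:=\frac{\boldsymbol{r}}{1-\epsilon\boldsymbol{r}}$; then $\frac1{\widetilde{q}}=\frac1{\boldsymbol{r}}-\epsilon$, so $\frac1{1-\epsilon\boldsymbol{r}}=\frac{\widetilde{q}}{\boldsymbol{r}}$ and $p\bigl(\frac1{\boldsymbol{r}}-\epsilon\bigr)=\frac{p}{\widetilde{q}}$, and the admissibility condition $0<\epsilon<\frac1{\boldsymbol{r}}-\frac1p$ of Theorem~\ref{thm : maximal inequality for single gaussian process} is equivalent to $\boldsymbol{r}<\widetilde{q}<p$. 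With this substitution, \eqref{eqn: maximal inequality for single gaussian process} becomes
\begin{align*}
\mathbb{E}\Bigl[\sup_{t\leq T}\Bigl\|\int_{0}^{t}u_{s}\delta\boldsymbol{\beta}_{s}\Bigr\|_{K}^{p}\Bigr]
&\leq C\Bigl\{\Bigl(\int_{0}^{T}\|\mathbb{E}[u_{s}]\|_{K\otimes U_{0}}^{\widetilde{q}}\,ds\Bigr)^{p/\widetilde{q}}\\
&\qquad+\mathbb{E}\Bigl(\int_{0}^{T}\Bigl(\int_{0}^{T}\|D_{\theta}^{\boldsymbol{\beta}}u_{s}\|_{K\otimes U_{0}\otimes U_{0}}^{\boldsymbol{r}}\,d\theta\Bigr)^{\widetilde{q}/\boldsymbol{r}}\,ds\Bigr)^{p/\widetilde{q}}\Bigr\}
\end{align*}
for every $\widetilde{q}\in(\boldsymbol{r},p)$ (and, by taking $\epsilon=\frac1{\boldsymbol{r}}-\frac1q$, also for $\widetilde{q}=q$ when $q<p$).

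Next I would choose $\widetilde{q}$. Since $\boldsymbol{r}<q\leq p$, I can fix $\widetilde{q}$ with $\boldsymbol{r}<\widetilde{q}\leq q$ and $\widetilde{q}<p$: take $\widetilde{q}=q$ if $q<p$, and take any $\widetilde{q}\in(\boldsymbol{r},p)$ if $q=p$. If $\widetilde{q}=q$ the displayed bound is exactly \eqref{eqn : cor of max.ineq for single gaussian process}. If $\widetilde{q}<q$, then for each fixed $\omega$ and each nonnegative measurable $\varphi$ on $[0,T]$, H\"older's inequality with exponent $q/\widetilde{q}\geq1$ gives
\begin{align*}
\int_{0}^{T}\varphi(s)^{\widetilde{q}}\,ds\leq T^{\,1-\widetilde{q}/q}\Bigl(\int_{0}^{T}\varphi(s)^{q}\,ds\Bigr)^{\widetilde{q}/q},
\end{align*}
hence $\bigl(\int_{0}^{T}\varphi^{\widetilde{q}}\,ds\bigr)^{p/\widetilde{q}}\leq T^{\,p/\widetilde{q}-p/q}\bigl(\int_{0}^{T}\varphi^{q}\,ds\bigr)^{p/q}$. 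I would apply this with $\varphi(s)=\|\mathbb{E}[u_{s}]\|_{K\otimes U_{0}}$ in the first term and with $\varphi(s)=\bigl(\int_{0}^{T}\|D_{\theta}^{\boldsymbol{\beta}}u_{s}\|_{K\otimes U_{0}\otimes U_{0}}^{\boldsymbol{r}}\,d\theta\bigr)^{1/\boldsymbol{r}}$ in the second (so that $\varphi^{\widetilde{q}}$ and $\varphi^{q}$ are the inner integrals raised to $\widetilde{q}/\boldsymbol{r}$ and $q/\boldsymbol{r}$), then take expectations in the second term and absorb the powers of $T$ into the constant; this yields \eqref{eqn : cor of max.ineq for single gaussian process}. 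Since the chain of estimates also shows the right-hand side of \eqref{eqn: maximal inequality for single gaussian process} is dominated, up to a constant, by that of \eqref{eqn : cor of max.ineq for single gaussian process}, the finiteness hypothesis of Theorem~\ref{thm : maximal inequality for single gaussian process} holds whenever that of the present theorem does, so the deduction is legitimate. Finally, \eqref{eqn: cor maximal inequality for single gaussian process-0} is precisely the case $q=p$ of \eqref{eqn : cor of max.ineq for single gaussian process}.

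There is no real obstacle here; the only point that needs care is the endpoint $q=p$, where the natural choice $\epsilon=\frac1{\boldsymbol{r}}-\frac1q$ is \emph{not} admissible in Theorem~\ref{thm : maximal inequality for single gaussian process} (it fails $\epsilon<\frac1{\boldsymbol{r}}-\frac1p$), which is exactly why one first works with $\widetilde{q}$ strictly below $q$ and then restores the full exponent through the loss-free (up to a constant) H\"older step on the bounded time interval.
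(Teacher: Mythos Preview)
Your proof is correct and follows essentially the same approach as the paper: apply Theorem~\ref{thm : maximal inequality for single gaussian process} with a suitably chosen $\epsilon$ and then use H\"older's inequality on the finite interval $[0,T]$ to adjust the exponent to $q$. The paper's version is slightly more uniform---it always picks $\epsilon$ with $0<\epsilon<\tfrac{1}{\boldsymbol{r}}-\tfrac{1}{q}$ (so that $\widetilde{q}<q$ in your notation) and always invokes H\"older, whereas you split into the cases $q<p$ (where $\widetilde{q}=q$ is admissible and no H\"older is needed) and $q=p$ (where you must take $\widetilde{q}<q$ and then upgrade); but the underlying mechanism is identical.
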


\begin{proof}
Let's take $\epsilon$ such that $0<\epsilon<\frac{1}{\boldsymbol{r}}-\frac{1}{q}$.
Then $q\left(\frac{1-\boldsymbol{r}\epsilon}{\boldsymbol{r}}\right)>1$,
and so by applying the H\"{o}lder's inequality to $\phi \in L^{q}([0.T])$,  we have
\begin{equation*}
\|\phi\|_{L^{\frac{\boldsymbol{r}}{1-\boldsymbol{r}\epsilon}}([0,T])}
\leq C^{(1)}\|\phi\|_{L^{q}([0.T])}
\end{equation*}
for some constant $C^{(1)}=C_{q,\boldsymbol{r},T}^{(1)}>0$,
which is applied to the cases of $\phi (s)=\|\mathbb{E}[u_{s}]\|_{K\otimes U_{0}}$
and $\phi (s)=\left(\int_{0}^{T}\|D_{\theta}^{\boldsymbol{\beta}}u_{s}\|_{K\otimes U_{0}\otimes U_{0}}^{\boldsymbol{r}}d\theta\right)^{1/\boldsymbol{r}}$,
and then we have \eqref{eqn : cor of max.ineq for single gaussian process}
from the inequality \eqref{eqn: maximal inequality for single gaussian process}.
By taking $p=q$ in \eqref{eqn : cor of max.ineq for single gaussian process},
we have \eqref{eqn: cor maximal inequality for single gaussian process-0}.
\end{proof}

\begin{remark}
\rm
{
Theorem \ref{thm : cor of max.ineq for single gaussian process} is a generalization of Corollary 2.3 in \cite{Balan 2011}.
}
\end{remark}

By applying the Jensen's inequality
to the first terms of the RHSs of the inequalities \eqref{eqn : cor of max.ineq for single gaussian process}
 and \eqref{eqn: cor maximal inequality for single gaussian process-0}, we have the following corollary.

\begin{corollary}\label{cor: max ineq for single gaussian process}
Under the assumptions given in Theorem \ref{thm : cor of max.ineq for single gaussian process},
there exists a constant $C=C_{p,\boldsymbol{r},T}>0$ such that
for any $u\in\mathbb{D}_{\boldsymbol{\beta}}^{1,p}(|\mathcal{H}_{K\otimes U_{0}}|)$
for which the RHS of (\ref{eqn: cor maximal inequality for single gaussian process}) is finite,
\begin{align}\label{eqn : cor of max.ineq for single gaussian process+1}
\mathbb{E}\left[\sup_{t\leq T}\left\|\int_{0}^{t}u_{s}\delta \boldsymbol{\beta}_{s}\right\|_{K}^{p}\right]\nonumber
&\leq C\left\{\mathbb{E}\left(\int_{0}^{T}\|u_{s}\|_{K\otimes U_{0}}^{q}ds\right)^{\frac{p}{q}}\right.\\
&\,\,\left.+\mathbb{E}\left(\int_{0}^{T}\left(\int_{0}^{T}\|D_{\theta}^{\boldsymbol{\beta}}u_{s}\|_{K\otimes U_{0}\otimes U_{0}}^{\boldsymbol{r}}d\theta\right)^{\frac{q}{\boldsymbol{r}}}ds\right)^{\frac{p}{q}}\right\}.
\end{align}
In particular, we have
\begin{align}\label{eqn: cor maximal inequality for single gaussian process}
\mathbb{E}\left[\sup_{t\leq T}\left\|\int_{0}^{t}u_{s}\delta \boldsymbol{\beta}_{s}\right\|_{K}^{p}\right]\nonumber
&\leq C\left\{\mathbb{E}\int_{0}^{T}\|u_{s}\|_{K\otimes U_{0}}^{p}ds\right.\\
&\left.\,\,+\mathbb{E}\int_{0}^{T}\left(\int_{0}^{T}\|D_{\theta}^{\boldsymbol{\beta}}u_{s}\|_{K\otimes U_{0}\otimes U_{0}}^{\boldsymbol{r}}d\theta\right)^{\frac{p}{\boldsymbol{r}}}ds\right\}.
\end{align}
\end{corollary}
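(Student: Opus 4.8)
The plan is to read the corollary off from Theorem~\ref{thm : cor of max.ineq for single gaussian process} by replacing $\|\mathbb{E}[u_{s}]\|_{K\otimes U_{0}}$ with $\mathbb{E}\|u_{s}\|_{K\otimes U_{0}}$ and then moving the expectation outside the time integral. Since the second summand on the right-hand side of \eqref{eqn : cor of max.ineq for single gaussian process+1} is literally the same as that of \eqref{eqn : cor of max.ineq for single gaussian process}, and likewise for \eqref{eqn: cor maximal inequality for single gaussian process} versus \eqref{eqn: cor maximal inequality for single gaussian process-0}, it suffices to establish the two deterministic-to-stochastic estimates
\[
\left(\int_{0}^{T}\|\mathbb{E}[u_{s}]\|_{K\otimes U_{0}}^{q}ds\right)^{\frac{p}{q}}
\leq \mathbb{E}\left(\int_{0}^{T}\|u_{s}\|_{K\otimes U_{0}}^{q}ds\right)^{\frac{p}{q}}
\]
and
\[
\int_{0}^{T}\|\mathbb{E}[u_{s}]\|_{K\otimes U_{0}}^{p}ds\leq\mathbb{E}\int_{0}^{T}\|u_{s}\|_{K\otimes U_{0}}^{p}ds,
\]
after which substitution into \eqref{eqn : cor of max.ineq for single gaussian process} and \eqref{eqn: cor maximal inequality for single gaussian process-0} yields \eqref{eqn : cor of max.ineq for single gaussian process+1} and \eqref{eqn: cor maximal inequality for single gaussian process}, respectively. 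In fact the particular case \eqref{eqn: cor maximal inequality for single gaussian process} also follows by taking $p=q$ in \eqref{eqn : cor of max.ineq for single gaussian process+1}, so only the first estimate is essential.

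Both estimates rest on a few applications of Jensen's inequality. The starting point is the pointwise bound $\|\mathbb{E}[u_{s}]\|_{K\otimes U_{0}}\leq\mathbb{E}\|u_{s}\|_{K\otimes U_{0}}$ for a.e.\ $s\in[0,T]$, which is Jensen's inequality for the $K\otimes U_{0}$-valued Bochner expectation (equivalently, the triangle inequality for the Bochner integral over $(\Omega,\mathcal{F},P)$). For the second estimate I would then raise this to the power $p\geq1$, use Jensen's inequality for the convex function $x\mapsto x^{p}$ to get $(\mathbb{E}\|u_{s}\|_{K\otimes U_{0}})^{p}\leq\mathbb{E}\|u_{s}\|_{K\otimes U_{0}}^{p}$, integrate over $s\in[0,T]$, and interchange $\int_{0}^{T}$ and $\mathbb{E}$ by Tonelli's theorem.

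For the first estimate, after the pointwise bound it remains to prove
\[
\left(\int_{0}^{T}\bigl(\mathbb{E}\|u_{s}\|_{K\otimes U_{0}}\bigr)^{q}ds\right)^{\frac{1}{q}}
\leq \mathbb{E}\left[\left(\int_{0}^{T}\|u_{s}\|_{K\otimes U_{0}}^{q}ds\right)^{\frac{1}{q}}\right],
\]
which is Minkowski's integral inequality for the nonnegative jointly measurable function $(s,\omega)\mapsto\|u_{s}(\omega)\|_{K\otimes U_{0}}$ on $[0,T]\times\Omega$ (valid since $q\geq1$), i.e.\ Jensen's inequality for the convex functional $\|\cdot\|_{L^{q}([0,T])}$. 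Raising both sides to the power $p$ ($\geq1$) and then applying Jensen's inequality for $x\mapsto x^{p}$ on the right-hand side produces exactly the first displayed estimate, since the left-hand side then equals $\left(\int_{0}^{T}(\mathbb{E}\|u_{s}\|_{K\otimes U_{0}})^{q}ds\right)^{p/q}$. I do not expect any genuine obstacle; the only care needed is the joint measurability and the integrability implicit in Tonelli's theorem and in Minkowski's integral inequality, which is guaranteed by $u\in\mathbb{D}_{\boldsymbol{\beta}}^{1,p}(|\mathcal{H}_{K\otimes U_{0}}|)$ together with the standing hypothesis that the right-hand sides in question are finite.
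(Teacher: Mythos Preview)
Your proposal is correct and follows essentially the same approach as the paper, which simply states that the corollary follows by applying Jensen's inequality to the first terms on the right-hand sides of \eqref{eqn : cor of max.ineq for single gaussian process} and \eqref{eqn: cor maximal inequality for single gaussian process-0}. You have accurately unpacked this terse remark into the pointwise bound $\|\mathbb{E}[u_{s}]\|\leq\mathbb{E}\|u_{s}\|$, Minkowski's integral inequality (itself a Jensen-type inequality for the $L^{q}$-norm), and Jensen for $x\mapsto x^{p}$.
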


\section{Fourier Multiplier}\label{sec:Fourier multi}
Let $K$ be a Hilbert space.
We denote by $L_{K}^{p}=L^{p}(\mathbb{R}^{d};K)$
the Banach space of all strongly measurable functions $u:\mathbb{R}^{d}\rightarrow K$ such that
\begin{align*}
\|u\|_{L_{K}^{p}}^{p}=\int_{\mathbb{R}^{d}}\|u(x)\|_{K}^{p}dx<\infty.
\end{align*}
In the case of $K=\mathbb{R}$, we write $L^{p}= L_{\mathbb{R}}^{p}$.

For $f\in L_{K}^{1}$, the Fourier transform and inverse Fourier transform of $f$ is defined by
\begin{align}\label{eqn:FTand FIT}
\mathcal{F}(f)(\xi):=\frac{1}{(2\pi)^{d/2}}\int_{\mathbb{R}^{d}}e^{-i x\cdot\xi}f(x)dx,
\quad\mathcal{F}^{-1}(f)(\xi):=\frac{1}{(2\pi)^{d/2}}\int_{\mathbb{R}^{d}}e^{i x\cdot\xi}f(\xi)d\xi.
\end{align}

\begin{definition}[\cite{Grafakos 2008}, Definition 2.5.11]
\upshape
Let $1\leq p<\infty$. A complex valued bounded function $m$ defined on $\mathbb{R}^{d}$ is called a
Fourier multiplier on $L^{p}$ if
it holds that
\begin{align*}
\|\mathcal{F}^{-1}(m(\xi)\mathcal{F}(f)(\xi))\|_{L^{p}}
\leq C\|f\|_{L^{p}},\quad f\in L^{p}
\end{align*}
for some constant $C>0$.
\end{definition}

Let $\mathbb{N}_{0}=\mathbb{N}\cup \{0\}$. For $\alpha=(\alpha_{1},\cdots,\alpha_{d})\in\mathbb{N}_{0}^{d}$
and $\xi=(\xi_{1},\cdots,\xi_{d})\in\mathbb{R}^{d}$, we denote $|\alpha|=\alpha_{1}+\cdots+\alpha_{d}$ and
$\partial_{\xi}^{\alpha}=\partial_{\xi_{1}}^{\alpha_{1}}\cdots\partial_{\xi_{d}}^{\alpha_{d}}$,
where  $\partial_{\xi_{i}}^{\alpha_{i}}$ is the $\alpha_{i}$-th derivative in the variable $\xi_{i}$.

Note that $\mathbb{R}$ is divided into its two half-lines, and $\mathbb{R}^{2}$ is divided into its four-quadrants,
and generally $\mathbb{R}^{d}$ is divided into its $2^{d}$ ``octants''. Thus the first octant in $\mathbb{R}^{d}$ will be
\[
\{(\xi_{1},\xi_{2},\cdots,\xi_{d})\,|\, \xi_{i}>0 \quad\text{for all}\quad i=1,2,\cdots, d.\}.
\]
We shall assume that $m(\xi)$ is defined on each octant and is
continuous together with its partial derivatives up to and including order $d$. Thus
$m$ may be left undefined on the set of points where one or more coordinate variables vanishes.

\begin{theorem}[Marcinkiewicz multiplier theorem]\label{thm: Lp multiplier}
Let $m$ be a bounded function defined on $\mathbb{R}^{d}$ of type described.
Suppose that
\begin{itemize}
  \item [\rm{(i)}] there exists a constant $C>0$ such that for any $0\leq k\leq d$,
  \begin{align*}
  \sup_{\xi_{k+1},\cdots,\xi_{d}}\int_{A}\left|\partial_{ \xi_{1}}\partial_{\xi_{2}}\cdots\partial_{\xi_{k}}m(\xi)\right|d\xi_{1} d\xi_{2}\cdots d\xi_{k}\leq C
  \end{align*}
  as $A$ ranges over dyadic rectangles of $\mathbb{R}^{k}$, (If $k=d$, the ``sup'' sign is omitted.)
  \item [\rm{(ii)}] The condition analogous to $\rm{(i)}$ is valid for every one of the $d!$ permutations of variables $\xi_{1},\xi_{2},\cdots,\xi_{d}$.
\end{itemize}
Then $m$ is a Fourier multiplier on $L^{p}$.
\end{theorem}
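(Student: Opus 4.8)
The plan is to follow the classical Littlewood--Paley route to the Marcinkiewicz multiplier theorem, essentially as in Chapter~IV, \S6 of \cite{Stein 1970}: reduce the desired $L^{p}$ bound for $T_{m}f:=\mathcal{F}^{-1}(m\,\mathcal{F}f)$ to a square-function estimate over a dyadic rectangular decomposition of frequency space, resolve $m$ on each rectangle by the fundamental theorem of calculus so that the hypotheses on the mixed derivatives of $m$ enter, and reassemble by Cauchy--Schwarz together with vector-valued Littlewood--Paley inequalities. For the first step, for $k\in\mathbb{Z}$ set $I_{k}=\{\,\eta\in\mathbb{R}:2^{k-1}\le|\eta|<2^{k}\,\}$, and for $\rho=(k_{1},\dots,k_{d})\in\mathbb{Z}^{d}$ put $\Delta_{\rho}=I_{k_{1}}\times\cdots\times I_{k_{d}}$, further subdivided into its $2^{d}$ octant pieces so that each coordinate has a fixed sign on each piece. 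Writing $S_{\rho}f=\mathcal{F}^{-1}(1_{\Delta_{\rho}}\mathcal{F}f)$ and iterating the one-dimensional Littlewood--Paley inequality coordinate by coordinate --- using at each stage its $\ell^{2}$-valued (Fefferman--Stein) extension to absorb the sums accumulated at earlier stages --- one obtains, for $1<p<\infty$,
\begin{align*}
A_{p}\,\|f\|_{L^{p}}\ \le\ \Big\|\Big(\sum_{\rho}|S_{\rho}f|^{2}\Big)^{1/2}\Big\|_{L^{p}}\ \le\ B_{p}\,\|f\|_{L^{p}}.
\end{align*}
Since $S_{\rho}T_{m}=T_{m1_{\Delta_{\rho}}}=T_{m1_{\Delta_{\rho}}}S_{\rho}$, it then suffices to prove $\big\|\big(\sum_{\rho}|T_{m1_{\Delta_{\rho}}}S_{\rho}f|^{2}\big)^{1/2}\big\|_{L^{p}}\le C\|f\|_{L^{p}}$.

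Next I would resolve $m$ on a single octant piece $\prod_{i}[a_{i},b_{i})$ of $\Delta_{\rho}$, with $a_{i}=2^{k_{i}-1}$, $b_{i}=2^{k_{i}}$. Repeated use of the fundamental theorem of calculus in $\xi_{1},\dots,\xi_{d}$ gives, for $\xi$ in this piece,
\begin{align*}
m(\xi)=\sum_{E\subseteq\{1,\dots,d\}}\ \int_{\prod_{i\in E}[a_{i},\xi_{i})}\big(\partial_{E}m\big)\big(t_{E};a_{E^{c}}\big)\,dt_{E},
\end{align*}
where $\partial_{E}=\prod_{i\in E}\partial_{\xi_{i}}$ and $(t_{E};a_{E^{c}})$ denotes the point whose $i$-th coordinate is $t_{i}$ for $i\in E$ and $a_{i}$ for $i\notin E$. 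Multiplying by $1_{\Delta_{\rho}}$ and passing to the operator side (Fubini being justified on a dense class of $f$, the hypothesis making $\partial_{E}m$ integrable on the rectangle) yields
\begin{align*}
T_{m1_{\Delta_{\rho}}}S_{\rho}f=\sum_{E}\int_{\prod_{i\in E}[a_{i},b_{i})}\big(\partial_{E}m\big)\big(t_{E};a_{E^{c}}\big)\,S_{\rho,E,t_{E}}f\ dt_{E},
\end{align*}
where $S_{\rho,E,t_{E}}$ is the sharp Fourier restriction to the sub-rectangle $\prod_{i\in E}[t_{i},b_{i})\times\prod_{i\notin E}[a_{i},b_{i})\subseteq\Delta_{\rho}$; the remaining octants are treated identically.

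To reassemble, apply Cauchy--Schwarz in the $t_{E}$-integral; since the Marcinkiewicz hypothesis, after the permutation of variables that lists the indices of $E$ first, gives $\int_{\prod_{i\in E}[a_{i},b_{i})}|\partial_{E}m(t_{E};a_{E^{c}})|\,dt_{E}\le C$ uniformly in $\rho$, this leads to
\begin{align*}
\sum_{\rho}|T_{m1_{\Delta_{\rho}}}S_{\rho}f|^{2}\ \le\ 2^{d}C\sum_{E}\ \sum_{\rho}\int_{\prod_{i\in E}[a_{i},b_{i})}\big|\partial_{E}m(t_{E};a_{E^{c}})\big|\,\big|S_{\rho,E,t_{E}}f\big|^{2}\,dt_{E}.
\end{align*}
Taking $L^{p/2}$ norms, using Minkowski's integral inequality (valid for $p\ge2$; the range $1<p<2$ follows by duality, the adjoint of $T_{m}$ being $T_{\overline m}$, which satisfies the same hypotheses) together with $\int|\partial_{E}m|\le C$ once more, the whole matter reduces to the uniform bound
\begin{align*}
\Big\|\Big(\sum_{\rho}|S_{\rho,E,t_{E}}f|^{2}\Big)^{1/2}\Big\|_{L^{p}}\ \le\ C_{p}\,\|f\|_{L^{p}}
\end{align*}
for every $E$ and every admissible $t_{E}$. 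In the directions $i\notin E$, $S_{\rho,E,t_{E}}$ is merely the dyadic projection onto $I_{k_{i}}$; in the directions $i\in E$, the intervals $[t_{i},b_{i})$ lie inside --- hence are subordinate to --- the dyadic intervals $I_{k_{i}}$ and are pairwise disjoint as $k_{i}$ ranges over $\mathbb{Z}$, so the square function on the left is dominated by the dyadic one through a Rademacher-randomisation argument combined with the uniform $L^{p}$-boundedness of one-sided frequency cutoffs (modulated Hilbert transforms), equivalently through the $\ell^{2}$-valued Littlewood--Paley inequality used above.

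The main obstacle is precisely this last uniform square-function estimate for the \emph{sharp} sub-rectangle cutoffs $S_{\rho,E,t_{E}}$: sharp frequency cutoffs are not covered by Mikhlin-type smoothness criteria, so one must exploit their subordination to the dyadic blocks together with randomisation --- or, alternatively, replace the fundamental-theorem-of-calculus resolution of $m$ by one using smooth cutoffs, at the cost of heavier bookkeeping in the reassembly. The underlying multi-parameter and $\ell^{2}$-valued Littlewood--Paley inequalities, while standard, are the other substantial ingredient and will be cited rather than reproved.
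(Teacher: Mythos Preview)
The paper does not actually prove this theorem: its entire proof is the one-line citation ``See, e.g., Theorem~4.6.6$'$ in p.~109 of \cite{Stein 1970}.'' Your sketch follows precisely the classical Littlewood--Paley route carried out in Chapter~IV, \S6 of Stein, so your approach coincides with the source the paper defers to; there is nothing to compare beyond noting that you have supplied the argument the paper chose to omit.
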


\begin{proof}
See, e.g., Theorem 4.6.6$^\prime$ in p.109 of \cite{Stein 1970}.
\end{proof}

\begin{theorem}\label{thm: Lp multiplier 2}
Let $m$ be a complex-valued bounded function on $\mathbb{R}^{d}\setminus \{0\}$.
Assume that there exists a constant $C>0$ such that
\begin{align*}
\left|\partial_{\xi}^{\alpha}m(\xi)\right|\leq C\prod_{i=1}^{d}|\xi_{i}|^{-\alpha_{i}},
\quad \xi=(\xi_{1},\cdots,\xi_{d})\in\mathbb{R}^{d}\setminus\{0\}
\end{align*}
for all multi-indices $\alpha=(\alpha_{1},\cdots,\alpha_{d})\in\mathbb{N}_{0}^{d}$
with $|\alpha|\leq \lfloor\frac{d}{2}\rfloor+1$, where $\lfloor\eta\rfloor$ is the integer part of $\eta\in\mathbb{R}$.
Then $m$ is a Fourier multiplier on $L^{p}$.
\end{theorem}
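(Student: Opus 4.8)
The plan is to run a product (multi‑parameter) Littlewood--Paley decomposition adapted to dyadic rectangles, together with a Sobolev embedding for the individual pieces; the role of the exponent $N:=\lfloor d/2\rfloor+1$ is precisely that $N>d/2$, so that $N$ derivatives already control the $L^{1}$‑norms of the (rescaled) convolution kernels. Fix $\varphi\in C_{\rm c}^{\infty}(\mathbb{R})$ with $\operatorname{supp}\varphi\subset\{1/2<|t|<2\}$ and $\sum_{l\in\mathbb{Z}}\varphi(2^{-l}t)=1$ for $t\neq0$, and for $\vec\jmath=(j_{1},\dots,j_{d})\in\mathbb{Z}^{d}$ put $\varphi_{\vec\jmath}(\xi)=\prod_{i=1}^{d}\varphi(2^{-j_{i}}\xi_{i})$, so that $\sum_{\vec\jmath\in\mathbb{Z}^{d}}\varphi_{\vec\jmath}\equiv1$ off the coordinate hyperplanes. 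Write $m=\sum_{\vec\jmath}m_{\vec\jmath}$ with $m_{\vec\jmath}=m\,\varphi_{\vec\jmath}$, and let $T_{\vec\jmath}$ be the operator with symbol $m_{\vec\jmath}$. Note that taking $\alpha=0$ in the hypothesis gives $m\in L^{\infty}$, so each $T_{\vec\jmath}$ and $T_{m}$ are bounded on $L^{2}$ by Plancherel.

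\medskip\noindent\textbf{Step 1 (uniform smoothness of the rescaled pieces).} Set $\tilde m_{\vec\jmath}(\xi)=m_{\vec\jmath}(2^{j_{1}}\xi_{1},\dots,2^{j_{d}}\xi_{d})$, supported in the fixed bounded box $Q=\{\xi:\,1/2<|\xi_{i}|<2,\ i=1,\dots,d\}$. Distributing $\partial^{\alpha}$ between $m$ and the $\varphi$'s by the Leibniz rule and using that on $\operatorname{supp}\varphi_{\vec\jmath}$ one has $|\xi_{i}|\sim 2^{j_{i}}$, the bound $|\partial^{\alpha}m(\xi)|\le C\prod_{i}|\xi_{i}|^{-\alpha_{i}}$ converts (after the dilation, which produces a factor $\prod_{i}2^{j_{i}\alpha_{i}}$) into
\[
\sup_{\vec\jmath\in\mathbb{Z}^{d}}\ \|\partial^{\alpha}\tilde m_{\vec\jmath}\|_{L^{\infty}(\mathbb{R}^{d})}\le C' \qquad (|\alpha|\le N).
\]

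\medskip\noindent\textbf{Step 2 (kernel estimates via Sobolev embedding).} Since $\tilde m_{\vec\jmath}$ is supported in the bounded set $Q$ and is bounded in $W^{N,\infty}$ uniformly in $\vec\jmath$, it is bounded in $H^{N}(\mathbb{R}^{d})$ uniformly in $\vec\jmath$; as $N>d/2$, Cauchy--Schwarz against $(1+|y|)^{-N}\in L^{2}(\mathbb{R}^{d})$ gives, for $\Psi_{\vec\jmath}:=\mathcal{F}^{-1}\tilde m_{\vec\jmath}$,
\[
\sup_{\vec\jmath}\int_{\mathbb{R}^{d}}|\Psi_{\vec\jmath}(y)|\,(1+|y|)^{\delta}\,dy\le C'' ,\qquad \delta:=N-\tfrac{d}{2}>0 .
\]
Undoing the dilation, the kernel of $T_{\vec\jmath}$ is $K_{\vec\jmath}(x)=\big(\prod_{i}2^{j_{i}}\big)\,\Psi_{\vec\jmath}(2^{j_{1}}x_{1},\dots,2^{j_{d}}x_{d})$, whence $\sup_{\vec\jmath}\|K_{\vec\jmath}\|_{L^{1}}\le C''$ together with the corresponding localization/tail bounds on each box.

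\medskip\noindent\textbf{Step 3 (summation).} For $p=2$ the claim follows from Plancherel and the bounded overlap of the supports $\operatorname{supp}\varphi_{\vec\jmath}$, which yields $\sum_{\vec\jmath}\|T_{\vec\jmath}f\|_{2}^{2}\lesssim\|m\|_{\infty}^{2}\|f\|_{2}^{2}$. For general $1<p<\infty$ one invokes the product Littlewood--Paley characterization $\|g\|_{L^{p}}\approx\big\|\big(\sum_{\vec\jmath}|\Delta_{\vec\jmath}g|^{2}\big)^{1/2}\big\|_{L^{p}}$, where $\Delta_{\vec\jmath}$ projects onto $\{|\xi_{i}|\sim 2^{j_{i}}\}$: by the finite overlap of the symbols one writes $\Delta_{\vec\jmath}T_{m}f$ in terms of $T_{\vec k}\widetilde\Delta_{\vec k}f$ with $\|\vec k-\vec\jmath\|_{\infty}\le1$, and the uniform kernel bounds of Step 2 reduce the matter to an $\ell^{2}$‑valued bound (Fefferman--Stein type, via the strong maximal function and almost‑orthogonality across the $d$‑fold dyadic grid) together with the reverse square‑function estimate for $\{\widetilde\Delta_{\vec k}\}$. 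For the $L^{p}(\mathbb{R}^{d};K)$ version with $K$ a Hilbert space the same argument applies verbatim with $|\cdot|$ replaced by $\|\cdot\|_{K}$; alternatively it follows from the scalar case by Fubini at $p=2$ followed by interpolation and duality.

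\medskip\noindent\textbf{Main obstacle.} Steps 1--2 are routine. The crux is Step 3: the summation is genuinely multi‑parameter, and one cannot simply dominate $|K_{\vec\jmath}|\ast|h|$ pointwise by a product of one‑dimensional bumps applied to $h$ (such a product decay would force $|\alpha|>d$, more derivatives than the hypothesis provides); instead one must exploit cancellation through the product Littlewood--Paley inequality and almost‑orthogonality over $\mathbb{Z}^{d}$, and this bookkeeping is the delicate point — it is exactly the price paid for needing only $\lfloor d/2\rfloor+1$ derivatives. We remark that a direct reduction to Theorem~\ref{thm: Lp multiplier} is not available for $d\ge3$, since verifying its hypotheses would require control of the full mixed derivative $\partial_{\xi_{1}}\cdots\partial_{\xi_{d}}m$, i.e.\ $d$ derivatives, which is why the Littlewood--Paley route is used.
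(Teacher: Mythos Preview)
The paper's own proof is much shorter and different in spirit: it simply verifies the hypotheses of the Marcinkiewicz multiplier theorem (Theorem~\ref{thm: Lp multiplier}) by integrating the assumed bound $|\partial^{\alpha}m(\xi)|\le C\prod_i|\xi_i|^{-\alpha_i}$ over dyadic rectangles. You are right that this reduction, taken at face value, requires control of $\partial_{\xi_1}\cdots\partial_{\xi_d}m$ and hence of $\partial^{\alpha}m$ for every $\alpha$ with $\alpha_i\in\{0,1\}$ (so $|\alpha|$ up to $d$), whereas the stated hypothesis only supplies $|\alpha|\le\lfloor d/2\rfloor+1$. For $d\le 2$ the two ranges coincide; for $d\ge 3$ the paper's argument tacitly uses more derivatives than are assumed. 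Most likely the restriction on $|\alpha|$ in the statement is an oversight and the authors intend the bound for all $\alpha$ with $\alpha_i\in\{0,1\}$; this is harmless for the rest of the paper, since the only place Theorem~\ref{thm: Lp multiplier 2} is invoked is in the proof of Corollary~\ref{cor:Mihlin}, where the stronger Mihlin decay $|\xi|^{-|\alpha|}$ is available anyway.

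Your attempt to genuinely make do with $\lfloor d/2\rfloor+1$ derivatives via a multi-parameter Littlewood--Paley argument does not close at Step~3. The kernel estimate from Step~2 is $|\Psi_{\vec\jmath}(y)|\le C(1+|y|)^{-N}$ with $N>d/2$; after the anisotropic rescaling this is decay along \emph{ellipsoids} with semi-axes $2^{-j_i}$, not along coordinate rectangles. For $d/2<N\le d$ the radial majorant $(1+|y|)^{-N}$ is not even in $L^1(\mathbb{R}^d)$, so the standard ``radially decreasing $L^1$ majorant $\Rightarrow$ dominated by the maximal function'' lemma is unavailable, and there is no pointwise bound $|K_{\vec\jmath}*g|\le C\,M_s g$ to feed into Fefferman--Stein for the strong maximal function. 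Obtaining the product-type kernel decay that \emph{would} give such a bound requires separate smoothness in each coordinate direction, which amounts to roughly $d$ derivatives in total---precisely what you set out to avoid. The ``almost-orthogonality'' you allude to yields $L^2$ bounds (already free from Plancherel) but provides no mechanism for $p\neq 2$. In short, neither the paper's reduction nor your square-function route establishes the theorem under the constraint $|\alpha|\le\lfloor d/2\rfloor+1$ when $d\ge 3$; the clean fix is to assume the derivative bound for all $\alpha$ with $\alpha_i\in\{0,1\}$, after which the paper's two-line verification of the Marcinkiewicz hypotheses is complete.
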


\begin{proof}
We check the conditions of Theorem \ref{thm: Lp multiplier}.
For $\alpha=(\alpha_{1},\cdots,\alpha_{d})\in\mathbb{N}_{0}^{d}$,
we assume that $\alpha_{i}=1$ for $i=j_{1},\cdots, j_{n}$
and $\alpha_{i}=0$ for $i\in \{1,\cdots,d\}\setminus J$, where $J=\{j_{1},\cdots, j_{n}\}$.
Then for any dyadic rectangle $A=\prod_{k=1}^{n}[2^{l_{k}},2^{l_{k}+1}]$, we obtain that
\begin{align*}
\int_{A}\left|\partial_{\xi}^{\alpha}m(\xi)\right| d\xi_{j_{1}}\cdots d\xi_{j_{n}}
&\leq C\int_{A}\prod_{k=1}^{n}|\xi_{j_{k}}|^{-1}d\xi_{j_{1}}\cdots d\xi_{j_{n}}\\
&=C\prod_{k=1}^{n}\left(\int_{2^{l_{k}}}^{2^{l_{k}+1}}\xi_{j_{k}}^{-1}d\xi_{j_{k}}\right)\\
&=C(\ln 2)^{n}.
\end{align*}
Thus, if we put $B=\{\xi_{j}\,|\, j\notin J\}$, then we obtain
\begin{align*}
\sup_{B}\int_{A}\left|\partial_{\xi}^{\alpha}m(\xi)\right| d\xi_{j_{1}}\cdots d\xi_{j_{n}}
\leq C (\ln 2)^{n}.
\end{align*}
Hence by Theorem \ref{thm: Lp multiplier}, $m$ is Fourier multiplier on $L^{p}$.
\end{proof}

\begin{example}[\cite{Stein 1970}]
\upshape
Let $a_{1}, a_{2},\cdots,a_{d}>0$ and let $a=a_{1}+a_{2}+\cdots+a_{d}$. For $\xi=(\xi_{1},\xi_{2},\cdots,\xi_{d})\in\mathbb{R}^{d}$, the functions
\begin{align*}
m_{1}(\xi)=\frac{\xi_{1}}{\xi_{1}+i(\xi_{2}^{2}+\xi_{3}^{2}+\cdots+\xi_{d}^{2})},\quad
m_{2}(\xi)
=\frac{|\xi_{1}|^{a_{1}}|\xi_{2}|^{a_{2}}\cdots|\xi_{d}|^{a_{d}}}{(\xi_{1}^2+\xi_{2}^{2}+\cdots+\xi_{d}^{2})^{\frac{a}{2}}}
\end{align*}
satisfy the condition in Theorem \ref{thm: Lp multiplier 2}.
\end{example}

\begin{corollary}[Mihlin's multiplier theorem]\label{cor:Mihlin}
Let $m$ be a complex-valued bounded function on $\mathbb{R}^{d} \setminus \{0\}$.
Assume that there exists a constant $C>0$ such that
\begin{equation}\label{eqn: Mihlin's condition}
|\partial_{\xi}^{\alpha}m(\xi)|\leq C|\xi|^{-|\alpha|},\quad \xi\in\mathbb{R}^{d}\setminus\{0\}
\end{equation}
for all multi-indices $|\alpha| \leq \lfloor\frac{d}{2}\rfloor+1$.
Then $m$ is a Fourier multiplier on $L^{p}$.
\end{corollary}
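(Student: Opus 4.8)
The plan is to deduce the corollary directly from Theorem \ref{thm: Lp multiplier 2} by comparing the two hypotheses: Mihlin's condition \eqref{eqn: Mihlin's condition} is a pointwise strengthening of the condition imposed there, so the corollary is essentially immediate once we record one elementary inequality. Since $|\xi_i|\le|\xi|$ for every coordinate $i$ and every $\xi\in\mathbb{R}^d\setminus\{0\}$, raising to the $\alpha_i$-th power and inverting gives $|\xi|^{-\alpha_i}\le|\xi_i|^{-\alpha_i}$, and taking the product over $i=1,\dots,d$ yields
\[
|\xi|^{-|\alpha|}=\prod_{i=1}^{d}|\xi|^{-\alpha_i}\le\prod_{i=1}^{d}|\xi_i|^{-\alpha_i},\qquad \xi\in\mathbb{R}^d\setminus\{0\}.
\]

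Consequently, if $m$ satisfies \eqref{eqn: Mihlin's condition}, then for every multi-index $\alpha$ with $|\alpha|\le\lfloor\frac{d}{2}\rfloor+1$ we have
\[
|\partial_{\xi}^{\alpha}m(\xi)|\le C|\xi|^{-|\alpha|}\le C\prod_{i=1}^{d}|\xi_i|^{-\alpha_i},\qquad \xi\in\mathbb{R}^d\setminus\{0\},
\]
which is exactly the hypothesis of Theorem \ref{thm: Lp multiplier 2} (note that \eqref{eqn: Mihlin's condition}, read for $|\alpha|\le\lfloor\frac{d}{2}\rfloor+1$, already presupposes the existence of these partial derivatives, so there is nothing further to check about the ``type'' of $m$). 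Applying Theorem \ref{thm: Lp multiplier 2} then gives that $m$ is a Fourier multiplier on $L^p$, which is the assertion.

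There is no genuine obstacle here: the whole argument collapses to the coordinatewise bound $|\xi_i|\le|\xi|$, and the point of stating the corollary separately is simply to isolate the classical, rotation-symmetric form of the hypothesis that will be convenient for the applications to symbols $\psi$ of pseudo-differential operators in the later sections.
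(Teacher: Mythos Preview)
Your proof is correct and follows precisely the same route as the paper: use $|\xi_i|\le|\xi|$ to convert Mihlin's bound $C|\xi|^{-|\alpha|}$ into the coordinatewise bound $C\prod_{i=1}^{d}|\xi_i|^{-\alpha_i}$, then invoke Theorem~\ref{thm: Lp multiplier 2}.
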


\begin{proof}
For $\xi=(\xi_{1},\cdots,\xi_{d})$, since $|\xi_{i}|\leq |\xi|$ for all $i=1,\cdots,d$, we have
\begin{align*}
|\partial_{\xi}^{\alpha}m(\xi)|
\leq C|\xi|^{-|\alpha|}
=C|\xi|^{-\alpha_{1}}\cdots|\xi|^{-\alpha_{d}}
\leq C|\xi_{1}|^{-\alpha_{1}}\cdots|\xi_{d}|^{-\alpha_{d}}.
\end{align*}
By Theorem \ref{thm: Lp multiplier 2}, $m$ is a Fourier multiplier on $L^{p}$.
\end{proof}

\begin{remark}\label{rmk: multiplier}
\upshape
(i)\enspace The condition \eqref{eqn: Mihlin's condition} is called the Mihlin's condition. The following condition is called the H\"{o}rmander's condition: there exists a constant $A$ such that
\begin{equation*}
\sup_{R>0}R^{-d+2|\alpha|}\int_{R<|\xi|<2R}|\partial_{\xi}^{\alpha}m(\xi)|^2d\xi
\leq A^2<\infty
\end{equation*}
for all multi-indices $|\alpha|\leq \lfloor \frac{d}{2}\rfloor+1$.
If $m$ satisfies the H\"{o}rmander's condition, then $m$ is a Fourier multiplier on $L^{p}$.
For the proof, see, e.g., Theorem 5.2.7 in \cite{Grafakos 2008}.

(ii)\enspace Let $K$ be a Hilbert space. For $1<p<\infty$, let $m$ be a Fourier multiplier on $L^{p}$.
Then it is obvious that
\begin{align*}
\|\mathcal{F}^{-1}(m(\xi)\mathcal{F}(f)(\xi))\|_{L_{K}^{p}}
\leq C\|f\|_{L_{K}^{p}}, \quad f\in L_{K}^{p}
\end{align*}
for some constant $C>0$. Therefore, Theorems \ref{thm: Lp multiplier}, \ref{thm: Lp multiplier 2}
and Corollary \ref{cor:Mihlin} hold for $L_{K}^{p}$.
\end{remark}

For each $\gamma>0$,
we denote by $\mathfrak{M}_{\gamma}(\mathbb{R}^{d})$
the set of all positive functions $\psi: \mathbb{R}^{d}\rightarrow \mathbb{R}$
satisfying that there exist positive constants $\kappa=\kappa_{\psi}$ and $\mu=\mu_{\psi}$ such that
for any $\xi\in\mathbb{R}^{d}$ with $\xi\neq 0$
and multi index $\alpha\in\mathbb{N}_{0}^{d}$ with $|\alpha|\leq \lfloor\frac{d}{2}\rfloor+1$,
\begin{align}
\psi(\xi)&\geq \kappa |\xi|^{\gamma},\label{eqn: sym cond 1}\\
|\partial_{\xi}^{\alpha}\psi(\xi)|&\leq \mu |\xi|^{\gamma-|\alpha|}.\label{eqn: sym cond 2}
\end{align}

\begin{lemma}\label{lem: xi f in mathfrak m}
Let $f:\mathbb{R}^{d}\rightarrow\mathbb{R}$ be a function.
Assume that there exist positive constants $m$ and $C$ such that
for any $\xi\in\mathbb{R}^{d}$ with $\xi\neq 0$
and multi index $\alpha\in\mathbb{N}_{0}^{d}$ with $|\alpha|\leq \lfloor\frac{d}{2}\rfloor+1$,
\begin{align}
f(\xi)&\geq m,\label{eqn: f geq m}\\
|\partial_{\xi}^{\alpha}f(\xi)|&\leq C|\xi|^{-|\alpha|}.\label{eqn: partial f}
\end{align}
Let $\gamma>0$ be given. Then the function $\psi(\xi)=|\xi|^{\gamma}f(\xi)\,\,(\xi\in\mathbb{R}^{d})$
is in $\mathfrak{M}_{\gamma}(\mathbb{R}^{d})$.
\end{lemma}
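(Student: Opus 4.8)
The plan is to verify directly the two defining conditions \eqref{eqn: sym cond 1} and \eqref{eqn: sym cond 2} of $\mathfrak{M}_{\gamma}(\mathbb{R}^{d})$ for the function $\psi(\xi)=|\xi|^{\gamma}f(\xi)$. The lower bound is immediate: by \eqref{eqn: f geq m} we have $\psi(\xi)=|\xi|^{\gamma}f(\xi)\geq m|\xi|^{\gamma}$ for every $\xi\neq 0$, so \eqref{eqn: sym cond 1} holds with $\kappa=m$. For the derivative estimate \eqref{eqn: sym cond 2}, the main tool is the Leibniz rule
\begin{align*}
\partial_{\xi}^{\alpha}\psi(\xi)
=\sum_{\beta\leq\alpha}\binom{\alpha}{\beta}\bigl(\partial_{\xi}^{\beta}|\xi|^{\gamma}\bigr)\bigl(\partial_{\xi}^{\alpha-\beta}f(\xi)\bigr),
\end{align*}
so it is enough to control each factor on $\mathbb{R}^{d}\setminus\{0\}$.

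For the factor $\partial_{\xi}^{\alpha-\beta}f$ I would simply invoke \eqref{eqn: partial f}, which gives $|\partial_{\xi}^{\alpha-\beta}f(\xi)|\leq C|\xi|^{-|\alpha-\beta|}=C|\xi|^{|\beta|-|\alpha|}$. The key auxiliary claim is therefore that for each multi index $\beta$ there is a constant $c_{\beta}>0$ with
\begin{align*}
\bigl|\partial_{\xi}^{\beta}|\xi|^{\gamma}\bigr|\leq c_{\beta}|\xi|^{\gamma-|\beta|},\qquad \xi\in\mathbb{R}^{d}\setminus\{0\}.
\end{align*}
I would establish this by noting that $\xi\mapsto|\xi|^{\gamma}=(\xi_{1}^{2}+\cdots+\xi_{d}^{2})^{\gamma/2}$ is $C^{\infty}$ on $\mathbb{R}^{d}\setminus\{0\}$ and positively homogeneous of degree $\gamma$; since differentiating a function that is positively homogeneous of degree $r$ and smooth away from the origin yields one that is positively homogeneous of degree $r-1$ and again smooth away from the origin, an induction on $|\beta|$ shows $\partial_{\xi}^{\beta}|\xi|^{\gamma}$ is positively homogeneous of degree $\gamma-|\beta|$ and continuous on $\mathbb{R}^{d}\setminus\{0\}$. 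Setting $c_{\beta}:=\sup_{|\xi|=1}|\partial_{\xi}^{\beta}|\xi|^{\gamma}|<\infty$ (finite by compactness of the unit sphere) and rescaling gives the displayed bound. This homogeneity-plus-compactness step is the only substantive point; the rest is bookkeeping.

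Combining the two bounds inside the Leibniz expansion, for every multi index $\alpha$ with $|\alpha|\leq\lfloor\frac{d}{2}\rfloor+1$ and every $\xi\neq 0$,
\begin{align*}
\bigl|\partial_{\xi}^{\alpha}\psi(\xi)\bigr|
\leq\sum_{\beta\leq\alpha}\binom{\alpha}{\beta}c_{\beta}|\xi|^{\gamma-|\beta|}\cdot C|\xi|^{|\beta|-|\alpha|}
=\Bigl(C\sum_{\beta\leq\alpha}\binom{\alpha}{\beta}c_{\beta}\Bigr)|\xi|^{\gamma-|\alpha|}.
\end{align*}
Because there are only finitely many multi indices $\alpha$ with $|\alpha|\leq\lfloor\frac{d}{2}\rfloor+1$, taking $\mu$ to be the maximum over such $\alpha$ of the constants $C\sum_{\beta\leq\alpha}\binom{\alpha}{\beta}c_{\beta}$ yields \eqref{eqn: sym cond 2}. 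Together with the lower bound this shows $\psi\in\mathfrak{M}_{\gamma}(\mathbb{R}^{d})$, completing the proof. I do not expect any serious obstacle here; the only care needed is the derivative bound for $|\xi|^{\gamma}$, which I would phrase as a short lemma or simply inline via the homogeneity argument above.
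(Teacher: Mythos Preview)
Your proof is correct and follows essentially the same approach as the paper: verify the lower bound directly from \eqref{eqn: f geq m}, establish the estimate $|\partial_{\xi}^{\beta}|\xi|^{\gamma}|\leq C_{1}|\xi|^{\gamma-|\beta|}$ (the paper merely asserts this ``by direct computation'' where you supply the homogeneity-plus-compactness argument), and then combine it with \eqref{eqn: partial f} via the Leibniz rule. The only cosmetic difference is that you make explicit the step of taking a maximum over the finitely many admissible $\alpha$ to obtain a single constant $\mu$.
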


\begin{proof}
By \eqref{eqn: f geq m}, we have
\begin{align*}
\psi(\xi)=|\xi|^{\gamma}f(\xi)\geq m|\xi|^{\gamma},
\end{align*}
which implies \eqref{eqn: sym cond 1}. On the other hand, by direct computation,
we see that there exists a constant $C_{1}>0$ such that
for any multi index $\alpha\in\mathbb{N}_{0}^{d}$ with $|\alpha|\leq \lfloor\frac{d}{2}\rfloor+1$,
\begin{align}\label{eqn: partial norm xi}
|\partial_{\xi}^{\alpha}|\xi|^{\gamma}|&\leq C_{1} |\xi|^{\gamma-|\alpha|}
\end{align}
Therefore, by applying the Leibniz rule, \eqref{eqn: partial norm xi} and \eqref{eqn: partial f}, we obtain that
\begin{align*}
|\partial_{\xi}^{\alpha}\psi(\xi)|
&\leq \sum_{\beta\leq \alpha}\binom{\alpha}{\beta}
        |\partial_{\xi}^{\beta}|\xi|^{\gamma}||\partial_{\xi}^{\alpha-\beta}f(\xi)|\\
&\leq \sum_{\beta\leq \alpha}\binom{\alpha}{\beta}
        C_{1} |\xi|^{\gamma-|\beta|}C |\xi|^{-|\alpha-\beta|}\\
&=\left(\sum_{\beta\leq \alpha}\binom{\alpha}{\beta}C_{1}C\right) |\xi|^{\gamma-|\alpha|},
\end{align*}
which implies \eqref{eqn: sym cond 2}. Hence $\psi\in\mathfrak{M}_{\gamma}(\mathbb{R}^{d})$.
\end{proof}

\begin{example}
\upshape
Let $a>0,\,\,b\geq 0$ and $\gamma>0$ be given.
Put
\begin{align*}
\psi(\xi)=|\xi|^{\gamma}(a+be^{-|\xi|^{\gamma}}),\quad \xi\in\mathbb{R}^{d}.
\end{align*}
Then by direct computation, we see that the function $f(\xi)=a+be^{-|\xi|^{\gamma}} (\xi\in\mathbb{R}^{d})$
satisfies the conditions \eqref{eqn: f geq m} and \eqref{eqn: partial f}. Therefore, by Lemma \ref{lem: xi f in mathfrak m},
it holds that $\psi\in\mathfrak{M}_{\gamma}(\mathbb{R}^{d})$.
\end{example}

\begin{lemma}\label{lmm:FB}
Let $\gamma>0$ and let $\psi\in \mathfrak{M}_{\gamma}(\mathbb{R}^{d})$.
Then for any $s\in\mathbb{R}$,
there exists a constant $C>0$ such that for any $\xi\in\mathbb{R}^{d}$ with $\xi\neq 0$
and multi-index $\alpha\in\mathbb{N}_{0}^{d}$ with $|\alpha|\leq \lfloor\frac{d}{2}\rfloor+1$,
\begin{align}\label{eqn: partial alpha m1}
|\partial_{\xi}^{\alpha}\left((1+\psi(\xi))^{s}\right)|
\leq C|\xi|^{-|\alpha|}(1+\psi(\xi))^{s}.
\end{align}
\end{lemma}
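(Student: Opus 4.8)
The plan is to reduce \eqref{eqn: partial alpha m1} to the one-dimensional chain rule applied to the composition $g\circ\psi$, where $g(t)=(1+t)^{s}$. First I would dispose of the trivial case $\alpha=0$, in which the claimed inequality holds with $C=1$. For $\alpha\neq 0$ with $1\leq|\alpha|\leq\lfloor\frac{d}{2}\rfloor+1$, the multivariate Fa\`a di Bruno formula expresses $\partial_{\xi}^{\alpha}\big((1+\psi(\xi))^{s}\big)$ as a finite linear combination, with coefficients depending only on $\alpha$, of terms of the form
\begin{align*}
g^{(k)}(\psi(\xi))\prod_{j=1}^{k}\partial_{\xi}^{\beta_{j}}\psi(\xi),
\end{align*}
where $1\leq k\leq|\alpha|$, each $\beta_{j}\in\mathbb{N}_{0}^{d}$ satisfies $|\beta_{j}|\geq 1$, and $\beta_{1}+\cdots+\beta_{k}=\alpha$ (repeated $\beta_{j}$'s are allowed, which absorbs the combinatorial multiplicities). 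In particular each $|\beta_{j}|\leq|\alpha|\leq\lfloor\frac{d}{2}\rfloor+1$, so the symbol bound \eqref{eqn: sym cond 2} is applicable to every factor $\partial_{\xi}^{\beta_{j}}\psi$.

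Next I would estimate the factors separately. Since $g^{(k)}(t)=s(s-1)\cdots(s-k+1)(1+t)^{s-k}$ and $\psi(\xi)>0$, we get $|g^{(k)}(\psi(\xi))|\leq c_{k}(1+\psi(\xi))^{s-k}$ with $c_{k}$ depending only on $s$ and $k$. From \eqref{eqn: sym cond 2} we have $|\partial_{\xi}^{\beta_{j}}\psi(\xi)|\leq\mu|\xi|^{\gamma-|\beta_{j}|}$ for each $j$. Multiplying and using $\sum_{j=1}^{k}|\beta_{j}|=|\alpha|$, a generic term is bounded by
\begin{align*}
c_{k}\mu^{k}(1+\psi(\xi))^{s-k}\prod_{j=1}^{k}|\xi|^{\gamma-|\beta_{j}|}
=c_{k}\mu^{k}(1+\psi(\xi))^{s-k}|\xi|^{k\gamma-|\alpha|}.
\end{align*}

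Finally I would absorb the surplus factor $(1+\psi(\xi))^{-k}|\xi|^{k\gamma}$. The lower bound \eqref{eqn: sym cond 1} gives $1+\psi(\xi)\geq\psi(\xi)\geq\kappa|\xi|^{\gamma}$, hence $|\xi|^{k\gamma}\leq\kappa^{-k}(1+\psi(\xi))^{k}$, so $(1+\psi(\xi))^{-k}|\xi|^{k\gamma}\leq\kappa^{-k}$. Therefore each term is at most $c_{k}\mu^{k}\kappa^{-k}|\xi|^{-|\alpha|}(1+\psi(\xi))^{s}$, and summing over the finitely many terms yields \eqref{eqn: partial alpha m1} with a constant $C$ depending only on $d$, $s$, $\gamma$, $\kappa_{\psi}$, $\mu_{\psi}$. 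There is no analytic difficulty here; the only point requiring care is the bookkeeping in the multivariate Fa\`a di Bruno expansion — specifically, checking that every internal multi-index $\beta_{j}$ is nonzero and of order at most $\lfloor\frac{d}{2}\rfloor+1$, so that \eqref{eqn: sym cond 2} applies to each factor.
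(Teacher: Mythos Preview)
Your proof is correct and uses the same ingredients as the paper: the symbol bounds \eqref{eqn: sym cond 1}--\eqref{eqn: sym cond 2} together with the chain rule for $(1+\psi)^{s}$. The only difference is packaging: the paper proceeds by induction on $|\alpha|$ (treating $s<0$ first and then remarking that $s>0$ is similar), whereas you invoke the multivariate Fa\`a di Bruno formula directly and thereby handle all $s\in\mathbb{R}$ at once; the two arguments unwind to the same estimate.
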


\begin{proof}
If $s=0$, then the inequality is obvious.
Let $s=-t<0$ be fixed for some $t>0$.
It is proved by induction on $|\alpha|$. For $|\alpha|=1$, we have
\begin{align*}
\partial_{\xi_{j}}(1+\psi(\xi))^{-t}=-t(1+\psi(\xi))^{-t-1}\partial_{\xi_{j}}\psi(\xi),
\end{align*}
from which, \eqref{eqn: sym cond 1} and \eqref{eqn: sym cond 2} imply that
\begin{align*}
|\partial_{\xi_{j}}(1+\psi(\xi))^{-t}|
&=t|(1+\psi(\xi))^{-t-1}||\partial_{\xi_{j}}\psi(\xi)|\\
&\leq t\frac{1}{(1+\psi(\xi))^{t}}\frac{\mu |\xi|^{\gamma}}{1+\kappa |\xi|^{\gamma}}|\xi|^{-1}\\
&\leq t\frac{\mu}{\kappa}\frac{|\xi|^{-1}}{(1+\psi(\xi))^{t}}.
\end{align*}
Suppose that \eqref{eqn: partial alpha m1} holds for all $|\alpha|\leq k$.
Denote $e_{j}$ by $(0,\cdots,1,\cdots,0)$ with the 1 in $j$-th position.
If $|\alpha|=k+1$, then there exist multi-indices $\beta$ and $\delta$ such that $|\beta|=k$, $|\delta|=1$
and $\alpha=\beta+\delta$. Suppose that $\delta=e_{j}$ for some $j$.
By using the induction hypothesis, \eqref{eqn: sym cond 1}  and \eqref{eqn: sym cond 2},
we obtain that
  \begin{align*}
  |\partial_{\xi}^{\alpha}(1+\psi(\xi))^{-t}|
  &=|\partial_{\xi}^{\beta}\partial_{\xi}^{\delta}(1+\psi(\xi))^{-t}|\nonumber\\
  &=|\partial_{\xi}^{\beta}(-t(1+\psi(\xi))^{-t-1}\partial_{\xi}^{\delta}\psi(\xi))|\nonumber\\
  &=\left|-t\sum_{\sigma\leq\beta}\binom{\beta}{\sigma}
  (\partial_{\xi}^{\beta-\sigma}(1+\psi(\xi))^{-t-1})(\partial_{\xi}^{\sigma}\partial_{\xi}^{\delta}\psi(\xi))\right|\nonumber\\
  &\leq t\sum_{\sigma\leq\beta}\binom{\beta}{\sigma}
  |\partial_{\xi}^{\beta-\sigma}(1+\psi(\xi))^{-t-1}||\partial_{\xi}^{\sigma}\partial_{\xi}^{\delta}\psi(\xi)|\nonumber\\
  &\leq t\sum_{\sigma\leq\beta}\binom{\beta}{\sigma}
  C\frac{|\xi|^{-(|\beta|-|\sigma|)}}{(1+\psi(\xi))^{t+1}}\mu|\xi|^{\gamma-(|\sigma|+1)}\nonumber\\
  &\leq tC\frac{\mu}{\kappa}\sum_{\sigma\leq\beta}\binom{\beta}{\sigma}
  \frac{|\xi|^{-(|\beta|+1)}}{(1+\psi(\xi))^{t}}\frac{\kappa|\xi|^{\gamma}}{1+\kappa |\xi|^{\gamma}}\nonumber\\
  &\le C'  \frac{|\xi|^{-|\alpha|}}{(1+\psi(\xi))^{t}},
  \end{align*}
which implies \eqref{eqn: partial alpha m1} for $s<0$.
The proof for the case $s>0$ is similar.
\end{proof}

\begin{proposition}\label{prop: multiplier}
Let $\gamma>0$ and let $\psi\in \mathfrak{M}_{\gamma}(\mathbb{R}^{d})$.
Let $\varphi:\mathbb{R}^{d}\rightarrow \mathbb{R}$ be a positive function
satisfying the condition \eqref{eqn: sym cond 2} with $\gamma$.
Then for any $s\ge t\ge0$, the functions
\begin{align*}
m_{1}(\xi)=(1+\psi(\xi))^{-s},\quad
m_{2}(\xi)=\frac{\varphi(\xi)^{t}}{(1+\psi(\xi))^{s}},\quad
m_{3}(\xi)=\frac{(1+\varphi(\xi))^{t}}{1+\psi(\xi)^{s}},\quad \xi\in\mathbb{R}^{d}
\end{align*}
are Fourier multipliers on $L_{K}^{p}$ for all $1<p<\infty$.
\end{proposition}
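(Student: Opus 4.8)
The plan is to verify, for each of $m_{1},m_{2},m_{3}$, the Mihlin condition \eqref{eqn: Mihlin's condition} and then to conclude by Corollary \ref{cor:Mihlin} together with Remark \ref{rmk: multiplier}(ii), which upgrades a Fourier multiplier on $L^{p}$ to one on $L_{K}^{p}$. So it suffices to produce, for each $i$, a constant $C>0$ such that $|\partial_{\xi}^{\alpha}m_{i}(\xi)|\le C|\xi|^{-|\alpha|}$ for all $\xi\ne0$ and all multi-indices $\alpha$ with $|\alpha|\le\lfloor d/2\rfloor+1$.

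For $m_{1}$ this is immediate from Lemma \ref{lmm:FB} applied with the exponent $-s\le0$: since $1+\psi(\xi)\ge1$, it gives $|\partial_{\xi}^{\alpha}m_{1}(\xi)|\le C|\xi|^{-|\alpha|}(1+\psi(\xi))^{-s}\le C|\xi|^{-|\alpha|}$. The heart of the matter is a symbol estimate for the fractional powers of $\varphi$. I would first prove, by induction on $|\alpha|$ in the spirit of the proof of Lemma \ref{lmm:FB}, that there is a constant $C>0$ with
\[
|\partial_{\xi}^{\alpha}(\varphi(\xi)^{t})|\le C|\xi|^{\gamma t-|\alpha|},\qquad |\partial_{\xi}^{\alpha}((1+\varphi(\xi))^{t})|\le C(1+|\xi|^{\gamma})^{t}|\xi|^{-|\alpha|}
\]
for all $\xi\ne0$ and $|\alpha|\le\lfloor d/2\rfloor+1$; here one differentiates the power, applies the Leibniz rule to the resulting products, and uses \eqref{eqn: sym cond 2} for $\varphi$ together with its $\alpha=0$ instance $0<\varphi(\xi)\le\mu|\xi|^{\gamma}$.

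Granting these, $m_{2}$ and $m_{3}$ follow from the Leibniz rule. For $m_{2}=\varphi^{t}\cdot(1+\psi)^{-s}$, combining the first estimate with Lemma \ref{lmm:FB} for $(1+\psi)^{-s}$ gives $|\partial_{\xi}^{\alpha}m_{2}(\xi)|\le C|\xi|^{\gamma t-|\alpha|}(1+\psi(\xi))^{-s}$, and since $\psi(\xi)\ge\kappa|\xi|^{\gamma}$ and $s\ge t\ge0$,
\[
|\xi|^{\gamma t}(1+\psi(\xi))^{-s}\le|\xi|^{\gamma t}(\kappa|\xi|^{\gamma})^{-t}(1+\psi(\xi))^{-(s-t)}\le\kappa^{-t},
\]
whence $|\partial_{\xi}^{\alpha}m_{2}(\xi)|\le C|\xi|^{-|\alpha|}$. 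For $m_{3}=(1+\varphi)^{t}\cdot(1+\psi)^{-s}$ the same scheme, using the second estimate and $1+\psi(\xi)\ge\min(1,\kappa)(1+|\xi|^{\gamma})$, yields $|\partial_{\xi}^{\alpha}m_{3}(\xi)|\le C(1+|\xi|^{\gamma})^{t-s}|\xi|^{-|\alpha|}\le C|\xi|^{-|\alpha|}$ because $t\le s$; if the denominator of $m_{3}$ is instead read as $1+\psi^{s}$, one first records that $\psi^{s}\in\mathfrak{M}_{\gamma s}$ (which uses the lower bound \eqref{eqn: sym cond 1} for $\psi$) and argues identically.

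The step I expect to be the main obstacle is the fractional-power estimate: because $\varphi$ is assumed only bounded above, the Leibniz expansion of $\partial_{\xi}^{\alpha}(\varphi^{t})$ involves powers $\varphi^{t-k}$ with $t-k<0$ once $|\alpha|>t$, so the induction must be arranged so that each term still carries exactly the homogeneity $|\xi|^{\gamma t-|\alpha|}$; the hypothesis $s\ge t$ is precisely what then lets the leftover factor $|\xi|^{\gamma t}$ be swallowed by $(1+\psi)^{-s}$ in the final estimates for $m_{2}$ and $m_{3}$. Everything else---the base case $|\alpha|=1$, the Leibniz bookkeeping, and the concluding appeal to Corollary \ref{cor:Mihlin} and Remark \ref{rmk: multiplier}(ii)---is routine.
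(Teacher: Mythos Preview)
Your proposal is correct and follows essentially the same route as the paper: both verify Mihlin's condition \eqref{eqn: Mihlin's condition} by combining Lemma \ref{lmm:FB} for the $(1+\psi)$-factors with the symbol bound $|\partial_\xi^\alpha(\varphi^t)|\le C|\xi|^{\gamma t-|\alpha|}$ (stated in the paper as \eqref{eqn: partial alpha psi s}) via the Leibniz rule, and for $m_3$ both record that $\psi^s\in\mathfrak{M}_{\gamma s}$ so that Lemma \ref{lmm:FB} applies to $(1+\psi^s)^{-1}$. The fractional-power estimate you flag as the main obstacle is dispatched in the paper simply as ``by \eqref{eqn: sym cond 2} and direct computation''; note also that the denominator of $m_3$ is $1+\psi^s$ rather than $(1+\psi)^s$, which you correctly handle in your final clause.
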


\begin{proof}
Let $1<p<\infty$ be given.
Firstly we prove that the function $m_{1}$ is a Fourier multiplier on $L_{K}^{p}$.
Since $(1+\psi(\xi))^{-s}\leq 1$, by applying \eqref{eqn: partial alpha m1},
we have
\begin{align*}
|\partial_{\xi}^{\alpha}m_{1}(\xi)|\leq C_{1}|\xi|^{-|\alpha|}
\end{align*}
for any multi index $\alpha\in\mathbb{N}_{0}^{d}$ with $|\alpha|\leq \lfloor\frac{d}{2}\rfloor+1$.
Therefore, by applying Corollary \ref{cor:Mihlin} (see also, (ii) in Remark \ref{rmk: multiplier}),
we see that $m_{1}$ is a Fourier multiplier on $L_{K}^{p}$.

We now prove that the function $m_{2}$ is a Fourier multiplier on $L_{K}^{p}$.
We claim that there exists a constant $C_{1}>0$ such that for any $\xi\in\mathbb{R}^{d}$ with $\xi\neq 0$ and multi-index $\alpha\in\mathbb{N}_{0}^{d}$ with $|\alpha|\leq \lfloor\frac{d}{2}\rfloor+1$,
\begin{align}
|\partial_{\xi}^{\alpha}m_{2}(\xi)|\leq C_{1}|\xi|^{-|\alpha|}.
\end{align}
By \eqref{eqn: sym cond 2} and direct computation, we have
\begin{align}\label{eqn: partial alpha psi s}
|\partial_{\xi}^{\alpha}\left(\varphi(\xi)^{t}\right)|\leq C'|\xi|^{\gamma t-|\alpha|}.
\end{align}
for some constant $C'>0$.
By applying the Leibniz rule, \eqref{eqn: partial alpha m1} and \eqref{eqn: sym cond 1}, we obtain that
\begin{align*}
  |\partial_{\xi}^{\alpha}m_{2}(\xi)|
  &=\left|\sum_{\beta\leq \alpha}\binom{\alpha}{\beta}\left(\partial_{\xi}^{\beta}\left(\varphi(\xi)^{t}\right)\right)
    \left(\partial_{\xi}^{\alpha-\beta}(1+\psi(\xi))^{-s}\right)\right|\\
  &\leq \sum_{\beta\leq \alpha}\binom{\alpha}{\beta}C'|\xi|^{\gamma t-|\beta|}C\frac{|\xi|^{-(|\alpha|-|\beta|)}}{(1+\psi(\xi))^{s}}\\
  &\leq C'C\sum_{\beta\leq \alpha}\binom{\alpha}{\beta}\frac{|\xi|^{\gamma t}}{(1+\kappa|\xi|^{\gamma})^{s}}|\xi|^{-|\alpha|}\\
  &=:C_{1}|\xi|^{-|\alpha|}.
  \end{align*}
Therefore, by applying Corollary \ref{cor:Mihlin},
we see that $m_{2}$ is a Fourier multiplier on $L_{K}^{p}$ for all $1<p<\infty$.

Finally we prove that the function $m_{3}$ is a Fourier multiplier on $L_{K}^{p}$.
We claim that there exists a constant $C_{2}>0$ such that for any $\xi\in\mathbb{R}^{d}$ with $\xi\neq 0$ and multi-index $\alpha\in\mathbb{N}_{0}^{d}$ with $|\alpha|\leq \lfloor\frac{d}{2}\rfloor+1$,
\begin{align}
|\partial_{\xi}^{\alpha}m_{3}(\xi)|\leq C_{2}|\xi|^{-|\alpha|}.
\end{align}
From \eqref{eqn: partial alpha psi s}, we see that the function
$\psi_{1}(\xi):=\psi(\xi)^{s}$ ($\xi\in\mathbb{R}^{d}$)
satisfies \eqref{eqn: sym cond 2}. Also, obviously, $\psi_{1}$ satisfies \eqref{eqn: sym cond 1}
with $\gamma_{\psi_{1}}=\gamma_{\psi} s$ and $\kappa_{\psi_{1}}=\kappa_{\psi}^{s}$.
Then by applying \eqref{eqn: partial alpha m1}, we have
\begin{align}\label{eqn: partial alpha m3 2}
|\partial_{\xi}^{\alpha}(1+\psi(\xi)^{s})^{-1}|
\leq C'(1+\psi(\xi)^{s})^{-1}|\xi|^{-|\alpha|}
\end{align}
for some constant $C'>0$. Therefore, by applying the Leibniz rule,
\eqref{eqn: partial alpha m1} and \eqref{eqn: partial alpha m3 2}, we obtain that
\begin{align*}
|\partial_{\xi}^{\alpha}m_{3}(\xi)|
&=\left|\sum_{\beta\leq \alpha}\binom{\alpha}{\beta}\left(\partial_{\xi}^{\beta}\left((1+\varphi(\xi))^{t}\right)\right)
  \left(\partial_{\xi}^{\alpha-\beta}(1+\psi(\xi)^{s})^{-1}\right)\right|\\
&\leq \sum_{\beta\leq \alpha}\binom{\alpha}{\beta}C\left(1+\varphi(\xi)\right)^{t}|\xi|^{-|\beta|}
        C'(1+\psi(\xi)^{s})^{-1}|\xi|^{-(|\alpha|-|\beta|)}\\
&\leq CC'\sum_{\beta\leq \alpha}\binom{\alpha}{\beta}\frac{\left(1+\varphi(\xi)\right)^{t}}{1+\psi(\xi)^{s}}|\xi|^{-|\alpha|}\\
&=:C_{2}|\xi|^{-|\alpha|}.
\end{align*}
Hence, by applying Corollary \ref{cor:Mihlin},
we see that $m_{3}$ is a Fourier multiplier on $L_{K}^{p}$.
\end{proof}

\section{Stochastic Banach Spaces}\label{sec:stochastic Banach}
In this section, we investigate some properties of the stochastic Banach spaces
with respect to a $Q$-Gaussian process with the covariance kernel $R$.
We first recall the basic definitions and results for the vector-valued tempered distributions
and the Bessel potential spaces.
For more details, we refer to \cite{Hytonen 2016} and \cite{Stein 1970}.

Let $(K,\bilin{\cdot}{\cdot}_{K})$ be a separable Hilbert space with an inner product.
We denote by $C_{\mathrm{c},K}^{\infty}=C_{\rm c}^{\infty}(\mathbb{R}^{d};K)$
the space of infinitely differentiable $K$-valued functions on $\mathbb{R}^{d}$ with compact support.
Let $\mathcal{D}_{K}:=\mathcal{D}(\mathbb{R}^{d};K)$ be the space of $K$-valued tempered distributions on $C_{\mathrm{c},K}^{\infty}$,
that is, $\mathcal{D}_{K}$ is the space of all continuous linear operators from $C_{\mathrm {c},\mathbb{R}}^{\infty}$ into $K$.
In the case of $K=\mathbb{R}$, we write
$C_{\rm c}^{\infty}= C_{\mathrm{c},\mathbb{R}}^{\infty}$ and $\mathcal{D}=\mathcal{D}_{\mathbb{R}}$.

Let $\psi:\mathbb{R}^{d}\rightarrow \mathbb{R}$ be a positive measurable function.
For $f\in L_{K}^{1}$, the pseudo differential operator $L_{\psi}$ with the symbol $\psi$ is defined by
\begin{align*}
L_{\psi}f(x)=\mathcal{F}^{-1}(\psi(\xi)\mathcal{F}f(\xi))(x)
\end{align*}
if the right hand side exists. For $\alpha\in\mathbb{R}$, we define
\begin{align*}
L_{\psi}^{\alpha}:=L_{\psi^{\alpha}}.
\end{align*}
For $p>1$ and $\alpha\in\mathbb{R}$, the $\psi$-Bessel potential space $H_{K,p}^{\psi,\alpha}=H_{p}^{\psi,\alpha}(\mathbb{R}^{d};K)$
is defined as the closure of $C_{\mathrm{c},K}^{\infty}$ with respect to the norm
\begin{align*}
\|u\|_{H_{K,p}^{\psi,\alpha}}
:=\|(1+L_{\psi})^{\frac{\alpha}{2}}u\|_{L_{K}^{p}}
:=\|\mathcal{F}^{-1}[(1+\psi(\xi))^{\frac{\alpha}{2}}\mathcal{F}(u)(\xi)]\|_{L_{K}^{p}}.
\end{align*}
In the case of $K=\mathbb{R}$, we write
$H_{p}^{\psi, \alpha}=H_{\mathbb{R}, p}^{\psi,\alpha}$. If $\psi(\xi)=|\xi|^2$ ($\xi\in\mathbb{R}^{d}$),
then it holds that $L_{\psi}=-\Delta$ and so $H_{K,p}^{\psi, \alpha}$ is the classical Bessel potential space.

\begin{proposition}\label{prop: psi bessel}
Let $\psi\in \mathfrak{M}_{\gamma}$ for some $\gamma>0$.
Let $\alpha\in\mathbb{R}$ and let $p>1$. Then it holds that
\begin{itemize}
  \item [\rm (i)] for any $\theta\in\mathbb{R}$, the operator
  $(1+L_{\psi})^{\frac{\alpha}{2}}$
  is an isometry from $H_{K,p}^{\psi,\theta}$ into $H_{K,p}^{\psi,\theta-\alpha}$,

  \item [\rm (ii)] for any $\epsilon\geq 0$,
  $H_{K,p}^{\psi,\alpha+\epsilon}$ is continuously embedded into $H_{K,p}^{\psi,\alpha}$,

\item [\rm (iii)] if $\alpha\geq 0$, then there exists a constant $C_{1},C_{2}>0$ such that for any $u\in H_{p}^{\psi,\alpha}(K)$,
  \begin{align}
  C_{1}(\|u\|_{L_{K}^{p}}+\|L_{\psi}^{\frac{\alpha}{2}}u\|_{L_{K}^{p}})
  \leq \|u\|_{H_{K,p}^{\psi,\alpha}}
  \leq C_{2}(\|u\|_{L_{K}^{p}}+\|L_{\psi}^{\frac{\alpha}{2}}u\|_{L_{K}^{p}}).\label{eqn: equiv. norm Bessel}
  \end{align}
\end{itemize}
\end{proposition}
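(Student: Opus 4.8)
\emph{Proof proposal.} The plan is to obtain all three assertions as essentially immediate consequences of the Fourier multiplier bounds of Proposition~\ref{prop: multiplier}, combined with the fact that, on the Fourier side, the operators $(1+L_{\psi})^{s}$ form a one-parameter group. Write $\Lambda^{s}$ as shorthand for the operator $u\mapsto\mathcal{F}^{-1}[(1+\psi(\xi))^{s}\mathcal{F}u]$, so that $(1+L_{\psi})^{\alpha/2}=\Lambda^{\alpha/2}$ and, by definition, $\|u\|_{H_{K,p}^{\psi,\alpha}}=\|\Lambda^{\alpha/2}u\|_{L_{K}^{p}}$. For $u\in C_{\mathrm{c},K}^{\infty}$ the pointwise identity $(1+\psi(\xi))^{a}(1+\psi(\xi))^{b}=(1+\psi(\xi))^{a+b}$ gives $\Lambda^{a}\Lambda^{b}u=\Lambda^{a+b}u$; such manipulations are legitimate because $\mathcal{F}u$ decays faster than any polynomial while $\psi$ grows at most polynomially by \eqref{eqn: sym cond 2}, so each symbol multiplies $\mathcal{F}u$ into an integrable function. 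I will also use the two pointwise factorizations $\psi(\xi)^{\alpha/2}=m_{2}(\xi)\,(1+\psi(\xi))^{\alpha/2}$ and $(1+\psi(\xi))^{\alpha/2}=m_{3}(\xi)\,(1+\psi(\xi)^{\alpha/2})$, where $m_{2},m_{3}$ are the functions of Proposition~\ref{prop: multiplier} for $\varphi=\psi$ and $t=s=\alpha/2$ (here $\alpha\ge0$ makes $s\ge t\ge0$), hence Fourier multipliers on $L_{K}^{p}$; the $K$-valued multiplier bounds are available by Remark~\ref{rmk: multiplier}(ii).

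For (i), I would compute, for $u\in C_{\mathrm{c},K}^{\infty}$, that $\|\Lambda^{\alpha/2}u\|_{H_{K,p}^{\psi,\theta-\alpha}}=\|\Lambda^{(\theta-\alpha)/2}\Lambda^{\alpha/2}u\|_{L_{K}^{p}}=\|\Lambda^{\theta/2}u\|_{L_{K}^{p}}=\|u\|_{H_{K,p}^{\psi,\theta}}$ via the group identity, observe that $\Lambda^{-\alpha/2}$ is a two-sided inverse enjoying the same norm-preserving property, and then extend by density (each space being the closure of $C_{\mathrm{c},K}^{\infty}$, hence complete) so that $\Lambda^{\alpha/2}$ becomes an isometric isomorphism $H_{K,p}^{\psi,\theta}\to H_{K,p}^{\psi,\theta-\alpha}$. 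For (ii), I would write $\Lambda^{\alpha/2}=\Lambda^{-\epsilon/2}\Lambda^{(\alpha+\epsilon)/2}$ and invoke that $m_{1}(\xi)=(1+\psi(\xi))^{-\epsilon/2}$ is a Fourier multiplier on $L_{K}^{p}$ (the case $s=\epsilon/2\ge0$ of Proposition~\ref{prop: multiplier}); then $\|u\|_{H_{K,p}^{\psi,\alpha}}=\|\Lambda^{-\epsilon/2}\Lambda^{(\alpha+\epsilon)/2}u\|_{L_{K}^{p}}\le C\|u\|_{H_{K,p}^{\psi,\alpha+\epsilon}}$ first for $u\in C_{\mathrm{c},K}^{\infty}$ and then, by density, for all $u$.

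For (iii), with $\alpha\ge0$: the upper bound follows from $(1+\psi(\xi))^{\alpha/2}\mathcal{F}u=m_{3}(\xi)\bigl(\mathcal{F}u+\psi(\xi)^{\alpha/2}\mathcal{F}u\bigr)$, so that $\Lambda^{\alpha/2}u=M_{3}\bigl(u+L_{\psi}^{\alpha/2}u\bigr)$ with $M_{3}$ the bounded operator of symbol $m_{3}$, whence $\|u\|_{H_{K,p}^{\psi,\alpha}}\le C\bigl(\|u\|_{L_{K}^{p}}+\|L_{\psi}^{\alpha/2}u\|_{L_{K}^{p}}\bigr)$ by the triangle inequality. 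For the lower bound I would use $u=\Lambda^{-\alpha/2}\Lambda^{\alpha/2}u$ with boundedness of $\Lambda^{-\alpha/2}$ (symbol $m_{1}$, $s=\alpha/2$) to get $\|u\|_{L_{K}^{p}}\le C\|u\|_{H_{K,p}^{\psi,\alpha}}$, and $L_{\psi}^{\alpha/2}u=M_{2}\,\Lambda^{\alpha/2}u$ with $M_{2}$ the bounded operator of symbol $m_{2}$ to get $\|L_{\psi}^{\alpha/2}u\|_{L_{K}^{p}}\le C\|u\|_{H_{K,p}^{\psi,\alpha}}$; adding the two inequalities produces $C_{1}$. As in (i)--(ii), everything is carried out on $C_{\mathrm{c},K}^{\infty}$ and extended to the closures. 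There is no substantial obstacle beyond Proposition~\ref{prop: multiplier}: the content lies entirely in recognizing which factorizations make the symbols $m_{1},m_{2},m_{3}$ appear, after which each estimate is a one-line consequence of a multiplier bound on $L_{K}^{p}$ plus the group identity $\Lambda^{a}\Lambda^{b}=\Lambda^{a+b}$; the only routine point is checking that $\Lambda^{\pm\alpha/2}$ and $L_{\psi}^{\alpha/2}$ send $C_{\mathrm{c},K}^{\infty}$ into the relevant spaces with finite norm, which is immediate from the definitions and the polynomial growth in \eqref{eqn: sym cond 2}, so that the density extension is valid.
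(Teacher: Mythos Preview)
Your proposal is correct and follows essentially the same approach as the paper's proof: both argue via the group identity $\Lambda^{a}\Lambda^{b}=\Lambda^{a+b}$ for (i), invoke the multiplier $m_{1}(\xi)=(1+\psi(\xi))^{-\epsilon/2}$ from Proposition~\ref{prop: multiplier} for (ii), and for (iii) use exactly the multipliers $m_{2}(\xi)=\psi(\xi)^{\alpha/2}/(1+\psi(\xi))^{\alpha/2}$ and $m_{3}(\xi)=(1+\psi(\xi))^{\alpha/2}/(1+\psi(\xi)^{\alpha/2})$ with $\varphi=\psi$ and $t=s=\alpha/2$. Your write-up is in fact slightly more careful than the paper's in making the density-extension step explicit.
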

\begin{proof}
(i)\enspace It is obvious by the definition of $H_{K,p}^{\psi,\alpha}$.

(ii)\enspace By (i), we may assume that $\alpha=0$.
By Proposition \ref{prop: multiplier},
the function $m(\xi)=(1+\psi(\xi))^{-\frac{\epsilon}{2}}$ is a Fourier multiplier on $L_{K}^{p}$
and so for any $u\in H_{K,p}^{\psi,\epsilon}$, we obtain that
\begin{align}
\|u\|_{L_{K}^{p}}
&=\|\mathcal{F}^{-1}\left((1+\psi(\xi))^{-\frac{\epsilon}{2}}
            (1+\psi(\xi))^{\frac{\epsilon}{2}}\mathcal{F}u(\xi)\right)\|_{L_{K}^{p}}\nonumber\\
&\leq C\|\mathcal{F}^{-1}\left((1+\psi(\xi))^{\frac{\epsilon}{2}}\mathcal{F}u(\xi)\right)\|_{L_{K}^{p}}\nonumber\\
&=C\|u\|_{H_{K,p}^{\psi,\epsilon}}\label{eqn: embed bessel}
\end{align}
for some constant $C>0$, which proves the desired result.

(iii)\enspace Firstly we prove the first inequality of \eqref{eqn: equiv. norm Bessel}.
By Proposition \ref{prop: multiplier},
the function $m(\xi)=\frac{\psi(\xi)^{\frac{\alpha}{2}}}{(1+\psi(\xi))^{\frac{\alpha}{2}}}$
is a Fourier multiplier on $L_{K}^{p}$ and so we obtain that
\begin{align}
\|L_{\psi}^{\frac{\alpha}{2}}u\|_{L_{K}^{p}}
&=\left\|\mathcal{F}^{-1}\left(\frac{\psi(\xi)^{\frac{\alpha}{2}}}{(1+\psi(\xi))^{\frac{\alpha}{2}}}
(1+\psi(\xi))^{\frac{\alpha}{2}}\mathcal{F}u(\xi)\right)\right\|_{L_{K}^{p}}\nonumber\\
&\leq C'\left\|\mathcal{F}^{-1}\left((1+\psi(\xi))^{\frac{\alpha}{2}}\mathcal{F}u(\xi)\right)\right\|_{L_{K}^{p}}\nonumber\\
&=C'\left\|u\right\|_{H_{K,p}^{\psi,\alpha}}\label{eqn: L alpha bdd Lp}
\end{align}
for some constant $C'>0$. Therefore, by applying \eqref{eqn: embed bessel} and \eqref{eqn: L alpha bdd Lp}, we have
\begin{align*}
\|u\|_{L_{K}^{p}}+\|L_{\psi}^{\frac{\alpha}{2}}u\|_{L_{K}^{p}}
\leq (C+C')\left\|u\right\|_{H_{K,p}^{\psi,\alpha}},
\end{align*}
which gives the first inequality of \eqref{eqn: equiv. norm Bessel}.

We now prove the second inequality of \eqref{eqn: equiv. norm Bessel}.
By Proposition \ref{prop: multiplier},
the function $m(\xi)=\frac{(1+\psi(\xi))^{\frac{\alpha}{2}}}{1+\psi(\xi)^{\frac{\alpha}{2}}}$
is a Fourier multiplier on $L_{K}^{p}$ and so we obtain that
\begin{align*}
\|u\|_{H_{K,p}^{\psi,\alpha}}
&=\left\|\mathcal{F}^{-1}\left((1+\psi(\xi))^{\frac{\alpha}{2}}\mathcal{F}u(\xi)\right)\right\|_{L_{K}^{p}}\\
&=\left\|\mathcal{F}^{-1}\left(\frac{(1+\psi(\xi))^{\frac{\alpha}{2}}}{1+\psi(\xi)^{\frac{\alpha}{2}}}
\left(1+\psi(\xi)^{\frac{\alpha}{2}}\right)\mathcal{F}u(\xi)\right)\right\|_{L_{K}^{p}}\\
&\leq \left\|\mathcal{F}^{-1}\left(\frac{(1+\psi(\xi))^{\frac{\alpha}{2}}}{1+\psi(\xi)^{\frac{\alpha}{2}}}
\mathcal{F}u(\xi)\right)\right\|_{L_{K}^{p}}\\
&\qquad+\left\|\mathcal{F}^{-1}\left(\frac{(1+\psi(\xi))^{\frac{\alpha}{2}}}{1+\psi(\xi)^{\frac{\alpha}{2}}}
\psi(\xi)^{\frac{\alpha}{2}}\mathcal{F}u(\xi)\right)\right\|_{L_{K}^{p}}\\
&\leq C''\left\|u\right\|_{L_{K}^{p}}
+C''\left\|\mathcal{F}^{-1}\left(\psi(\xi)^{\frac{\alpha}{2}}\mathcal{F}u(\xi)\right)\right\|_{L_{K}^{p}}\\
&=C''\left(\left\|u\right\|_{L_{K}^{p}}+\left\|L_{\psi}^{\frac{\alpha}{2}}u\right\|_{L_{K}^{p}}\right)
\end{align*}
for some constant $C''>0$, which implies the second inequality of \eqref{eqn: equiv. norm Bessel}.
\end{proof}

For $u\in H_{K,p}^{\psi, \alpha}$ and $\varphi\in C_{\mathrm{c},K}^{\infty}$, the duality is defined by
  \begin{align*}
(u,\varphi)
=\int_{\mathbb{R}^{d}}\bilin{[(1+L_{\psi})^{\frac{\alpha}{2}}u](x)}{[(1+L_{\psi})^{-\frac{\alpha}{2}}\varphi](x)}_{K} dx.
  \end{align*}
By applying the H\"{o}lder's inequality to $(u,\varphi)$,  we have
\begin{equation}\label{eqn:inequality for duality}
|(u,\varphi)|\leq N\|u\|_{H_{K,p}^{\psi,\alpha}},
\end{equation}
where $N=\|(1+L_{\psi})^{-\frac{\alpha}{2}}\varphi\|_{L_{K}^{\frac{p}{p-1}}}$.

Let $\mathcal{C}_{\boldsymbol{\beta}}(\mathcal{E}_{C_{\mathrm{c},K}^{\infty}})$
be the set of all smooth processes of the form
\begin{equation*}
g(t,\cdot)=\sum_{i=1}^{m}F_{i}1_{(t_{i-1},t_{i}]}(t)\phi_{i}(\cdot),\quad t\in [0,T],
\end{equation*}
where $F_{i}\in\mathcal{C}_{\boldsymbol{\beta}}$,
$0\leq t_{0}<t_{1}<\cdots<t_{m}\leq T$, and $\phi_{i}\in C_{\mathrm{c},K}^{\infty}$.

\begin{definition}
\upshape
Let $p>1$, $\alpha\in\mathbb{R}$, $T>0$ and let $X$ be a separable Hilbert space.
\begin{itemize}

  \item[(a)]
  For an element $\Phi\in\mathcal{C}_{\boldsymbol{\beta}}(\mathcal{E}_{C_{\mathrm{c},X}^{\infty}})$, we define a norm
  \begin{align*}
  \|\Phi\|_{\mathbb{D}_{\boldsymbol{\beta}}^{1,p}(|\mathcal{H}_{H_{X,p}^{\psi,\alpha}}|)}^{p}
  :=\mathbb{E}\|\Phi\|_{|\mathcal{H}_{H_{X,p}^{\psi,\alpha}}|}^{p}
        +\mathbb{E}\|D^{\boldsymbol{\beta}}\Phi\|_{|\mathcal{H}_{U_{0}}|\otimes|\mathcal{H}_{H_{X,p}^{\psi,\alpha}}|}^{p}.
  \end{align*}
  We denote by $\mathbb{D}_{\boldsymbol{\beta}}^{1,p}(|\mathcal{H}_{H_{X,p}^{\psi,\alpha}}|)$ the completion of $\mathcal{C}_{\boldsymbol{\beta}}(\mathcal{E}_{C_{\mathrm{c},X}^{\infty}})$ with respect to the norm $\|\cdot\|_{\mathbb{D}_{\boldsymbol{\beta}}^{1,p}(|\mathcal{H}_{H_{X,p}^{\psi,\alpha}}|)}$.

  \item[(b)] We denote by $\mathbb{L}_{\boldsymbol{r},\boldsymbol{\beta}}^{1,p}(H_{X,p}^{\psi,\alpha},T)$ the set of all elements $\Phi\in\mathbb{D}_{\boldsymbol{\beta}}^{1,p}(|\mathcal{H}_{H_{X,p}^{\psi,\alpha}}|)$ such that
\begin{align*}
\|\Phi\|_{\mathbb{L}_{\boldsymbol{r},\boldsymbol{\beta}}^{1,p}(H_{X,p}^{\psi,\alpha},T)}^{p}
&:=\mathbb{E}\!\!\int_{0}^{T}\|\Phi(s,\cdot)\|_{H_{X,p}^{\psi,\alpha}}^{p}ds\\
 &\qquad +\mathbb{E}\int_{0}^{T}\left(\!\!\int_{0}^{T}
                \|D_{\theta}^{\boldsymbol{\beta}}\Phi(s,\cdot)\|_{H_{X\otimes U_{0},p}^{\psi,\alpha}}^{\boldsymbol{r}}d\theta\right)^{\frac{p}{\boldsymbol{r}}}ds<\infty.
\end{align*}

  \item[(c)] We denote by $\widetilde{\mathbb{L}}_{\boldsymbol{r},\boldsymbol{\beta}}^{1,p}(H_{X,p}^{\psi,\alpha},T)$ the completion of $\mathcal{C}_{\boldsymbol{\beta}}(\mathcal{E}_{C_{\mathrm{c},X}^{\infty}})$ in $\mathbb{D}_{\boldsymbol{\beta}}^{1,p}(|\mathcal{H}_{H_{X,p}^{\psi,\alpha}}|)$ with respect to the norm $\|\cdot\|_{\mathbb{L}_{\boldsymbol{r},\boldsymbol{\beta}}^{1,p}(H_{X,p}^{\psi,\alpha},T)}$.
\end{itemize}
\end{definition}

\begin{lemma}\label{lmm:commutativity between D and 1-Delta}
Let $p>1$ and $\alpha\in\mathbb{R}$
and let $\Phi\in\mathbb{D}_{\boldsymbol{\beta}}^{1,p}(|\mathcal{H}_{H_{K,p}^{\psi,\alpha}}|)$. Then it holds that
\begin{align*}
  D^{\boldsymbol{\beta}}[(1+L_{\psi})^{\frac{\alpha}{2}}\Phi]
  =(1+L_{\psi})^{\frac{\alpha}{2}}[D^{\boldsymbol{\beta}}\Phi].
\end{align*}
\end{lemma}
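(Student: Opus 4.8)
The plan is to establish the identity first on the dense subspace $\mathcal{C}_{\boldsymbol{\beta}}(\mathcal{E}_{C_{\mathrm{c},K}^{\infty}})$ and then to transfer it to an arbitrary $\Phi$ by continuity. The conceptual reason it holds is that $(1+L_{\psi})^{\alpha/2}$ is a deterministic operator acting only on the spatial variable $x\in\mathbb{R}^{d}$, whereas $D^{\boldsymbol{\beta}}$ differentiates only in the Gaussian variable and merely appends a tensor factor living in $\mathcal{H}_{U_{0}}$; operators acting on disjoint variables commute. The actual work is to make this precise within the scale of spaces $|\mathcal{H}_{V}|$, $|\mathcal{H}_{U_{0}}|\otimes|\mathcal{H}_{V}|$ and $\mathbb{D}_{\boldsymbol{\beta}}^{1,p}(|\mathcal{H}_{V}|)$, each of which depends on the target space $V$ through its norm only (and through the fixed nonnegative kernel $\partial^{2}R/\partial s\,\partial t$, which is a genuine density by Assumption \textbf{(R1)}).

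First I would treat $\Phi(t,\cdot)=\sum_{i=1}^{m}F_{i}\,1_{(t_{i-1},t_{i}]}(t)\,\phi_{i}(\cdot)$ with $F_{i}=f_{i}(\boldsymbol{\beta}(h_{i1}),\dots,\boldsymbol{\beta}(h_{in_{i}}))\in\mathcal{C}_{\boldsymbol{\beta}}$ and $\phi_{i}\in C_{\mathrm{c},K}^{\infty}$. By Proposition \ref{prop: psi bessel}(i), each $(1+L_{\psi})^{\alpha/2}\phi_{i}$ is a fixed deterministic element of $H_{K,p}^{\psi,\theta}$ for every $\theta$, so $(1+L_{\psi})^{\alpha/2}\Phi(t,\cdot)=\sum_{i}F_{i}\,1_{(t_{i-1},t_{i}]}(t)\,[(1+L_{\psi})^{\alpha/2}\phi_{i}](\cdot)$ is again a finite cylindrical sum of the same form, now with deterministic spatial parts in $L_{K}^{p}$, to which the Malliavin derivative applies through the same derivation rule. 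Using the explicit formula for $D^{\boldsymbol{\beta}}$ on $\mathcal{C}_{\boldsymbol{\beta}}$, the linearity of $D^{\boldsymbol{\beta}}$ and of $(1+L_{\psi})^{\alpha/2}$, and the fact that $(1+L_{\psi})^{\alpha/2}$ acts only on the spatial tensor factor, one finds that both $D^{\boldsymbol{\beta}}[(1+L_{\psi})^{\alpha/2}\Phi]$ and $(1+L_{\psi})^{\alpha/2}[D^{\boldsymbol{\beta}}\Phi]$ equal $\sum_{i=1}^{m}\sum_{l=1}^{n_{i}}\partial_{x_{l}}f_{i}(\boldsymbol{\beta}(h_{i1}),\dots,\boldsymbol{\beta}(h_{in_{i}}))\,1_{(t_{i-1},t_{i}]}(t)\,[(1+L_{\psi})^{\alpha/2}\phi_{i}](\cdot)\otimes h_{il}$, so the identity holds on $\mathcal{C}_{\boldsymbol{\beta}}(\mathcal{E}_{C_{\mathrm{c},K}^{\infty}})$.

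Next I would pass to the limit. By Proposition \ref{prop: psi bessel}(i), $(1+L_{\psi})^{\alpha/2}$ is an isometry $H_{K,p}^{\psi,\alpha}\to H_{K,p}^{\psi,0}=L_{K}^{p}$; since $\|\cdot\|_{|\mathcal{H}_{V}|}$ is built from $\|\cdot\|_{V}$ alone (and likewise for the tensored norms), the same operator induces isometries $|\mathcal{H}_{H_{K,p}^{\psi,\alpha}}|\to|\mathcal{H}_{L_{K}^{p}}|$ and $|\mathcal{H}_{U_{0}}|\otimes|\mathcal{H}_{H_{K\otimes U_{0},p}^{\psi,\alpha}}|\to|\mathcal{H}_{U_{0}}|\otimes|\mathcal{H}_{L_{K\otimes U_{0}}^{p}}|$ (acting on the Bessel-potential factor), and hence an isometry $\mathbb{D}_{\boldsymbol{\beta}}^{1,p}(|\mathcal{H}_{H_{K,p}^{\psi,\alpha}}|)\to\mathbb{D}_{\boldsymbol{\beta}}^{1,p}(|\mathcal{H}_{L_{K}^{p}}|)$. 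Given $\Phi\in\mathbb{D}_{\boldsymbol{\beta}}^{1,p}(|\mathcal{H}_{H_{K,p}^{\psi,\alpha}}|)$, choose $\Phi_{n}\in\mathcal{C}_{\boldsymbol{\beta}}(\mathcal{E}_{C_{\mathrm{c},K}^{\infty}})$ with $\Phi_{n}\to\Phi$ in that norm; then $(1+L_{\psi})^{\alpha/2}\Phi_{n}\to(1+L_{\psi})^{\alpha/2}\Phi$ in $\mathbb{D}_{\boldsymbol{\beta}}^{1,p}(|\mathcal{H}_{L_{K}^{p}}|)$, while by the smooth case $D^{\boldsymbol{\beta}}[(1+L_{\psi})^{\alpha/2}\Phi_{n}]=(1+L_{\psi})^{\alpha/2}[D^{\boldsymbol{\beta}}\Phi_{n}]$, which converges to $(1+L_{\psi})^{\alpha/2}[D^{\boldsymbol{\beta}}\Phi]$ in $L^{p}(\Omega;|\mathcal{H}_{U_{0}}|\otimes|\mathcal{H}_{L_{K\otimes U_{0}}^{p}}|)$. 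Since $D^{\boldsymbol{\beta}}$ is continuous (indeed closed) on $\mathbb{D}_{\boldsymbol{\beta}}^{1,p}(|\mathcal{H}_{L_{K}^{p}}|)$, letting $n\to\infty$ yields the identity for $\Phi$.

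I expect the main obstacle to be organizational rather than analytic: one must verify carefully that $(1+L_{\psi})^{\alpha/2}$ genuinely induces bounded, indeed isometric, maps on the entire scale $|\mathcal{H}_{V}|$, $|\mathcal{H}_{U_{0}}|\otimes|\mathcal{H}_{V}|$, $\mathbb{D}_{\boldsymbol{\beta}}^{1,p}(|\mathcal{H}_{V}|)$ --- which is precisely where Proposition \ref{prop: psi bessel}(i) and the definition of these norms (meaningful thanks to Assumption \textbf{(R1)}) enter --- and keep straight which space each object lives in. The remaining point, that $(1+L_{\psi})^{\alpha/2}$ commutes with $D^{\boldsymbol{\beta}}$, is essentially automatic once one notes that Malliavin differentiation merely produces an extra $\mathcal{H}_{U_{0}}$-tensor factor and never affects the $\mathbb{R}^{d}$-variable on which $(1+L_{\psi})^{\alpha/2}$ acts; no analytic difficulty arises beyond the Fourier-multiplier estimates already established in Section \ref{sec:Fourier multi}.
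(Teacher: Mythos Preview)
Your proposal is correct and follows essentially the same approach as the paper: verify the identity by direct computation on the cylindrical class $\mathcal{C}_{\boldsymbol{\beta}}(\mathcal{E}_{C_{\mathrm{c},K}^{\infty}})$, then extend by density. The paper's own proof is in fact terser---it does only the cylindrical computation and then dismisses the extension with the phrase ``by applying the approximation method''---whereas you spell out explicitly how the isometry property of $(1+L_{\psi})^{\alpha/2}$ from Proposition~\ref{prop: psi bessel}(i) transfers to the $|\mathcal{H}_{V}|$ and $\mathbb{D}_{\boldsymbol{\beta}}^{1,p}$ scales and how closedness of $D^{\boldsymbol{\beta}}$ is used; this added detail is sound and arguably an improvement.
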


\begin{proof}
Let $\Phi\in \mathcal{C}_{\boldsymbol{\beta}}(\mathcal{E}_{C_{\mathrm{c},K}^{\infty}})$ such that
\begin{align*}
\Phi(t,x)=\sum_{i=1}^{m}F_{i}1_{(t_{i-1},t_{i}]}(t)\phi_{i}(x),\quad t\in [0,T],\quad x\in\mathbb{R}^{d},
\end{align*}
where $F_{i}\in\mathcal{C}_{\boldsymbol{\beta}}$,
$0\leq t_{0}<t_{1}<\cdots<t_{m}\leq T$, and $\phi_{i}\in C_{\mathrm{c},K}^{\infty}$.
Then we obtain that
\begin{align*}
D^{\boldsymbol{\beta}}[(1+L_{\psi})^{\frac{\alpha}{2}}\Phi]
&=\sum_{i=1}^{m}\left(D^{\boldsymbol{\beta}}F_{i}\right)1_{(t_{i-1},t_{i}]}(1+L_{\psi})^{\frac{\alpha}{2}}\phi_{i}\\
&=(1+L_{\psi})^{\frac{\alpha}{2}}[D^{\boldsymbol{\beta}}\Phi],
\end{align*}
from which, by applying the approximation method, the proof is completed.
\end{proof}

\begin{proposition}\label{prop:L-p estimate of HSI}
For any $p\geq 2$,
there exists a constant $c>0$ depending on $p$ such that for any $\alpha\in\mathbb{R}$ and
$\Phi\in\mathbb{D}_{\boldsymbol{\beta}}^{1,p}(|\mathcal{H}_{H_{p, K\otimes U_{0}}^{\psi,\alpha}}|)$,
\begin{align}\label{eqn:L-p estimate of HSI}
\mathbb{E}\|\delta^{\boldsymbol{\beta}}(\Phi)\|_{H_{K,p}^{\psi,\alpha}}^{p}
&\leq c\left(\mathbb{E}\|\Phi\|_{|\mathcal{H}_{H_{p, K\otimes U_{0}}^{\psi,\alpha}}|}^{p}
  +\mathbb{E}\|D^{\boldsymbol{\beta}}\Phi\|_{|\mathcal{H}_{U_{0}}|\otimes|\mathcal{H}_{H_{p, K\otimes U_{0}}^{\psi,\alpha}}|}^{p}\right)\\
&=c\|\Phi\|_{\mathbb{D}_{\boldsymbol{\beta}}^{1,p}(|\mathcal{H}_{H_{p, K\otimes U_{0}}^{\psi,\alpha}}|)}^{p}.
 \nonumber
\end{align}
\end{proposition}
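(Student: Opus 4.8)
The plan is to reduce the statement to the case $\alpha=0$ and then to integrate, over $x\in\mathbb{R}^{d}$, the pointwise-in-$x$ $L^{p}$-bound for the Skorohod integral furnished by Proposition~\ref{prop:Lp boundedness of Skorohod integral}. Since $(1+L_{\psi})^{\alpha/2}$ acts only on the spatial variable, it commutes with $D^{\boldsymbol{\beta}}$ by Lemma~\ref{lmm:commutativity between D and 1-Delta}; one checks in the same manner — first on elements of $\mathcal{C}_{\boldsymbol{\beta}}(\mathcal{E}_{C_{\mathrm{c},K\otimes U_{0}}^{\infty}})$ using \eqref{eqn:Skorohod integral of G-x-phi}, then by density — that it also commutes with $\delta^{\boldsymbol{\beta}}$. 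Moreover, for any separable Hilbert space $X$, $(1+L_{\psi})^{\alpha/2}$ is an isometric isomorphism of $H_{X,p}^{\psi,\alpha}$ onto $H_{X,p}^{\psi,0}=L_{X}^{p}$ (Proposition~\ref{prop: psi bessel}(i)). Hence, replacing $\Phi$ by $(1+L_{\psi})^{\alpha/2}\Phi$ identifies the three quantities in \eqref{eqn:L-p estimate of HSI} with their $\alpha=0$ counterparts, and it suffices to prove, for $p\ge2$ and $\Phi\in\mathbb{D}_{\boldsymbol{\beta}}^{1,p}(|\mathcal{H}_{L_{K\otimes U_{0}}^{p}}|)$,
\[
\mathbb{E}\|\delta^{\boldsymbol{\beta}}(\Phi)\|_{L_{K}^{p}}^{p}\le c\Big(\mathbb{E}\|\Phi\|_{|\mathcal{H}_{L_{K\otimes U_{0}}^{p}}|}^{p}+\mathbb{E}\|D^{\boldsymbol{\beta}}\Phi\|_{|\mathcal{H}_{U_{0}}|\otimes|\mathcal{H}_{L_{K\otimes U_{0}}^{p}}|}^{p}\Big).
\]
By density it is enough to treat $\Phi\in\mathcal{C}_{\boldsymbol{\beta}}(\mathcal{E}_{C_{\mathrm{c},K\otimes U_{0}}^{\infty}})$; the resulting bound then shows $\delta^{\boldsymbol{\beta}}$ extends continuously to the whole space.

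For such a $\Phi$, the Skorohod integral acts on the time variable only, so $[\delta^{\boldsymbol{\beta}}(\Phi)](x)=\delta^{\boldsymbol{\beta}}(\Phi(\cdot,x))$ with $\Phi(\cdot,x)\in\mathcal{C}_{\boldsymbol{\beta}}(K\otimes\mathcal{H}_{U_{0}})$ for each $x$. Applying Proposition~\ref{prop:Lp boundedness of Skorohod integral} (with the Hilbert space $K$) at fixed $x$, then Jensen's inequality $\|\mathbb{E}[\,\cdot\,]\|\le\mathbb{E}\|\,\cdot\,\|$, the unitary identifications $K\otimes\mathcal{H}_{U_{0}}\cong\mathcal{H}_{K\otimes U_{0}}$ and $K\otimes\mathcal{H}_{U_{0}}^{\otimes2}\cong\mathcal{H}_{U_{0}}\otimes\mathcal{H}_{K\otimes U_{0}}$, and the inequalities \eqref{ineq: H norm leq absolute H norm}--\eqref{ineq: Hx H norm leq absolute H x absolute H norm}, Tonelli's theorem gives
\[
\mathbb{E}\|\delta^{\boldsymbol{\beta}}(\Phi)\|_{L_{K}^{p}}^{p}=\int_{\mathbb{R}^{d}}\mathbb{E}\|\delta^{\boldsymbol{\beta}}(\Phi(\cdot,x))\|_{K}^{p}\,dx\le C\,\mathbb{E}\!\int_{\mathbb{R}^{d}}\!\Big(\|\Phi(\cdot,x)\|_{|\mathcal{H}_{K\otimes U_{0}}|}^{p}+\|D^{\boldsymbol{\beta}}\Phi(\cdot,x)\|_{|\mathcal{H}_{U_{0}}|\otimes|\mathcal{H}_{K\otimes U_{0}}|}^{p}\Big)dx.
\]

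It remains to bound $\int_{\mathbb{R}^{d}}\|\Phi(\cdot,x)\|_{|\mathcal{H}_{K\otimes U_{0}}|}^{p}\,dx$ by $\|\Phi\|_{|\mathcal{H}_{L_{K\otimes U_{0}}^{p}}|}^{p}$ and, analogously, the $D^{\boldsymbol{\beta}}\Phi$-integral by $\|D^{\boldsymbol{\beta}}\Phi\|_{|\mathcal{H}_{U_{0}}|\otimes|\mathcal{H}_{L_{K\otimes U_{0}}^{p}}|}^{p}$; this is exactly where $p\ge2$ is used. Set $\nu(ds,dt)=\frac{\partial^{2}R}{\partial s\partial t}(s,t)\,ds\,dt$, a nonnegative measure on $[0,T]^{2}$ by Assumption~\textbf{(R1)}, so that $\|\Phi(\cdot,x)\|_{|\mathcal{H}_{K\otimes U_{0}}|}^{2}=\int_{[0,T]^{2}}\|\Phi(s,x)\|_{K\otimes U_{0}}\|\Phi(t,x)\|_{K\otimes U_{0}}\,\nu(ds,dt)$. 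Applying Minkowski's integral inequality for the $L^{p/2}(\mathbb{R}^{d})$-norm (valid since $p/2\ge1$), followed by the Cauchy--Schwarz inequality in $x$, yields
\[
\Big(\int_{\mathbb{R}^{d}}\|\Phi(\cdot,x)\|_{|\mathcal{H}_{K\otimes U_{0}}|}^{p}\,dx\Big)^{2/p}\le\int_{[0,T]^{2}}\Big(\int_{\mathbb{R}^{d}}\big(\|\Phi(s,x)\|_{K\otimes U_{0}}\|\Phi(t,x)\|_{K\otimes U_{0}}\big)^{p/2}dx\Big)^{2/p}\nu(ds,dt)\le\|\Phi\|_{|\mathcal{H}_{L_{K\otimes U_{0}}^{p}}|}^{2},
\]
and raising to the power $p/2$ gives the claim for the first term. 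The second term is handled identically, with $[0,T]^{4}$ and the product measure $\nu(ds,dt)\,\nu(d\theta,d\eta)$ replacing $[0,T]^{2}$ and $\nu$, and $D^{\boldsymbol{\beta}}\Phi$ replacing $\Phi$, using the definition of the $|\mathcal{H}_{U_{0}}|\otimes|\mathcal{H}_{L_{K\otimes U_{0}}^{p}}|$-norm. Taking expectations and combining with the preceding display proves the case $\alpha=0$, hence the proposition. The only genuinely delicate point is this last interchange of the spatial $L^{p}$-norm with the $|\mathcal{H}|$-type integrals: it works precisely because \textbf{(R1)} makes $\nu$ a positive measure, so that Minkowski's inequality is available; the rest is bookkeeping with the unitary identifications and the density of smooth processes.
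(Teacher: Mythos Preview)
Your proof is correct and follows essentially the same route as the paper's: reduce to $\alpha=0$ via the commutation of $(1+L_{\psi})^{\alpha/2}$ with $\delta^{\boldsymbol{\beta}}$ and $D^{\boldsymbol{\beta}}$, apply Proposition~\ref{prop:Lp boundedness of Skorohod integral} pointwise in $x$ together with \eqref{ineq: H norm leq absolute H norm}--\eqref{ineq: Hx H norm leq absolute H x absolute H norm}, and then interchange the $x$-integral with the $|\mathcal{H}|$-type integrals using Minkowski's inequality (for $p/2\ge1$) followed by Cauchy--Schwarz. The paper performs these steps in the same order, merely keeping $\widetilde{\Phi}=(1+L_{\psi})^{\alpha/2}\Phi$ rather than first specializing to $\alpha=0$.
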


\begin{proof}
Note that by the definition of $(1+L_{\psi})^{\frac{\alpha}{2}}$ and stochastic Fubini theorem, we have
\[
(1+L_{\psi})^{\frac{\alpha}{2}}\delta^{\boldsymbol{\beta}}(\Phi)
=\delta^{\boldsymbol{\beta}}((1+L_{\psi})^{\frac{\alpha}{2}}\Phi)
\]
(also, see Lemma \ref{lmm:commutativity between D and 1-Delta}).
By taking $\widetilde{\Phi}(\cdot,x):=(1+L_{\psi})^{\frac{\alpha}{2}}\Phi(\cdot,x)$
 and applying the inequalities \eqref{eqn:ineq for continuity of skorohod integral},
\eqref{ineq: H norm leq absolute H norm} and \eqref{ineq: Hx H norm leq absolute H x absolute H norm}, we obtain that
\begin{align}
\mathbb{E}\|\delta^{\boldsymbol{\beta}}(\Phi)\|_{H_{K,p}^{\psi,\alpha}}^{p}
&=\mathbb{E}\|\delta^{\boldsymbol{\beta}}((1+L_{\psi})^{\frac{\alpha}{2}}\Phi)\|_{L_{K}^{p}}^{p}
=\int_{\mathbb{R}^{d}}\mathbb{E}\left\|\delta^{\boldsymbol{\beta}}(\widetilde{\Phi}(\cdot,x))\right\|_{K}^{p}dx\nonumber\\
&\leq c\int_{\mathbb{R}^{d}}\mathbb{E}\left\|\widetilde{\Phi}(\cdot,x)\right\|_{ K\otimes \mathcal{H}_{U_{0}}}^{p}dx
   +c\int_{\mathbb{R}^{d}}\mathbb{E}\left\|D^{\boldsymbol{\beta}}(\widetilde{\Phi}(\cdot,x))
      \right\|_{K\otimes \mathcal{H}_{U_{0}}\otimes \mathcal{H}_{U_{0}}}^{p}dx\nonumber\\
&= c\mathbb{E}[I_1]+c_p\mathbb{E}[I_2],\label{eqn:L-p estimate of HSI in proof}
\end{align}
where
\begin{align*}
I_1:=\int_{\mathbb{R}^{d}}\left\|\widetilde{\Phi}(\cdot,x)\right\|_{|\mathcal{H}_{ K\otimes U_{0}}|}^{p}dx,\quad
I_2:=\int_{\mathbb{R}^{d}}\left\|D^{\boldsymbol{\beta}}(\widetilde{\Phi}(\cdot,x))
  \right\|_{|\mathcal{H}_{U_{0}}|\otimes |\mathcal{H}_{K\otimes U_{0}}|}^{p}dx.
\end{align*}
For a notational convenience, we write
$d\boldsymbol{R}=\frac{\partial^{2}R}{\partial s\partial t}(s,t)
\frac{\partial^{2}R}{\partial \theta\partial \eta}(\theta,\eta)dsdt d\theta d\eta$.
Then by applying the Minkowski's inequality and Cauchy-Schwarz inequality, we obtain that
\begin{align}
I_2
&=\int_{\mathbb{R}^{d}}\left\|D^{\boldsymbol{\beta}}(\widetilde{\Phi}(\cdot,x))
  \right\|_{|\mathcal{H}_{U_{0}}|\otimes |\mathcal{H}_{K\otimes U_{0}}|}^{p}dx\nonumber\\
&=\int_{\mathbb{R}^{d}}\left(\int_{[0,T]^{4}}
 \|D_{\theta}^{\boldsymbol{\beta}}(\widetilde{\Phi}(s,x))\|_{K\otimes U_{0}\otimes U_{0}}
 \|D_{\eta}^{\boldsymbol{\beta}}(\widetilde{\Phi}(t,x))\|_{K\otimes U_{0}\otimes U_{0}}d\boldsymbol{R}\right)^{\frac{p}{2}}dx\nonumber\\
&\le \left(\int_{[0,T]^{4}}
 \left(\int_{\mathbb{R}^{d}}\|D_{\theta}^{\boldsymbol{\beta}}(\widetilde{\Phi}(s,x))\|_{K\otimes U_{0}\otimes U_{0}}^{\frac{p}{2}}
 \|D_{\eta}^{\boldsymbol{\beta}}(\widetilde{\Phi}(t,x))\|_{K\otimes U_{0}\otimes U_{0}}^{\frac{p}{2}}dx\right)^{\frac{2}{p}}
   d\boldsymbol{R}\right)^{\frac{p}{2}}\nonumber\\
&\leq \left(\int_{[0,T]^{4}}\left(\int_{\mathbb{R}^{d}}
  \|D_{\theta}^{\boldsymbol{\beta}}(\widetilde{\Phi}(s,x))\|_{K\otimes U_{0}\otimes U_{0}}^{p}dx\right)^{\frac{1}{p}}
  \left(\int_{\mathbb{R}^{d}}
  \|D_{\theta}^{\boldsymbol{\beta}}(\widetilde{\Phi}(t,x))\|_{K\otimes U_{0}\otimes U_{0}}^{p}dx\right)^{\frac{1}{p}}
  d\boldsymbol{R}\right)^{\frac{p}{2}}\nonumber\\
&=\left(\int_{[0,T]^{4}}
\|D_{\theta}^{\boldsymbol{\beta}}\Phi(s,\cdot)\|_{U_{0}\otimes H_{p, K\otimes U_{0}}^{\psi,\alpha}}
\|D_{\eta}^{\boldsymbol{\beta}}\Phi(t,\cdot)\|_{U_{0}\otimes H_{p, K\otimes U_{0}}^{\psi,\alpha}}d\boldsymbol{R}\right)^{\frac{p}{2}}\nonumber\\
&=\|D^{\boldsymbol{\beta}}\Phi\|_{|\mathcal{H}_{U_{0}}|\otimes|\mathcal{H}_{H_{p, K\otimes U_{0}}^{\psi,\alpha}}|}^{p}.
   \label{eqn:estimate of L2}
\end{align}
By similar arguments, we obtain that
\begin{align}\label{eqn:estimate of L1}
I_1\le \|\Phi\|_{|\mathcal{H}_{H_{p}^{\psi,\alpha}(K\otimes  U_{0})}|}^{p}.
\end{align}
Therefore, by combining \eqref{eqn:L-p estimate of HSI in proof}, \eqref{eqn:estimate of L2} and \eqref{eqn:estimate of L1},
we prove \eqref{eqn:L-p estimate of HSI}.
\end{proof}

Let $p\geq 2$ and $\alpha\in\mathbb{R}$.
Then $\delta^{\boldsymbol{\beta}}$ is a bounded linear operator
from $\mathbb{D}_{\boldsymbol{\beta}}^{1,p}(|\mathcal{H}_{H_{p, K\otimes U_{0}}^{\psi,\alpha}}|)$
into $L^p(\Omega,H_{K,p}^{\psi,\alpha})$.

\section{Generalized Littlewood-Paley Type Inequalities}\label{sec: GLPI}
In this section, we prove a generalized Littlewood-Paley type inequality
for the Banach space $L^{\boldsymbol{r}}((\eta_{1},\eta_{2});H)$-valued functions
(Theorem \ref{cor: 2nd LP ineq}), where $H$ is a Hilbert space.
In fact, Theorem \ref{cor: 2nd LP ineq} will be applied
for the proof of the main result for SPDEs  in Section \ref{sec: main result}.

Let $K$ be a real separable Hilbert space and
let $\psi:[0,\infty)\times \mathbb{R}^{d}\rightarrow \mathbb{C}$ be a measurable function.
For $l\geq 0$ and $f\in C_{\mathrm{c},K}^{\infty}$, the pseudo differential operator $L_{\psi}(l)$
with the symbol $\psi$ of order $\gamma>0$ (in the sense of \eqref{eqn: condition for symbol psi 1})
is defined by
\begin{align*}
L_{\psi}(l)f(x):=\mathcal{F}^{-1}(\psi(l,\xi)\mathcal{F}f(\xi))(x).
\end{align*}
We assume that there exist positive constants $\kappa:=\kappa_{\psi},\mu:=\mu_{\psi},\gamma:=\gamma_{\psi}$
and a natural number $N:=N_{\psi}\in\mathbb{N}$ with $N\ge \lfloor\frac{d}{2}\rfloor +1$ such that
\begin{itemize}
  \item [\textbf{(S1)}] for almost all $t\geq 0$ and $\xi\in\mathbb{R}^{d}$,
  \begin{equation}\label{eqn: condition for symbol psi 1}
  {\rm Re}[\psi (t,\xi)]\leq -\kappa|\xi|^{\gamma},
  \end{equation}
where ${\rm Re}(z)$ is the real part of the complex number $z$,

  \item [\textbf{(S2)}] for any multi-index $\alpha\in \mathbb{N}_{0}^{d}$ with
  $|\alpha|\leq N$,
   and for almost all $t\geq 0$ and $\xi\in\mathbb{R}^{d}\setminus\widetilde{\boldsymbol{0}}$,
   where $\widetilde{\boldsymbol{0}}=\{(x_1,\cdots,x_d)|x_i=0\text{ for some }i=1,\cdots,d\}$,
      \begin{equation}\label{eqn: condition for symbol psi 2}
      |\partial_{\xi}^{\alpha}\psi(t,\xi)|\leq \mu|\xi|^{\gamma-|\alpha|},
      \end{equation}
\end{itemize}

We put $\mathfrak{S}$ the set of all measurable functions (symbols of pseudo-differential operators)
$\psi:[0,\infty)\times \mathbb{R}^{d}\rightarrow \mathbb{C}$ satisfying the conditions \textbf{(S1)} and \textbf{(S2)}.

\begin{remark}
\upshape
The conditions \textbf{(S1)} and \textbf{(S2)} have been considered in \cite{Ji-Kim 2025-1, Ji-Kim 2025}.
Also, the conditions \textbf{(S1)} and \textbf{(S2)} have been considered in \cite{I. Kim K.-H. Kim 2016}
with $N=\lfloor\frac{d}{2}\rfloor+1$. In fact, the condition \textbf{(S2)} is stronger than the condition \eqref{eqn: sym cond 2}.
\end{remark}

If $\psi\in \mathfrak{S}$, then
by the condition \textbf{(S1)}, we have $\exp\left(\int_{s}^{t}\psi(r,\cdot)dr\right)\in L^1$ for each $t>s$.
Put
\begin{align*}
p_{\psi}(t,s,x)=\mathcal{F}^{-1}\left(\exp\left(\int_{s}^{t}\psi(r,\cdot)dr\right)\right)(x),\quad t>s,\quad x\in\mathbb{R}^d.
\end{align*}
Then by Corollary 2.10 in \cite{Ji-Kim 2025}, we see that $p_{\psi}(t,s,\cdot)\in L^{1}$ for each $t>s$.
Hence, for each $f\in L_{K}^{p}$ and $t>s$,
by applying the Young's convolution inequality, we can define
\begin{equation}\label{eqn: mathcal T}
\mathcal{T}_{\psi}(t,s)f(x)=p_{\psi}(t,s,\cdot)*f(x)=\int_{\mathbb{R}^{d}}p_{\psi}(t,s,x-y)f(y)dy
\end{equation}
as $\mathcal{T}_{\psi}(t,s)f\in L_{K}^{p}$.
Also, by the definition,
we can easily see that for any $t\geq r\geq s$ and $f\in L_{K}^{p}$,
\begin{align*}
\mathcal{T}_{\psi}(t,r)\mathcal{T}_{\psi}(r,s)f(x)
&=\mathcal{T}_{\psi}(t,s)f(x).
\end{align*}
This means that the family $\{\mathcal{T}_{\psi}(t,s)\}_{t\geq s\geq 0}$ is an evolution system.
If $\psi(t,\xi)=\psi(\xi)$, then we write $\mathcal{T}_{\psi}(t-s)=\mathcal{T}_{\psi}(t,s)$.

The following theorem is a generalized Littlewood-Paley type inequality for the Banach space
$L^{r}(\mathbb{R};H)$-valued functions, where $r\geq 1$ and $H$ is a Hilbert space.
Here for a function $f\in C_{\rm c}^{\infty}((a,b)\times\mathbb{R}^{d};L^{r}(\mathbb{R};H))$
and fixed $s\in (a,b)$, $x\in \mathbb{R}^{d}$ and $\theta\in \mathbb{R}$,
it holds that $f(s,x)(\theta)\in H$. Therefore, for a notational convenience,
we write $f(s,x,\theta):=f(s,x)(\theta)$
for all $s\in (a,b)$, $x\in \mathbb{R}^{d}$ and $\theta\in \mathbb{R}$.

\begin{theorem}\label{cor: 2nd LP ineq}
Let $H$ be a separable Hilbert space.
Let $\varphi\in\mathfrak{M}_{\gamma}(\mathbb{R}^{d})$ for some $\gamma\equiv\gamma_{\varphi}>0$
satisfying the condition \textbf{(S2)} and let $\psi\in\mathfrak{S}$.
Assume that $N_{\varphi},N_{\psi}>d+2+\lfloor\gamma_{\varphi}\rfloor+\lfloor\gamma_{\psi}\rfloor$.
Let $r\geq 1$ and let $q\geq \max\{2,r\}$.
Then for any $p\geq q$, it holds that

\begin{itemize}
  \item [\rm{(i)}] if  $-\infty< a< b<\infty$, then
there exists a constant $C_{1}>0$ depending on $a,b,p,q,d$,
$\gamma_{\varphi},\gamma_{\psi},\kappa_{\varphi},\kappa_{\psi},\mu_{\varphi}$ and $\mu_{\psi}$
such that for any $f\in C_{\rm c}^{\infty}((a,b)\times\mathbb{R}^{d}; L^{r}(\mathbb{R};H))$,
\begin{align}\label{ineq: 2nd LP ineq}
&\int_{\mathbb{R}^{d}}\int_{a}^{b}\left[\int_{a}^{t}(t-s)^{\frac{q\gamma_{\varphi}}{\gamma_{\psi}}-1}
\left(\int_{\mathbb{R}}\|L_{\varphi}
\mathcal{T}_{\psi}(t,s)f(s,\cdot,\theta)(x)\|_{H}^{r}d\theta\right)^{\frac{q}{r}} ds\right]^{\frac{p}{q}}dtdx\nonumber\\
&\qquad\leq C_{1}\int_{a}^{b}\left[\int_{\mathbb{R}}\left(\int_{\mathbb{R}^{d}}
\|f(t,x,\theta)\|_{H}^{p}dx\right)^{\frac{r}{p}}d\theta\right]^{\frac{p}{r}}dt
\end{align}
for which the right hand side of \eqref{ineq: 2nd LP ineq} is finite,

  \item [\rm{(ii)}] if $r\leq 2$ and $q=2$, there exists a constant $C_{2}>0$ depending on
  $p,d,\gamma_{\varphi},\gamma_{\psi}$,$\kappa_{\varphi},\kappa_{\psi}$, $\mu_{\varphi}$ and $\mu_{\psi}$
such that for any $f\in C_{\rm c}^{\infty}(\mathbb{R}^{d+1}; L^{r}(\mathbb{R};H))$,
\begin{align}\label{ineq: 2nd LP ineq q=2}
&\int_{-\infty}^{\infty}\int_{\mathbb{R}^{d}}\left[\int_{-\infty}^{t}(t-s)^{\frac{2\gamma_{\varphi}}{\gamma_{\psi}}-1}
    \left(\int_{\mathbb{R}}\|L_{\varphi}
    \mathcal{T}_{\psi}(t,s)f(s,\cdot,\theta)(x)\|_{H}^{r}d\theta\right)^{\frac{2}{r}}
            ds\right]^{\frac{p}{2}}dxdt\nonumber\\
&\qquad\leq C_{2}\int_{-\infty}^{\infty}\left[\int_{\mathbb{R}}\left(\int_{\mathbb{R}^{d}}
\|f(t,x,\theta)\|_{H}^{p}dx\right)^{\frac{r}{p}}d\theta\right]^{\frac{p}{r}}dt
\end{align}
for which the right hand side of \eqref{ineq: 2nd LP ineq q=2} is finite.
\end{itemize}
\end{theorem}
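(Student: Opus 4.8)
\emph{Proof plan.} I would prove part (i) first; part (ii) is the same argument run on the full line, with $a=-\infty$, $b=+\infty$ and $q=2$, the hypothesis $r\le 2$ there being exactly the requirement $q=\max\{2,r\}$ that makes \eqref{ineq:LP-ineq intro} available at $q=2$. By density it suffices to treat $f\in C_{\rm c}^{\infty}((a,b)\times\mathbb{R}^{d};L^{r}(\mathbb{R};H))$. Put $\mathbf{B}:=L^{r}(\mathbb{R};H)$, $\varrho:=\frac{q\gamma_{\varphi}}{\gamma_{\psi}}-1$, and, for $s\in(a,b)$, $x\in\mathbb{R}^{d}$, let $\mathbf{f}(s,x):=f(s,x,\cdot)\in\mathbf{B}$. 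The key remark is that $L_{\varphi}$ and $\mathcal{T}_{\psi}(t,s)$ act only in $x$ — $L_{\varphi}$ is a scalar Fourier multiplier and $\mathcal{T}_{\psi}(t,s)$ is convolution in $x$ against the scalar kernel $p_{\psi}(t,s,\cdot)$ — so they commute with the fibrewise structure of $\mathbf{B}$ and $\bigl(\int_{\mathbb{R}}\|L_{\varphi}\mathcal{T}_{\psi}(t,s)f(s,\cdot,\theta)(x)\|_{H}^{r}d\theta\bigr)^{1/r}=\|L_{\varphi}\mathcal{T}_{\psi}(t,s)\mathbf{f}(s,\cdot)(x)\|_{\mathbf{B}}$. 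Setting $\mathbf{Y}:=L^{q}\bigl((0,\infty),u^{\varrho}du;\mathbf{B}\bigr)$ and defining the linear operator
\[
(\mathcal{S}g)(t,x)(u):=\mathbf{1}_{(0,\,t-a)}(u)\,L_{\varphi}\mathcal{T}_{\psi}(t,t-u)\,g(t-u,\cdot)(x),
\]
the change of variables $s=t-u$ shows that the left-hand side of \eqref{ineq: 2nd LP ineq} equals $\|\mathcal{S}\mathbf{f}\|_{L^{p}((a,b)\times\mathbb{R}^{d};\mathbf{Y})}^{p}$. So it remains to bound $\mathcal{S}$ from $L^{p}((a,b)\times\mathbb{R}^{d};\mathbf{B})$ into $L^{p}((a,b)\times\mathbb{R}^{d};\mathbf{Y})$ and then to pass from $\|\mathbf{f}\|_{L^{p}(\cdot;\mathbf{B})}$ to the right-hand side of \eqref{ineq: 2nd LP ineq}.

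The plan is to obtain this boundedness from the scalar estimate \eqref{ineq:LP-ineq intro} by a Calderón--Zygmund argument. Because the operators do not see the fibre variable $\theta$, one has $\mathcal{S}=\mathcal{S}_{0}\otimes I_{\mathbf{B}}$, where $\mathcal{S}_{0}$ is the analogous scalar operator (from $\mathbb{C}$-valued functions on $(a,b)\times\mathbb{R}^{d}$ to $L^{q}((0,\infty),u^{\varrho}du)$-valued ones), whose kernel is $K_{0}((t,x),(s,y))\otimes I_{\mathbf{B}}$ with $K_{0}$ the (scalar) $L^{q}(u^{\varrho}du)$-valued kernel assembled from $\varphi$ and $p_{\psi}$. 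Two inputs are needed. First, $\mathcal{S}_{0}$ is bounded from $L^{q}((a,b)\times\mathbb{R}^{d})$ into $L^{q}\bigl((a,b)\times\mathbb{R}^{d};L^{q}(u^{\varrho}du)\bigr)$: this is exactly \eqref{ineq:LP-ineq intro} with $K=\mathbb{R}$, $\psi_{1}=\varphi$, $\psi_{2}=\psi$ and $p=q$ (legitimate since $q\ge 2$). Second, $K_{0}$ satisfies a Hörmander integral condition in $L^{q}(u^{\varrho}du)$-operator norm relative to the anisotropic quasi-metric on $\mathbb{R}^{1+d}$ attached to the orders $\gamma_{\varphi},\gamma_{\psi}$; this is the kernel analysis at the heart of the proof of \eqref{ineq:LP-ineq intro} in \cite{Ji-Kim 2025-1}, and this is where the standing assumption $N_{\varphi},N_{\psi}>d+2+\lfloor\gamma_{\varphi}\rfloor+\lfloor\gamma_{\psi}\rfloor$ is consumed — it guarantees enough symbol derivatives to produce the required decay and smoothness of $p_{\psi}$ and of $L_{\varphi}\mathcal{T}_{\psi}$ (cf. also \cite{I. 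Kim K.-H. Kim 2016}). Given these, the Banach-space--valued Calderón--Zygmund theorem (see, e.g., \cite{Hytonen 2016}; classically \cite{Stein 1970}) yields at once that $\mathcal{S}=\mathcal{S}_{0}\otimes I_{\mathbf{B}}$ is bounded $L^{p}((a,b)\times\mathbb{R}^{d};\mathbf{B})\to L^{p}((a,b)\times\mathbb{R}^{d};L^{q}(u^{\varrho}du;\mathbf{B}))=L^{p}((a,b)\times\mathbb{R}^{d};\mathbf{Y})$ for every $1<p<\infty$, with constant depending only on the listed parameters.

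Then the left-hand side of \eqref{ineq: 2nd LP ineq} equals $\|\mathcal{S}\mathbf{f}\|_{L^{p}(\cdot;\mathbf{Y})}^{p}\le C\|\mathbf{f}\|_{L^{p}((a,b)\times\mathbb{R}^{d};\mathbf{B})}^{p}=C\int_{a}^{b}\int_{\mathbb{R}^{d}}\bigl(\int_{\mathbb{R}}\|f(t,x,\theta)\|_{H}^{r}d\theta\bigr)^{p/r}dx\,dt$, and since $p\ge r$, Minkowski's integral inequality gives for each fixed $t$ that $\int_{\mathbb{R}^{d}}\bigl(\int_{\mathbb{R}}\|f(t,x,\theta)\|_{H}^{r}d\theta\bigr)^{p/r}dx\le\bigl(\int_{\mathbb{R}}\bigl(\int_{\mathbb{R}^{d}}\|f(t,x,\theta)\|_{H}^{p}dx\bigr)^{r/p}d\theta\bigr)^{p/r}$; integrating in $t$ produces the right-hand side of \eqref{ineq: 2nd LP ineq}, whose finiteness (with the continuity of $\mathcal{S}$) justifies the density reduction. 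The identical chain with $a=-\infty$, $b=+\infty$, $q=2$ gives \eqref{ineq: 2nd LP ineq q=2}, with $r\le 2$ ensuring $q=\max\{2,r\}$.

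The hard part is the second input: making the Calderón--Zygmund structure of $\mathcal{S}$ precise — extracting from the proof of \eqref{ineq:LP-ineq intro} a clean Hörmander estimate for the $L^{q}(u^{\varrho}du)$-valued kernel $K_{0}$ that is uniform in the parameters, and handling the upper-triangular ($s<t$) and anisotropic character of the kernel (diagonal $\{s=t\}$, parabolic dilations $u\mapsto\lambda u$, $x\mapsto\lambda^{1/\gamma_{\psi}}x$). A more self-contained alternative is to rerun the proof of \eqref{ineq:LP-ineq intro} line by line carrying $\mathbf{B}=L^{r}(\mathbb{R};H)$ in place of the Hilbert space $K$; the only genuine (non-identity) steps there are applications of Minkowski's integral inequality, which survive because $q\ge\max\{2,r\}$ and $p\ge q\ge r$. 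Either way, the only point beyond \cite{Ji-Kim 2025-1} is the observation that $\mathcal{S}$ is scalar in $x$ and the identity in $\theta$, together with the Minkowski bookkeeping relating the mixed norms.
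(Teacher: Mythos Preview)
Your ``more self-contained alternative'' --- rerunning the proof of the Hilbert-space inequality \eqref{ineq:LP-ineq intro} with $\mathbf{B}=L^{r}(\mathbb{R};H)$ in place of $K$, using Minkowski at the spots where the $\theta$-integral must be moved --- is exactly what the paper does, and your diagnosis that $q\ge r$ and $p\ge r$ are precisely what make those Minkowski steps go through is correct. Concretely the paper argues via a parabolic sharp-function method: first the endpoint $p=q$ (Fubini, Minkowski in $\theta$ since $q\ge r$, then Lemma~\ref{lem: initial condition esti}); then a pointwise estimate
\[
(\mathcal{R}_{a,q}f)^{\#}(t,x)\le C\Bigl[\mathbb{M}_{t}\Bigl(\int_{\mathbb{R}}\bigl(\mathbb{M}_{x}\|f(t,x,\theta)\|_{H}^{q}\bigr)^{r/q}\,d\theta\Bigr)^{q/r}\Bigr]^{1/q}
\]
assembled from oscillation bounds over anisotropic cylinders; finally Fefferman--Stein and two applications of Hardy--Littlewood (in $t$, then in $x$ after another Minkowski with $p\ge r$) give $p>q$. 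The one place where the Banach structure of $V=L^{r}(\mathbb{R};H)$ is genuinely unpacked beyond the scalar argument is the bound $\mathbb{M}_{x}\|f(t,x,\cdot)\|_{V}^{q}\le\bigl(\int_{\mathbb{R}}(\mathbb{M}_{x}\|f(t,x,\theta)\|_{H}^{q})^{r/q}\,d\theta\bigr)^{q/r}$, again Minkowski with $q\ge r$. Your final Minkowski step from $\|\mathbf{f}\|_{L^{p}(\cdot;\mathbf{B})}$ to the stated right-hand side is likewise exactly how the paper finishes.

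Your primary route, by contrast, has a concrete obstruction: the kernel $K_{0}((t,x),(s,y))$ you want, as an element of $L^{q}((0,\infty),u^{\varrho}du)$, does not exist. Undoing the substitution $s=t-u$, the contribution of $g(s,y)$ to $(\mathcal{S}_{0}g)(t,x)(\cdot)$ sits at the single point $u=t-s$; formally $K_{0}((t,x),(s,y))(u)=\delta(u-(t-s))\,L_{\varphi}p_{\psi}(t,s,x-y)$, a Dirac mass in $u$, not an $L^{q}(u^{\varrho}du)$ function. There is therefore no H\"ormander condition ``in $L^{q}(u^{\varrho}du)$-operator norm'' to verify and no off-the-shelf vector-valued Calder\'on--Zygmund theorem to cite. (That your conclusion would be boundedness for \emph{all} $1<p<\infty$ is also a warning sign: the theorem asserts, and the sharp-function method delivers, only $p\ge q$.) This is why the paper --- following \cite{Ji-Kim 2025-1} and \cite{I. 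Kim K.-H. Kim 2016} --- treats $\mathcal{R}_{a,q}f$ as a sublinear square function and controls its sharp function directly rather than through an operator-valued kernel. Your alternative is thus not merely more self-contained; it is the route that actually closes.
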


A proof of Theorem \ref{cor: 2nd LP ineq} will be given in Appendix \ref{sec:A Proof of LPI-Banach space}.

\begin{remark}\label{rmk: LP ineq FI}
\upshape
For any $-\infty\leq \eta_{1}<\eta_{2}\leq \infty$,
by considering the function $f1_{(\eta_{1},\eta_{2})}(\theta)$ in the inequality \eqref{ineq: 2nd LP ineq},
we have
\begin{align}\label{ineq: 2nd LP ineq FI}
&\int_{\mathbb{R}^{d}}\int_{a}^{b}\left[\int_{a}^{t}(t-s)^{\frac{q\gamma_{\varphi}}{\gamma_{\psi}}-1}
\left(\int_{\eta_{1}}^{\eta_{2}}\|L_{\varphi}
\mathcal{T}_{\psi}(t,s)f(s,\cdot,\theta)(x)\|_{H}^{r}d\theta\right)^{\frac{q}{r}} ds\right]^{\frac{p}{q}}dtdx\nonumber\\
&\qquad\leq C_{1}\int_{a}^{b}\left[\int_{\eta_{1}}^{\eta_{2}}\left(\int_{\mathbb{R}^{d}}
\|f(t,x,\theta)\|_{H}^{p}dx\right)^{\frac{r}{p}}d\theta\right]^{\frac{p}{r}}dt,
\end{align}
which will be used in the proof of Lemma \ref{lem: estimate of solution}.
\end{remark}

\begin{corollary}[\cite{Ji-Kim 2025-1}, Theorem 5.3]\label{thm:LP ineq}
Let $H$ be a separable Hilbert space.
Let $\varphi\in\mathfrak{M}_{\gamma}(\mathbb{R}^{d})$ for some $\gamma\equiv\gamma_{\varphi}>0$ satisfying the condition \textbf{(S2)} and let $\psi\in\mathfrak{S}$.
Assume that $N_{\varphi},N_{\psi}>d+2+\lfloor\gamma_{\varphi}\rfloor+\lfloor\gamma_{\psi}\rfloor$.
Then for any $q\geq 2$ and $p\geq q$, it holds that

\begin{itemize}
  \item [\rm{(i)}] if  $-\infty< a< b<\infty$, then
there exists a constant $C_{1}>0$ depending on $a,b,p,q,d$, $\gamma_{\varphi},\gamma_{\psi}$,
$\kappa_{\varphi},\kappa_{\psi},\mu_{\varphi}$ and $\mu_{\psi}$ such that
\begin{align}\label{ineq:LP-ineq}
&\int_{\mathbb{R}^{d}}\int_{a}^{b}\left(\int_{a}^{t}(t-s)^{\frac{q\gamma_{\varphi}}{\gamma_{\psi}}-1}
\|L_{\varphi}
\mathcal{T}_{\psi}(t,s)f(s,\cdot)(x)\|_{H}^{q}ds\right)^{\frac{p}{q}}dxdt\nonumber\\
&\qquad\leq C_{1}\int_{a}^{b}\int_{\mathbb{R}^{d}}
\|f(t,x)\|_{H}^{p}dtdx
\end{align}
for all $f\in L^p((a,b)\times\mathbb{R}^{d}; H)$,
  \item [\rm{(ii)}] (if $q=2$,) there exists a constant $C_{2}>0$ depending on
  $p,d,\gamma_{\varphi},\gamma_{\psi},\kappa_{\varphi},\kappa_{\psi}$, $\mu_{\varphi}$ and $\mu_{\psi}$
such that
\begin{align}\label{ineq:LP-ineq q=2}
&\int_{-\infty}^{\infty}\int_{\mathbb{R}^{d}}\left(\int_{-\infty}^{t}(t-s)^{\frac{2\gamma_{\varphi}}{\gamma_{\psi}}-1}
\|L_{\varphi}
\mathcal{T}_{\psi}(t,s)f(s,\cdot)(x)\|_{H}^{2}ds\right)^{\frac{p}{2}}dtdx\nonumber\\
&\qquad\leq C_{2}\int_{-\infty}^{\infty}\int_{\mathbb{R}^{d}}
\|f(t,x)\|_{H}^{p}dxdt
\end{align}
for all $f\in L^p(\mathbb{R}^{d+1}; H)$.
\end{itemize}
\end{corollary}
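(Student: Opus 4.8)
The plan is to obtain this corollary as the special case $r=1$ of Theorem~\ref{cor: 2nd LP ineq}, by tensoring the scalar data against a fixed mollifier and then passing to the limit by density. First I would fix $\eta\in C_{\rm c}^{\infty}(\mathbb{R})$ with $\eta\geq 0$ and $\int_{\mathbb{R}}\eta(\theta)\,d\theta=1$. Given $g\in C_{\rm c}^{\infty}((a,b)\times\mathbb{R}^{d};H)$, I set $f(t,x):=g(t,x)\,\eta$, so that $f\in C_{\rm c}^{\infty}((a,b)\times\mathbb{R}^{d};L^{1}(\mathbb{R};H))$ and $f(t,x,\theta)=g(t,x)\eta(\theta)$; smoothness and compact support into the Banach space $L^{1}(\mathbb{R};H)$ are inherited from $g$ since $\|(\partial^{\alpha}g)(t,x)\eta\|_{L^{1}(\mathbb{R};H)}=\|\eta\|_{L^{1}}\|(\partial^{\alpha}g)(t,x)\|_{H}$. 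Because $L_{\varphi}\mathcal{T}_{\psi}(t,s)$ is linear and acts only in the spatial variable, it commutes with the scalar factor $\eta(\theta)$, so $L_{\varphi}\mathcal{T}_{\psi}(t,s)f(s,\cdot,\theta)(x)=\eta(\theta)\,L_{\varphi}\mathcal{T}_{\psi}(t,s)g(s,\cdot)(x)$.

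Next I would substitute this $f$ into \eqref{ineq: 2nd LP ineq} with $r=1$ (the hypotheses $q\geq\max\{2,r\}=2$ and $p\geq q$ of Theorem~\ref{cor: 2nd LP ineq} match those of the corollary) and use $\int_{\mathbb{R}}|\eta|\,d\theta=1$ to collapse the inner $\theta$-integrals on both sides: on the left, $\big(\int_{\mathbb{R}}\|L_{\varphi}\mathcal{T}_{\psi}(t,s)f(s,\cdot,\theta)(x)\|_{H}\,d\theta\big)^{q}=\|L_{\varphi}\mathcal{T}_{\psi}(t,s)g(s,\cdot)(x)\|_{H}^{q}$, and on the right $\big[\int_{\mathbb{R}}(\int_{\mathbb{R}^{d}}\|f(t,x,\theta)\|_{H}^{p}\,dx)^{1/p}\,d\theta\big]^{p}=\int_{\mathbb{R}^{d}}\|g(t,x)\|_{H}^{p}\,dx$. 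This yields \eqref{ineq:LP-ineq} for all $g\in C_{\rm c}^{\infty}((a,b)\times\mathbb{R}^{d};H)$ with the same constant $C_{1}$. For part~(ii) I would run the identical computation, now invoking Theorem~\ref{cor: 2nd LP ineq}(ii), whose restrictions $r\leq2$, $q=2$ hold here since $r=1$, to get \eqref{ineq:LP-ineq q=2} for all $g\in C_{\rm c}^{\infty}(\mathbb{R}^{d+1};H)$.

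Finally I would extend from $C_{\rm c}^{\infty}((a,b)\times\mathbb{R}^{d};H)$ (resp.\ $C_{\rm c}^{\infty}(\mathbb{R}^{d+1};H)$) to all of $L^{p}((a,b)\times\mathbb{R}^{d};H)$ (resp.\ $L^{p}(\mathbb{R}^{d+1};H)$) by density. The point is that the left-hand side of \eqref{ineq:LP-ineq}, raised to the power $1/p$, is a seminorm of $f$: the map $f\mapsto L_{\varphi}\mathcal{T}_{\psi}(t,s)f(s,\cdot)(x)$ is linear, and the outer mixed-Lebesgue expression satisfies Minkowski's inequality because $p\geq q\geq2\geq1$. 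Approximating $f\in L^{p}$ by $g_{n}\in C_{\rm c}^{\infty}$ with $g_{n}\to f$ in $L^{p}$, applying the just-proved inequality to $g_{n}-g_{m}$, and then invoking Fatou's lemma along an a.e.-convergent subsequence, delivers the bound for $f$.

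I expect the only delicate point to be this last density step, i.e.\ verifying the lower semicontinuity of the left-hand side of \eqref{ineq:LP-ineq} under $L^{p}$-convergence, which is precisely where Minkowski's inequality and Fatou's lemma are used; all the genuine analysis (the Fourier-multiplier estimates for $L_{\varphi}\mathcal{T}_{\psi}(t,s)$ and the Littlewood--Paley square-function mechanism) is already contained in Theorem~\ref{cor: 2nd LP ineq}, whose proof is deferred to Appendix~\ref{sec:A Proof of LPI-Banach space}.
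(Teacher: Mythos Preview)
Your proposal is correct and follows essentially the same approach as the paper: the paper's one-line proof invokes Remark~\ref{rmk: LP ineq FI}, which amounts to applying Theorem~\ref{cor: 2nd LP ineq} to $f(t,x,\theta)=g(t,x)\,1_{(\eta_1,\eta_2)}(\theta)$ so that the $\theta$-integrals collapse, exactly as your smooth tensor $g(t,x)\eta(\theta)$ with $r=1$ does. Your explicit density step (Minkowski plus Fatou) to pass from $C_{\rm c}^{\infty}$ to $L^{p}$ is a detail the paper leaves implicit, but the core reduction is the same.
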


\begin{proof}
By Remark \ref{rmk: LP ineq FI}, it is obvious.
\end{proof}

\begin{remark}
\upshape
In fact, the results obtained in Corollary \ref{thm:LP ineq} are generalizations of results of Theorem 5.3 in \cite{Ji-Kim 2025-1}.
\end{remark}
\section{SPDEs Driven by Gaussian Processes in Hilbert Space}\label{sec: main result}

Let $\varphi\in\mathfrak{M}_{\gamma}(\mathbb{R}^{d})$ for some $\gamma>0$.
For each $\alpha\in\mathbb{R}$, $T>0$ and $p,r\ge1$, we define
\begin{align}
\mathbb{H}_{K,p,T}^{\varphi, \alpha}&=L^{p}(\Omega\times [0,T],\mathcal{F}\otimes \mathcal{B}([0,T]),H_{K,p}^{\varphi,\alpha})\label{eqn:H_pnt}
\end{align}

Motivated by Definition 3.1 in \cite{Krylov 1999},
we have the following definition.

\begin{definition}
\upshape
Let $q\geq \max\{2,\boldsymbol{r}\}$, where $\boldsymbol{r}$ is the constant given in Assumption \textbf{(R2)}.
Let $ T>0,\, \gamma>0$ and let $\alpha\in\mathbb{R}$.
For $p\geq q$ and $\mathcal{D}_{K}$-valued stochastic process $u$,
we say that $u\in\mathcal{H}_{K,p,\boldsymbol{r},q,T}^{\varphi,\alpha+\frac{2\gamma}{\gamma_{\varphi}}}$ if
\begin{itemize}
  \item [\rm{(i)}] $u(0,\cdot)\in L^{p}\left(\Omega,\mathcal{F},H_{K,p}^{\varphi,\alpha+\frac{2\gamma}{\gamma_{\varphi}}(1-\frac{1}{p})}\right)$,\

  \item [\rm{(ii)}] $u\in \mathbb{H}_{K,p,T}^{\varphi,\alpha+\frac{2\gamma}{\gamma_{\varphi}}}$,

  \item [\rm{(iii)}] there exist $f\in \mathbb{H}_{K,p}^{\varphi,\alpha}$,
  and $g\in \widetilde{\mathbb{L}}_{\boldsymbol{r},\boldsymbol{\beta}}^{1,p}
  (H_{K\otimes U_{0},p}^{\varphi,\alpha+\frac{2\gamma}{q'\gamma_{\varphi}}},T)$,
  where $q'$ is the conjugate number of $q$, such that
\begin{align*}
du(t,x)=f(t,x)dt+g(t,x)\delta\boldsymbol{\beta}_{t}
\end{align*}
in the sense of distributions, i.e., for any $\phi\in C_{\mathrm{c},K}^{\infty}$ and $t\in [0,T]$,
\begin{equation}\label{the weak solution}
(u(t,\cdot),\phi)=(u(0,\cdot),\phi)+\int_{0}^{t}(f(s,\cdot),\phi)ds
+\left(\int_{0}^{t}g(s,\cdot)\delta\boldsymbol{\beta}_{s},\phi\right)
\end{equation}
holds a.s.
\end{itemize}
In this case, we write
\begin{align*}
\mathbb{D}u=f,\quad \mathbb{S}u=g
\end{align*}
and define the norm
\begin{align}\label{eqn: sol. sp norm}
\|u\|_{\mathcal{H}_{K,p,\boldsymbol{r},q,T}^{\varphi,\alpha+\frac{2\gamma}{\gamma_{\varphi}}}}
=&\|u\|_{\mathbb{H}_{K,p,T}^{\varphi,\alpha+\frac{2\gamma}{\gamma_{\varphi}}}}
        +\|\mathbb{D}u\|_{\mathbb{H}_{K,p,T}^{\varphi,\alpha}}
+\|\mathbb{S}u\|_{\mathbb{L}_{\boldsymbol{r},\boldsymbol{\beta}}^{1,p}
(H_{K\otimes U_{0},p}^{\varphi,\alpha+\frac{2\gamma}{q'\gamma_{\varphi}}},T)}\nonumber\\
&\quad
+\left(\mathbb{E}\|u(0,\cdot)\|_{H_{K,p}^{\varphi,\alpha+\frac{2\gamma}{\gamma_{\varphi}}(1-\frac{1}{p})}}^{p}\right)^{\frac{1}{p}}.
\end{align}
\end{definition}
From \eqref{eqn: sol. sp norm},
it is obvious that for any $u\in\mathcal{H}_{K,p,\boldsymbol{r},q,T}^{\varphi,\alpha+\frac{2\gamma}{\gamma_{\varphi}}}$,
\begin{align}\label{eqn:ineq sol.space 2}
\|u\|_{\mathbb{H}_{K,p,T}^{\varphi,\alpha+\frac{2\gamma}{\gamma_{\varphi}}}}
\leq \|u\|_{\mathcal{H}_{K,p,\boldsymbol{r},q,T}^{\varphi,\alpha+\frac{2\gamma}{\gamma_{\varphi}}}}.
\end{align}
\begin{remark}
\upshape
If $q=\gamma=2$ and $\varphi(\xi)=|\xi|^2 \,\,(\xi\in\mathbb{R}^{d})$, then the norm
$\|u\|_{\mathbb{H}_{K,p,T}^{\varphi,\alpha+2}}$ is equivalent to
$\|u_{xx}\|_{\mathbb{H}_{K,p,T}^{\varphi,\alpha}}$, where $u_{xx}$ is the Hessian matrix
$\left[\frac{\partial^{2}u}{\partial x_{i}\partial x_{j}}\right]_{i,j=1}^{d}$.
Therefore, in this case, the norm defined in \eqref{eqn: sol. sp norm} of the solution space
is equivalent to the norm of the solution space defined in Definition 3.1 of \cite{Krylov 1999}.
\end{remark}
\begin{theorem}
Let $\alpha\in\mathbb{R}$ and let $T>0$. Then it holds that
\begin{itemize}
\item[\rm (i)] there exists a constant $C>0$ depending on $ d, p$ and $T$
such that for any $u\in\mathcal{H}_{K,p,\boldsymbol{r},q,T}^{\varphi,\alpha+\frac{2\gamma}{\gamma_{\varphi}}}$,
\begin{align}
\mathbb{E}\sup_{t\leq T}\|u(t,\cdot)\|_{H_{K,p}^{\varphi,\alpha}}^{p}
&\leq C\|u\|_{\mathcal{H}_{K,p,\boldsymbol{r},q,T}^{\varphi,\alpha+\frac{2\gamma}{\gamma_{\varphi}}}},
\label{eqn:ineq sol.space 1}
\end{align}
\item [\rm (ii)] $(\mathcal{H}_{K,p,\boldsymbol{r},q,T}^{\varphi,\alpha+\frac{2\gamma}{\gamma_{\varphi}}},
    \|\cdot\|_{\mathcal{H}_{K,p,\boldsymbol{r},q,T}^{\varphi,\alpha+\frac{2\gamma}{\gamma_{\varphi}}}})$
 is a Banach space.
\end{itemize}
\end{theorem}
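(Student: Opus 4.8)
For part (i), the plan is to represent $u$ through its data and estimate term by term. Since $u\in\mathcal{H}_{K,p,\boldsymbol{r},q,T}^{\varphi,\alpha+\frac{2\gamma}{\gamma_{\varphi}}}$, testing \eqref{the weak solution} against a countable dense family in $C_{\mathrm{c},K}^{\infty}$ and using the pairing to identify elements of $\mathcal{D}_{K}$ yields the representation $u(t,\cdot)=u(0,\cdot)+\int_{0}^{t}f(s,\cdot)ds+\int_{0}^{t}g(s,\cdot)\delta\boldsymbol{\beta}_{s}$ with $f=\mathbb{D}u$, $g=\mathbb{S}u$, the right-hand side being continuous in $t$ with values in $H_{K,p}^{\varphi,\alpha}$. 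I then apply the isometry $(1+L_{\varphi})^{\alpha/2}$, which commutes with the time integral and, by Lemma~\ref{lmm:commutativity between D and 1-Delta} together with the commutation of $(1+L_{\psi})^{\alpha/2}$ with $\delta^{\boldsymbol\beta}$ used in the proof of Proposition~\ref{prop:L-p estimate of HSI}, passes through $D^{\boldsymbol\beta}$ and $\delta^{\boldsymbol\beta}$; taking $L_{K}^{p}$-norms splits $\mathbb{E}\sup_{t\le T}\|u(t,\cdot)\|_{H_{K,p}^{\varphi,\alpha}}^{p}$ into an initial-data piece, a drift piece and a stochastic piece. The initial-data piece is controlled by the continuous embedding $H_{K,p}^{\varphi,\alpha+\frac{2\gamma}{\gamma_{\varphi}}(1-\frac{1}{p})}\hookrightarrow H_{K,p}^{\varphi,\alpha}$ of Proposition~\ref{prop: psi bessel}(ii) (valid since $\frac{2\gamma}{\gamma_{\varphi}}(1-\frac{1}{p})\ge0$), and the drift piece by Minkowski's integral inequality followed by H\"older on $[0,T]$, giving $T^{p-1}\|\mathbb{D}u\|_{\mathbb{H}_{K,p,T}^{\varphi,\alpha}}^{p}$.

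The stochastic piece is the heart of the matter. After applying $(1+L_{\varphi})^{\alpha/2}$ I write $\|\cdot\|_{L_{K}^{p}}^{p}=\int_{\mathbb{R}^{d}}\|\cdot\|_{K}^{p}dx$, use $\sup_{t\le T}\int_{\mathbb{R}^{d}}(\cdot)dx\le\int_{\mathbb{R}^{d}}\sup_{t\le T}(\cdot)dx$ with Tonelli, and apply the $p$-th moment maximal inequality of Corollary~\ref{cor: max ineq for single gaussian process} (inequality \eqref{eqn : cor of max.ineq for single gaussian process+1}) to the $K\otimes U_{0}$-valued integrand $s\mapsto(1+L_{\varphi})^{\alpha/2}g(s,x)$ for a.e.\ fixed $x$, whose Malliavin derivative is $(1+L_{\varphi})^{\alpha/2}D_{\theta}^{\boldsymbol\beta}g(s,x)$ by Lemma~\ref{lmm:commutativity between D and 1-Delta}. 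It then remains to reorganize the resulting space integral: Tonelli brings $\int_{\mathbb{R}^{d}}dx$ back inside $\mathbb{E}$, Minkowski's integral inequality (legitimate because $p\ge q$ and $p\ge\boldsymbol{r}$, which hold since $q\ge\max\{2,\boldsymbol{r}\}$) pushes the $L^{p}(dx)$-norm past the $L^{q}_{s}$ and $L^{\boldsymbol{r}}_{\theta}$ norms, H\"older on $[0,T]$ converts the $q/\boldsymbol{r}$-type outer exponents into $\int_{0}^{T}(\cdot)^{p/\boldsymbol{r}}ds$, and finally the embeddings $H_{K\otimes U_{0},p}^{\varphi,\alpha+\frac{2\gamma}{q'\gamma_{\varphi}}}\hookrightarrow H_{K\otimes U_{0},p}^{\varphi,\alpha}$ and $H_{K\otimes U_{0}\otimes U_{0},p}^{\varphi,\alpha+\frac{2\gamma}{q'\gamma_{\varphi}}}\hookrightarrow H_{K\otimes U_{0}\otimes U_{0},p}^{\varphi,\alpha}$ (Proposition~\ref{prop: psi bessel}(ii), using $\frac{2\gamma}{q'\gamma_{\varphi}}\ge0$) produce exactly the two terms of $\|\mathbb{S}u\|_{\mathbb{L}_{\boldsymbol{r},\boldsymbol{\beta}}^{1,p}(H_{K\otimes U_{0},p}^{\varphi,\alpha+2\gamma/(q'\gamma_{\varphi})},T)}^{p}$. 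Summing the three pieces yields \eqref{eqn:ineq sol.space 1} (with the right-hand side read as the $p$-th power of the norm, by homogeneity). I expect the only real obstacle to be the exponent bookkeeping: keeping straight the three Bessel orders $\alpha$, $\alpha+\frac{2\gamma}{\gamma_{\varphi}}$, $\alpha+\frac{2\gamma}{q'\gamma_{\varphi}}$ and the factor $1-\frac{1}{p}$, and choosing the correct order of the Minkowski and H\"older steps; no idea beyond the tools already assembled is needed.

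For part (ii), I take a Cauchy sequence $\{u_{n}\}$ in $\mathcal{H}_{K,p,\boldsymbol{r},q,T}^{\varphi,\alpha+\frac{2\gamma}{\gamma_{\varphi}}}$. By the definition of the norm \eqref{eqn: sol. sp norm}, $\{u_{n}\}$, $\{\mathbb{D}u_{n}\}$, $\{\mathbb{S}u_{n}\}$ and $\{u_{n}(0,\cdot)\}$ are Cauchy in $\mathbb{H}_{K,p,T}^{\varphi,\alpha+\frac{2\gamma}{\gamma_{\varphi}}}$, $\mathbb{H}_{K,p,T}^{\varphi,\alpha}$, $\widetilde{\mathbb{L}}_{\boldsymbol{r},\boldsymbol{\beta}}^{1,p}(H_{K\otimes U_{0},p}^{\varphi,\alpha+2\gamma/(q'\gamma_{\varphi})},T)$ and $L^{p}(\Omega;H_{K,p}^{\varphi,\alpha+\frac{2\gamma}{\gamma_{\varphi}}(1-\frac{1}{p})})$ respectively; the first, second and fourth are $L^{p}$-type (Bochner) spaces and hence complete, and the third is complete by its very construction as a completion, so there exist limits $u$, $f$, $g$, $v_{0}$. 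It remains to show $u\in\mathcal{H}_{K,p,\boldsymbol{r},q,T}^{\varphi,\alpha+\frac{2\gamma}{\gamma_{\varphi}}}$ with $u(0,\cdot)=v_{0}$, $\mathbb{D}u=f$, $\mathbb{S}u=g$, after which $u_{n}\to u$ in $\mathcal{H}$ is immediate. For this I pass to the limit in \eqref{the weak solution} for fixed $\phi\in C_{\mathrm{c},K}^{\infty}$ and $t\in[0,T]$: by the duality inequality \eqref{eqn:inequality for duality} (with the embedding into $H_{K,p}^{\varphi,\alpha}$) and H\"older on $[0,T]$ one gets $(u_{n}(0,\cdot),\phi)\to(v_{0},\phi)$ and $\int_{0}^{t}(\mathbb{D}u_{n}(s,\cdot),\phi)ds\to\int_{0}^{t}(f(s,\cdot),\phi)ds$ in $L^{p}(\Omega)$, while the estimate proved in part (i) shows that $h\mapsto\int_{0}^{\cdot}h(s,\cdot)\delta\boldsymbol{\beta}_{s}$ is bounded from $\widetilde{\mathbb{L}}_{\boldsymbol{r},\boldsymbol{\beta}}^{1,p}(H_{K\otimes U_{0},p}^{\varphi,\alpha+2\gamma/(q'\gamma_{\varphi})},T)$ into $L^{p}(\Omega;C([0,T];H_{K,p}^{\varphi,\alpha}))$, whence $\big(\int_{0}^{t}\mathbb{S}u_{n}(s,\cdot)\delta\boldsymbol{\beta}_{s},\phi\big)\to\big(\int_{0}^{t}g(s,\cdot)\delta\boldsymbol{\beta}_{s},\phi\big)$ in $L^{p}(\Omega)$ via \eqref{eqn:inequality for duality}. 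On the other hand $u_{n}\to u$ in $L^{p}(\Omega\times[0,T];H_{K,p}^{\varphi,\alpha+\frac{2\gamma}{\gamma_{\varphi}}})$, so $(u_{n},\phi)\to(u,\phi)$ in $L^{p}(\Omega\times[0,T])$; matching the two limits along a common subsequence shows that for a.e.\ $t$, a.s., $u(t,\cdot)=v_{0}+\int_{0}^{t}f(s,\cdot)ds+\int_{0}^{t}g(s,\cdot)\delta\boldsymbol{\beta}_{s}$, and since the right-hand side is $t$-continuous with values in $H_{K,p}^{\varphi,\alpha}$ I take it as the version of $u$; then \eqref{the weak solution} holds for every $t$, $u(0,\cdot)=v_{0}$, $\mathbb{D}u=f$, $\mathbb{S}u=g$, and $u\in\mathcal{H}_{K,p,\boldsymbol{r},q,T}^{\varphi,\alpha+\frac{2\gamma}{\gamma_{\varphi}}}$. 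The main obstacle in (ii) is precisely this identification of the limit --- in particular handling the stochastic term (where part (i) is reused) and the routine-but-delicate subsequence/null-set-in-time argument needed to upgrade the a.e.-in-$t$ identity to a genuine version of $u$.
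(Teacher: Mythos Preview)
Your overall strategy for both parts matches the paper's: represent $u$ through its data $(u(0,\cdot),\mathbb{D}u,\mathbb{S}u)$, estimate each of the three pieces, and for the stochastic piece pull the sup inside the $x$-integral and apply the $p$-th moment maximal inequality pointwise in $x$; part (ii) then follows by taking limits in the weak identity. There is, however, one technical difference worth flagging.

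The paper does \emph{not} work directly with the distributional identity as you do. Instead it mollifies: it substitutes $\phi=\zeta_{\epsilon}(x-\cdot)\otimes k_{0}$ into \eqref{the weak solution} to obtain a genuine pointwise-in-$x$ equation
\[
u^{(\epsilon)}(t,x)=u^{(\epsilon)}(0,x)+\int_{0}^{t}f^{(\epsilon)}(s,x)\,ds+\int_{0}^{t}g^{(\epsilon)}(s,x)\,\delta\boldsymbol{\beta}_{s},
\]
and only \emph{then} applies Corollary~\ref{cor: max ineq for single gaussian process} to $s\mapsto g^{(\epsilon)}(s,x)$ for each fixed $x$, using $\|h^{(\epsilon)}\|_{L^{p}_{K}}\le\|h\|_{L^{p}_{K}}$ to remove $\epsilon$ from the bounds. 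The mollification is precisely the device that justifies the two steps you take for granted: (a) that the stochastic integral $\int_{0}^{t}g(s,\cdot)\,\delta\boldsymbol{\beta}_{s}$, a priori an $H^{\varphi,\alpha}_{K,p}$-valued object, can be evaluated pointwise in $x$ and that this agrees with the Skorohod integral of the pointwise integrand; and (b) that for a.e.\ $x$ the process $s\mapsto(1+L_{\varphi})^{\alpha/2}g(s,x)$ actually lies in $\mathbb{D}_{\boldsymbol{\beta}}^{1,p}(|\mathcal{H}_{K\otimes U_{0}}|)$ so that the maximal inequality applies. Your direct approach can be made rigorous via a stochastic Fubini argument, but the paper's mollification (following Krylov's classical route) sidesteps these issues cleanly. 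A minor secondary difference: the paper invokes the simpler $p$-form \eqref{eqn: cor maximal inequality for single gaussian process} rather than the $q$-form \eqref{eqn : cor of max.ineq for single gaussian process+1}, which spares you the extra H\"older step you describe.
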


\begin{proof}
(i)\enspace By (i) in Proposition \ref{prop: psi bessel}, we may assume that $\alpha=0$.
Let $u\in\mathcal{H}_{K,p,\boldsymbol{r},q,T}^{\varphi,\frac{2\gamma}{\gamma_{\varphi}}}$ be given.
Let $\zeta\in C_{\rm c}^{\infty}$ be nonnegative with $\int_{\mathbb{R}^{d}}\zeta(x)dx=1$.
For any $\epsilon>0$ and any function $h\in\mathcal{D}_{K}$, let
\begin{align*}
\zeta_{\epsilon}(x)=\epsilon^{-d}\zeta\left(\frac{x}{\epsilon}\right),\quad h^{(\epsilon)}(x)=h*\zeta_{\epsilon}(x),\quad x\in \mathbb{R}^{d}.
\end{align*}
Note that $\|h^{(\epsilon)}\|_{L_{K}^{p}}\leq \|h\|_{L_{K}^{p}}$ for any $h\in L_{K}^{p}$.
By the definition of $\mathcal{H}_{K,p,\boldsymbol{r},q,T}^{\varphi,\frac{2\gamma}{\gamma_{\varphi}}}$,
there exist $f\in\mathbb{H}_{K,p,T}^{\varphi,0}$
and $g\in\widetilde{\mathbb{L}}_{\boldsymbol{r},\boldsymbol{\beta}}^{1,p}
(H_{K\otimes U_{0},p}^{\varphi,\frac{2\gamma}{q'\gamma_{\varphi}}},T)$
such that for any $\phi\in C_{\mathrm{c},K}^{\infty}$ and $t\in [0,T]$, the equality
\begin{equation}\label{eqn: u satisfies duality form}
(u(t,\cdot),\phi)
=(u(0,\cdot),\phi)+\int_{0}^{t}(f(s,\cdot),\phi)ds
        +\left(\int_{0}^{t}g(s,\cdot)\delta\boldsymbol{\beta}_{s},\phi\right)
\end{equation}
holds a.s.
Let $k_{0}\in K$. Then $\zeta_{\epsilon}(x-\cdot)\otimes k_{0}\in C_{\mathrm{c},K}^{\infty}$
and we obtain that for any $\eta\in\mathbb{R}$ and $v(t,\cdot)\in H_{K,p}^{\varphi,\eta}$,
\begin{align*}
(v(t,\cdot),\zeta_{\epsilon}(x-\cdot)\otimes k_{0})
&=\int_{\mathbb{R}^{d}}\bilin{(1-L_{\varphi})^{\frac{\eta}{2}}v(t,y)}{(1-L_{\varphi})^{-\frac{\eta}{2}}\zeta_{\epsilon}(x-y) k_{0}}_{K}dy\\
&=\bilin{\int_{\mathbb{R}^{d}}v(t,y)\zeta_{\epsilon}(x-y) dy}{k_{0}}_{K}\\
&=\bilin{v^{(\epsilon)}(t,x)}{k_{0}}_{K}.
\end{align*}
Especially, by the stochastic Fubini theorem, we obtain that
\begin{align*}
\left(\int_{0}^{t}g(s,\cdot)\delta\boldsymbol{\beta}_{s},\zeta_{\epsilon}(x-\cdot)\otimes k_{0}\right)
&=\bilin{\left(\int_{0}^{t}g(s,\cdot)\delta\boldsymbol{\beta}_{s}\right)^{(\epsilon)}(x)}{k_{0}}_{K}\\
&=\bilin{\int_{0}^{t}g^{(\epsilon)}(s,x)\delta\boldsymbol{\beta}_{s}}{k_{0}}_{K}.
\end{align*}
Substituting $\zeta_{\epsilon}(x-\cdot)\otimes k_{0}$ instead of $\phi$ in \eqref{eqn: u satisfies duality form}, we have
\begin{align*}
\bilin{u^{(\epsilon)}(t,x)}{k_{0}}_{K}
&=\bilin{u^{(\epsilon)}(0,x)}{k_{0}}_{K}+\int_{0}^{t}\bilin{f^{(\epsilon)}(t,x)}{k_{0}}_{K}ds\\
&\qquad+\bilin{\int_{0}^{t}g^{(\epsilon)}(s,x)\delta\boldsymbol{\beta}_{s}}{k_{0}}_{K}.
\end{align*}
Since $k_{0}\in K$ is arbitrary, we have
\begin{equation*}
u^{(\epsilon)}(t,x)
=u^{(\epsilon)}(0,x)+\int_{0}^{t}f^{(\epsilon)}(s,x)ds
+\int_{0}^{t}g^{(\epsilon)}(s,x)\delta\boldsymbol{\beta}_{s},
\end{equation*}
which, by applying Jensen's inequality, implies that
\begin{align}
\|u^{(\epsilon)}(t,\cdot)\|_{L_{K}^{p}}^{p}
\leq & 3^{p-1}\left(\|u^{(\epsilon)}(0,\cdot)\|_{L_{K}^{p}}^{p}
+\left\|\int_{0}^{t}f^{(\epsilon)}(s,\cdot)ds\right\|_{L_{K}^{p}}^{p}\right.\nonumber\\
&\left.\qquad+\left\|\int_{0}^{t}g^{(\epsilon)}(s,\cdot)\delta\boldsymbol{\beta}_{s}\right\|_{L_{K}^{p}}^{p}\right).
\label{eqn:ineq u epsilon}
\end{align}
For the first term of the right hand side of \eqref{eqn:ineq u epsilon},
since $p\ge q\geq \max\{2,\boldsymbol{r}\}\ge 1$,
by applying (ii) in Proposition \ref{prop: psi bessel}, we have
\begin{align}
\mathbb{E}\|u^{(\epsilon)}(0,\cdot)\|_{L_{K}^{p}}^{p}
\leq \mathbb{E}\|u(0,\cdot)\|_{L_{K}^{p}}^{p}
\leq \mathbb{E}\|u(0,\cdot)\|_{H_{K,p}^{\varphi,\frac{2\gamma}{\gamma_{\varphi}}(1-\frac{1}{p})}}^{p}
\leq \|u\|_{\mathcal{H}_{K,p,\boldsymbol{r},q,T}^{\varphi,\frac{2\gamma}{\gamma_{\varphi}}}}^{p}.
\label{eqn:ineq 1st term}
\end{align}
For the second term the right hand side of \eqref{eqn:ineq u epsilon},
by applying the Jensen's inequality and the Fubini theorem, we obtain that
\begin{align*}
\left\|\int_{0}^{t}f^{(\epsilon)}(s,\cdot)ds\right\|_{L_{K}^{p}}^{p}
&=\int_{\mathbb{R}^{d}}\left\|\int_{0}^{t}f^{(\epsilon)}(s,x)ds\right\|_{K}^{p}dx\\
&\leq t^{p-1} \int_{\mathbb{R}^{d}} \int_{0}^{t}\|f^{(\epsilon)}(s,x)\|_{K}^{p}dsdx\\
&= t^{p-1} \int_{0}^{t} \int_{\mathbb{R}^{d}}\|f^{(\epsilon)}(s,x)\|_{K}^{p}dxds\\
&= t^{p-1} \int_{0}^{t} \|f^{(\epsilon)}(s,\cdot)\|_{L_{K}^{p}}^{p}ds\\
&\leq t^{p-1} \int_{0}^{t} \|f(s,\cdot)\|_{L_{K}^{p}}^{p}ds.
\end{align*}
Thus we obtain that
\begin{align}
\mathbb{E}\sup_{t\leq T}\left\|\int_{0}^{t}f^{(\epsilon)}(s,\cdot)ds\right\|_{L_{K}^{p}}^{p}
&\leq T^{p-1}\mathbb{E}\int_{0}^{T} \|f(s,\cdot)\|_{L_{K}^{p}}^{p}ds\nonumber\\
&=T^{p-1}\|f\|_{\mathbb{H}_{K,p,T}^{\varphi,0}}^{p}\nonumber\\
&\leq T^{p-1}\|u\|_{\mathcal{H}_{K,p,\boldsymbol{r},q,T}^{\varphi,\frac{2\gamma}{\gamma_{\varphi}}}}^{p}.
\label{eqn:ineq 2nd term}
\end{align}
We now estimate the third term the right hand side of \eqref{eqn:ineq u epsilon}.
By \eqref{eqn: cor maximal inequality for single gaussian process},
there exists a constant $C^{(1)}>0$ depending on $p,\boldsymbol{r}$, and $T$ such that for any $x\in\mathbb{R}^{d}$,
\begin{align*}
\mathbb{E}\sup_{t\leq T}\left\|\int_{0}^{t}g^{(\epsilon)}(s,x)\delta\boldsymbol{\beta}_{s}\right\|_{K}^p
&\leq C^{(1)}\left\{\mathbb{E}\int_{0}^{T}\|g^{(\epsilon)}(s,x)\|_{K\otimes U_{0}}^p ds\right.\\
&\qquad+\left.\mathbb{E}\int_{0}^{T}\left(\int_{0}^{T}\|D_{\theta}^{\boldsymbol{\beta}}
g^{(\epsilon)}(s,x)\|_{K\otimes U_{0}\otimes U_{0}}^{\boldsymbol{r}}d\theta\right)^{\frac{p}{\boldsymbol{r}}}ds\right\},
\end{align*}
from which, by applying Minkowski's inequality
and the fact that $\|h^{(\epsilon)}\|_{L_{K}^{p}}\leq \|h\|_{L_{K}^{p}}$
for any $h\in L_{K}^{p}$, we obtain that
\begin{align}
&\mathbb{E}\sup_{t\leq T}\left\|\int_{0}^{t}
    g^{(\epsilon)}(s,\cdot)\delta\boldsymbol{\beta}_{s}\right\|_{L_{K}^{p}}^{p}\nonumber\\
&=\mathbb{E}\sup_{t\leq T}\int_{\mathbb{R}^{d}}\left\|\int_{0}^{t}
    g^{(\epsilon)}(s,x)\delta\boldsymbol{\beta}_{s}\right\|_{K}^{p}dx\nonumber\\
&\leq \mathbb{E}\int_{\mathbb{R}^{d}}\sup_{t\leq T}\left\|\int_{0}^{t}
    g^{(\epsilon)}(s,x)\delta\boldsymbol{\beta}_{s}\right\|_{K}^{p}dx\nonumber\\
&\leq C^{(1)}\left\{\mathbb{E}\int_{0}^{T}\int_{\mathbb{R}^{d}}
        \|g^{(\epsilon)}(s,x)\|_{K\otimes U_{0}}^{p} dxds\right.\nonumber\\
&\left.\quad +\mathbb{E}\int_{0}^{T}\int_{\mathbb{R}^{d}}\left(\int_{0}^{T}\|D_{\theta}^{\boldsymbol{\beta}}
        g^{(\epsilon)}(s,x)\|_{K\otimes U_{0}\otimes U_{0}}^{\boldsymbol{r}}d\theta\right)^{\frac{p}{\boldsymbol{r}}}dxds\right\}\nonumber\\
&\leq C^{(1)}\left\{\mathbb{E}\int_{0}^{T}\|g^{(\epsilon)}(s,\cdot)\|_{L_{K\otimes U_{0}}^{p}}^{p}ds
    +\mathbb{E}\int_{0}^{T}\left(\int_{0}^{T}\|D_{\theta}^{\boldsymbol{\beta}}
        g^{(\epsilon)}(s,\cdot)\|_{L_{K\otimes U_{0}\otimes U_{0}}^{p}}^{\boldsymbol{r}}d\theta\right)^{\frac{p}{\boldsymbol{r}}}ds\right\}\nonumber\\
&\leq C^{(1)}\left\{\mathbb{E}\int_{0}^{T}\|g(s,\cdot)\|_{L_{K\otimes U_{0}}^{p}}^{p}ds
    +\mathbb{E}\int_{0}^{T}\left(\int_{0}^{T}\|D_{\theta}^{\boldsymbol{\beta}}
        g(s,\cdot)\|_{L_{K\otimes U_{0}\otimes U_{0}}^{p}}^{\boldsymbol{r}}d\theta\right)^{\frac{p}{\boldsymbol{r}}}ds\right\}\nonumber\\
&=C^{(1)}\|g\|_{\mathbb{L}_{\boldsymbol{r},\boldsymbol{\beta}}^{1,p}(L_{K\otimes U_{0}}^{p},T)}^{p}.
\label{eqn:ineq Skorohod int Lp}
\end{align}
Since
\begin{align*}
\|g\|_{\mathbb{L}_{\boldsymbol{r},\boldsymbol{\beta}}^{1,p}(L_{K\otimes U_{0}}^{p},T)}
\leq\|g\|_{\mathbb{L}_{\boldsymbol{r},\boldsymbol{\beta}}^{1,p}(H_{K\otimes U_{0},p}^{\varphi,\frac{2\gamma}{q'\gamma_{\varphi}}},T)}
\leq \|u\|_{\mathcal{H}_{K,p,\boldsymbol{r},q,T}^{\varphi,\frac{2\gamma}{\gamma_{\varphi}}}},
\end{align*}
we have
\begin{align}
\mathbb{E}\sup_{t\leq T}\left\|\int_{0}^{t}g^{(\epsilon)}(s,\cdot)\delta\boldsymbol{\beta}_{s}\right\|_{L_{K}^{p}}^{p}
\leq C^{(1)}\|u\|_{\mathcal{H}_{K,p,\boldsymbol{r},q,T}^{\varphi,\frac{2\gamma}{\gamma_{\varphi}}}}^{p}.
\label{eqn:ineq 3rd term}
\end{align}
Therefore, by combining \eqref{eqn:ineq u epsilon}, \eqref{eqn:ineq 1st term},
\eqref{eqn:ineq 2nd term} and \eqref{eqn:ineq 3rd term}, we have the inequality given in \eqref{eqn:ineq sol.space 1}.

(ii)\enspace Let $\{u_{j}\}_{j\geq 1}$ be a Cauchy sequence
in $\mathcal{H}_{K,p,\boldsymbol{r},q,T}^{\varphi,\alpha+\frac{2\gamma}{\gamma_{\varphi}}}$.
Then by \eqref{eqn:ineq sol.space 2}, $\{u_{j}\}_{j\geq 1}$ is also a Cauchy sequence
in $\mathbb{H}_{K,p,T}^{\varphi,\alpha+\frac{2\gamma}{\gamma_{\varphi}}}$ and so
$\{u_{j}\}_{j\geq 1}$ converges in $\mathbb{H}_{K,p,T}^{\varphi,\alpha+\frac{2\gamma}{\gamma_{\varphi}}}$.
Let $u\in\mathbb{H}_{K,p,T}^{\varphi,\alpha+\frac{2\gamma}{\gamma_{\varphi}}}$ be the limit of $\{u_{j}\}_{j\geq 1}$.
To show that $u\in \mathcal{H}_{K,p,\boldsymbol{r},q,T}^{\varphi,\alpha+\frac{2\gamma}{\gamma_{\varphi}}}$,
we claim that there exist $f\in \mathbb{H}_{K,p,T}^{\varphi,\alpha}$
and $g\in\widetilde{\mathbb{L}}_{\boldsymbol{r},\boldsymbol{\beta}}^{1,p}
(H_{K\otimes U_{0}, p}^{\varphi,\alpha+\frac{2\gamma}{q'\gamma_{\varphi}}},T)$ such that for any $\phi\in C_{\mathrm{c},K}^{\infty}$ and $t\in [0,T]$,
\begin{equation}\label{third condition of def of solution space for uj}
(u(t,\cdot),\phi)=(u(0,\cdot),\phi)+\int_{0}^{t}(f(s,\cdot),\phi)ds
+\left(\int_{0}^{t}g(s,\cdot)\delta\boldsymbol{\beta}_{s},\phi\right)
\end{equation}
a.s.
Since $u_{j}\in \mathcal{H}_{K,p,\boldsymbol{r},q,T}^{\varphi,\alpha+\frac{2\gamma}{\gamma_{\varphi}}}$ for each $j$,
by the definition, there exist $f_{j}\in\mathbb{H}_{K,p,T}^{\varphi,\alpha}$ and
$g_{j}\in \widetilde{\mathbb{L}}_{\boldsymbol{r},\boldsymbol{\beta}}^{1,p}
(H_{K\otimes U_{0},p}^{\varphi,\alpha+\frac{2\gamma}{q'\gamma_{\varphi}}},T)$
such that for any $\phi\in C_{\mathrm{c},K}^{\infty}$ and $t\in [0,T]$, \eqref{third condition of def of solution space for uj} holds a.s.

Since $\{u_{j}\}_{j\geq 1}$ is Cauchy in $\mathcal{H}_{K,p,\boldsymbol{r},q,T}^{\varphi,\alpha+\frac{2\gamma}{\gamma_{\varphi}}}$,
$\{u_{j}(0,\cdot)\}_{j\geq 1}$, $\{f_{j}\}_{j\geq 1}$,
and $\{g_{j}\}_{j\geq 1}$ are Cauchy in $L^{p}(\Omega,\mathcal{F}, H_{K,p}^{\varphi,\alpha+\frac{2\gamma}{\gamma_{\varphi}}(1-\frac{1}{p})})$,
$\mathbb{H}_{K,p,T}^{\varphi,\alpha}$ and
$\widetilde{\mathbb{L}}_{\boldsymbol{r},\boldsymbol{\beta}}^{1,p}
(H_{K\otimes U_{0},p}^{\varphi,\alpha+\frac{2\gamma}{q'\gamma_{\varphi}}},T)$, respectively.
We define $u(0,\cdot)\in L^{p}(\Omega,\mathcal{F}, H_{K,p}^{\varphi,\alpha+\frac{2\gamma}{\gamma_{\varphi}}(1-\frac{1}{p})})$,
$f\in\mathbb{H}_{K,p,T}^{\varphi,\alpha}$ and
$g\in\widetilde{\mathbb{L}}_{\boldsymbol{r},\boldsymbol{\beta}}^{1,p}
(H_{K\otimes U_{0},p}^{\varphi,\alpha+\frac{2\gamma}{q'\gamma_{\varphi}}},T)$
by the limits of $u_{j}(0,\cdot)$, $f_{j}$ and $g_{j}$, respectively.

Since $\|u_{j}-u\|_{\mathbb{H}_{K,p,T}^{\varphi,\alpha+\frac{2\gamma}{\gamma_{\varphi}}}}\rightarrow 0$,
there exists a subsequence $\{u_{j_{m}}\}_{m\geq 1}$ of $\{u_{j}\}_{j\geq 1}$ such that
\begin{align}\label{eqn:PWC}
\|u_{j_{m}}(\omega,t)-u(\omega,t)\|_{H_{K,p}^{\varphi,\alpha+\frac{2\gamma}{\gamma_{\varphi}}}}
\rightarrow 0\quad
 \text{for almost all } (\omega, t)\in \Omega\times [0,T].
\end{align}
Let $\Omega_{0}\subset\Omega$ and $A\subset [0,T]$ be null sets such that
\eqref{eqn:PWC} holds for $\omega\in\Omega\setminus\Omega_{0}$ and $t\in [0,T]\setminus A$.

Fix $\omega\in\Omega\setminus\Omega_{0}$ and $t\in [0,T]\setminus A$,
and consider the equality \eqref{third condition of def of solution space for uj} for $u_{j_{m}}$.
For the left hand side of \eqref{third condition of def of solution space for uj},
by applying the duality, we have
\begin{align*}
|(u_{j_{m}}(\omega,t),\phi)-(u(\omega,t),\phi)|
\leq C^{(2)} \|u_{j_{m}}(\omega,t)-u(\omega,t)\|_{H_{K,p}^{\varphi,\alpha+\frac{2\gamma}{\gamma_{\varphi}}}}\rightarrow 0.
\end{align*}
Considering the right hand side of \eqref{third condition of def of solution space for uj},
the first two terms converge to $(u(0,\cdot),\phi)$ and $\int_{0}^{t}(f(s,\cdot),\phi)ds$, respectively.
For the third term in the right hand side of \eqref{third condition of def of solution space for uj},
by applying the duality and \eqref{eqn:ineq Skorohod int Lp}, we obtain that
\begin{align*}
&\mathbb{E}\left|\left(\int_{0}^{t}
(g_{j_{m}}(s,\cdot)-g(s,\cdot))\delta\boldsymbol{\beta}_{s},\phi\right)\right|^{p}\\
&\leq C^{(3)}\left\{\mathbb{E}\int_{0}^{T}
\|g_{j_{m}}(s,x)-g(s,x)\|_{H_{K\otimes U_{0},p}^{\varphi,\alpha}}^{p}ds\right.\\
&\qquad\left.+\mathbb{E}\int_{0}^{T}\left(\int_{0}^{T}\|D_{\theta}^{\boldsymbol{\beta}}
(g_{j_{m}}(s,x)-g(s,x))\|_{H_{K\otimes U_{0}\otimes U_{0},p}^{\varphi,\alpha}}^{\boldsymbol{r}}d\theta\right)^{\frac{p}{\boldsymbol{r}}}ds\right\}\\
&=C^{(3)}\|g_{j_{m}}-g\|_{\mathbb{L}_{\boldsymbol{r},\boldsymbol{\beta}}^{1,p}(H_{K\otimes U_{0},p}^{\varphi,\alpha},T)}^{p}\\
&\leq C^{(3)}C^{(4)}\|g_{j_{m}}-g\|_{\mathbb{L}_{\boldsymbol{r},\boldsymbol{\beta}}^{1,p}
(H_{K\otimes U_{0},p}^{\varphi,\alpha+\frac{2\gamma}{q'\gamma_{\varphi}}},T)}^{p}\\
&\rightarrow 0 \quad \text{as}\quad m\rightarrow \infty.
\end{align*}
Hence there exists a subsequence $\{g_{j_{m_{l}}}\}_{l\geq 1}$ of $\{g_{j_{m}}\}_{m\geq 1}$ such that
\begin{align*}
\left(\int_{0}^{t}(g_{j_{m_{l}}}(s,\cdot)-g(s,\cdot))\delta\boldsymbol{\beta}_{s},\phi\right)\rightarrow 0\quad a.s.
\end{align*}
Therefore, for any $\phi\in C_{\mathrm{c},K}^{\infty}$ and $t\in [0,T]\setminus A$,
\eqref{third condition of def of solution space for uj} holds a.s.

On the other hand, for a fixed $\phi\in C_{\mathrm{c},K}^{\infty}$ and all $l\geq 1$,
$(u_{j_{m_{l}}}(t,\cdot),\phi)$ is continuous in $t$ a.s. (see, \eqref{the weak solution}),
and then by applying \eqref{eqn:ineq sol.space 1},
we can easily see that
$(u_{j_{m_{l}}}(t,\cdot),\phi)$ converges to $(u(t,\cdot),\phi)$ uniformly in $t$, in probability.
By applying the Borel-Cantelli lemma, we can see that the process $(u(t,\cdot),\phi)$ is continuous a.s.
Hence, for any $t\leq T$, the equality \eqref{third condition of def of solution space for uj} holds a.s.,
and so $u\in\mathcal{H}_{K,p,\boldsymbol{r},q,T}^{\varphi,\alpha+\frac{2\gamma}{\gamma_{\varphi}}}$.
\end{proof}

The following lemma is a Hilbert space version of Theorem 4.3 in \cite{Ji-Kim 2025}.

\begin{lemma} \label{lem: initial condition esti}
Let $\varphi\in\mathfrak{M}_{\gamma}(\mathbb{R}^{d})$ for some $\gamma\equiv\gamma_{\varphi}>0$
satisfying the condition \textbf{(S2)} and let $\psi\in\mathfrak{S}$.
Let $0< T< \infty$ and let $p\geq 2$. Then it holds that
there exists a constant $C>0$ depending on $T,d,p,\gamma_{\varphi},\gamma_{\psi},\mu_{\varphi},\mu_{\psi}$,
$\kappa_{\varphi}$ and $\kappa_{\psi}$
such that for any $s\geq 0$ and $f\in L_{K}^{p}$,
\begin{equation}\label{eqn: LP 1}
\int_{s}^{s+T}\int_{\mathbb{R}^{d}}(t-s)^{\frac{p\gamma_{\varphi}}{\gamma_{\psi}}-1}
       \|L_{\varphi}\mathcal{T}_{\psi}(t,s)f(x)\|_{K}^{p}dxdt
\leq C\int_{\mathbb{R}^{d}}\|f(x)\|_{K}^{p}dx.
\end{equation}
\end{lemma}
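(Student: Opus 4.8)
The plan is to derive \eqref{eqn: LP 1} from the time‑dependent Littlewood–Paley inequality of Corollary~\ref{thm:LP ineq}(i) by concentrating the source term near the initial time $s$. First I would remove the dependence on $s$: putting $\widetilde\psi(r,\xi):=\psi(r+s,\xi)$, one checks directly from \textbf{(S1)}–\textbf{(S2)} that $\widetilde\psi\in\mathfrak S$ with the same structural constants $\kappa_\psi,\mu_\psi,\gamma_\psi,N_\psi$ and that $\mathcal T_{\widetilde\psi}(t,0)=\mathcal T_\psi(t+s,s)$. Since $L_\varphi$ acts by a fixed Fourier symbol, the change of variable $u=t+s$ shows that the left‑hand side of \eqref{eqn: LP 1} equals
\[
\int_0^T\int_{\mathbb R^d} t^{\frac{p\gamma_\varphi}{\gamma_\psi}-1}\bigl\|L_\varphi\mathcal T_{\widetilde\psi}(t,0)f(x)\bigr\|_K^p\,dx\,dt ,
\]
so it suffices to prove \eqref{eqn: LP 1} for $s=0$, and the resulting constant will then depend only on $T,p,d$ and the structural constants of $\varphi,\psi$, as required. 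Write $\beta:=\tfrac{p\gamma_\varphi}{\gamma_\psi}$ in what follows.

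Fix $\epsilon\in(0,T)$ and set $g_\epsilon(\tau,x):=\epsilon^{-1/p}\mathbf 1_{(0,\epsilon)}(\tau)f(x)$, which lies in $L^p((0,T)\times\mathbb R^d;K)$ with $\|g_\epsilon\|^p=\|f\|_{L^p_K}^p$. Applying Corollary~\ref{thm:LP ineq}(i) with $H=K$, $q=p$ (legitimate since $p\ge2$), $a=0$, $b=T$ and source $g_\epsilon$, using that $\mathcal T_\psi(t,\tau)g_\epsilon(\tau,\cdot)=\epsilon^{-1/p}\mathbf 1_{(0,\epsilon)}(\tau)\mathcal T_\psi(t,\tau)f$, and applying Tonelli's theorem in $(x,\tau)$, I obtain
\begin{equation}\label{eqn:aux-conc}
\int_0^T\phi_\epsilon(t)\,dt\le C_1\|f\|_{L^p_K}^p,\qquad
\phi_\epsilon(t):=\frac1\epsilon\int_0^{\min(t,\epsilon)}(t-\tau)^{\beta-1}\bigl\|L_\varphi\mathcal T_\psi(t,\tau)f\bigr\|_{L^p_K}^p\,d\tau ,
\end{equation}
with $C_1$ independent of $\epsilon$. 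The goal is to let $\epsilon\downarrow0$: for each fixed $t>0$ one has $\phi_\epsilon(t)\to t^{\beta-1}\|L_\varphi\mathcal T_\psi(t,0)f\|_{L^p_K}^p$, and since $\phi_\epsilon\ge0$, Fatou's lemma in \eqref{eqn:aux-conc} yields precisely \eqref{eqn: LP 1} for $s=0$. Note that only the square‑function structure of Corollary~\ref{thm:LP ineq} makes this work: the crude bound $\|L_\varphi\mathcal T_\psi(t,\tau)f\|_{L^p_K}\lesssim(t-\tau)^{-\gamma_\varphi/\gamma_\psi}\|f\|_{L^p_K}$ alone is not enough, since $\int_0^T t^{\beta-1}t^{-\beta}\,dt=\infty$.

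The one point that needs genuine work is the pointwise convergence $\phi_\epsilon(t)\to t^{\beta-1}\|L_\varphi\mathcal T_\psi(t,0)f\|_{L^p_K}^p$. As $\phi_\epsilon(t)$ (for $\epsilon<t$) is the average over $(0,\epsilon)$ of $\tau\mapsto(t-\tau)^{\beta-1}\|L_\varphi\mathcal T_\psi(t,\tau)f\|_{L^p_K}^p$, and $(t-\tau)^{\beta-1}$ is continuous at $\tau=0$, it suffices to show that $\tau\mapsto L_\varphi\mathcal T_\psi(t,\tau)f$ is continuous from $[0,t)$ into $L^p_K$. Here $L_\varphi\mathcal T_\psi(t,\tau)$ has symbol $m_\tau(\xi)=\varphi(\xi)\exp\bigl(\int_\tau^t\psi(r,\xi)\,dr\bigr)$, which by \textbf{(S1)}, \textbf{(S2)}, the bound $\varphi(\xi)\le\mu_\varphi|\xi|^{\gamma_\varphi}$ and a Leibniz‑rule argument as in Lemma~\ref{lmm:FB} satisfies a Mihlin estimate $|\partial_\xi^\alpha m_\tau(\xi)|\le C(t-\tau)^{-\gamma_\varphi/\gamma_\psi}|\xi|^{-|\alpha|}$ uniformly for $\tau$ in compact subsets of $[0,t)$; hence by Corollary~\ref{cor:Mihlin} and Remark~\ref{rmk: multiplier}(ii) the operators $L_\varphi\mathcal T_\psi(t,\tau)$ are uniformly bounded on $L^p_K$ there. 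Combining this uniform bound with the pointwise convergence $m_\tau(\xi)\to m_{\tau_0}(\xi)$ and dominated convergence — first for Schwartz $f$, then for general $f\in L^p_K$ by density — gives the required continuity. This strong‑continuity property of $\{\mathcal T_\psi(t,\tau)\}$ and of $L_\varphi\mathcal T_\psi(t,\tau)$ on $L^p_K$, up to but not including the diagonal, is the main obstacle; once it is established the Fatou step, and hence \eqref{eqn: LP 1}, follow at once.
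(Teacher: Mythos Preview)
The main problem is circularity. You derive \eqref{eqn: LP 1} from Corollary~\ref{thm:LP ineq}(i), but in this paper Corollary~\ref{thm:LP ineq} is deduced from Theorem~\ref{cor: 2nd LP ineq}, whose proof in Appendix~\ref{sec:A Proof of LPI-Banach space} rests on Lemma~\ref{lem: 2nd LP ineq for p equal lambda}(i); and the proof of that lemma invokes precisely Lemma~\ref{lem: initial condition esti} (see \eqref{eqn:int esti}). Your application uses Corollary~\ref{thm:LP ineq}(i) with $q=p$, which is exactly the case covered by Lemma~\ref{lem: 2nd LP ineq for p equal lambda}(i), so the dependence is direct and unavoidable. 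The paper's own proof sidesteps this by treating Lemma~\ref{lem: initial condition esti} as logically prior to all of Section~\ref{sec: GLPI}: it refers to Theorem~4.3 of \cite{Ji-Kim 2025} (adapted to the Hilbert space setting via Theorem~A.2 of \cite{Ji-Kim 2025-1}), where the estimate is established by a direct multiplier/kernel argument rather than by the generalized Littlewood--Paley inequality.

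A secondary issue: even setting circularity aside, Corollary~\ref{thm:LP ineq}(i) carries the extra hypothesis $N_\varphi,N_\psi>d+2+\lfloor\gamma_\varphi\rfloor+\lfloor\gamma_\psi\rfloor$, whereas Lemma~\ref{lem: initial condition esti} as stated imposes no such size restriction on $N_\varphi,N_\psi$ beyond what is built into condition~\textbf{(S2)}. So your route would at best yield a strictly weaker statement than the one you are asked to prove.
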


\begin{proof}
This lemma can be proved by the similar arguments used in the proof of Theorem 4.3 in \cite{Ji-Kim 2025}
by taking Theorem A.2 in \cite{Ji-Kim 2025-1} into account.
Therefore, the proof is omitted.
\end{proof}

\begin{lemma}\label{lem:L psi potential}
Let $\varphi\in\mathfrak{M}_{\gamma}(\mathbb{R}^{d})$ for some $\gamma\equiv\gamma_{\varphi}>0$
satisfying the condition \textbf{(S2)} and let $\psi\in\mathfrak{S}$.
Assume that $\gamma_{\varphi}=\gamma_{\psi}$ and $N_{\varphi},N_{\psi}>d+2+2\lfloor\gamma\rfloor$.
Then for any $1<p<\infty$, there exists a constant $C>0$ depending on $d,p,\gamma,\mu_{\varphi}$ and $\kappa_{\psi}$
such that for any $f\in L^{p}(\mathbb{R}^{d+1};K)$,
\begin{align*}
\int_{-\infty}^{\infty}\int_{\mathbb{R}^{d}}
\left\|\int_{-\infty}^{t}L_{\varphi}\mathcal{T}_{\psi}(t,s)f(s,x)ds\right\|_{K}^{p}dxdt
\leq C\int_{-\infty}^{\infty}\int_{\mathbb{R}^{d}}\|f(s,x)\|_{K}^{p}dxds.
\end{align*}
\end{lemma}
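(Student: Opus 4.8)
The plan is to reduce the estimate to the generalized Littlewood--Paley inequality of Corollary~\ref{thm:LP ineq}(ii) (equivalently, the $q=2$ case of Theorem~\ref{cor: 2nd LP ineq}) combined with the Hardy--Littlewood--Sobolev fractional integration inequality. The key observation is that, since $\gamma_{\varphi}=\gamma_{\psi}=\gamma$, the time weight $(t-s)^{\frac{q\gamma_{\varphi}}{\gamma_{\psi}}-1}=(t-s)^{q-1}$ appearing in \eqref{ineq:LP-ineq q=2} with $q=2$ becomes $(t-s)$, and, more to the point, the operator $L_{\varphi}\mathcal{T}_{\psi}(t,s)$ is controlled in $L^{p}$ with an extra smoothing of order $\gamma$ in space, which translates (via $\psi\in\mathfrak{S}$, i.e.\ ${\rm Re}\,\psi(r,\xi)\le-\kappa|\xi|^{\gamma}$) into a decay factor like $(t-s)^{-1}$ on the relevant scales. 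Concretely, I would first show a pointwise-in-time operator bound of the form
\begin{align*}
\|L_{\varphi}\mathcal{T}_{\psi}(t,s)h\|_{L_{K}^{p}}\leq C(t-s)^{-1}\|h\|_{L_{K}^{p}},\qquad t>s,
\end{align*}
for all $h\in L_{K}^{p}$; this follows because the Fourier symbol of $L_{\varphi}\mathcal{T}_{\psi}(t,s)$ is $\varphi(\xi)\exp\!\big(\int_{s}^{t}\psi(r,\xi)\,dr\big)$, whose modulus is bounded by $\mu|\xi|^{\gamma}e^{-\kappa(t-s)|\xi|^{\gamma}}\le C(t-s)^{-1}$, and whose derivatives satisfy Mihlin-type bounds uniformly in $t,s$ by the argument of Lemma~\ref{lmm:FB} and Proposition~\ref{prop: multiplier} (here the hypothesis $N_{\varphi},N_{\psi}>d+2+2\lfloor\gamma\rfloor$ guarantees enough regularity of the symbol to apply Corollary~\ref{cor:Mihlin} on $L_{K}^{p}$).

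With this bound in hand, write $v(t,x)=\int_{-\infty}^{t}L_{\varphi}\mathcal{T}_{\psi}(t,s)f(s,x)\,ds$. For each fixed $t$ I would estimate $\|v(t,\cdot)\|_{L_{K}^{p}}\le\int_{-\infty}^{t}\|L_{\varphi}\mathcal{T}_{\psi}(t,s)f(s,\cdot)\|_{L_{K}^{p}}\,ds$ by Minkowski's integral inequality, but the naive $(t-s)^{-1}$ bound is \emph{not} integrable near $s=t$, so a crude approach fails. Instead the right tool is to interpolate: one splits the symbol estimate into two regimes, using $|\varphi(\xi)\exp(\int_s^t\psi)|\le \mu|\xi|^{\gamma}$ for $|\xi|^{\gamma}(t-s)\le1$ (short-time, high decay in $\xi$ after integrating) and $\le C(t-s)^{-1-\epsilon}|\xi|^{-\gamma\epsilon}$ for $|\xi|^{\gamma}(t-s)\ge1$. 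Cleaner yet, and the route I would actually follow: apply the Littlewood--Paley inequality \eqref{ineq:LP-ineq q=2} with $H=K$, $a=-\infty$, $b=\infty$, which gives
\begin{align*}
\left(\int_{-\infty}^{\infty}\!\!\int_{\mathbb{R}^{d}}\Big(\int_{-\infty}^{t}(t-s)\,\|L_{\varphi}\mathcal{T}_{\psi}(t,s)f(s,\cdot)(x)\|_{K}^{2}\,ds\Big)^{\frac{p}{2}}dx\,dt\right)^{\!2/p}\!\!\le C\|f\|_{L^{p}(\mathbb{R}^{d+1};K)}^{2},
\end{align*}
and then control $\|v(t,\cdot)\|_{L_{K}^{p}}$ by a weighted Cauchy--Schwarz: $\big\|\int_{-\infty}^{t}\!L_{\varphi}\mathcal{T}_{\psi}(t,s)f(s,\cdot)ds\big\|$ is bounded, after inserting the weight $(t-s)^{1/2}(t-s)^{-1/2}$, by $\big(\int_{-\infty}^{t}(t-s)\,\|L_{\varphi}\mathcal{T}_{\psi}(t,s)f(s,\cdot)(x)\|_K^2 ds\big)^{1/2}\big(\int_{-\infty}^{t}(t-s)^{-1}\,\mathbf{1}_{?}\,ds\big)^{1/2}$ — but the second factor again diverges, so the genuinely correct reduction is to use a fractional-integral/Hardy--Littlewood argument in the time variable, exactly as in the proof of Theorem~\ref{thm : maximal inequality for single gaussian process}: one writes $(t-s)^{-1}=c_{\alpha}\int_{s}^{t}(r-s)^{\alpha-1}(t-r)^{-\alpha}\,dr$ for a suitable $\alpha\in(0,1)$ and applies the one-dimensional Hardy--Littlewood maximal inequality in $t$ after the Minkowski/L\textsuperscript{p} reductions, turning the singular kernel $(t-s)^{-1}$ into an $L^{p}(\mathbb{R})\to L^{p}(\mathbb{R})$ bounded operation.

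I expect the main obstacle to be exactly this borderline integrability of the $(t-s)^{-1}$ kernel: since $\gamma_{\varphi}=\gamma_{\psi}$ makes the scaling critical, one cannot absorb the singularity by a pure H\"older estimate in time, and one must genuinely invoke a Littlewood--Paley square-function decomposition (the $q=2$ case of Theorem~\ref{cor: 2nd LP ineq}, with $H=K$ a Hilbert space so the square function is well-behaved) together with the Hardy--Littlewood fractional integration theorem; the bookkeeping that matches the square-function output back to $\|v\|_{L^{p}(\mathbb{R}^{d+1};K)}$ requires care with the order of the $L^{p}_{x}$, $L^{p}_{t}$ and $L^{2}_{s}$ norms (Minkowski's inequality is used to interchange $L^{p}_{x}$ with an $L^{2}_{s}$-type norm, which is legitimate because $p\ge2$). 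The remaining verifications — the uniform-in-$(t,s)$ Mihlin bounds on the symbol $\varphi(\xi)\exp(\int_s^t\psi(r,\xi)dr)$, and the fact that the resulting constant depends only on $d,p,\gamma,\mu_{\varphi},\kappa_{\psi}$ — are routine consequences of Lemma~\ref{lmm:FB}, Proposition~\ref{prop: multiplier} and condition \textbf{(S2)}, and I would state them as a preliminary claim and then conclude. Since this is precisely the $p=q$, $\gamma_{\varphi}=\gamma_{\psi}$, $H=K$ specialization of the machinery already developed for Theorem~\ref{cor: 2nd LP ineq}, the cleanest write-up simply deduces the asserted inequality from \eqref{ineq:LP-ineq q=2} and the Hardy--Littlewood inequality, mirroring the final lines of the proof of Theorem~\ref{thm : maximal inequality for single gaussian process}.
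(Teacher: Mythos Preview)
Your proposal has a genuine gap at the critical step. You correctly identify that the Mihlin bound gives $\|L_{\varphi}\mathcal{T}_{\psi}(t,s)h\|_{L_{K}^{p}}\le C(t-s)^{-1}\|h\|_{L_{K}^{p}}$ and that this kernel is not integrable near $s=t$; but none of your proposed fixes actually closes the argument. The weighted Cauchy--Schwarz route fails, as you note. Your fallback identity ``$(t-s)^{-1}=c_{\alpha}\int_{s}^{t}(r-s)^{\alpha-1}(t-r)^{-\alpha}\,dr$'' is false: the right-hand side is a Beta-function constant independent of $t,s$ (this is exactly the identity used in Theorem~\ref{thm : maximal inequality for single gaussian process}, where it produces the factor $1$, not $(t-s)^{-1}$). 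More fundamentally, Hardy--Littlewood--Sobolev cannot rescue you because the time kernel $(t-s)^{-1}$ sits at the critical exponent $\alpha=0$, outside the range where fractional integration is $L^{p}\to L^{p}$ bounded. The operator $\mathcal{G}f(t,x)=\int_{-\infty}^{t}L_{\varphi}\mathcal{T}_{\psi}(t,s)f(s,x)\,ds$ is, because $\gamma_{\varphi}=\gamma_{\psi}$, a genuine parabolic Calder\'on--Zygmund singular integral, and its $L^{p}$ boundedness cannot be obtained from Minkowski plus multiplier bounds alone. The Littlewood--Paley inequality \eqref{ineq:LP-ineq q=2} controls the \emph{square function} $\big(\int_{-\infty}^{t}(t-s)\|L_{\varphi}\mathcal{T}_{\psi}(t,s)f(s,\cdot)(x)\|_{K}^{2}\,ds\big)^{1/2}$, which is a different object from $\|\mathcal{G}f(t,x)\|_{K}$; there is no pointwise or $L^{p}$ domination of the latter by the former. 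Note also that the lemma is stated for all $1<p<\infty$, whereas your Minkowski interchange of $L^{p}_{x}$ with $L^{2}_{s}$ already requires $p\ge2$.

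The paper's proof proceeds along an entirely different line: it recognizes $\mathcal{G}$ as a singular integral with kernel $M(t,x,s,y)=1_{(0,\infty)}(t-s)L_{\varphi}p_{\psi}(t,s,x-y)$, first proves $L^{2}$ boundedness directly via Plancherel in $x$ and then in $t$ (the symbol $\varphi(\xi)/(iu+\kappa_{\psi}|\xi|^{\gamma})$ is bounded precisely when $\gamma_{\varphi}\le\gamma_{\psi}$), then verifies parabolic H\"ormander/Calder\'on--Zygmund kernel conditions for $M$ using the pointwise estimates $|L_{\varphi}p_{\psi}|,|\nabla L_{\varphi}p_{\psi}|,|\partial_{s}L_{\varphi}p_{\psi}|\lesssim |x|^{-(\gamma+\cdots)}\wedge(t-s)^{-(\cdots)/\gamma}$, and finally invokes the Calder\'on--Zygmund theorem (as in \cite{I. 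Kim S. Lim K.-H. Kim 2016,Krylov 2001}) to extend from $L^{2}$ to $L^{p}$, $1<p\le2$, with duality handling $p>2$. This is the standard and, as far as I can see, unavoidable route at critical scaling.
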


A proof of Lemma \ref{lem:L psi potential} will be given in Appendix \ref{sec: potential}.

\begin{lemma}\label{lem: estimate of solution}
Let $\varphi\in\mathfrak{M}_{\gamma}(\mathbb{R}^{d})$ for some $\gamma\equiv\gamma_{\varphi}>0$
satisfying the condition \textbf{(S2)} and let $\psi\in\mathfrak{S}$.
Let $\boldsymbol{r}$ be the constant appeared in \textbf{(R2)} and let $q\geq \max\{2,\boldsymbol{r}\}$.
Let $0<T<\infty$.
Then  for any $p\geq q$, there exists a constant $C>0$
depending on $p,d,\gamma_{\psi},\gamma_{\varphi},\kappa_{\psi},\kappa_{\varphi},\mu_{\psi},\mu_{\varphi},\boldsymbol{r}$
and $T$ such that for any $m\in\mathbb{R}$ and
$g\in {\mathbb{L}_{\boldsymbol{r},\boldsymbol{\beta}}^{1,p}(H_{K\otimes U_{0},p}^{\varphi,m},T)}$,
\begin{align}\label{eqn: esti stoc term}
\mathbb{E}\int_{0}^{T}\left\|\int_{0}^{t}\mathcal{T}_{\psi}(t,s)g(s,\cdot)\delta\boldsymbol{\beta}_{s}
                                        \right\|_{H_{K,p}^{\varphi,m+\frac{2\gamma_{\psi}}{q\gamma_{\varphi}}}}^{p}dt
\leq C\|g\|_{\mathbb{L}_{\boldsymbol{r},\boldsymbol{\beta}}^{1,p}(H_{K\otimes U_{0},p}^{\varphi,m},T)}^{p}.
\end{align}
\end{lemma}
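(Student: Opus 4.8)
The plan is to reduce the estimate to the $p$-th moment maximal inequality for the stochastic integral (Corollary \ref{cor: max ineq for single gaussian process}) applied pointwise in $x$, and then to absorb the $x$-integration through the generalized Littlewood-Paley inequality of Theorem \ref{cor: 2nd LP ineq} (in the form of Remark \ref{rmk: LP ineq FI}). First I would reduce to $m=0$: since $(1+L_\psi)^{m/2}$ commutes with $\mathcal{T}_\psi(t,s)$ and with $D^{\boldsymbol{\beta}}$ (Lemma \ref{lmm:commutativity between D and 1-Delta} and the definition of $\mathcal{T}_\psi$ via Fourier multipliers), replacing $g$ by $(1+L_\psi)^{m/2}g$ moves the $H_{K\otimes U_0,p}^{\varphi,m}$-norms to $L^p$-norms on both sides, and the claimed estimate for general $m$ follows from the case $m=0$. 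So it suffices to prove
\[
\mathbb{E}\int_{0}^{T}\left\|\int_{0}^{t}\mathcal{T}_{\psi}(t,s)g(s,\cdot)\delta\boldsymbol{\beta}_{s}
\right\|_{H_{K,p}^{\varphi,\frac{2\gamma_{\psi}}{q\gamma_{\varphi}}}}^{p}dt
\le C\|g\|_{\mathbb{L}_{\boldsymbol{r},\boldsymbol{\beta}}^{1,p}(L_{K\otimes U_{0}}^{p},T)}^{p}.
\]

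Next, by Proposition \ref{prop: psi bessel}(iii) the $H_{K,p}^{\varphi,\frac{2\gamma_{\psi}}{q\gamma_{\varphi}}}$-norm is equivalent to the sum of the $L_K^p$-norm and the $L_{K}^p$-norm of $L_\varphi^{\gamma_\psi/(q\gamma_\varphi)}$ applied to the function; the first (lower-order) piece is handled exactly as in the proof of part (i) of the preceding theorem (using $\mathcal{T}_\psi(t,s)$ is a contraction-type operator on $L_K^p$ via Young's inequality, together with Corollary \ref{cor: max ineq for single gaussian process}), so the essential term is
\[
\mathbb{E}\int_0^T\Big\|\int_0^t L_\varphi^{\frac{\gamma_\psi}{q\gamma_\varphi}}\mathcal{T}_\psi(t,s)g(s,\cdot)\,\delta\boldsymbol{\beta}_s\Big\|_{L_K^p}^p\,dt.
\]
For fixed $x\in\mathbb{R}^d$ I would apply Corollary \ref{cor: max ineq for single gaussian process} (with the Hilbert space $K$ there being $K$, and $u_s = [L_\varphi^{\gamma_\psi/(q\gamma_\varphi)}\mathcal{T}_\psi(t,s)g(s,\cdot)](x)$, noting that the Malliavin derivative commutes with the deterministic operators so $D_\theta^{\boldsymbol{\beta}}u_s = [L_\varphi^{\gamma_\psi/(q\gamma_\varphi)}\mathcal{T}_\psi(t,s)D_\theta^{\boldsymbol{\beta}}g(s,\cdot)](x)$), integrate in $x$, use Minkowski's integral inequality to pull the $x$-integral inside the $s$- and $\theta$-integrals, and arrive at an upper bound of the form
\[
C\int_0^T\!\!\int_{\mathbb{R}^d}\Big(\int_0^t\big\|L_\varphi^{\frac{\gamma_\psi}{q\gamma_\varphi}}\mathcal{T}_\psi(t,s)g(s,\cdot)(x)\big\|_{K\otimes U_0}^q\,ds\Big)^{\frac pq}dx\,dt
\]
plus the analogous term with $D_\theta^{\boldsymbol{\beta}}g$ and an extra $\theta$-integral in the $L^{\boldsymbol{r}}$-sense. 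The power of $(t-s)$ needed by the Littlewood-Paley inequality, namely $\frac{q\gamma_\varphi}{\gamma_\psi}-1$ after identifying $L_\varphi^{\gamma_\psi/(q\gamma_\varphi)}$ as $L_\varphi$ with a rescaled order, is exactly reproduced here — this is the point of the exponent $\frac{2\gamma_\psi}{q\gamma_\varphi}$ in the statement; one checks that $L_\varphi^{\gamma_\psi/(q\gamma_\varphi)}$ has symbol in $\mathfrak{M}_{\gamma_\psi/q}$, so applying Theorem \ref{cor: 2nd LP ineq}(i) (with $H = K\otimes U_0$, the role of "$\gamma_\varphi$" there played by $\gamma_\psi/q$, hence $\frac{q\cdot(\gamma_\psi/q)}{\gamma_\psi}-1 = \frac{q\cdot?}{}$ — more precisely one uses the version with the deterministic $L^{\boldsymbol{r}}(\mathbb{R};H)$-structure, i.e. Remark \ref{rmk: LP ineq FI}) converts each term into $C\int_0^T\int_{\mathbb{R}^d}\|g(t,x)\|_{K\otimes U_0}^p\,dx\,dt$ and $C\int_0^T(\int_{\mathbb{R}}(\int_{\mathbb{R}^d}\|D_\theta^{\boldsymbol{\beta}}g(t,x)\|_{K\otimes U_0\otimes U_0}^p dx)^{\boldsymbol{r}/p}d\theta)^{p/\boldsymbol{r}}dt$ respectively, which together are precisely $C\|g\|_{\mathbb{L}_{\boldsymbol{r},\boldsymbol{\beta}}^{1,p}(L_{K\otimes U_0}^p,T)}^p$.

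The main obstacle I anticipate is the careful bookkeeping of exponents and the matching of the Malliavin-derivative term with the $L^{\boldsymbol{r}}(\mathbb{R};H)$-valued form of the Littlewood-Paley inequality: the maximal inequality produces, for the derivative term, an inner structure $\big(\int_0^T\|D_\theta^{\boldsymbol{\beta}}(\cdots)(x)\|_{K\otimes U_0\otimes U_0}^{\boldsymbol{r}}d\theta\big)^{q/\boldsymbol{r}}$, and one must recognize this — after using \eqref{eqn: norm of D bold beta} to identify the $K\otimes U_0\otimes U_0$-norm with an $\ell^2$-sum over the coordinates $\beta_j$ — as exactly the quantity to which the $L^{\boldsymbol{r}}$-valued inequality \eqref{ineq: 2nd LP ineq FI} applies with $\theta$ as the extra Banach-space variable. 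One also needs to justify interchanging $L_\varphi^{\gamma_\psi/(q\gamma_\varphi)}\mathcal{T}_\psi(t,s)$ with $D_\theta^{\boldsymbol{\beta}}$ and with the stochastic integral on the dense class $\mathcal{C}_{\boldsymbol{\beta}}(\mathcal{E}_{C^\infty_{\mathrm{c},K\otimes U_0}})$ and pass to the limit using the norm of $\mathbb{L}_{\boldsymbol{r},\boldsymbol{\beta}}^{1,p}$, which is routine but must be stated. Apart from these points the argument is a direct composition of the two main tools of the paper.
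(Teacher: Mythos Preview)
Your proposal is correct and follows essentially the same route as the paper's proof: reduce to $m=0$ via the isometry in Proposition~\ref{prop: psi bessel}(i), split the Bessel norm via Proposition~\ref{prop: psi bessel}(iii) into the $L^p_K$-part and the $L_\varphi^{\gamma_\psi/(q\gamma_\varphi)}$-part, apply the $p$-th moment maximal inequality of Corollary~\ref{cor: max ineq for single gaussian process} pointwise in $x$ (using the commutation $D_\theta^{\boldsymbol{\beta}}L_{\widetilde\varphi}\mathcal{T}_\psi = L_{\widetilde\varphi}\mathcal{T}_\psi D_\theta^{\boldsymbol{\beta}}$ on simple $g$), and close with Corollary~\ref{thm:LP ineq} for the first term and the $L^{\boldsymbol r}$-valued inequality \eqref{ineq: 2nd LP ineq FI} for the Malliavin-derivative term. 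Two small clean-ups: the reducing operator is $(1+L_\varphi)^{m/2}$, not $(1+L_\psi)^{m/2}$; and the exponent bookkeeping is simply that $\widetilde\varphi:=\varphi^{\gamma_\psi/(q\gamma_\varphi)}\in\mathfrak{M}_{\gamma_\psi/q}$, so $\frac{q\gamma_{\widetilde\varphi}}{\gamma_\psi}-1=0$ and no weight $(t-s)^{\cdot}$ appears---the appeal to \eqref{eqn: norm of D bold beta} is unnecessary, since \eqref{ineq: 2nd LP ineq FI} applies directly with $H=K\otimes U_0\otimes U_0$.
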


\begin{proof}
Let $u(t,x)=\int_{0}^{t}\mathcal{T}_{\psi}(t,s)g(s,x)\delta\boldsymbol{\beta}_{s}$
and let $\alpha=\frac{2\gamma_{\psi}}{q\gamma_{\varphi}}$.
By (i) in Proposition \ref{prop: psi bessel}, we may assume that $m=0$. Then by (iii) in Proposition \ref{prop: psi bessel}, we have
\begin{align*}
\|u(t,\cdot)\|_{H_{K,p}^{\varphi,\alpha}}
&\leq C_{1}\left(\|u(t,\cdot)\|_{L_{K}^{p}}+\|L_{\varphi}^{\frac{\alpha}{2}}u(t,\cdot)\|_{L_{K}^{p}}\right).
\end{align*}
for some constant $C_{1}>0$. Then we have
\begin{align}\label{eqn: sto-esti 1}
\mathbb{E}\int_{0}^{T}\left\|u(t,\cdot)\right\|_{H_{K,p}^{\varphi,\alpha}}^{p}dt
\leq C_{1}\left(I_{1}+I_{2}\right),
\end{align}
where
\begin{align*}
I_{1}=\mathbb{E}\int_{0}^{T}\int_{\mathbb{R}^{d}}\|u(t,x)\|_{K}^{p}dxdt,\quad
I_{2}=\mathbb{E}\int_{0}^{T}\int_{\mathbb{R}^{d}}\|L_{\varphi}^{\frac{\alpha}{2}}u(t,x)\|_{K}^{p}dxdt.
\end{align*}
Firstly we estimate $I_{1}$. Note that by the approximation method, we have
\begin{align}\label{eqn: DT TD}
D_{\theta}^{\boldsymbol{\beta}}\mathcal{T}_{\psi}(t,s)g(s,\cdot)(x)
=\mathcal{T}_{\psi}(t,s)D_{\theta}^{\boldsymbol{\beta}}g(s,\cdot)(x).
\end{align}
Indeed, if
\begin{align*}
g(t,x)=\sum_{i=1}^{l}F_{i}1_{(t_{i-1},t_{i}]}(t)\phi_{i}(x),\qquad t\in [0,T], \quad x\in\mathbb{R}^{d},
\end{align*}
where $F_{i}\in \mathcal{C}_{\boldsymbol{\beta}}$, $0\leq t_{0}<t_{1}<\cdots <t_{l}\leq T$,
and $\phi_{i}\in C_{\mathrm{c},K\otimes U_{0}}^{\infty}$, then we obtain that
\begin{align*}
D_{\theta}^{\boldsymbol{\beta}}\mathcal{T}_{\psi}(t,s)g(s,\cdot)(x)
&=\sum_{i=1}^{l}\left(D_{\theta}^{\boldsymbol{\beta}}F_{i}\right)1_{(t_{i-1},t_{i}]}(t)\mathcal{T}_{\psi}(t,s)\phi_{i}(x)\\
&=\mathcal{T}_{\psi}(t,s)\left(\sum_{i=1}^{l}\left(D_{\theta}^{\boldsymbol{\beta}}F_{i}\right)
    1_{(t_{i-1},t_{i}]}(t)\phi_{i}(\cdot)\right)(x)\\
&=\mathcal{T}_{\psi}(t,s)D_{\theta}^{\boldsymbol{\beta}}g(s,\cdot)(x).
\end{align*}
By using \eqref{eqn: cor maximal inequality for single gaussian process}, \eqref{eqn: DT TD}, Minkowski's inequality,
and the fact that $\|\mathcal{T}_{\psi}(t,s)g(s,\cdot)\|_{L_{K}^{p}}\leq C_{2}\|g(s,\cdot)\|_{L_{K}^{p}}$,
we obtain that
\begin{align}\label{eqn: sto-esti 2}
I_{1}
&\leq C_{3}\left\{\mathbb{E}\int_{\mathbb{R}^{d}}\int_{0}^{T}\int_{0}^{t}
\|\mathcal{T}_{\psi}(t,s)g(s,\cdot)(x)\|_{K\otimes U_{0}}^{p}dsdtdx\right.\nonumber\\
&\left.\qquad+\mathbb{E}\int_{\mathbb{R}^{d}}\int_{0}^{T}\int_{0}^{t}\left(\int_{0}^{T}
\|D_{\theta}^{\boldsymbol{\beta}}\mathcal{T}_{\psi}(t,s)g(s,\cdot)(x)\|_{K\otimes U_{0}\otimes U_{0}}^{\boldsymbol{r}}d\theta\right)^{\frac{p}{\boldsymbol{r}}}dsdtdx\right\}\nonumber\\
&\leq C_{3}\left\{\mathbb{E}\int_{0}^{T}\int_{0}^{t}\int_{\mathbb{R}^{d}}
    \|\mathcal{T}_{\psi}(t,s)g(s,\cdot)(x)\|_{K\otimes U_{0}}^{p}dxdsdt\right.\nonumber\\
&\left.\qquad+\mathbb{E}\int_{0}^{T}\int_{0}^{t}\left(\int_{0}^{T}
\left(\int_{\mathbb{R}^{d}}\|\mathcal{T}_{\psi}(t,s)D_{\theta}^{\boldsymbol{\beta}}g(s,\cdot)(x)\|_{K\otimes U_{0}\otimes U_{0}}^{p}dx\right)^{\frac{\boldsymbol{r}}{p}}d\theta\right)^{\frac{p}{\boldsymbol{r}}}dsdt\right\}\nonumber\\
&\leq C_{3}C_{2}^{p}\left\{\mathbb{E}\int_{0}^{T}\int_{0}^{t}
\int_{\mathbb{R}^{d}}\|g(s,x)\|_{K\otimes U_{0}}^{p}dxdsdt\right.\nonumber\\
&\left.\qquad+\mathbb{E}\int_{0}^{T}\int_{0}^{t}\left(\int_{0}^{T}\left(\int_{\mathbb{R}^{d}}
\|D_{\theta}^{\boldsymbol{\beta}}g(s,x)\|_{K\otimes U_{0}\otimes U_{0}}^{p}dx\right)^{\frac{\boldsymbol{r}}{p}}d\theta\right)^{\frac{p}{\boldsymbol{r}}}dsdt\right\}\nonumber\\
&\leq C_{3}C_{2}^{p}T\|g\|_{\mathbb{L}_{\boldsymbol{r},\boldsymbol{\beta}}^{1,p}(L_{K\otimes U_{0}}^{p},T)}^{p}.
\end{align}
We now estimate $I_{2}$. Note that If we put $\widetilde{\varphi}(\xi)=\varphi(\xi)^{\frac{\alpha}{2}}$,
then $\widetilde{\varphi}\in\mathfrak{M}_{\gamma_{\widetilde{\varphi}}}(\mathbb{R}^{d})$ with the order $\gamma_{\widetilde{\varphi}}=\gamma_{\psi}/q$.
By the definition of $L_{\widetilde{\varphi}}$ and the stochastic Fubini theorem,
it holds that
\begin{align*}
L_{\widetilde{\varphi}}u(t,x)
=L_{\widetilde{\varphi}}\int_{0}^{t}\mathcal{T}_{\psi}(t,s)g(s,x)\delta\boldsymbol{\beta}_{s}
=\int_{0}^{t}L_{\widetilde{\varphi}}\mathcal{T}_{\psi}(t,s)g(s,x)\delta\boldsymbol{\beta}_{s}.
\end{align*}
Also, by using the same argument used in the proof of Lemma \ref{lmm:commutativity between D and 1-Delta} and \eqref{eqn: DT TD},
we see that
\begin{align*}
D_{\theta}^{\boldsymbol{\beta}}L_{\widetilde{\varphi}}\mathcal{T}_{\psi}(t,s)g(s,\cdot)(x)
=L_{\widetilde{\varphi}}\mathcal{T}_{\psi}(t,s)D_{\theta}^{\boldsymbol{\beta}}g(s,\cdot)(x).
\end{align*}
Therefore, by applying \eqref{eqn : cor of max.ineq for single gaussian process+1},
\eqref{ineq:LP-ineq} and \eqref{ineq: 2nd LP ineq FI} with $r=\boldsymbol{r}$, we obtain that
\begin{align}\label{eqn: sto-esti 3}
I_{2}
&=\mathbb{E}\int_{0}^{T}\int_{\mathbb{R}^{d}}\|L_{\varphi}^{\frac{\alpha}{2}}u(t,x)\|_{K}^{p}dxdt
=\mathbb{E}\int_{0}^{T}\int_{\mathbb{R}^{d}}\|L_{\widetilde{\varphi}}u(t,x)\|_{K}^{p}dxdt\nonumber\\
&\leq C\left\{\mathbb{E}\int_{\mathbb{R}^{d}}\int_{0}^{T}\left(\int_{0}^{t}
\|L_{\widetilde{\varphi}}\mathcal{T}_{\psi}(t,s)g(s,\cdot)(x)\|_{K\otimes U_{0}}^{q}ds\right)^{\frac{p}{q}}dtdx\right.\nonumber\\
&\left.\qquad+\mathbb{E}\int_{\mathbb{R}^{d}}\int_{0}^{T}\left(\int_{0}^{t}\left(\int_{0}^{T}
\|D_{\theta}^{\boldsymbol{\beta}}L_{\widetilde{\varphi}}
        \mathcal{T}_{\psi}(t,s)g(s,\cdot)(x)\|_{K\otimes U_{0}\otimes U_{0}}^{\boldsymbol{r}}
        d\theta\right)^{\frac{q}{\boldsymbol{r}}}ds\right)^{\frac{p}{q}}dtdx\right\}\nonumber\\
&\leq C'\left\{\mathbb{E}
\int_{\mathbb{R}^{d}}\int_{0}^{T}\|g(t,x)\|_{K\otimes U_{0}}^{p}dtdx\right.\nonumber\\
&\left.\qquad+\mathbb{E}\int_{0}^{T}\left(\int_{0}^{T}\left(\int_{\mathbb{R}^{d}}
    \|D_{\theta}^{\boldsymbol{\beta}}g(t,x)\|_{K\otimes U_{0}\otimes U_{0}}^{p}dx\right)^{\frac{\boldsymbol{r}}{p}}d\theta\right)^{\frac{p}{\boldsymbol{r}}}dt\right\}\nonumber\\
&=C'\|g\|_{\mathbb{L}_{\boldsymbol{r},\boldsymbol{\beta}}^{1,p}(L_{K\otimes U_{0}}^{p},T)}^{p}
\end{align}
for some positive constants $C$ and $C'$.
Hence, by combining \eqref{eqn: sto-esti 1}, \eqref{eqn: sto-esti 2} and \eqref{eqn: sto-esti 3},
we have the desired result.
\end{proof}

\begin{remark}
\upshape
Let $l_{2}$ be the space of all square summable sequences of real numbers.
In the case of that $\boldsymbol{\beta}$ is a $l_{2}$-valued fBm with the Hurst index $H\in (1/2,1)$,
the author in \cite{Balan 2011} proved \eqref{eqn: esti stoc term}
with $\psi=\varphi=|\xi|^{2}$, $q=2$, $\boldsymbol{r}=1/H$, $K=\mathbb{R}$ and $U_{0}=l_{2}$
(see, the proof of Theorem 5.6 in \cite{Balan 2011}).
\end{remark}

The following is the main theorem in this section.

\begin{theorem}\label{thm: unique existence of sol of Cauchy problem}
Let $\varphi\in\mathfrak{M}_{\gamma}(\mathbb{R}^{d})$ for some $\gamma\equiv\gamma_{\varphi}>0$
satisfying the condition \textbf{(S2)} and let $\psi\in\mathfrak{S}$.
Assume that $N_{\varphi},N_{\psi}>d+2+\lfloor\gamma_{\varphi}\rfloor+\lfloor\gamma_{\psi}\rfloor$.
Let $\boldsymbol{r}$ be the constant appeared in \textbf{(R2)}.
Let $q\geq \max\{2,\boldsymbol{r}\}$ and let $q'$ be the conjugate number of $q$.
Let $p\geq q$ and let $s\in\mathbb{R}$. Suppose that
\begin{align*}
f\in\mathbb{H}_{K,p,T}^{\varphi,s},\,\, g\in\widetilde{\mathbb{L}}_{\boldsymbol{r},\boldsymbol{\beta}}^{1,p}
(H_{K\otimes U_{0},p}^{\varphi,s+\frac{2\gamma_{\psi}}{q'\gamma_{\varphi}}},T),\,\,
u_{0}\in L^{p}(\Omega,\mathcal{F},H_{K,p}^{\varphi,s+\frac{2\gamma_{\psi}}{\gamma_{\varphi}}(1-\frac{1}{p})}).
\end{align*}
Then the SPDE given in \eqref{eqn :SPDE with pseudo diff op}
with initial condition $u(0,\cdot)=u_{0}$ has a unique solution
$u\in\mathcal{H}_{K,p,\boldsymbol{r},q,T}^{\varphi, s+\frac{2\gamma_{\psi}}{\gamma_{\varphi}}}$.
For the solution $u$, there exists a constant $C>0$ depending on $p,d,\gamma_{\psi},\gamma_{\varphi},\kappa_{\psi},\kappa_{\varphi},\mu_{\psi},\mu_{\varphi},\boldsymbol{r}$ and $T$
such that
\begin{align}\label{eqn: inequality for the norm of solution}
\|u\|_{\mathcal{H}_{K,p,\boldsymbol{r},q,T}^{\varphi, s+\frac{2\gamma_{\psi}}{\gamma_{\varphi}}}}
\leq C\left\{(\mathbb{E}\|u_{0}\|_{H_{K,p}^{\varphi,s+\frac{2\gamma_{\psi}}{\gamma_{\varphi}}(1-\frac{1}{p})}}^{p})^{\frac{1}{p}}
+\|f\|_{\mathbb{H}_{K,p,T}^{\varphi,s}}
+\|g\|_{\mathbb{L}_{\boldsymbol{r},\boldsymbol{\beta}}^{1,p}
    (H_{K\otimes U_{0},p}^{\varphi,s+\frac{2\gamma_{\psi}}{q'\gamma_{\varphi}}},T)}\right\}.
\end{align}
\end{theorem}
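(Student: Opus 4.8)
The strategy is the classical one for the $L^{p}$-theory of linear SPDEs: build the solution by the variation-of-constants (Duhamel) formula, verify directly that it lies in $\mathcal{H}_{K,p,\boldsymbol{r},q,T}^{\varphi,s+\frac{2\gamma_{\psi}}{\gamma_{\varphi}}}$ and satisfies \eqref{eqn: inequality for the norm of solution}, and then obtain uniqueness by a pathwise duality argument. Since the principal part of \eqref{eqn :SPDE with pseudo diff op} is exactly $L_{\psi}(t)$, no freezing of coefficients is needed; by (i) of Proposition \ref{prop: psi bessel} applied to $\varphi$ I may and do assume $s=0$. For existence I would set, by linearity,
\[
u=v_{0}+v_{1}+v_{2},\qquad
v_{0}(t,\cdot)=\mathcal{T}_{\psi}(t,0)u_{0},\quad
v_{1}(t,\cdot)=\int_{0}^{t}\mathcal{T}_{\psi}(t,s)f(s,\cdot)\,ds,\quad
v_{2}(t,\cdot)=\int_{0}^{t}\mathcal{T}_{\psi}(t,s)g(s,\cdot)\,\delta\boldsymbol{\beta}_{s},
\]
and, using the evolution-system identity $\mathcal{T}_{\psi}(t,r)\mathcal{T}_{\psi}(r,s)=\mathcal{T}_{\psi}(t,s)$, the definition of $L_{\psi}(t)$ as a Fourier multiplier, the duality pairing on $H_{K,p}^{\varphi,\alpha}$, the stochastic Fubini theorem (exactly as in the proof of \eqref{eqn:ineq sol.space 1}), and the commutation \eqref{eqn: DT TD} of $D^{\boldsymbol{\beta}}$ with $\mathcal{T}_{\psi}(t,s)$, I would check that $u(0,\cdot)=u_{0}$ and that $u$ satisfies \eqref{the weak solution} with $\mathbb{D}u=L_{\psi}(\cdot)u+f$ and $\mathbb{S}u=g$.

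\emph{The norm bound.} I would then estimate the four constituents of $\|u\|_{\mathcal{H}_{K,p,\boldsymbol{r},q,T}^{\varphi,\frac{2\gamma_{\psi}}{\gamma_{\varphi}}}}$ (see \eqref{eqn: sol. sp norm}) one by one, which simultaneously certifies membership in the solution space. Writing $u_{0}=(1+L_{\varphi})^{-\rho/2}h$ with $\rho=\frac{2\gamma_{\psi}}{\gamma_{\varphi}}(1-\frac{1}{p})$ and $h\in L_{K}^{p}$, and using that $(1+L_{\varphi})^{-\rho/2}$ commutes with $\mathcal{T}_{\psi}(t,0)$, a Fourier-multiplier reduction (Propositions \ref{prop: psi bessel}(iii) and \ref{prop: multiplier}) bounds $\|v_{0}(t,\cdot)\|_{H_{K,p}^{\varphi,\frac{2\gamma_{\psi}}{\gamma_{\varphi}}}}$ by $\|\mathcal{T}_{\psi}(t,0)h\|_{L_{K}^{p}}+\|L_{\varphi_{*}}\mathcal{T}_{\psi}(t,0)h\|_{L_{K}^{p}}$ with $\varphi_{*}:=\varphi^{\gamma_{\psi}/(p\gamma_{\varphi})}\in\mathfrak{M}_{\gamma_{\psi}/p}$; applying Lemma \ref{lem: initial condition esti} with $\varphi_{*}$ in place of $\varphi$ makes the time weight $(t-0)^{p\gamma_{\varphi_{*}}/\gamma_{\psi}-1}$ equal to $1$ and yields $\mathbb{E}\int_{0}^{T}\|v_{0}(t,\cdot)\|_{H_{K,p}^{\varphi,\frac{2\gamma_{\psi}}{\gamma_{\varphi}}}}^{p}dt\le C\,\mathbb{E}\|u_{0}\|_{H_{K,p}^{\varphi,\rho}}^{p}$. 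The deterministic potential estimate (Lemma \ref{lem:L psi potential}, used in both the $L_{K}^{p}$-component and the $L_{\varphi}^{\gamma_{\psi}/\gamma_{\varphi}}$-component and combined with Proposition \ref{prop: psi bessel}(iii); if $\gamma_{\psi}\neq\gamma_{\varphi}$ one first passes to the $\gamma_{\varphi}$-scale version of that lemma by the reductions of Section \ref{sec: GLPI}) gives $\|v_{1}\|_{\mathbb{H}_{K,p,T}^{\varphi,\frac{2\gamma_{\psi}}{\gamma_{\varphi}}}}\le C\|f\|_{\mathbb{H}_{K,p,T}^{\varphi,0}}$. Lemma \ref{lem: estimate of solution} with $m=\frac{2\gamma_{\psi}}{q'\gamma_{\varphi}}$ (and $\frac{1}{q}+\frac{1}{q'}=1$) gives $\mathbb{E}\int_{0}^{T}\|v_{2}(t,\cdot)\|_{H_{K,p}^{\varphi,\frac{2\gamma_{\psi}}{\gamma_{\varphi}}}}^{p}dt\le C\|g\|_{\mathbb{L}_{\boldsymbol{r},\boldsymbol{\beta}}^{1,p}(H_{K\otimes U_{0},p}^{\varphi,\frac{2\gamma_{\psi}}{q'\gamma_{\varphi}}},T)}^{p}$. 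Finally, since $L_{\psi}(t)$ maps $H_{K,p}^{\varphi,\alpha}$ into $H_{K,p}^{\varphi,\alpha-\frac{2\gamma_{\psi}}{\gamma_{\varphi}}}$ boundedly and uniformly in $t$ (a $t$-uniform instance of Proposition \ref{prop: multiplier}, via Lemma \ref{lmm:FB} and \textbf{(S1)}--\textbf{(S2)}), the drift $\mathbb{D}u=L_{\psi}(\cdot)u+f$ is controlled in $\mathbb{H}_{K,p,T}^{\varphi,0}$ by the three norms just bounded plus $\|f\|_{\mathbb{H}_{K,p,T}^{\varphi,0}}$, while $\mathbb{S}u=g$ and $u(0,\cdot)=u_{0}$ are controlled by hypothesis; adding the four contributions gives \eqref{eqn: inequality for the norm of solution}.

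\emph{Uniqueness.} If $u,\widetilde{u}$ both solve \eqref{eqn :SPDE with pseudo diff op} with the same data, then $w=u-\widetilde{u}$ satisfies $\mathbb{D}w=L_{\psi}(\cdot)w$, $\mathbb{S}w=0$, $w(0,\cdot)=0$, so for almost every $\omega$ the path $w(\cdot,\omega)$ is a weak solution of $\partial_{t}w=L_{\psi}(t)w$ in $\mathbb{H}_{K,p,T}^{\varphi,\frac{2\gamma_{\psi}}{\gamma_{\varphi}}}$ with zero initial value. Fixing $t_{0}\in(0,T]$ and $\phi\in C_{\mathrm{c},K}^{\infty}$ and testing against the backward flow $s\mapsto\mathcal{T}_{\psi}(t_{0},s)^{*}\phi$ (which solves $\partial_{s}\mathcal{T}_{\psi}(t_{0},s)^{*}=-L_{\psi}(s)^{*}\mathcal{T}_{\psi}(t_{0},s)^{*}$, with $L_{\psi}(s)^{*}$ the formal adjoint of $L_{\psi}(s)$ for the duality pairing), one finds $(w(t_{0},\cdot),\phi)=\int_{0}^{t_{0}}\frac{d}{ds}(w(s,\cdot),\mathcal{T}_{\psi}(t_{0},s)^{*}\phi)\,ds=0$, since the two terms produced by the product rule cancel; hence $w\equiv0$. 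The differentiation under the integral and the adjoint identities are legitimate by the smoothness and compact support of $\phi$ together with the multiplier bounds of Lemma \ref{lmm:FB} and Proposition \ref{prop: multiplier}; alternatively one may quote the uniqueness for the deterministic Cauchy problem from \cite{I. Kim K.-H. Kim 2016} applied pathwise.

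\emph{Main obstacle.} The difficulty is not any single inequality — those are delivered by Lemmas \ref{lem: initial condition esti}, \ref{lem:L psi potential} and \ref{lem: estimate of solution} — but the bookkeeping that shows the Duhamel formula really produces an element of the solution space in the sense of the weak formulation: one must confirm that $v_{1}$ and $v_{2}$ gain the full $\frac{2\gamma_{\psi}}{\gamma_{\varphi}}$ derivatives in the $\varphi$-scale (with the gain for $v_{2}$ split as $\frac{2\gamma_{\psi}}{q'\gamma_{\varphi}}+\frac{2\gamma_{\psi}}{q\gamma_{\varphi}}$ so that Lemma \ref{lem: estimate of solution} applies), that $D^{\boldsymbol{\beta}}v_{2}$ carries the regularity demanded by the definition of $\widetilde{\mathbb{L}}_{\boldsymbol{r},\boldsymbol{\beta}}^{1,p}$, that the Skorohod integral genuinely commutes with $\mathcal{T}_{\psi}(t,s)$ and with the test-function pairing, and — when $\gamma_{\psi}\neq\gamma_{\varphi}$ — that Lemma \ref{lem:L psi potential} is upgraded to the $\gamma_{\varphi}$-scale, which is routine via the Fourier-multiplier manipulations of Sections \ref{sec:Fourier multi} and \ref{sec: GLPI}.
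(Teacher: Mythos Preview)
Your proposal is correct and follows essentially the same route as the paper: reduce to $s=0$ via Proposition~\ref{prop: psi bessel}(i), write the Duhamel solution as the sum of initial, deterministic-forcing, and stochastic pieces, and bound each via Lemma~\ref{lem: initial condition esti} (with $\varphi$ replaced by $\varphi^{\gamma_{\psi}/(p\gamma_{\varphi})}$), Lemma~\ref{lem:L psi potential} (with $\varphi$ replaced by $\widetilde\varphi=\varphi^{\gamma_{\psi}/\gamma_{\varphi}}$ so that the hypothesis $\gamma_{\widetilde\varphi}=\gamma_{\psi}$ is met---this is exactly how the paper handles the $\gamma_{\psi}\neq\gamma_{\varphi}$ issue you flag), and Lemma~\ref{lem: estimate of solution} with $m=\frac{2\gamma_{\psi}}{q'\gamma_{\varphi}}$, respectively; the drift bound then follows from the multiplier estimate you describe (the paper's \eqref{eqn:L psi bound}). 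Your uniqueness argument via the backward adjoint flow is actually more explicit than the paper's proof, which does not address uniqueness at all---once $\mathbb{S}w=0$ the difference $w$ solves the deterministic Cauchy problem pathwise, so either your duality computation or the citation to \cite{I. Kim K.-H. Kim 2016} closes the gap.
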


\begin{proof}
By (i) in Proposition \ref{prop: psi bessel}, we may assume that $s=0$.

\textbf{Step 1.}
We consider the following deterministic equation
\begin{align}\label{eqn: deter eq}
\frac{du}{dt}(t,x)=L_{\psi}(t)u(t,x),\quad u(0,x)=u_{0}(x).
\end{align}
Then
\begin{align*}
u(t,x)=\mathcal{T}_{\psi}(t,0)u_{0}(x)
\end{align*}
is a solution to \eqref{eqn: deter eq}. By the definition of the norm of $\mathcal{H}_{K,p,\boldsymbol{r},q,T}^{\varphi,\frac{2\gamma_{\psi}}{\gamma_{\varphi}}}$, we have
\begin{align}\label{eqn: sol. sp norm ini}
\|u\|_{\mathcal{H}_{K,p,\boldsymbol{r},q,T}^{\varphi,\frac{2\gamma_{\psi}}{\gamma_{\varphi}}}}
=&\|u\|_{\mathbb{H}_{K,p,T}^{\varphi,\frac{2\gamma_{\psi}}{\gamma_{\varphi}}}}
        +\|L_{\psi}u\|_{\mathbb{H}_{K,p,T}^{\varphi,0}}
+\left(\mathbb{E}\|u_{0}\|_{H_{K,p}^{\varphi,\frac{2\gamma_{\psi}}{\gamma_{\varphi}}(1-\frac{1}{p})}}^{p}\right)^{\frac{1}{p}}.
\end{align}
Firstly we estimate $\|u\|_{\mathbb{H}_{K,p,T}^{\varphi,\frac{2\gamma_{\psi}}{\gamma_{\varphi}}}}$.
Note that by applying Jensen's inequality and (iii) in Proposition \ref{prop: psi bessel}, we obtain that
\begin{align}
\|u\|_{\mathbb{H}_{K,p,T}^{\varphi,\frac{2\gamma_{\psi}}{\gamma_{\varphi}}}}^{p}
&=\mathbb{E}\int_{0}^{T}\|u(t,\cdot)\|_{H_{K,p}^{\varphi,\frac{2\gamma_{\psi}}{\gamma_{\varphi}}}}^{p}dt\nonumber\\
&\leq C_{1}\mathbb{E}\int_{0}^{T}\left(\|u\|_{L_{K}^{p}}
            +\|L_{\varphi}^{\frac{\gamma_{\psi}}{\gamma_{\varphi}}}u\|_{L_{K}^{p}}\right)^{p}dt\nonumber\\
&\leq C_{1}2^{p-1}\mathbb{E}\int_{0}^{T}\left(\|u\|_{L_{K}^{p}}^{p}
            +\|L_{\varphi}^{\frac{\gamma_{\psi}}{\gamma_{\varphi}}}u\|_{L_{K}^{p}}^{p}\right)dt\nonumber\\
&= C_{1}2^{p-1}\left(\|u\|_{\mathbb{H}_{K,p,T}^{\varphi,0}}^{p}
+\|L_{\varphi}^{\frac{\gamma_{\psi}}{\gamma_{\varphi}}}u\|_{\mathbb{H}_{K,p,T}^{\varphi,0}}^{p}\right).
\label{eqn: esti sol homoeq}
\end{align}
For $\|u\|_{\mathbb{H}_{K,p,T}^{\varphi,0}}^{p}$,
by applying the fact that $\|\mathcal{T}_{\psi}(t,0)h\|_{L_{K}^{p}}\leq C_{2}\|h\|_{L_{K}^{p}}$
and (ii) in Proposition \ref{prop: psi bessel},
we have
\begin{align}
\|u\|_{\mathbb{H}_{K,p,T}^{\varphi,0}}^{p}
&=\mathbb{E}\int_{0}^{T}\|\mathcal{T}_{\psi}(t,0)u_{0}\|_{L_{K}^{p}}^{p}dt\nonumber\\
&\leq C_{2}\mathbb{E}\int_{0}^{T}\|u_{0}\|_{L_{K}^{p}}^{p}dt\nonumber\\
&\leq C_{2}TC_{3}\mathbb{E}\|u_{0}\|_{H_{K,p}^{\varphi, \frac{2\gamma_{\psi}}{\gamma_{\varphi}}\left(1-\frac{1}{p}\right)}}^{p}.
\label{eqn: esti1 in homoeq}
\end{align}
On the other hand, for $\|L_{\varphi}^{\frac{\gamma_{\psi}}{\gamma_{\varphi}}}u\|_{\mathbb{H}_{K,p,T}^{\varphi,0}}^{p}$,
we note that if we put
\begin{align}\label{eqn: tilde varphi}
\widetilde{\varphi}(\xi)=\varphi(\xi)^{\frac{\gamma_{\psi}}{\gamma_{\varphi}}},
\end{align}
then by the definition of $\mathfrak{M}_{\gamma}(\mathbb{R}^{d})$,
we see that $\widetilde{\varphi}^{\frac{1}{p}}\in\mathfrak{M}_{\gamma}(\mathbb{R}^{d})$
with the order $\gamma_{\widetilde{\varphi}^{\frac{1}{p}}}=\frac{\gamma_{\psi}}{p\gamma_{\varphi}}$.
By applying Lemma \ref{lem: initial condition esti} with $\varphi=\widetilde{\varphi}^{\frac{1}{p}}$ and $s=0$,
and applying (iii) in Proposition \ref{prop: psi bessel},
we obtain that
\begin{align}
\|L_{\varphi}^{\frac{\gamma_{\psi}}{\gamma_{\varphi}}}u\|_{\mathbb{H}_{K,p,T}^{\varphi,0}}^{p}
&= \|L_{\widetilde{\varphi}}u\|_{\mathbb{H}_{K,p,T}^{\varphi, 0}}^{p}\nonumber\\
&=\mathbb{E}\int_{0}^{T}\int_{\mathbb{R}^{d}}\|L_{\widetilde{\varphi}}^{\frac{1}{p}}\mathcal{T}_{\psi}(t,0)
        L_{\widetilde{\varphi}}^{1-\frac{1}{p}}u_{0}(x)\|_{K}^{p}dxdt\nonumber\\
&\leq C_{4}\mathbb{E}\int_{\mathbb{R}^{d}}\|L_{\widetilde{\varphi}}^{1-\frac{1}{p}}u_{0}(x)\|_{K}^{p}dx
        \nonumber\\
&=C_{4}\mathbb{E} \|L_{\varphi}^{\frac{\gamma_{\psi}}{\gamma_{\varphi}}\left(1-\frac{1}{p}\right)}u_{0}\|_{L_{K}^{p}}^{p}\nonumber\\
&\leq C_{4}C_{5}\mathbb{E}\|u_{0}\|
_{H_{K,p}^{\varphi, \frac{2\gamma_{\psi}}{\gamma_{\varphi}}\left(1-\frac{1}{p}\right)}}^{p}.\label{eqn:deter Lv}
\end{align}
Therefore, by \eqref{eqn: esti sol homoeq}, \eqref{eqn: esti1 in homoeq} and \eqref{eqn:deter Lv},
we obtain that
\begin{align}
\|u\|_{\mathbb{H}_{K,p,T}^{\varphi,\frac{2\gamma_{\psi}}{\gamma_{\varphi}}}}^{p}
&\leq C_{1}2^{p-1}\left(\|u\|_{\mathbb{H}_{K,p,T}^{\varphi,0}}^{p}
+\|L_{\varphi}^{\frac{\gamma_{\psi}}{\gamma_{\varphi}}}u\|_{\mathbb{H}_{K,p,T}^{\varphi,0}}^{p}\right)\nonumber\\
&\leq C_{1}2^{p-1}\left(C_{2}TC_{3}+C_{4}C_{5}\right)
\mathbb{E}\|u_{0}\|_{H_{K,p}^{\varphi, \frac{2\gamma_{\psi}}{\gamma_{\varphi}}
                                    \left(1-\frac{1}{p}\right)}}^{p}.\label{eqn:esti2 in homoeq}
\end{align}

We now estimate $\|L_{\psi}u\|_{\mathbb{H}_{K,p,T}^{\varphi,0}}$.
Note that by the condition \textbf{(S2)} for $\psi(t,\xi)$ and Proposition \ref{prop: multiplier},
we see that for each $t>0$, the function $m(\xi)=\frac{\psi(t,\xi)}{(1+\varphi(\xi))^{\frac{\gamma_{\psi}}{\gamma_{\varphi}}}}$
is a Fourier multiplier on $L_{K}^{p}$.
Therefore, we obtain that there exists a constant $C_{6}>0$ such that
for any $v\in\mathbb{H}_{K,p,T}^{\varphi,\frac{2\gamma_{\psi}}{\gamma_{\varphi}}}$,
\begin{align}
\|L_{\psi}v\|_{\mathbb{H}_{K,p,T}^{\varphi,0}}^{p}
&=\mathbb{E}\int_{0}^{T}\|L_{\psi}(t)v(t,\cdot)\|_{L_{K}^{p}}^{p}dt\nonumber\\
&=\mathbb{E}\int_{0}^{T}\left\|\mathcal{F}^{-1}\left(\frac{\psi(t,\xi)}{(1+\varphi(\xi))^{\frac{\gamma_{\psi}}{\gamma_{\varphi}}}}
    (1+\varphi(\xi))^{\frac{\gamma_{\psi}}{\gamma_{\varphi}}}\mathcal{F}v(t,\xi)\right)\right\|_{L_{K}^{p}}dt\nonumber\\
 &\leq C_{6} \mathbb{E}\int_{0}^{T} \left\|\mathcal{F}^{-1}\left(
    (1+\varphi(\xi))^{\frac{\gamma_{\psi}}{\gamma_{\varphi}}}\mathcal{F}v(t,\xi)\right)\right\|_{L_{K}^{p}} dt\nonumber\\
 &=C_{6}\|v\|_{\mathbb{H}_{K,p,T}^{\varphi,\frac{2\gamma_{\psi}}{\gamma_{\varphi}}}}^{p}. \label{eqn:L psi bound}
\end{align}
Hence, by combining \eqref{eqn: sol. sp norm ini}, \eqref{eqn:esti2 in homoeq} and \eqref{eqn:L psi bound},
we obtain that
\begin{align}
\|u\|_{\mathcal{H}_{K,p,\boldsymbol{r},q,T}^{\varphi, \frac{2\gamma_{\psi}}{\gamma_{\varphi}}}}^{p}
&\leq 3^{p-1}\left(\|u\|_{\mathbb{H}_{K,p,T}^{\varphi,\frac{2\gamma_{\psi}}{\gamma_{\varphi}}}}^{p}
        +\|L_{\psi}u\|_{\mathbb{H}_{K,p,T}^{\varphi,0}}^{p}
+\mathbb{E}\|u_{0}\|_{H_{K,p}^{\varphi,\frac{2\gamma_{\psi}}{\gamma_{\varphi}}(1-\frac{1}{p})}}^{p}\right)\nonumber\\
&\leq 3^{p-1}C_{7}\mathbb{E}\|u_{0}\|
_{H_{K,p}^{\varphi, \frac{2\gamma_{\psi}}{\gamma_{\varphi}}\left(1-\frac{1}{p}\right)}}^{p},\label{eqn: norm sol ini}
\end{align}
which proves \eqref{eqn: inequality for the norm of solution} for the case $f=0$ and $g=0$.

\textbf{Step 2.} We consider the following deterministic equation
\begin{align}\label{eqn: deter eq 1}
\frac{du}{dt}(t,x)=L_{\psi}(t)u(t,x)+f(t,x),\qquad u(0,x)=0.
\end{align}
Then
\begin{align*}
u(t,x)=\int_{0}^{t}\mathcal{T}_{\psi}(t,s)f(s,x)ds
\end{align*}
is a solution to \eqref{eqn: deter eq 1}. By the definition of the norm of $\mathcal{H}_{K,p,\boldsymbol{r},q,T}^{\varphi,\frac{2\gamma_{\psi}}{\gamma_{\varphi}}}$, we have
\begin{align}\label{eqn: sol. sp norm 1}
\|u\|_{\mathcal{H}_{K,p,\boldsymbol{r},q,T}^{\varphi,\frac{2\gamma_{\psi}}{\gamma_{\varphi}}}}
=&\|u\|_{\mathbb{H}_{K,p,T}^{\varphi,\frac{2\gamma_{\psi}}{\gamma_{\varphi}}}}
        +\|L_{\psi}u+f\|_{\mathbb{H}_{K,p,T}^{\varphi,0}}.
\end{align}
Firstly we estimate $\|u\|_{\mathbb{H}_{K,p,T}^{\varphi,\frac{2\gamma_{\psi}}{\gamma_{\varphi}}}}$.
From \eqref{eqn: esti sol homoeq}, we have
\begin{align}
\|u\|_{\mathbb{H}_{K,p,T}^{\varphi,\frac{2\gamma_{\psi}}{\gamma_{\varphi}}}}^{p}
&\leq C_{1}2^{p-1}\left(\|u\|_{\mathbb{H}_{K,p,T}^{\varphi,0}}^{p}
+\|L_{\varphi}^{\frac{\gamma_{\psi}}{\gamma_{\varphi}}}u\|_{\mathbb{H}_{K,p,T}^{\varphi,0}}^{p}\right).
\label{eqn: esti sol inhomoeq}
\end{align}
On the other hand, by applying Jensen's inequality and the fact that $\|\mathcal{T}_{\psi}(t,s)h\|_{L_{K}^{p}}\leq C_{9}\|h\|_{L_{K}^{p}}$, we obtain that
\begin{align}
\|u\|_{\mathbb{H}_{K,p,T}^{\varphi,0}}^{p}
&=\mathbb{E}\int_{0}^{T}\left\|\int_{0}^{t}\mathcal{T}_{\psi}(t,s)f(s,\cdot)ds\right\|_{L_{K}^{p}}^{p}dt\nonumber\\
&\leq\mathbb{E}\int_{0}^{T}t^{p-1}\int_{0}^{t}\left\|\mathcal{T}_{\psi}(t,s)f(s,\cdot)\right\|_{L_{K}^{p}}^{p}dsdt\nonumber\\
&\leq C_{8}\mathbb{E}\int_{0}^{T}t^{p-1}\int_{0}^{t}\left\|f(s,\cdot)\right\|_{L_{K}^{p}}^{p}dsdt\nonumber\\
&\leq C_{8}T^{p}\|f\|_{\mathbb{H}_{K,p,T}^{\varphi,0}}^{p}.\label{eqn: deter esti}
\end{align}
Also, by applying Lemma \ref{lem:L psi potential}, we obtain that
\begin{align}
\|L_{\varphi}^{\frac{\gamma_{\psi}}{\gamma_{\varphi}}}u\|_{\mathbb{H}_{K,p,T}^{\varphi,0}}^{p}
&= \|L_{\widetilde{\varphi}}u\|_{\mathbb{H}_{K,p,T}^{\varphi, 0}}^{p}\nonumber\\
&=\mathbb{E}\int_{0}^{T}\int_{\mathbb{R}^{d}}\left\|\int_{0}^{t}L_{\widetilde{\varphi}}\mathcal{T}_{\psi}(t,s)f(s,x)ds
\right\|_{K}^{p}dxdt\nonumber\\
&\leq C_{9}\mathbb{E}\int_{0}^{T}\int_{\mathbb{R}^{d}}\|f(s,x)\|_{K}^{p}dxds
        \nonumber\\
&=C_{9}\|f\|_{\mathbb{H}_{K,p,T}^{\varphi, 0}}^{p}.\label{eqn:deter Lv 1}
\end{align}
Therefore, by combining \eqref{eqn: esti sol inhomoeq}, \eqref{eqn: deter esti} and \eqref{eqn:deter Lv 1},
we obtain that
\begin{align}
\|u\|_{\mathbb{H}_{K,p,T}^{\varphi,\frac{2\gamma_{\psi}}{\gamma_{\varphi}}}}^{p}
&\leq C_{1}2^{p-1}\left(\|u\|_{\mathbb{H}_{K,p,T}^{\varphi,0}}^{p}
+\|L_{\varphi}^{\frac{\gamma_{\psi}}{\gamma_{\varphi}}}u\|_{\mathbb{H}_{K,p,T}^{\varphi,0}}^{p}\right)\nonumber\\
&\leq C_{1}2^{p-1}\left(C_{8}+C_{9}\right)\|f\|_{\mathbb{H}_{K,p,T}^{\varphi,0}}^{p}.\label{eqn: deter esti 2}
\end{align}
We now estimate $\|L_{\psi}u+f\|_{\mathbb{H}_{K,p,T}^{\varphi,0}}$.
By applying triangle inequality, \eqref{eqn:L psi bound} with $v=u$ and \eqref{eqn: deter esti 2}, we obtain that
\begin{align}
\|L_{\psi}u+f\|_{\mathbb{H}_{K,p,T}^{\varphi,0}}
&\leq \|L_{\psi}u\|_{\mathbb{H}_{K,p,T}^{\varphi,0}}+\|f\|_{\mathbb{H}_{K,p,T}^{\varphi,0}}\nonumber\\
&\leq C_{6}\|u\|_{\mathbb{H}_{K,p,T}^{\varphi,\frac{2\gamma_{\psi}}{\gamma_{\varphi}}}}
    +\|f\|_{\mathbb{H}_{K,p,T}^{\varphi,0}}\nonumber\\
&\leq (C_{6}C_{1}2^{p-1}\left(C_{8}+C_{9}\right)+1)\|f\|_{\mathbb{H}_{K,p,T}^{\varphi,0}}.\label{eqn: deter esti 3}
\end{align}
Hence, by combining \eqref{eqn: sol. sp norm 1}, \eqref{eqn: deter esti 2} and \eqref{eqn: deter esti 3},
we obtain that
\begin{align}
\|u\|_{\mathcal{H}_{K,p,\boldsymbol{r},q,T}^{\varphi, \frac{2\gamma_{\psi}}{\gamma_{\varphi}}}}^{p}
&\leq 2^{p-1}\left(\|u\|_{\mathbb{H}_{K,p,T}^{\varphi,\frac{2\gamma_{\psi}}{\gamma_{\varphi}}}}^{p}
        +\|L_{\psi}u+f\|_{\mathbb{H}_{K,p,T}^{\varphi,0}}^{p}\right)\nonumber\\
&\leq 2^{p-1}\left((1+C_{6})C_{1}2^{p-1}\left(C_{8}+C_{9}\right)+1\right)
\|f\|_{\mathbb{H}_{K,p,T}^{\varphi,0}}^{p},\label{eqn: norm sol pot}
\end{align}
which proves \eqref{eqn: inequality for the norm of solution} for the case $g=0$ and $u_{0}=0$.

\textbf{Step 3.}
Suppose that
\begin{align*}
g(t,x)=\sum_{i=1}^{l}F_{i}1_{(t_{i-1},t_{i}]}(t)\phi_{i}(x),\qquad t\in [0,T], \quad x\in\mathbb{R}^{d},
\end{align*}
where $F_{i}\in \mathcal{C}_{\boldsymbol{\beta}}$, $0\leq t_{0}<t_{1}<\cdots <t_{l}\leq T$,
and $\phi_{i}\in C_{\mathrm{c},K\otimes U_{0}}^{\infty}$.
We consider the following equation
\begin{align}
du(t,x)=L_{\psi}(t)u(t,x)+g(t,x)\delta\boldsymbol{\beta}_{t},\quad
u(0,\cdot)=0.\label{eqn: sto eq.}
\end{align}
It is known that for each $t\in [0,T]$ and $x\in\mathbb{R}^{d}$, the solution of \eqref{eqn: sto eq.}
is given by
\begin{align*}
u(t,x)=\int_{0}^{t}\mathcal{T}_{\psi}(t,r)g(r,x)\delta\boldsymbol{\beta}_{r},
\end{align*}
see \cite{Krylov 1999, Balan 2011}.

By the definition of the norm of $\mathcal{H}_{K,p,\boldsymbol{r},q,T}^{\varphi,\frac{2\gamma_{\psi}}{\gamma_{\varphi}}}$,
we have
\begin{align}\label{eqn: sol. sp norm sto}
\|u\|_{\mathcal{H}_{K,p,\boldsymbol{r},q,T}^{\varphi,\frac{2\gamma_{\psi}}{\gamma_{\varphi}}}}
=&\|u\|_{\mathbb{H}_{K,p,T}^{\varphi,\frac{2\gamma_{\psi}}{\gamma_{\varphi}}}}
        +\|L_{\psi}u\|_{\mathbb{H}_{K,p,T}^{\varphi,0}}
+\|g\|_{\mathbb{L}_{\boldsymbol{r},\boldsymbol{\beta}}^{1,p}
(H_{K\otimes U_{0},p}^{\varphi,\frac{2\gamma_{\psi}}{q'\gamma_{\varphi}}},T)}
\end{align}
We estimate $\|u\|_{\mathbb{H}_{K,p,T}^{\varphi,\frac{2\gamma_{\psi}}{\gamma_{\varphi}}}}$
and $\|L_{\psi}u\|_{\mathbb{H}_{K,p,T}^{\varphi,0}}$.
By Lemma \ref{lem: estimate of solution} with $m=\frac{2\gamma_{\psi}}{q^{\prime}\gamma_{\varphi}}$,
we obtain that
\begin{align}
\|u\|_{\mathbb{H}_{K,p,T}^{\varphi,\frac{2\gamma_{\psi}}{\gamma_{\varphi}}}}^{p}
&= \mathbb{E}\int_{0}^{T}\left\|\int_{0}^{t}\mathcal{T}_{\psi}(t,r)g(r,\cdot)\delta\boldsymbol{\beta}_{s}
        \right\|_{H_{K,p}^{\varphi,\left(\frac{1}{q}+\frac{1}{q'}\right)\frac{2\gamma_{\psi}}{\gamma_{\varphi}}}}^{p}dt\nonumber\\
&\leq C_{10}\|g\|_{\mathbb{L}_{\boldsymbol{r},\boldsymbol{\beta}}^{1,p}
        (H_{K\otimes U_{0},p}^{\varphi,\frac{2\gamma_{\psi}}{q^{\prime}\gamma_{\varphi}}},T)}^{p}\label{eqn: sto esti 1}
\end{align}
for some constant $C_{10}>0$. For $\|L_{\psi}u\|_{\mathbb{H}_{K,p,T}^{\varphi,0}}$,
by applying \eqref{eqn:L psi bound} with $v=u$ and \eqref{eqn: sto esti 1}, we have
\begin{align}
\|L_{\psi}u\|_{\mathbb{H}_{K,p,T}^{\varphi,0}}^{p}
\leq  C_{6}\|u\|_{\mathbb{H}_{K,p,T}^{\varphi,\frac{2\gamma_{\psi}}{\gamma_{\varphi}}}}^{p}
\leq C_{6}C_{10}\|g\|_{\mathbb{L}_{\boldsymbol{r},\boldsymbol{\beta}}^{1,p}
        (H_{K\otimes U_{0},p}^{\varphi,\frac{2\gamma_{\psi}}{q'\gamma_{\varphi}}},T)}^{p}.\label{eqn: sto esti 2}
\end{align}
Therefore, by combining \eqref{eqn: sol. sp norm sto}, \eqref{eqn: sto esti 1} and \eqref{eqn: sto esti 2},
we obtain that
\begin{align}
\|u\|_{\mathcal{H}_{K,p,\boldsymbol{r},q,T}^{\varphi,\frac{2\gamma_{\psi}}{\gamma_{\varphi}}}}^{p}
&\leq 3^{p-1}\left(\|u\|_{\mathbb{H}_{K,p,T}^{\varphi,\frac{2\gamma_{\psi}}{\gamma_{\varphi}}}}^{p}
+\|L_{\psi}u\|_{\mathbb{H}_{K,p,T}^{\varphi,0}}^{p}
+\|g\|_{\mathbb{L}_{\boldsymbol{r},\boldsymbol{\beta}}^{1,p}
    (H_{K\otimes U_{0},p}^{\varphi,\frac{2\gamma_{\psi}}{q'\gamma_{\varphi}}},T)}^{p}\right)\nonumber\\
&\leq 3^{p-1}\left(C_{10}+C_{6}C_{10}+1\right)\|g\|_{\mathbb{L}_{\boldsymbol{r},\boldsymbol{\beta}}^{1,p}
    (H_{K\otimes U_{0},p}^{\varphi,\frac{2\gamma_{\psi}}{q'\gamma_{\varphi}}},T)}^{p},\label{eqn: norm sol sto}
\end{align}
which proves \eqref{eqn: inequality for the norm of solution} for the case $f=0$ and $u_{0}=0$.

\textbf{Step 4.} Finally, if $u_{1},\,u_{2},\,u_{3}$ are solutions of equations
\eqref{eqn: deter eq}, \eqref{eqn: deter eq 1}, \eqref{eqn: sto eq.}, respectively, then $u=u_{1}+u_{2}+u_{3}$ is a solution
of \eqref{eqn :SPDE with pseudo diff op}.
For this solution $u$, by applying \eqref{eqn: norm sol ini}, \eqref{eqn: norm sol pot} and \eqref{eqn: norm sol sto},
we obtain that
\begin{align*}
\|u\|_{\mathcal{H}_{K,p,\boldsymbol{r},q,T}^{\varphi,\frac{2\gamma_{\psi}}{\gamma_{\varphi}}}}^{p}
&\leq 3^{p-1}\left(\|u_{1}\|_{\mathcal{H}_{K,p,\boldsymbol{r},q,T}^{\varphi,\frac{2\gamma_{\psi}}{\gamma_{\varphi}}}}^{p}
        +\|u_{2}\|_{\mathcal{H}_{K,p,\boldsymbol{r},q,T}^{\varphi,\frac{2\gamma_{\psi}}{\gamma_{\varphi}}}}^{p}
        +\|u_{3}\|_{\mathcal{H}_{K,p,\boldsymbol{r},q,T}^{\varphi,\frac{2\gamma_{\psi}}{\gamma_{\varphi}}}}^{p}\right)\\
&\leq 3^{p-1}C\left(\mathbb{E}\|u_{0}\|_{H_{K,p}^{\varphi,\frac{2\gamma_{\psi}}{\gamma_{\varphi}}(1-\frac{1}{p})}}^{p}
        +\|f\|_{\mathbb{H}_{K,p,T}^{\varphi,0}}^{p}
        +\|g\|_{\mathbb{L}_{\boldsymbol{r},\boldsymbol{\beta}}^{1,p}(H_{K\otimes U_{0},p}^{\varphi,\frac{2\gamma_{\psi}}{q'\gamma_{\varphi}}},T)}^{p}\right).
\end{align*}
for some constant $C>0$. The proof is complete.
\end{proof}

\section{Examples of Covariance Kernels for Gaussian Processes}\label{sec: examples}
In this section, we discuss some examples of $Q$-Gaussian process $\boldsymbol{\beta}=\{\boldsymbol{\beta}_t\}_{0\le t\le T}$
for which the conditions \textbf{(R1)} and \textbf{(R2)} hold.

\begin{example}\label{example: Bm}
\upshape
Let $\boldsymbol{\beta}=\{\boldsymbol{\beta}_t\}_{0\le t\le T}$ be a $Q$-Wiener process
with the covariance kernel $R(t,s)$ such that
\begin{align*}
\mathbb{E}[\bilin{\boldsymbol{\beta}_{t}}{u}_{U}\bilin{\boldsymbol{\beta}_{s}}{v}_{U}]&=R(t,s)\bilin{Qu}{v}_{U},
\quad u,v\in U,\\
R(t,s)&=\min\{t,s\},\quad 0\le t,s\le T.
\end{align*}
Then we have
\begin{align*}
\frac{\partial^{2}R}{\partial t \partial s}(t,s)=\delta(t-s)\geq 0  \quad \text{for all }t,s\in [0,T]
\end{align*}
in the sense of distribution, where $\delta$ is the Dirac delta function at zero.
Then the integral kernel operator $K_{R}$ is given by
\begin{align*}
K_{R} f(t)=\int_{0}^{T}f(s)\delta(t-s)ds=f(t),\quad t\in [0,T],
\end{align*}
i.e., $K_{R}$ is the identity operator.
In this case, $\mathcal{H}=L^{2}([0,T])$ and for any $1\le p\le q\le \infty$, we have
\begin{align*}
\|K_{R}f\|_{L^{p}([0,T])}=\|f\|_{L^{p}([0,T])}\le C_{p,q}\|f\|_{L^{q}([0,T])}
\end{align*}
for some constant $C_{p,q}\ge0$.
By taking $\boldsymbol{s}=p\ge1$ and $\boldsymbol{r}=q\ge p$ with $1/p+1/q=1$ and $C_{R}=C_{p,q}$,
we see that the assumptions given as in \textbf{(R1)} and \textbf{(R2)} hold.
\end{example}

\begin{example}\label{example: fBm}
\upshape
Let $\boldsymbol{\beta}=\{\boldsymbol{\beta}_t\}_{0\le t\le T}$ be a $Q$-fractional Brownian motion ($Q$-fBm)
with the covariance kernel $R(t,s)$ such that
\begin{align*}
\mathbb{E}[\bilin{\boldsymbol{\beta}_{t}}{u}_{U}\bilin{\boldsymbol{\beta}_{s}}{v}_{U}]&=R(t,s)\bilin{Qu}{v}_{U},
\quad u,v\in U,\\
R(t,s)&=\frac{1}{2}(t^{2H}+s^{2H}-|t-s|^{2H}),\quad 0\le t,s\le T
\end{align*}
(see \cite{Duncan 2006,Grecksch 2009}), where $H$ is the Hurst parameter with $H\in (0,1)$.
Then we have
\begin{align*}
\frac{\partial^{2}R}{\partial t \partial s}(t,s)=H(2H-1)|t-s|^{2H-2}\geq 0  \quad \text{for all }t,s\in [0,T]
\end{align*}
in the sense of distribution.
Suppose that $H\in (\frac{1}{2},1)$.
Then by applying the Hardy-Littlewood inequality,
we see that the integral kernel operator $K_{R}$ defined by
\begin{align*}
K_{R} f(t)=H(2H-1)\int_{0}^{T}f(s)|t-s|^{2H-2}ds
\end{align*}
satisfies that
\begin{align*}
\|K_{R} f\|_{L^{\frac{1}{1-H}}([0,T])}\leq C\|f\|_{L^{\frac{1}{H}}([0,T])}
\end{align*}
for some constant $C=C_{H}>0$ (see Equation (12) in \cite{Nualart 2003}),
and then by taking $\boldsymbol{r}=\frac{1}{H},\boldsymbol{s}=\frac{1}{1-H}$ and $C_{R}=C$,
we see that the assumptions given as in \textbf{(R1)} and \textbf{(R2)} hold.
\end{example}

\begin{example}
\upshape
A $Q$-fBm $\boldsymbol{\beta}=\{\boldsymbol{\beta}_t\}_{0\le t\le T}$ with the Hurst parameter $H=1$
is defined by
\begin{align*}
\boldsymbol{\beta}_{t}=t \boldsymbol{X}\quad t\in [0,T],
\end{align*}
where $\boldsymbol{X}$ is a $U$-valued standard normal random variable (see Remark 1.2.3 in \cite{Mishura 2008}).
Then we have
\begin{align*}
R(t,s)=ts,\quad \frac{\partial^{2} R}{\partial t \partial s}(t,s)=1\geq 0
\end{align*}
for all $t,s\in [0,T]$, and then for the integral kernel operator $K_{R}$ defined by
\begin{align*}
K_{R}f(t)=\int_{0}^{T}f(s)\frac{\partial^{2} R}{\partial t \partial s}(t,s)ds
=\int_{0}^{T}f(s)ds,
\end{align*}
we have
\begin{align*}
|K_{R}f(t)|\leq \int_{0}^{T}|f(s)|ds=\|f\|_{L^{1}([0,T])}.
\end{align*}
Therefore, we have
\begin{align*}
\|K_{R}f\|_{L^{\infty}([0,T])}\leq \|f\|_{L^{1}([0,T])},
\end{align*}
from which by taking $\boldsymbol{r}=1$, $\boldsymbol{s}=\infty$ and $C_{R}=1$,
we see that the assumptions \textbf{(R1)} and \textbf{(R2)} hold.
\end{example}

\begin{example}
\upshape
Let $0<\delta<1$ and let $\boldsymbol{\beta}=\{\boldsymbol{\beta}_t\}_{0\le t\le T}$
be a $Q$-centered Gaussian process with the covariance kernel $R(t,s)$ such that
\begin{align*}
\mathbb{E}[\bilin{\boldsymbol{\beta}_{t}}{u}_{U}\bilin{\boldsymbol{\beta}_{s}}{v}_{U}]&=R(t,s)\bilin{Qu}{v}_{U},
\quad u,v\in U,\\
\frac{\partial^{2} R}{\partial t \partial s}(t,s)&=\frac{(2\sqrt{\pi})^{-1}}{\Gamma\left(\frac{\delta}{2}\right)}
   \int_{0}^{\infty}e^{-x}e^{-\frac{|t-s|^{2}}{4x}}x^{\frac{\delta-1}{2}}\frac{dx}{x},\quad 0\le t,s\le T,
\end{align*}
where $\Gamma$ is the gamma function.
Then for the integral kernel operator $K_{R}$ defined by
\begin{align*}
K_{R} f(t)=\int_{0}^{T}f(s)\frac{\partial^{2} R}{\partial t \partial s}(t,s)ds,
\end{align*}
we have
\begin{align*}
K_{R} f(t)=G_{\delta}*f(t)
\end{align*}
with the (one-dimensional) Bessel kernel $G_{\delta}$ given by
\begin{align*}
G_{\delta}(t)=\frac{(2\sqrt{\pi})^{-1}}{\Gamma\left(\frac{\delta}{2}\right)}
\int_{0}^{\infty}e^{-x}e^{-\frac{|t|^{2}}{4x}}x^{\frac{\delta-1}{2}}\frac{dx}{x}
\end{align*}
 (see \cite{Hu 2015,Balan 2008}).
By taking $\boldsymbol{r}=\frac{2}{\delta+1}$ and $\boldsymbol{s}=\frac{\boldsymbol{r}}{\boldsymbol{r}-1}$,
 we have
 \begin{align*}
 \frac{1}{\boldsymbol{r}}-\frac{1}{\boldsymbol{s}}=\delta, \quad 1<\boldsymbol{r}<2<\boldsymbol{s}<\infty.
\end{align*}
Therefore, by Corollary 6.1.6 in \cite{Grafakos 2009},
there exists a constant $C=C_{\delta}>0$ such that for any $f\in L^{\boldsymbol{r}}([0,T])$
\begin{align*}
\|K_{R} f\|_{L^{\boldsymbol{s}}([0,T])}\leq C\|f\|_{L^{\boldsymbol{r}}([0,T])},
\end{align*}
and hence the assumptions $\textbf{(R1)}$ and $\textbf{(R2)}$ hold.
\end{example}

\begin{example}
\upshape
Let $\delta>0$ and let $\boldsymbol{\beta}$ be a $Q$-centered Gaussian process with the covariance kernel $R(t,s)$ such that
\begin{align*}
\mathbb{E}[\bilin{\boldsymbol{\beta}_{t}}{u}_{U}\bilin{\boldsymbol{\beta}_{s}}{v}_{U}]&=R(t,s)\bilin{Qu}{v}_{U},
\quad u,v\in U,\\
\frac{\partial^{2} R}{\partial t \partial s}(t,s)&=\frac{1}{(4\pi \delta)^{\frac{1}{2}}}\exp\left(-\frac{|t-s|^{2}}{4\delta}\right)
,\quad 0\le t,s\le T.
\end{align*}
Then for the integral kernel operator $K_{R}$ defined by
\begin{align*}
K_{R} f=H_{\delta}*f
\end{align*}
with the (one-dimensional) heat kernel $H_{\delta}$ given by
\begin{align*}
H_{\delta}(t)=\frac{1}{(4\pi \delta)^{\frac{1}{2}}}\exp\left(-\frac{|t|^{2}}{4\delta}\right)
\end{align*}
(see \cite{Balan 2008}).
For $1\leq \boldsymbol{r},\boldsymbol{s},p\leq \infty$, let $1+\frac{1}{\boldsymbol{s}}=\frac{1}{p}+\frac{1}{\boldsymbol{r}}$ and $\frac{1}{\boldsymbol{r}}+\frac{1}{\boldsymbol{s}}=1$.
By Young's inequality we have
\begin{align*}
\|K_{R} f\|_{L^{\boldsymbol{s}}([0,T])}
&=\|H_{\delta}*f\|_{L^{\boldsymbol{s}}([0,T])}
\leq \|H_{\delta}\|_{L^{p}([0,T])}\|f\|_{L^{\boldsymbol{r}}([0,T])}.
\end{align*}
Since $\|H_{\delta}\|_{L^{p}([0,T])}<\infty$, by taking $C_{R}=\|H_{\delta}\|_{L^{p}([0,T])}$ we have
\begin{align*}
\|K_{R} f\|_{L^{\boldsymbol{s}}([0,T])}\leq C_{R}\|f\|_{L^{\boldsymbol{r}}([0,T])}.
\end{align*}
Therefore, the assumptions $\textbf{(R1)}$ and $\textbf{(R2)}$ hold.
\end{example}

\appendix
\section*{Appendix}\label{sec: Appendix}
\addcontentsline{toc}{section}{Appendix}
\renewcommand{\thesubsection}{\Alph{subsection}}
\counterwithin{theorem}{subsection}
\counterwithin{equation}{subsection}

\subsection{A Proof of Theorem \ref{cor: 2nd LP ineq}}\label{sec:A Proof of LPI-Banach space}

For a notational convenience, we put $V=L^{r}(\mathbb{R};H)$,
and for any $-\infty\le a<b\le \infty$ and $f\in C_{\rm c}^{\infty}((a,b)\times \mathbb{R}^{d};V)$,
we put
\begin{align*}
\mathcal{R}_{a,q}f(t,x)
&=\left[\int_{a}^{t}
(t-s)^{\frac{q\gamma_{\varphi}}{\gamma_{\psi}}-1}\|L_{\varphi}\mathcal{T}_{\psi}(t,s)f(s,\cdot,\cdot)(x)\|_{V}^{q} ds\right]^{\frac{1}{q}}\\
&=\left[\int_{a}^{t}(t-s)^{\frac{q\gamma_{\varphi}}{\gamma_{\psi}}-1}\left(\int_{\mathbb{R}}
\|L_{\varphi}
\mathcal{T}_{\psi}(t,s)f(s,\cdot,\theta)(x)\|_{H}^{r}d\theta\right)^{\frac{q}{r}} ds\right]^{\frac{1}{q}}
\end{align*}
To prove Theorem \ref{cor: 2nd LP ineq}, we apply the arguments used in the proofs of Theorems 4.1 and 5.3 in \cite{Ji-Kim 2025-1}.

\begin{lemma}\label{lem: 2nd LP ineq for p equal lambda}
Let $r\geq 1$ and let $q\geq \max\{2,r\}$ be given.
Then it holds that
\begin{itemize}
  \item [\rm{(i)}] if $-\infty<a\leq c_{1}\leq c_{2}\leq b<\infty$, then
  there exists a constant $C_{1}>0$ depending on $a,b,d,q, \gamma_{\varphi},\gamma_{\psi}$, $\mu_{\varphi},\mu_{\psi}$,
  $\kappa_{\varphi}$ and $\kappa_{\psi}$
such that for any $f\in C_{\rm c}^{\infty}((a,b)\times\mathbb{R}^{d}; V)$,
\begin{align}
&\int_{c_{1}}^{c_{2}}\int_{\mathbb{R}^{d}}|\mathcal{R}_{a,q}f(t,x)|^{q}dxdt\nonumber\\
&\qquad\leq C_{1}\int_{a}^{c_{2}}\left[\int_{\mathbb{R}}\left(\int_{\mathbb{R}^{d}}
\|f(s,x,\theta)\|_{H}^{q}dx\right)^{\frac{r}{q}}d\theta\right]^{\frac{q}{r}} ds
\label{eqn:LP p=q 1}
\end{align}
for which the right hand side of \eqref{eqn:LP p=q 1} is finite,

  \item [\rm{(ii)}] if $r\leq 2$ and $q=2$ and $-\infty\leq c_{1}\leq c_{2}\leq \infty$,
  then there exists a constant $C_{2}>0$ depending on $\mu_{\varphi},\kappa_{\psi}$, $\gamma_{\varphi}$ and $\gamma_{\psi}$
  such that for any $f\in C_{\rm c}^{\infty}(\mathbb{R}^{d+1}; V)$,
\begin{align}
&\int_{c_{1}}^{c_{2}}\int_{\mathbb{R}^{d}}|\mathcal{R}_{-\infty,2}f(t,x)|^2 dxdt\nonumber\\
&\qquad\leq C_{2}\int_{-\infty}^{c_{2}}\left[\int_{\mathbb{R}}\left(\int_{\mathbb{R}^{d}}
\|f(s,x,\theta)\|_{H}^{2}dx\right)^{\frac{r}{2}}d\theta\right]^{\frac{2}{r}} ds
\label{eqn: LP 2}
\end{align}
for which the right hand side of \eqref{eqn: LP 2} is finite.
\end{itemize}
\end{lemma}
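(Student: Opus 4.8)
Overall, I would treat the two parts by different devices: part (ii), where $q=2$, by a direct Fourier‑analytic (Plancherel) computation, and part (i), where $q$ may exceed $2$, by mimicking the Calderón–Zygmund scheme of the proofs of Theorems~4.1 and~5.3 in \cite{Ji-Kim 2025-1}. In both cases the only genuinely new feature relative to the Hilbert‑valued statement in Corollary~\ref{thm:LP ineq} is the additional variable $\theta\in\mathbb{R}$, which carries an $L^{r}$–norm that must be transported through the remaining integrations; since $r\le q$, this transport is the delicate point.

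For part~(ii) the plan is as follows. By Tonelli the left side of \eqref{eqn: LP 2} equals $\int_{-\infty}^{c_2}\mathcal{J}(s)\,ds$, where, with $g:=f(s,\cdot,\cdot)$,
\[
\mathcal{J}(s)=\int_{\max(s,c_1)}^{c_2}\!\!\int_{\mathbb{R}^d}(t-s)^{\frac{2\gamma_\varphi}{\gamma_\psi}-1}\|L_\varphi\mathcal{T}_\psi(t,s)g(x)\|_V^{2}\,dx\,dt .
\]
Since $r\le 2$ one has $\|G\|_V^{2}=\big\|\,\theta\mapsto\|G(\theta)\|_H^{2}\,\big\|_{L^{r/2}(\mathbb{R})}$ with $r/2\le 1$, and because $(t-s)^{\frac{2\gamma_\varphi}{\gamma_\psi}-1}\ge 0$ the \emph{reverse} Minkowski inequality for $L^{r/2}$ (valid for exponents $\le 1$) gives $\mathcal{J}(s)\le\big\|\,\theta\mapsto W_s(\theta)\,\big\|_{L^{r/2}(\mathbb{R})}$ with
\[
W_s(\theta)=\int_{\max(s,c_1)}^{c_2}\!\!\int_{\mathbb{R}^d}(t-s)^{\frac{2\gamma_\varphi}{\gamma_\psi}-1}\|L_\varphi\mathcal{T}_\psi(t,s)g(\cdot,\theta)(x)\|_H^{2}\,dx\,dt .
\]
For fixed $\theta$ the integrand is $H$–valued, so Plancherel in $x$ applies: $L_\varphi\mathcal{T}_\psi(t,s)$ is the Fourier multiplier with symbol $\varphi(\xi)\exp(\int_s^t\psi(r,\xi)\,dr)$, whose modulus is $\le\mu_\varphi|\xi|^{\gamma_\varphi}e^{-\kappa_\psi|\xi|^{\gamma_\psi}(t-s)}$ by \textbf{(S1)} and $\varphi\in\mathfrak{M}_{\gamma_\varphi}$; the substitution $u=2\kappa_\psi|\xi|^{\gamma_\psi}(t-s)$ then yields $\int_s^{\infty}(t-s)^{\frac{2\gamma_\varphi}{\gamma_\psi}-1}|\varphi(\xi)e^{\int_s^t\psi}|^{2}\,dt\le C_0:=\mu_\varphi^{2}(2\kappa_\psi)^{-2\gamma_\varphi/\gamma_\psi}\Gamma(2\gamma_\varphi/\gamma_\psi)$, uniformly in $\xi,s$. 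Hence $W_s(\theta)\le C_0\|g(\cdot,\theta)\|_{L^2_x(H)}^{2}$; taking the $L^{r/2}$–norm in $\theta$ gives $\mathcal{J}(s)\le C_0\big(\int_{\mathbb{R}}\|g(\cdot,\theta)\|_{L^2_x(H)}^{r}d\theta\big)^{2/r}$, and integrating in $s$ produces \eqref{eqn: LP 2} with $C_2=C_0$.

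For part~(i) Plancherel is no longer available, and the source‑timewise reduction above fails (once the oscillation of the symbol is discarded the $t$–integral diverges like $\int(t-s)^{-1}dt$). Instead I would follow the proof of Theorems~4.1 and~5.3 in \cite{Ji-Kim 2025-1}: after Tonelli, rewrite the left side of \eqref{eqn:LP p=q 1} as an $L^{q}$–norm over the parabolic variables of the \emph{linear} operator $f\mapsto\big((t,s,x)\mapsto(t-s)^{\frac{\gamma_\varphi}{\gamma_\psi}-\frac1q}L_\varphi\mathcal{T}_\psi(t,s)f(s,\cdot)(x)\big)$; identify it, via the pointwise kernel bound for $L_\varphi\mathcal{T}_\psi(t,s)$ supplied by Lemma~\ref{lmm:FB} (equivalently Corollary~2.10 of \cite{Ji-Kim 2025}), according to which $\mathcal{F}^{-1}\!\big(\varphi(\xi)e^{\int_s^t\psi}\big)$ is dominated by $(t-s)^{-\gamma_\varphi/\gamma_\psi}$ times an integrable approximate identity at scale $(t-s)^{1/\gamma_\psi}$, as a Calderón–Zygmund operator with \emph{scalar} kernel on the parabolic space; and obtain its $L^{q}$–boundedness by combining the $L^{2}$–estimate (the case $q=2$, i.e. part~(ii), which already covers $V$–valued functions) with the kernel bounds through the maximal‑function/interpolation argument of \cite{Ji-Kim 2025-1}. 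Because the kernel is scalar, the passage from $H$–valued to $V=L^{r}(\mathbb{R};H)$–valued functions is the standard vector‑valued (Fefferman–Stein / Benedek–Calderón–Panzone) extension of Calderón–Zygmund operators, which holds for every $1\le r<\infty$ since $q>1$; a short mixed‑norm computation then identifies the resulting majorant with the right side of \eqref{eqn:LP p=q 1}.

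The main obstacle, in both parts, is the mismatch in the order of the iterated norms: on the left of \eqref{eqn:LP p=q 1} and \eqref{eqn: LP 2} the $L^{r}$–norm in $\theta$ is innermost, whereas on the right it occupies an intermediate position (above $L^{q}_x$ but below $L^{q}_s$), and since $r\le q$ Minkowski's inequality connects the two orderings only in the unfavourable direction, so a naive ``Minkowski $+$ Hilbert‑valued LP'' argument is strictly too weak. In part~(ii) this is resolved precisely by the reverse Minkowski inequality for $L^{r/2}$ ($r/2\le1$), applied \emph{after} the source‑time reduction so that no $s$–integration has been collapsed; in part~(i) the analogous role is played by setting up the Calderón–Zygmund / square‑function estimate source‑timewise on the parabolic strip and carrying the $\theta$–integration only at the end. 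I expect the careful verification in part~(i) — that the vector‑valued Calderón–Zygmund extension yields exactly the mixed norm $\big[\int_{\mathbb{R}}\big(\int_{\mathbb{R}^d}\|f\|_H^{q}dx\big)^{r/q}d\theta\big]^{q/r}$ rather than the larger ``Banach‑valued'' norm $\int_{\mathbb{R}^d}\big(\int_{\mathbb{R}}\|f\|_H^{r}d\theta\big)^{q/r}dx$ — to be the part requiring the most bookkeeping.
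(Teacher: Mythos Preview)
Your treatment of part~(ii) is correct and is exactly the paper's argument: Fubini to put the source time $s$ outermost, then Minkowski in $(t,x)$ versus $\theta$ (your ``reverse Minkowski for $L^{r/2}$'' is the same as the standard Minkowski for $L^{q/r}$ with $q=2$, $q/r\ge1$), then the Plancherel computation giving the constant $\mu_\varphi^{2}(2\kappa_\psi)^{-2\gamma_\varphi/\gamma_\psi}\Gamma(2\gamma_\varphi/\gamma_\psi)$.

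For part~(i), however, there is a genuine gap in your reasoning, and your proposed fix is unnecessary. The source-timewise reduction does \emph{not} fail. The identical Fubini $+$ Minkowski step (now with $q$ in place of $2$; Minkowski is still in the favourable direction since $q/r\ge1$) reduces the problem to the fixed-$s$, fixed-$\theta$ estimate
\[
\int_{s}^{c_2}\!\!\int_{\mathbb{R}^{d}}(t-s)^{\frac{q\gamma_{\varphi}}{\gamma_{\psi}}-1}\|L_{\varphi}\mathcal{T}_{\psi}(t,s)g(x)\|_{H}^{q}\,dx\,dt\le C\|g\|_{L^{q}_{H}}^{q},
\]
and this is precisely Lemma~\ref{lem: initial condition esti}, valid for every $q\ge2$ on a finite time interval. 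Your observation that the naive $L^{1}$ kernel bound $\|L_{\varphi}p_{\psi}(t,s,\cdot)\|_{L^{1}}\lesssim(t-s)^{-\gamma_{\varphi}/\gamma_{\psi}}$ leads to a divergent $\int(t-s)^{-1}dt$ is correct, but Lemma~\ref{lem: initial condition esti} does not proceed that way; it is the $L^{q}$ analogue of the Plancherel step and is already available in the paper. Once you use it, part~(i) is two lines, exactly parallel to part~(ii).

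Your diagnosis that ``Minkowski connects the two orderings only in the unfavourable direction'' is also off: after Fubini puts $s$ outermost, the remaining swap is $\theta$ versus $(t,x)$ with outer exponent $q/r\ge1$, which is the favourable direction. The ordering problem you describe would arise only if one tried to pull $\theta$ outside of the $s$-integral as well and then push it back in; the paper avoids this entirely.

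Finally, the Calder\'on--Zygmund scheme you propose is the machinery the paper uses to prove Theorem~\ref{cor: 2nd LP ineq} (going from $p=q$ to general $p\ge q$); the present lemma is the $p=q$ base case for that argument, so invoking the scheme here is at best circular, and your sketch does not explain how the $q$-dependent weight $(t-s)^{\gamma_\varphi/\gamma_\psi-1/q}$ would survive an interpolation from $q=2$.
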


\begin{proof}
(i)\enspace By the assumption, we have $q\geq r$ and so
by applying the Fubini theorem and Minkowski's inequality with the measures
$d\theta$ and $(t-s)^{\frac{q\gamma_{\varphi}}{\gamma_{\psi}}-1}dtdx$, we obtain that
\begin{align}
&\int_{\mathbb{R}^{d}}\int_{c_{1}}^{c_{2}}|\mathcal{R}_{a,q}f(t,x)|^{q}dtdx\nonumber\\
&\leq \int_{a}^{c_{2}}\int_{\mathbb{R}^{d}}\int_{a}^{t}(t-s)^{\frac{q\gamma_{\varphi}}{\gamma_{\psi}}-1}\left(\int_{\mathbb{R}}
\|L_{\varphi}
\mathcal{T}_{\psi}(t,s)f(s,\cdot,\theta)(x)\|_{H}^{r}d\theta\right)^{\frac{q}{r}} dsdxdt\nonumber\\
&=\int_{a}^{c_{2}}\int_{\mathbb{R}^{d}}\int_{s}^{c_{2}}
(t-s)^{\frac{q\gamma_{\varphi}}{\gamma_{\psi}}-1}\left(\int_{\mathbb{R}}
\|L_{\varphi}
\mathcal{T}_{\psi}(t,s)f(s,\cdot,\theta)(x)\|_{H}^{r}d\theta\right)^{\frac{q}{r}} dtdxds\nonumber\\
&\leq \int_{a}^{c_{2}}\left(\int_{\mathbb{R}}
\left(\int_{\mathbb{R}^{d}}\int_{s}^{c_{2}}(t-s)^{\frac{q\gamma_{\varphi}}{\gamma_{\psi}}-1}\|L_{\varphi}
\mathcal{T}_{\psi}(t,s)f(s,\cdot,\theta)(x)\|_{H}^{q}dtdx\right)^{\frac{r}{q}}d\theta\right)^{\frac{q}{r}} ds.
\label{eqn:ineq p=q}
\end{align}
By Lemma \ref{lem: initial condition esti},
it holds that there exists a constant $C>0$ such that
\begin{align}
\int_{\mathbb{R}^{d}}\int_{s}^{c_{2}}(t-s)^{\frac{q\gamma_{\varphi}}{\gamma_{\psi}}-1}
\|L_{\varphi}\mathcal{T}_{\psi}(t,s)f(s,\cdot,\theta)(x)\|_{H}^{q}dtdx
\leq C \|f(s,\cdot,\theta)\|_{L_{H}^{q}}^{q}.
\label{eqn:int esti}
\end{align}
Therefore, by combining \eqref{eqn:ineq p=q} and \eqref{eqn:int esti}, we obtain that
\begin{align*}
\int_{\mathbb{R}^{d}}\int_{c_{1}}^{c_{2}}|\mathcal{R}_{a,q}f(t,x)|^{q}dtdx
&\leq C\int_{a}^{c_{2}}\left(\int_{\mathbb{R}}
\|f(s,\cdot,\theta)\|_{L_{H}^{q}}^{r}d\theta\right)^{\frac{q}{r}} ds\\
&= C\int_{a}^{c_{2}}\left(\int_{\mathbb{R}}\left(\int_{\mathbb{R}^{d}}
\|f(s,x,\theta)\|_{H}^{q}dx\right)^{\frac{r}{q}}d\theta\right)^{\frac{q}{r}} ds,
\end{align*}
which proves the inequality given in \eqref{eqn:LP p=q 1}.

(ii)\enspace By the assumption $r\leq 2$ and $q=2$, we have $q\geq r$ and so by applying \eqref{eqn:ineq p=q},
we obtain that
\begin{align}
&\int_{\mathbb{R}^{d}}\int_{c_{1}}^{c_{2}}|\mathcal{R}_{-\infty,2}f(t,x)|^2 dtdx\nonumber\\
&\leq \int_{-\infty}^{c_{2}}\left(\int_{\mathbb{R}}
\left(\int_{\mathbb{R}^{d}}\int_{s}^{c_{2}}(t-s)^{\frac{2\gamma_{\varphi}}{\gamma_{\psi}}-1}
\|L_{\varphi}\mathcal{T}_{\psi}(t,s)f(s,\cdot,\theta)(x)\|_{H}^{2}
dtdx\right)^{\frac{r}{2}}d\theta\right)^{\frac{2}{r}} ds.\label{eqn: c1 c2 q=2}
\end{align}
Note that by applying the Plancherel theorem and the arguments used in the proof of Theorem 3.3 in \cite{Ji-Kim 2025-1},
we have
\begin{align}
&\int_{\mathbb{R}^{d}}\int_{s}^{c_{2}}(t-s)^{\frac{2\gamma_{\varphi}}{\gamma_{\psi}}-1}
   \|L_{\varphi}\mathcal{T}_{\psi}(t,s)f(s,\cdot,\theta)(x)\|_{H}^{2}dtdx\nonumber\\
&\qquad \le \mu_{\varphi}^{2}\Gamma\left(\frac{2\gamma_{\varphi}}{\gamma_{\psi}}\right)(2\kappa_{\psi})^{-\frac{2\gamma_{\varphi}}{\gamma_{\psi}}}
   \int_{\mathbb{R}^{d}}\left\|f(s,x,\theta)\right\|_{H}^{2} dx.\label{eqn: ineq q=2}
\end{align}
By combining \eqref{eqn: c1 c2 q=2} and \eqref{eqn: ineq q=2}, we obtain that
\begin{align*}
&\int_{\mathbb{R}^{d}}\int_{c_{1}}^{c_{2}}|\mathcal{R}_{-\infty,2}f(t,x)|^2 dtdx\nonumber\\
&\leq \mu_{\varphi}^{2}\Gamma\left(\frac{2\gamma_{\varphi}}{\gamma_{\psi}}\right)
(2\kappa_{\psi})^{-\frac{2\gamma_{\varphi}}{\gamma_{\psi}}}
\int_{-\infty}^{c_{2}}\left(\int_{\mathbb{R}}
\left(\int_{\mathbb{R}^{d}}\left\|f(s,x,\theta)\right\|_{H}^{2} dx\right)^{\frac{r}{2}}d\theta\right)^{\frac{2}{r}} ds,
\end{align*}
which proves the inequality given in \eqref{eqn: LP 2}.
\end{proof}

For $x\in\mathbb{R}^{d}$ and $r>0$, we denote $B_{r}(x)=\{y\in \mathbb{R}^{d}\,:\, |x-y|<r\}$ and $B_{r}=B_{r}(0)$.

\begin{lemma} \label{lem: g equal 0 outside of B3r1 2nd}
Let $r\geq 1$ and let $q\geq \max\{2,r\}$ be given.
Then for any $r_{1}>0$, it holds that

\begin{itemize}
  \item [\rm (i)] if $-\infty<a\leq -2r_{1}<0\leq b<\infty$, then there exists a constant $C_{1}>0$ depending on
  $a,d,q,\gamma_{\varphi},\gamma_{\psi},\mu_{\varphi},\mu_{\psi}$, $\kappa_{\varphi}$ and $\kappa_{\psi}$ such that
  for any $r_{2}>0$ and $f\in C_{\rm c}^{\infty}((a,b)\times\mathbb{R}^{d};V)$
  satisfying that $f(t,x,\cdot)=0$ for $x\notin B_{3r_{2}}$,
\begin{align}
&\int_{-2r_{1}}^{0}\int_{B_{r_{2}}}|\mathcal{R}_{a,q}f(s,y)|^{q}dyds\nonumber\\
&\qquad\leq C_{1}\int_{a}^{0}\left[\int_{\mathbb{R}}\left(\int_{B_{3r_{2}}}
\|f(s,y,\theta)\|_{H}^{q}dy\right)^{\frac{r}{q}}
d\theta\right]^{\frac{q}{r}}ds,\label{eqn:int -2r_{1} B3r}
\end{align}
for which the right hand side of \eqref{eqn:int -2r_{1} B3r} is finite,

  \item [\rm (ii)] if $r\leq 2$ and $q=2$,
there exists a constant $C_{2}>0$ depending on
$d,\gamma_{\varphi},\gamma_{\psi},\mu_{\varphi},\mu_{\psi}$, $\kappa_{\varphi}$ and $\kappa_{\psi}$
such that for any $r_{2}>0$ and $f\in  C_{\rm c}^{\infty}(\mathbb{R}^{d+1};V)$
satisfying that $f(t,x,\cdot)=0$ for $x\notin B_{3r_{2}}$,
\begin{align}
&\int_{-2r_{1}}^{0}\int_{B_{r_{2}}}|\mathcal{R}_{-\infty,2}f(l,s,y)|^{2}dyds\nonumber\\
&\qquad\leq C_{2}\int_{-\infty}^{0}\left[\int_{\mathbb{R}}\left(\int_{B_{3r_{2}}}
\|f(s,y,\theta)\|_{H}^{2}dy\right)^{\frac{r}{2}}
d\theta\right]^{\frac{2}{r}}ds\label{eqn:int -2r' B3r 3}
\end{align}
for which the right hand side of \eqref{eqn:int -2r' B3r 3}
is finite.
\end{itemize}
\end{lemma}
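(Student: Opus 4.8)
The plan is to obtain Lemma~\ref{lem: g equal 0 outside of B3r1 2nd} as a localized restatement of Lemma~\ref{lem: 2nd LP ineq for p equal lambda}. The key elementary observations are that the left-hand side only increases if the spatial integration is enlarged from $B_{r_2}$ to $\mathbb{R}^{d}$, while the support hypothesis $f(s,x,\cdot)=0$ for $x\notin B_{3r_2}$ gives $\int_{B_{3r_2}}\|f(s,x,\theta)\|_{H}^{q}\,dx=\int_{\mathbb{R}^{d}}\|f(s,x,\theta)\|_{H}^{q}\,dx$, so that writing the right-hand side over $B_{3r_2}$ rather than over $\mathbb{R}^{d}$ costs nothing. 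Thus the support condition on $f$ enters only to rewrite the right-hand side.

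Concretely, for part~(i) I would repeat the computation leading through \eqref{eqn:ineq p=q} and \eqref{eqn:int esti} in the proof of Lemma~\ref{lem: 2nd LP ineq for p equal lambda}, but with $c_1=-2r_1$ and $c_2=0$. Since $\mathcal{R}_{a,q}f(t,\cdot)$ for $t\le0$ involves $f(\cdot)$ only over the time interval $(a,t)\subset(a,0)$, every elapsed time occurring in that computation is at most $|a|$; hence Lemma~\ref{lem: initial condition esti} may be invoked with the elapsed-time bound $T=|a|$, and the constant it produces depends only on $a,d,q$ and the symbol data $\gamma_{\varphi},\gamma_{\psi},\mu_{\varphi},\mu_{\psi},\kappa_{\varphi},\kappa_{\psi}$ --- in particular not on the original $b$, and not on $r_1$ (which anyway satisfies $0<r_1\le|a|/2$). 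This yields
\begin{align*}
\int_{-2r_1}^{0}\int_{B_{r_2}}|\mathcal{R}_{a,q}f(s,y)|^{q}\,dy\,ds
&\le \int_{-2r_1}^{0}\int_{\mathbb{R}^{d}}|\mathcal{R}_{a,q}f(t,x)|^{q}\,dx\,dt \\
&\le C_1\int_{a}^{0}\left[\int_{\mathbb{R}}\left(\int_{\mathbb{R}^{d}}\|f(s,x,\theta)\|_{H}^{q}\,dx\right)^{\frac{r}{q}}d\theta\right]^{\frac{q}{r}}ds,
\end{align*}
and replacing $\int_{\mathbb{R}^{d}}$ by $\int_{B_{3r_2}}$ on the right (legitimate by the support hypothesis) is exactly \eqref{eqn:int -2r_{1} B3r}. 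For part~(ii) the same steps apply with $a=-\infty$, $q=2$ and $r\le2$, using the Plancherel-based bound \eqref{eqn: ineq q=2} in place of Lemma~\ref{lem: initial condition esti}; its constant $\mu_{\varphi}^{2}\Gamma\!\left(\frac{2\gamma_{\varphi}}{\gamma_{\psi}}\right)(2\kappa_{\psi})^{-\frac{2\gamma_{\varphi}}{\gamma_{\psi}}}$ is already independent of the time window (the integral defining it converges over all of $(-\infty,t)$), so the resulting $C_2$ depends only on $d$ and the symbol data, as claimed, and the truncation to $B_{r_2}$ and $B_{3r_2}$ is handled verbatim, yielding \eqref{eqn:int -2r' B3r 3}.

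I do not expect a genuine obstacle: the lemma is precisely the local form of Lemma~\ref{lem: 2nd LP ineq for p equal lambda} needed for the covering argument in Lemma~\ref{lem: estimate of solution}, and no new analytic input is required. The only point that deserves a line of care --- and the reason for writing the proof out rather than merely citing Lemma~\ref{lem: 2nd LP ineq for p equal lambda} --- is the constant bookkeeping: one must verify that freezing $c_2=0$ genuinely removes the dependence on $b$, which holds because the time integration is confined to $(a,0)$ and so all elapsed times are controlled by $|a|$.
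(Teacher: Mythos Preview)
Your proposal is correct and follows essentially the same route as the paper: enlarge $B_{r_2}$ to $\mathbb{R}^d$, apply Lemma~\ref{lem: 2nd LP ineq for p equal lambda} with $c_1=-2r_1$ and $c_2=0$, then use the support hypothesis to replace $\mathbb{R}^d$ by $B_{3r_2}$ on the right. Your additional remark that taking $c_2=0$ bounds all elapsed times by $|a|$ and thereby removes the dependence on $b$ is a point the paper leaves implicit, so your write-up is if anything slightly more careful about the constant bookkeeping.
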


\begin{proof}
(i)\enspace By \eqref{eqn:LP p=q 1} with $c_{1}=-2r_{1}$ and $c_{2}=0$, we obtain that
\begin{align*}
\int_{-2r_{1}}^{0}\int_{B_{r_{2}}}|\mathcal{R}_{a,q}f(s,y)|^{q}dyds
&\leq \int_{-2r_{1}}^{0}\int_{\mathbb{R}^{d}}|\mathcal{R}_{a,q}f(s,y)|^{q}dyds\\
&\leq C_{1}\int_{a}^{0}\left[\int_{\mathbb{R}}\left(\int_{\mathbb{R}^{d}}
\|f(s,y,\theta)\|_{H}^{q}dy\right)^{\frac{r}{q}}
d\theta\right]^{\frac{q}{r}}ds\\
&=C_{1}\int_{a}^{0}\left[\int_{\mathbb{R}}\left(\int_{B_{3r_{2}}}
\|f(s,y,\theta)\|_{H}^{q}dy\right)^{\frac{r}{q}}
d\theta\right]^{\frac{q}{r}}ds,
\end{align*}
which proves \eqref{eqn:int -2r_{1} B3r}.

(ii)\enspace
By \eqref{eqn: LP 2} with $c_{1}=-2r_{1}$ and $c_{2}=0$, we obtain that
\begin{align*}
\int_{-2r_{1}}^{0}\int_{B_{r}}|\mathcal{R}_{-\infty,2}f(s,y)|^{2}dyds
&\leq \int_{-2r_{1}}^{0}\int_{\mathbb{R}^{d}}|\mathcal{R}_{-\infty,q}f(l,s,y)|^{2}dyds\\
&\leq C_{2}\int_{-\infty}^{0}\left[\int_{\mathbb{R}}\left(\int_{\mathbb{R}^{d}}
\|f(s,y,\theta)\|_{H}^{2}dy\right)^{\frac{r}{2}}
d\theta\right]^{\frac{2}{r}}ds\\
&=C_{2}\int_{-\infty}^{0}\left[\int_{\mathbb{R}}\left(\int_{B_{3r_{2}}}
\|f(s,y,\theta)\|_{H}^{2}dy\right)^{\frac{r}{2}}
d\theta\right]^{\frac{2}{r}}ds,
\end{align*}
which proves \eqref{eqn:int -2r' B3r 3}.
\end{proof}

Recall that the definitions of the maximal function and the sharp function.
For any $R\geq 0$ and a real-valued locally integrable function $h$ on $\mathbb{R}^{d}$,
the maximal function $\mathbb{M}_{x}^{R}h(x)$ is defined by
\begin{align*}
\mathbb{M}_{x}^{R}h(x)=\sup_{r>R}\frac{1}{|B_{r}(x)|}\int_{B_{r}(x)}|h(y)|dy,\quad
\mathbb{M}_{x}h(x)=\mathbb{M}_{x}^{0}h(x).
\end{align*}
For any $R_{1}\geq R_{2}\ge0$, it is obvious that
\begin{align*}
\mathbb{M}_{x}^{R_{1}}h(x)\leq \mathbb{M}_{x}^{R_{2}}h(x).
\end{align*}
If $h$ is a real-valued locally integrable function on $\mathbb{R}^{1+d}$,
we denote
\begin{align*}
\mathbb{M}_{x}^{R}h(t,x)=\mathbb{M}_{x}^{R}(h(t,\cdot))(x),\quad
\mathbb{M}_{t}^{R}h(t,x)=\mathbb{M}_{t}^{R}(h(\cdot,x))(t).
\end{align*}

For $r_{1},r_{2}>0$, we put
\begin{align*}
Q_{r_{1},r_{2}}:=(-2r_{1},0)\times B_{r_{2}}.
\end{align*}
\begin{lemma}\label{lem: bound of integral of mathcal G over Qr2r1 2nd}
Let $r\geq 1$, $q\geq \max\{2,r\}$.
Then there exists a constant $C>0$ depending on $d,\mu_{\varphi},\mu_{\psi},\gamma_{\varphi},\gamma_{\psi},\kappa_{\varphi},\kappa_{\psi}$ and $q$
such that for any $r_1,r_2>0$, $x\in B_{r_{2}}$ and
$f\in C_{\rm c}^{\infty}((a,b)\times\mathbb{R}^{d};V)$ (for $-\infty\leq a<b\leq\infty$)
with support in $(-10r_{1},10r_{1})\times \mathbb{R}^{d}\setminus B_{2r_{2}}$,
\begin{align}
&\int_{Q_{r_{1},r_{2}}}|\mathcal{R}_{a,q}f(s,y)|^{q}dsdy\nonumber\\
&\qquad\leq Cr_{2}^{d-\gamma_{\varphi}q}r_{1}^{\frac{q\gamma_{\varphi}}{\gamma_{\psi}}}
   \int_{-10r_{1}}^{0}
\left(\int_{\mathbb{R}}\left(\mathbb{M}_{x}^{3r_{1}}
\|f(r,x,\theta)\|_{H}^{q}\right)^{\frac{r}{q}}d\theta\right)^{\frac{q}{r}}dr.\label{eqn:int-Q-Gg}
\end{align}
\end{lemma}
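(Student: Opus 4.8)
The plan is to estimate $\mathcal{R}_{a,q}f(t,y)$ pointwise for $(t,y)\in Q_{r_1,r_2}$ by means of the convolution kernel of $L_\varphi\mathcal{T}_\psi(t,s)$ and the spatial separation between $B_{r_2}$ and the support of $f$. Write
\[
L_\varphi\mathcal{T}_\psi(t,s)f(s,\cdot,\theta)(y)=\int_{\mathbb{R}^d}k_{t,s}(y-z)\,f(s,z,\theta)\,dz,\qquad k_{t,s}=\mathcal{F}^{-1}\Big(\varphi(\xi)\exp\Big(\int_s^t\psi(r,\xi)\,dr\Big)\Big).
\]
First I would record the kernel bound: since $\varphi$ satisfies \eqref{eqn: sym cond 2}, $\psi\in\mathfrak{S}$, and $N_\varphi,N_\psi>d+2+\lfloor\gamma_\varphi\rfloor+\lfloor\gamma_\psi\rfloor$, there is a threshold $\ell_0>d+\gamma_\varphi$ such that for every real $M\le\ell_0$,
\[
|k_{t,s}(w)|\le C_M\,(t-s)^{-\frac{d+\gamma_\varphi}{\gamma_\psi}}\Big(1+(t-s)^{-1/\gamma_\psi}|w|\Big)^{-M},\qquad t>s;
\]
this comes from \textbf{(S1)}, \textbf{(S2)} and repeated integration by parts in $\xi$, exactly as in the kernel estimates of \cite{Ji-Kim 2025-1,Ji-Kim 2025}, and it is here that the hypothesis on $N_\varphi,N_\psi$ enters: it guarantees enough admissible orders of decay for the steps below.

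Next, for $(t,y)\in Q_{r_1,r_2}$ the integrand vanishes unless $s\in(-10r_1,t)$ (by the time-support of $f$), so $(t-s)^{1/\gamma_\psi}\le(10r_1)^{1/\gamma_\psi}$; and since $|y|<r_2$ while $f(s,\cdot,\theta)=0$ on $B_{2r_2}$, one has $|y-z|\ge r_2$ on the support. I would split the decay exponent as $M=\gamma_\varphi+M_2$ with $M_2>d$, use $(1+|w|/\epsilon)^{-\gamma_\varphi}\le(1+r_2/\epsilon)^{-\gamma_\varphi}$ on $\{|w|\ge r_2\}$ (with $\epsilon=(t-s)^{1/\gamma_\psi}$) to harvest a factor that will kill the singularity at $s=t$, and treat the remaining Poisson factor $(1+|w|/\epsilon)^{-M_2}$ by the layer-cake identity together with a H\"older step turning $\|f(s,\cdot,\theta)\|_H$ into $\|f(s,\cdot,\theta)\|_H^q$ (since $q\ge1$) and the inclusion $B_\rho(y)\subseteq B_{\rho+2r_2}(x)$ (valid because $|x-y|<2r_2$). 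This leads, for every fixed $x\in B_{r_2}$, to a bound of the form
\[
\|L_\varphi\mathcal{T}_\psi(t,s)f(s,\cdot,\theta)(y)\|_H\le C\,(t-s)^{-\frac{\gamma_\varphi}{\gamma_\psi}}\Big(1+r_2(t-s)^{-1/\gamma_\psi}\Big)^{-\gamma_\varphi}\big(\mathbb{M}_x^{3r_1}\|f(s,\cdot,\theta)\|_H^q(x)\big)^{1/q}.
\]
The subtle point — and what I expect to be the main obstacle — is reaching the truncated maximal function $\mathbb{M}_x^{3r_1}$: for balls $B_\rho(y)$ with $\rho>r_1$ the enlarged ball $B_{3\rho}(x)$ already has radius exceeding $3r_1$, but for the range $r_2<\rho\le r_1$ (which is nonempty precisely when $r_2<r_1$) one must enlarge to $B_{3r_1}(x)$ and compensate the resulting loss against the available decay and against the powers of $r_1,r_2$ in the time integration; this forces a case analysis on the relative sizes of $r_1$ and $r_2$, as in the proofs of Theorems 4.1 and 5.3 of \cite{Ji-Kim 2025-1}.

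Finally I would raise the displayed estimate to the power $r$, integrate in $\theta$ using Minkowski's inequality (which needs $q\ge r$), raise to the power $q/r$, and integrate against $(t-s)^{\frac{q\gamma_\varphi}{\gamma_\psi}-1}\,ds$ over $s\in(a,t)$. Since the resulting majorant is independent of $y$, integrating over $y\in B_{r_2}$ produces only the factor $|B_{r_2}|\sim r_2^d$; a Fubini exchange of the $t$- and $s$-integrals and the substitution $u=r_2(t-s)^{-1/\gamma_\psi}$ in the time integral $\int_0^{10r_1}\tau^{-1}(1+r_2\tau^{-1/\gamma_\psi})^{-\gamma_\varphi q}\,d\tau$ reduce it to $\gamma_\psi\int_{u_0}^\infty u^{-1}(1+u)^{-\gamma_\varphi q}\,du$ with $u_0=r_2(10r_1)^{-1/\gamma_\psi}$; this is $\le C\,u_0^{-\gamma_\varphi q}=C\,r_2^{-\gamma_\varphi q}r_1^{q\gamma_\varphi/\gamma_\psi}$ when $u_0\ge1$ and of at most logarithmic size when $u_0<1$, in both cases dominated by $C\,r_2^{-\gamma_\varphi q}r_1^{q\gamma_\varphi/\gamma_\psi}$. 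Collecting the powers of $r_2$ yields $r_2^{d-\gamma_\varphi q}$ and leaves precisely $r_1^{q\gamma_\varphi/\gamma_\psi}\int_{-10r_1}^0\big(\int_{\mathbb{R}}(\mathbb{M}_x^{3r_1}\|f(r,x,\theta)\|_H^q)^{r/q}d\theta\big)^{q/r}dr$, which is \eqref{eqn:int-Q-Gg}.
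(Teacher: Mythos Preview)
Your approach is essentially the same as the paper's, which in turn is just the argument of Lemma~B.6 in \cite{Ji-Kim 2025-1} (kernel decay plus dyadic annuli plus the spatial separation $|y-z|\ge r_2$) supplemented by one extra Minkowski step. The only organizational difference is that the paper first runs the whole Lemma~B.6 argument with the Banach space $V=L^r(\mathbb{R};H)$ as the target, obtaining
\[
\int_{Q_{r_1,r_2}}|\mathcal{R}_{a,q}f(s,y)|^q\,ds\,dy
\le C\,r_2^{\,d-\gamma_\varphi q}\,r_1^{\,q\gamma_\varphi/\gamma_\psi}\int_{-10r_1}^{0}\mathbb{M}_x^{3r_2}\|f(r,x,\cdot)\|_V^q\,dr,
\]
and only then pulls the $\theta$-integral outside the maximal function via Minkowski (using $q\ge r$):
\[
\mathbb{M}_x^{r_0}\|f(r,x,\cdot)\|_V^q
\le\Bigl(\int_{\mathbb{R}}\bigl(\mathbb{M}_x^{r_0}\|f(r,x,\theta)\|_H^q\bigr)^{r/q}\,d\theta\Bigr)^{q/r}.
\]
You instead work $\theta$-wise from the start and integrate in $\theta$ at the end; the two orderings give the same bound.

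One point where you are making life harder than necessary: the truncation level that falls out naturally from the support condition is $3r_2$, not $3r_1$. Indeed, since $f(s,\cdot,\theta)$ vanishes on $B_{2r_2}$ and $y\in B_{r_2}$, only radii $\rho>r_2$ contribute, and then $B_\rho(y)\subset B_{\rho+2r_2}(x)\subset B_{3\rho}(x)$; this gives $\mathbb{M}_x^{3r_2}$ directly, with no case analysis on the relative sizes of $r_1$ and $r_2$. The paper's own proof lands on $\mathbb{M}_x^{3r_2}$ and then declares this ``gives the inequality given in \eqref{eqn:int-Q-Gg}'', so the $3r_1$ in the displayed statement is a typographical slip (harmless downstream, since in the only application one immediately bounds $\mathbb{M}_x^{R}\le\mathbb{M}_x$). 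Your ``subtle point'' and the anticipated case split are therefore unnecessary: aim for $\mathbb{M}_x^{3r_2}$ and the argument goes through cleanly.
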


\begin{proof}
Let $x\in B_{r_{2}}$ and $f\in C_{\rm c}^{\infty}((a,b)\times\mathbb{R}^{d};V)$.
By applying the arguments used in the proof of Lemma B.6 in \cite{Ji-Kim 2025-1}, we have
\begin{align}
\int_{Q_{r_{1},r_{2}}}|\mathcal{R}_{a,q}f(s,y)|^{q}dsdy
&\leq Cr_{2}^{d-\gamma_{\varphi}q}r_{1}^{\frac{q\gamma_{\varphi}}{\gamma_{\psi}}}
   \int_{-10r_{1}}^{0}
\mathbb{M}_{x}^{3r_{2}}\|f(r,x,\cdot)\|_{V}^{q}dr\label{eqn: Lemma A.6}
\end{align}
for some constant $C>0$.
Note that we have for any $x\in B_{r_{2}}$ and $r_{0}\geq 0$,
\begin{align}
\mathbb{M}_{x}^{r_{0}}\|f(r,x,\cdot)\|_{V}^{q}
\leq \left(\int_{\mathbb{R}}\left(\mathbb{M}_{x}^{r_{0}}
\|f(r,x,\theta)\|_{H}^{q}\right)^{\frac{r}{q}}d\theta\right)^{\frac{q}{r}}.
\label{eqn:ineq for maximal V valued}
\end{align}
Indeed, by applying the Minkowski's inequality, we obtain that for any $x\in B_{r_{2}}$, $r_{0}\geq 0$ and $r'>r_{0}$,
\begin{align*}
\int_{B_{r'}(x)}\|f(r,y,\cdot)\|_{V}^{q}dy
&=\int_{B_{r'}(x)}\left(\int_{\mathbb{R}}
    \|f(r,y,\theta)\|_{H}^{r}d\theta\right)^{\frac{q}{r}}dy\\
&\leq \left(\int_{\mathbb{R}}\left(\int_{B_{r'}(x)}
        \|f(r,y,\theta)\|_{H}^{q}dy\right)^{\frac{r}{q}}d\theta\right)^{\frac{q}{r}}\\
&\leq |B_{r'}(x)|\left(\int_{\mathbb{R}}\left(\mathbb{M}_{x}^{r_{0}}
            \|f(r,x,\theta)\|_{H}^{q}\right)^{\frac{r}{q}}d\theta\right)^{\frac{q}{r}},
\end{align*}
which implies \eqref{eqn:ineq for maximal V valued}.
Hence, by combining \eqref{eqn: Lemma A.6}
and \eqref{eqn:ineq for maximal V valued} with $r_{0}=3r_{2}$, we have
\begin{align*}
\int_{Q_{r_{1},r_{2}}}|\mathcal{R}_{a,q}f(s,y)|^{q}dsdy
&\leq Cr_{2}^{d-\gamma_{\varphi}q}r_{1}^{\frac{q\gamma_{\varphi}}{\gamma_{\psi}}}
   \int_{-10r_{1}}^{0}
\left(\int_{\mathbb{R}}\left(\mathbb{M}_{x}^{3r_{2}}
\|f(r,x,\theta)\|_{H}^{q}\right)^{\frac{r}{q}}d\theta\right)^{\frac{q}{r}}dr,
\end{align*}
which gives the inequality given in \eqref{eqn:int-Q-Gg}.
\end{proof}

For a real-valued differentiable function $f$ defined on $\mathbb{R}^{d}$,
we denote the gradient of $f$ by $\nabla f$.

\begin{lemma}\label{lem: bound of sup of gradient of mathcal G 2nd}
Let $r\geq 1$ and $q\geq \max\{2,r\}$.
Then there exists a constant $C>0$ depending on $d,q,\mu_{\varphi},\mu_{\psi}, \gamma_{\varphi},\gamma_{\psi}$,
$\kappa_{\varphi}$ and $\kappa_{\psi}$
such that for any $r_{1}, r_{2}>0 $ satisfying $r_{1}^{1/\gamma_{\psi}}=r_{2}$, $(t,x)\in Q_{r_{1},r_{2}}$ and
$f\in C_{\rm c}^{\infty}(\mathbb{R}\times\mathbb{R}^{d};V)$ satisfying that $f(t,x)=0$ for $t\geq -8r_{1}$,
\begin{align}
\sup_{(s,y)\in Q_{r_{1},r_{2}}}|\nabla\mathcal{R}_{a,q}f(s,y)|^{q}
&\leq
Cr_{1}^{-\frac{q}{\gamma_{\psi}}}\mathbb{M}_{t}^{8r_{1}}
        \left(\int_{\mathbb{R}}\left(\mathbb{M}_{x}^{2r_{2}}
        \|f(t,x,\theta)\|_{H}^{q}\right)^{\frac{r}{q}}d\theta\right)^{\frac{q}{r}},
\label{eqn: sup of gradient of mathcal G}
\end{align}
\begin{align}
\sup_{(s,y)\in Q_{r_{1},r_{2}}}\left|\frac{\partial}{\partial s}\mathcal{R}_{a,q}f(s,y)\right|^{q}
\leq C r_{1}^{-q}\mathbb{M}_{t}^{8r_{1}}\left(\int_{\mathbb{R}}
    (\mathbb{M}_{x}^{2r_{2}}\|f(t,x,\theta)\|_{H}^{q})^{\frac{r}{q}}d\theta\right)^{\frac{q}{r}},
\label{eqn: sup of derivative wrt s of mathcal G}
\end{align}
for any $a\in\mathbb{R}$.
\end{lemma}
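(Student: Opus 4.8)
The plan is to prove the two estimates \eqref{eqn: sup of gradient of mathcal G} and \eqref{eqn: sup of derivative wrt s of mathcal G} by bringing the spatial gradient (respectively the $s$-derivative) inside the operator $\mathcal{R}_{a,q}$, reducing everything to a pointwise control of derivatives of $L_{\varphi}\mathcal{T}_{\psi}(t,s)f(s,\cdot,\theta)(x)$, and then invoking the same Minkowski-type inequality \eqref{eqn:ineq for maximal V valued} that was used in Lemma~\ref{lem: bound of integral of mathcal G over Qr2r1 2nd} to pass from $V$-valued maximal functions to the iterated ($d\theta$ then maximal) quantity on the right-hand sides. First I would observe that, for fixed $s$, the map $x\mapsto L_{\varphi}\mathcal{T}_{\psi}(t,s)f(s,\cdot,\theta)(x)$ is a convolution of $f(s,\cdot,\theta)$ against the kernel $\mathcal{F}^{-1}\!\left(\varphi(\xi)\exp\!\left(\int_{s}^{t}\psi(r,\xi)\,dr\right)\right)$, so $\nabla_x$ and $\partial_s$ act only on this kernel. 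Using the decay $\mathrm{Re}[\psi(r,\xi)]\le-\kappa_{\psi}|\xi|^{\gamma_{\psi}}$ from \textbf{(S1)} together with the symbol bounds \textbf{(S2)} and the bound on $\varphi$ from $\mathfrak{M}_{\gamma_{\varphi}}$, one gets, exactly as in the proof of Lemma~B.7 in \cite{Ji-Kim 2025-1}, that for $(s,y)\in Q_{r_1,r_2}$ and $t\le-8r_1$ (the support restriction on $f$ forces the relevant times to be separated from $Q_{r_1,r_2}$ by a gap of order $r_1$), the kernels $\nabla_x p$ and $\partial_s p$ and their weighted $L^1$-norms in the $(t-s)$-variable are controlled by the corresponding $t>s$-gap, which is $\gtrsim r_1$; the parabolic scaling $r_1^{1/\gamma_{\psi}}=r_2$ then produces the factors $r_1^{-q/\gamma_{\psi}}$ and $r_1^{-q}$ respectively.

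The key steps, in order, would be: \textbf{(1)} write $\nabla_x\mathcal{R}_{a,q}f(s,y)$ (resp.\ $\partial_s\mathcal{R}_{a,q}f$) by differentiating under the integral sign in the definition of $\mathcal{R}_{a,q}$; since $f$ vanishes for $t\ge-8r_1$, the inner integral over $[a,s]$ only sees $s'\le-8r_1$, hence $s-s'\gtrsim r_1$ uniformly, and there is no singularity from the weight $(s-s')^{q\gamma_{\varphi}/\gamma_{\psi}-1}$. \textbf{(2)} Bound $\|\nabla_x L_{\varphi}\mathcal{T}_{\psi}(s,s')f(s',\cdot,\theta)(y)\|_{H}$ by $C(s-s')^{-1/\gamma_{\psi}}$ times a suitable average (maximal function at scale $2r_2$) of $\|f(s',\cdot,\theta)\|_{H}$, using the pointwise kernel estimates of \cite{Ji-Kim 2025-1, Ji-Kim 2025} for $L_{\varphi}\mathcal{T}_{\psi}$; similarly $\partial_s$ yields an extra $(s-s')^{-1}$. \textbf{(3)} Substitute into the $(s-s')$-integral, use the separation $s-s'\gtrsim r_1$ to integrate the weight and the negative powers against the Gaussian-type decay in $|\xi|^{\gamma_{\psi}}(t-s)$, and collect the single factor $r_1^{-q/\gamma_{\psi}}$ (resp.\ $r_1^{-q}$) along with an $\mathbb{M}_t^{8r_1}$ over the $s'$-variable. \textbf{(4)} Apply \eqref{eqn:ineq for maximal V valued} with $r_0=2r_2$ to replace $\mathbb{M}_x^{2r_2}\|f(s',x,\cdot)\|_V^q$ by $\bigl(\int_{\mathbb{R}}(\mathbb{M}_x^{2r_2}\|f(s',x,\theta)\|_H^q)^{r/q}d\theta\bigr)^{q/r}$, and take the supremum over $(s,y)\in Q_{r_1,r_2}$.

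The main obstacle I expect is \textbf{Step (2)}: producing the pointwise bound on $\nabla_x L_{\varphi}\mathcal{T}_{\psi}(s,s')f(s',\cdot,\theta)(y)$ with the correct power $(s-s')^{-1/\gamma_{\psi}}$ of the time gap and a clean maximal-function majorant. This requires estimating $\nabla_x \mathcal{F}^{-1}\!\left(\varphi(\xi)\exp\!\left(\int_{s'}^{s}\psi(r,\xi)\,dr\right)\right)$ in $L^1$ (and its first absolute moment) uniformly in the time variables, which is precisely the content of the kernel lemmas (Corollary 2.10 and the pointwise heat-kernel-type bounds) already established in \cite{Ji-Kim 2025} and used in the proof of Lemma~B.7 in \cite{Ji-Kim 2025-1}; the only genuinely new point here is threading the extra $\theta$-variable through, which is harmless because the kernel does not depend on $\theta$ and the whole argument is linear in $f(s',\cdot,\theta)$. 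I would therefore present Steps (1), (3), (4) in detail and cite \cite{Ji-Kim 2025-1, Ji-Kim 2025} for the kernel estimates of Step (2), mirroring how Lemma~\ref{lem: bound of integral of mathcal G over Qr2r1 2nd} was reduced to Lemma~B.6 of \cite{Ji-Kim 2025-1}.
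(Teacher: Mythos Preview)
Your proposal is correct and follows essentially the same approach as the paper: the paper invokes the arguments of Lemmas~B.7 (respectively B.8--B.10) in \cite{Ji-Kim 2025-1} to obtain the $V$-norm bounds $\sup_{Q_{r_1,r_2}}|\nabla\mathcal{R}_{a,q}f|^{q}\le Cr_1^{-q/\gamma_{\psi}}\mathbb{M}_t^{8r_1}\mathbb{M}_x^{2r_2}\|f(t,x,\cdot)\|_V^{q}$ and $\sup_{Q_{r_1,r_2}}|\partial_s\mathcal{R}_{a,q}f|^{q}\le Cr_1^{-q}\mathbb{M}_t^{8r_1}\mathbb{M}_x^{2r_2}\|f(t,x,\cdot)\|_V^{q}$, and then applies \eqref{eqn:ineq for maximal V valued} with $r_0=2r_2$ to thread in the $\theta$-variable. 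Your Steps~(1)--(3) simply unpack what those cited lemmas do, and your Step~(4) is exactly the paper's concluding application of \eqref{eqn:ineq for maximal V valued}.
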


\begin{proof}
Let $(t,x)\in Q_{r_{1},r_{2}}$ and
$f\in C_{\rm c}^{\infty}(\mathbb{R}\times\mathbb{R}^{d};V)$
satisfying that $f(t,x)=0$ for $t\geq -8r_{1}$ be given.
For \eqref{eqn: sup of gradient of mathcal G},
by applying the arguments used in the proof of Lemma B.7 in \cite{Ji-Kim 2025-1}, we have
\begin{align}
\sup_{(s,y)\in Q_{r_{1},r_{2}}}|\nabla\mathcal{R}_{a,q}f(s,y)|^{q}
\leq Cr_{1}^{-\frac{q}{\gamma_{\psi}}}\mathbb{M}_{t}^{8r_{1}}\mathbb{M}_{x}^{2r_{2}}\|f(t,x,\cdot)\|_{V}^{q}
\label{eqn:Lemma A.7}
\end{align}
for some constant $C>0$.
On the other hand, by \eqref{eqn:ineq for maximal V valued} with $r_{0}=2r_{2}$, we have
\begin{align}
\mathbb{M}_{x}^{2r_{2}}\|f(t,x,\cdot)\|_{V}^{q}
\leq \left(\int_{\mathbb{R}}\left(\mathbb{M}_{x}^{2r_{2}}
\|f(t,x,\theta)\|_{H}^{q}\right)^{\frac{r}{q}}d\theta\right)^{\frac{q}{r}}.
\label{eqn:ineq for maximal V valued 2}
\end{align}
Therefore, by combining \eqref{eqn:Lemma A.7} and \eqref{eqn:ineq for maximal V valued 2},
we have
\begin{align*}
\sup_{(s,y)\in Q_{r_{1},r_{2}}}|\nabla\mathcal{R}_{a,q}f(s,y)|^{q}
\leq Cr_{1}^{-\frac{q}{\gamma_{\psi}}}\mathbb{M}_{t}^{8r_{1}}
        \left(\int_{\mathbb{R}}\left(\mathbb{M}_{x}^{2r_{2}}
        \|f(r,x,\theta)\|_{H}^{q}\right)^{\frac{r}{q}}d\theta\right)^{\frac{q}{r}},
\end{align*}
which gives the inequality given in \eqref{eqn: sup of gradient of mathcal G}.

We now prove \eqref{eqn: sup of derivative wrt s of mathcal G}.
By applying the arguments used in the proof of Lemmas B.8, B.9 and B.10 in \cite{Ji-Kim 2025-1}, we have
\begin{align}
\sup_{(s,y)\in Q_{r_{1},r_{2}}}\left|\frac{\partial}{\partial s}\mathcal{R}_{a,q}f(s,y)\right|^{q}
\leq Cr_{1}^{-q}\mathbb{M}_{t}^{8r_{1}}\mathbb{M}_{x}^{2r_{2}}\|f(t,x,\cdot)\|_{V}^{q}
\label{eqn:Lemma A.8}
\end{align}
for some constant $C>0$.
Therefore, by combining \eqref{eqn:Lemma A.8} and \eqref{eqn:ineq for maximal V valued 2},
we have
\begin{align*}
\sup_{(s,y)\in Q_{r_{1},r_{2}}}\left|\frac{\partial}{\partial s}\mathcal{R}_{a,q}f(s,y)\right|^{q}
\leq Cr_{1}^{-q}\mathbb{M}_{t}^{8r_{1}}
        \left(\int_{\mathbb{R}}\left(\mathbb{M}_{x}^{2r_{2}}
        \|f(r,x,\theta)\|_{H}^{q}\right)^{\frac{r}{q}}d\theta\right)^{\frac{q}{r}},
\end{align*}
which gives the inequality given in \eqref{eqn: sup of derivative wrt s of mathcal G}.
\end{proof}

For $R>0$, we define
\begin{align*}
Q_{R}=(-2R,0)\times B_{R^{1/\gamma_{\psi}}}.
\end{align*}

\begin{lemma}\label{lem: bound of double average 2nd}
Let $r\geq 1$, $q\geq \max\{2,r\}$ and $R>0$. Then it holds that
\begin{itemize}
  \item [\rm (i)] if $-\infty< a\leq -2R<0\leq b< \infty$, then there exists a constant $C_{1}>0$ depending on $a,b ,q,\mu_{\varphi},\mu_{\psi},\kappa_{\varphi},\kappa_{\psi}$, $\gamma_{\varphi}$ and $\gamma_{\psi}$
      such that for any $(t,x)\in Q_{R}$ and $f\in C_{\rm c}^{\infty}(\mathbb{R}\times\mathbb{R}^{d};V)$,
\begin{align}
&\frac{1}{|Q_{R}|^{2}}\int_{Q_{R}}\int_{Q_{R}}
            |\mathcal{R}_{a,q}f(s,y)-\mathcal{R}_{a,q}f(w,z)|^{q}dsdydw dz\nonumber\\
&\qquad\leq C_{1}\mathbb{M}_{t}\left(\int_{\mathbb{R}}
        (\mathbb{M}_{x}\|f(t,x,\theta)\|_{H}^{q})^{\frac{r}{q}}
        d\theta\right)^{{\frac{q}{r}}},\label{eqn: double aver 1}
\end{align}

  \item [\rm (ii)] if $r\leq 2$ and $q=2$,
  then there exists a constant $C_{2}>0$ depending on $\mu_{\varphi},\mu_{\psi},\kappa_{\varphi},\kappa_{\psi}$,
  $\gamma_{\varphi}$ and $\gamma_{\psi}$
such that for any $(t,x)\in Q_{R}$ and $f\in C_{\rm c}^{\infty}(\mathbb{R}\times\mathbb{R}^{d};V)$,
\begin{align}
&\frac{1}{|Q_{R}|^{2}}\int_{Q_{R}}\int_{Q_{R}}
    |\mathcal{R}_{-\infty,2}f(s,y)-\mathcal{R}_{-\infty,2}f(w,z)|^{2}dsdydw dz\nonumber\\
&\qquad\leq C_{2}\mathbb{M}_{t}\left(\int_{\mathbb{R}}
        (\mathbb{M}_{x}\|f(t,x,\theta)\|_{H}^{q})^{\frac{r}{q}}
        d\theta\right)^{{\frac{q}{r}}}.\label{eqn: double aver 3}
\end{align}
\end{itemize}
\end{lemma}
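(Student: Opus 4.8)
The plan is to run the standard Krylov-type localization argument, reducing everything to the auxiliary Lemmas \ref{lem: g equal 0 outside of B3r1 2nd}, \ref{lem: bound of integral of mathcal G over Qr2r1 2nd} and \ref{lem: bound of sup of gradient of mathcal G 2nd}. Fix $(t,x)\in Q_{R}$ and put $r_{1}=R$, $r_{2}=R^{1/\gamma_{\psi}}$, so that $Q_{R}=Q_{r_{1},r_{2}}=(-2r_{1},0)\times B_{r_{2}}$ and $r_{1}^{1/\gamma_{\psi}}=r_{2}$. Decompose $f=g_{1}+g_{2}+g_{3}$ by
\[
g_{2}=f\,\mathbf{1}_{\{s\le -8r_{1}\}},\qquad g_{1}=f\,\mathbf{1}_{\{s>-8r_{1},\ |y|\ge 2r_{2}\}},\qquad g_{3}=f\,\mathbf{1}_{\{s>-8r_{1},\ |y|<2r_{2}\}}
\]
(the characteristic functions being smoothed at the cost of constants independent of $r_{1},r_{2}$). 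Since $\mathcal{R}_{a,q}$ is sublinear — it is built from the linear maps $L_{\varphi}\mathcal{T}_{\psi}(t,s)$ and the sub-additive norm of $L^{q}\big((a,t),(t-s)^{q\gamma_{\varphi}/\gamma_{\psi}-1}ds;V\big)$ — one gets, for all $(s,y),(w,z)\in Q_{R}$,
\[
|\mathcal{R}_{a,q}f(s,y)-\mathcal{R}_{a,q}f(w,z)|\le \sum_{i\in\{1,3\}}\big(\mathcal{R}_{a,q}g_{i}(s,y)+\mathcal{R}_{a,q}g_{i}(w,z)\big)+|\mathcal{R}_{a,q}g_{2}(s,y)-\mathcal{R}_{a,q}g_{2}(w,z)|.
\]
Hence it suffices to bound the averages $|Q_{R}|^{-1}\int_{Q_{R}}|\mathcal{R}_{a,q}g_{i}|^{q}$ for $i=1,3$ and the $g_{2}$-double-average by the right side of \eqref{eqn: double aver 1}. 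Also, as points of $Q_{R}$ have negative time coordinate, $\mathcal{R}_{a,q}g_{i}$ only uses $g_{i}(r,\cdot)$ with $r<0$, so $g_{1},g_{3}$ may be taken supported in $(-8r_{1},0)\times\mathbb{R}^{d}\subset(-10r_{1},10r_{1})\times\mathbb{R}^{d}$.

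For $g_{1}$: it is supported in $(-10r_{1},10r_{1})\times(\mathbb{R}^{d}\setminus B_{2r_{2}})$, so Lemma \ref{lem: bound of integral of mathcal G over Qr2r1 2nd} gives
\[
\int_{Q_{R}}|\mathcal{R}_{a,q}g_{1}|^{q}\,ds\,dy\le C r_{2}^{d-\gamma_{\varphi}q}r_{1}^{q\gamma_{\varphi}/\gamma_{\psi}}\int_{-10r_{1}}^{0}\Big(\int_{\mathbb{R}}\big(\mathbb{M}_{x}^{3r_{1}}\|f(r,x,\theta)\|_{H}^{q}\big)^{r/q}d\theta\Big)^{q/r}dr.
\]
Because $|Q_{R}|\sim r_{1}r_{2}^{d}$ and $r_{2}=r_{1}^{1/\gamma_{\psi}}$, the constant $|Q_{R}|^{-1}r_{2}^{d-\gamma_{\varphi}q}r_{1}^{q\gamma_{\varphi}/\gamma_{\psi}}$ is comparable to $r_{1}^{-1}$; since $(-10r_{1},0)\subset B_{10r_{1}}(t)$ and $\mathbb{M}_{x}^{3r_{1}}\le\mathbb{M}_{x}$, the $dr$-integral is at most $Cr_{1}\,\mathbb{M}_{t}\big(\int_{\mathbb{R}}(\mathbb{M}_{x}\|f(t,x,\theta)\|_{H}^{q})^{r/q}d\theta\big)^{q/r}$, which yields the bound for the $g_{1}$-term. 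For $g_{3}$: it is supported in $(-8r_{1},0)\times B_{3r_{2}}$, so Lemma \ref{lem: g equal 0 outside of B3r1 2nd}(i) (with $c_{1}=-2r_{1}$, $c_{2}=0$) bounds $\int_{Q_{R}}|\mathcal{R}_{a,q}g_{3}|^{q}$ by $C\int_{-8r_{1}}^{0}\big[\int_{\mathbb{R}}(\int_{B_{3r_{2}}}\|f(s,y,\theta)\|_{H}^{q}dy)^{r/q}d\theta\big]^{q/r}ds$; using $B_{3r_{2}}(0)\subset B_{4r_{2}}(x)$ for $x\in B_{r_{2}}$, then $\int_{B_{4r_{2}}(x)}(\cdot)\,dy\le |B_{4r_{2}}(x)|\,\mathbb{M}_{x}(\cdot)$ and $\int_{-8r_{1}}^{0}(\cdot)\,ds\le Cr_{1}\mathbb{M}_{t}^{8r_{1}}(\cdot)\le Cr_{1}\mathbb{M}_{t}(\cdot)$, one arrives at $Cr_{1}r_{2}^{d}\,\mathbb{M}_{t}\big(\int_{\mathbb{R}}(\mathbb{M}_{x}\|f(t,x,\theta)\|_{H}^{q})^{r/q}d\theta\big)^{q/r}$, and division by $|Q_{R}|\sim r_{1}r_{2}^{d}$ gives the required bound.

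Finally $g_{2}$ is the one genuinely contributing through oscillation: it vanishes for $s>-8r_{1}$ and $r_{1}^{1/\gamma_{\psi}}=r_{2}$, so Lemma \ref{lem: bound of sup of gradient of mathcal G 2nd} yields $\sup_{Q_{R}}|\nabla\mathcal{R}_{a,q}g_{2}|^{q}\le Cr_{1}^{-q/\gamma_{\psi}}X$ and $\sup_{Q_{R}}|\partial_{s}\mathcal{R}_{a,q}g_{2}|^{q}\le Cr_{1}^{-q}X$ with $X=\mathbb{M}_{t}^{8r_{1}}\big(\int_{\mathbb{R}}(\mathbb{M}_{x}^{2r_{2}}\|g_{2}(t,x,\theta)\|_{H}^{q})^{r/q}d\theta\big)^{q/r}$. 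Two points of the convex cube $Q_{R}$ differ by at most $2r_{1}$ in time and $2r_{2}=2r_{1}^{1/\gamma_{\psi}}$ in space, so integrating the derivatives along coordinate segments gives $|\mathcal{R}_{a,q}g_{2}(s,y)-\mathcal{R}_{a,q}g_{2}(w,z)|\le 2r_{1}(Cr_{1}^{-q}X)^{1/q}+2r_{2}(Cr_{1}^{-q/\gamma_{\psi}}X)^{1/q}\le CX^{1/q}$, hence the $g_{2}$-double-average is $\le CX$; using $\mathbb{M}_{t}^{8r_{1}}\le\mathbb{M}_{t}$ and the elementary pointwise bound $\mathbb{M}_{x}^{2r_{2}}\|g_{2}(t,x,\theta)\|_{H}^{q}\le C\mathbb{M}_{x}\|f(t,x,\theta)\|_{H}^{q}$ for $x\in B_{r_{2}}$, this is $\le C\mathbb{M}_{t}\big(\int_{\mathbb{R}}(\mathbb{M}_{x}\|f(t,x,\theta)\|_{H}^{q})^{r/q}d\theta\big)^{q/r}$. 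Summing the three contributions proves (i), and (ii) follows by the identical decomposition with $a=-\infty$, invoking Lemma \ref{lem: g equal 0 outside of B3r1 2nd}(ii) for the $g_{3}$-term and the $a=-\infty$ statements of Lemmas \ref{lem: bound of integral of mathcal G over Qr2r1 2nd} and \ref{lem: bound of sup of gradient of mathcal G 2nd} for the $g_{1}$- and $g_{2}$-terms, giving \eqref{eqn: double aver 3}. The substantive analysis is already packaged in those three lemmas; the only points that require care here are the exact cancellation of the powers of $r_{1}$ and $r_{2}$ in the $g_{1}$-estimate and the simultaneous use of the spatial and temporal derivative bounds to control the oscillation of $\mathcal{R}_{a,q}g_{2}$ over the anisotropic cube $Q_{R}$.
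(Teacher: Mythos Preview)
Your proof is correct and follows essentially the same strategy as the paper: a three-piece decomposition into a time-far part ($g_{2}\leftrightarrow f_{2}$), a time-near/space-far part ($g_{1}\leftrightarrow f_{12}$), and a time-near/space-near part ($g_{3}\leftrightarrow f_{11}$), with the three auxiliary Lemmas \ref{lem: g equal 0 outside of B3r1 2nd}, \ref{lem: bound of integral of mathcal G over Qr2r1 2nd} and \ref{lem: bound of sup of gradient of mathcal G 2nd} applied in exactly the same roles. The only cosmetic differences are that the paper uses smooth cutoffs $\zeta,\eta$ from the start and cites the basic inequality \eqref{eqn:bound for LL} from an external reference, whereas you use sharp cutoffs (noting they can be smoothed) and write out the sublinearity/mean-value argument for the $g_{2}$-oscillation explicitly.
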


\begin{proof}
Since the proofs of \eqref{eqn: double aver 1} and \eqref{eqn: double aver 3}
are similar, we only prove \eqref{eqn: double aver 1}.
Let $(t,x)\in Q_{R}$ and $f\in C_{\rm c}^{\infty}(\mathbb{R}\times\mathbb{R}^{d};V)$ be given.
We take a function $\zeta\in C_{\rm c}^{\infty}(\mathbb{R})$
such that $0\leq \zeta\leq 1$ and
\begin{align*}
\zeta=\left\{
        \begin{array}{ll}
          1 & \hbox{on } \quad [-8R,8R],\\
          0 & \hbox{on } \quad [-10R,10R]^{\rm c},
        \end{array}
      \right.
\end{align*}
where $A^{\rm c}$ is the complement of a set $A$.
Put
\begin{align*}
f_{1}(s,y)=f(s,y)\zeta(s),\quad f_{2}(s,y)=f(s,y)(1-\zeta(s,y)).
\end{align*}
For $f_{1}$, we put
\begin{align*}
f_{11}(s,y)=\eta(y) f_{1}(s,y),\quad f_{12}=(1-\eta(y))f_{1}(s,y).
\end{align*}
By applying the arguments used in the proof of Lemma B.11 in \cite{Ji-Kim 2025-1}, we have
\begin{align}
&\frac{1}{|Q_{R}|^{2}}\int_{Q_{R}}\int_{Q_{R}}
       \left|\mathcal{R}_{a,q}f(s,y)-\mathcal{R}_{a,q}f(w,z)\right|^{q}dsdydw dz \nonumber\\
&\qquad \leq 2^{2q-1}\frac{1}{|Q_{R}|}\int_{Q_{R}}|\mathcal{R}_{a,q}f_{1}(s,y)|^{q}dsdy \nonumber\\
&\qquad\qquad     +4^{2q-1}\sup_{(s,y)\in Q_{R}}
        \left(|R^{\frac{1}{\gamma_{\psi}}}\nabla \mathcal{R}_{a,q}f_{2}(s,y)|^{q}
        +\left|R\frac{\partial }{\partial s}\mathcal{R}_{a,q}f_{2}(s,y)\right|^{q}\right).
  \label{eqn:bound for LL}
\end{align}
Firstly, we show that
\begin{equation}\label{eqn: bound of mathcalG A1 on QR 2nd}
\int_{Q_{R}}|\mathcal{R}_{a,q}f_{1}(l, s,y)|^{q}dsdy
\leq C_{0}|Q_{R}|\mathbb{M}_{t}\left(\int_{\mathbb{R}}
        (\mathbb{M}_{x}\|f(t,x,\theta)\|_{H}^{q})^{\frac{r}{q}}
        d\theta\right)^{^{\frac{q}{r}}}
\end{equation}
for some constant $C_{0}>0$.
We take a function $\eta\in C_{\rm c}^{\infty}(\mathbb{R}^{d})$ such that $0\leq \eta\leq 1$ and
\begin{align*}
\quad\eta=\left\{
       \begin{array}{ll}
1\quad&\text{on}\quad B_{2R^{1/\gamma_{\psi}}},\\
0\quad&\text{on}\quad B_{3R^{1/\gamma_{\psi}}}^{\rm c},
       \end{array}
     \right.
\end{align*}
and we put $f_{11}(s,y)=\eta(y) f_{1}(s,y)$ and $f_{12}(s,y)=(1-\eta(y))f_{1}(s,y)$.
Then it holds that $\mathcal{R}_{a,q}f_{1}\leq \mathcal{R}_{a,q}f_{11}+\mathcal{R}_{a,q}f_{12}$
and
\begin{align}
\int_{Q_{R}}|\mathcal{R}_{a,q}f_{1}(s,y)|^{q}dsdy
\leq 2^{q-1}(I_{1}+I_{2}),\label{eqn:decomp f1 integral}
\end{align}
where
\begin{align*}
I_{1}:=\int_{Q_{R}}|\mathcal{R}_{a,q}f_{11}(s,y)|^{q}dsdy,\quad
I_{2}:=\int_{Q_{R}}|\mathcal{R}_{a,q}f_{12}(s,y)|^{q}dsdy.
\end{align*}
For $I_{1}$, by applying Lemma \ref{lem: g equal 0 outside of B3r1 2nd}, we obtain that
\begin{align}
\int_{Q_{R}}|\mathcal{R}_{a,q}f_{11}(s,y)|^{q}dsdy
&=\int_{-2R}^{0}\int_{B_{R^{1/\gamma_{\psi}}}}|\mathcal{R}_{a,q}f_{11}(s,y)|^{q}dsdy\nonumber\\
&\leq C^{(1)}\int_{a}^{0}\left[\int_{\mathbb{R}}\left(\int_{B_{3R^{1/\gamma_{\psi}}}}
        \|f_{11}(s,y,\theta)\|_{H}^{q}dy\right)^{\frac{r}{q}}
        d\theta\right]^{\frac{q}{r}}ds\nonumber\\
&\leq C^{(1)}\int_{-10R}^{0}\left[\int_{\mathbb{R}}\left(\int_{B_{3R^{1/\gamma_{\psi}}}}
        \|f_{11}(s,y,\theta)\|_{H}^{q}dy\right)^{\frac{r}{q}}
        d\theta\right]^{\frac{q}{r}}ds\nonumber\\
&\leq C^{(1)'}R^{\frac{d}{\gamma_{\psi}}}\int_{-10R}^{0}\left(\int_{\mathbb{R}}(\mathbb{M}_{x}
        \|f_{11}(s,x,\theta)\|_{H}^{q})^{\frac{r}{q}}
        d\theta\right)^{{\frac{q}{r}}}ds\nonumber\\
&\leq C^{(1)'}R^{\frac{d}{\gamma_{\psi}}+1}\mathbb{M}_{t}\left(\int_{\mathbb{R}}(\mathbb{M}_{x}
        \|f_{11}(t,x,\theta)\|_{H}^{q})^{\frac{r}{q}}
        d\theta\right)^{{\frac{q}{r}}}\nonumber\\
&\leq C^{(1)'}R^{\frac{d}{\gamma_{\psi}}+1}\mathbb{M}_{t}\left(\int_{\mathbb{R}}
        (\mathbb{M}_{x}\|f(t,x,\theta)\|_{H}^{q})^{\frac{r}{q}}d\theta\right)^{^{\frac{q}{r}}}
        \label{eqn:ineq f11}
\end{align}
for some constants $C^{(1)},C^{(1)'}>0$.
For $I_{2}$, by Lemma \ref{lem: bound of integral of mathcal G over Qr2r1 2nd}
with $(r_{1},r_{2})=(R^{1/\gamma_{\psi}},R)$, we have
\begin{align}
\int_{Q_{R}}|\mathcal{R}_{a,q}f_{12}(s,y)|^{q}dsdy
\leq C^{(2)}R^{\frac{d}{\gamma_{\psi}}+1}\mathbb{M}_{t}\left(\int_{\mathbb{R}}
        (\mathbb{M}_{x}\|f(t,x,\theta)\|_{H}^{q})^{\frac{r}{q}}d\theta\right)^{^{\frac{q}{r}}}
        \label{eqn:ineq f12}
\end{align}
for some constant $C^{(2)}>0$. Therefore, by combining  \eqref{eqn:decomp f1 integral},
\eqref{eqn:ineq f11} and \eqref{eqn:ineq f12}, we have \eqref{eqn: bound of mathcalG A1 on QR 2nd}.

On the other hand, by applying \eqref{eqn: sup of gradient of mathcal G}
with $(r_{1},r_{2})=(R^{1/\gamma_{\psi}},R)$, we have
\begin{equation}\label{eqn: bound of sup of R gamma gradient on QR 2nd}
\sup_{(s,y)\in Q_{R}}|R^{\frac{1}{\gamma_{\psi}}}\nabla \mathcal{R}_{a,q}f_{2}(s,y)|^{q}
\leq C^{(3)}\mathbb{M}_{t}\left(\int_{\mathbb{R}}
        (\mathbb{M}_{x}\|f(t,x,\theta)\|_{H}^{q})^{\frac{r}{q}}d\theta\right)^{\frac{q}{r}}
\end{equation}
for some constant $C^{(3)}>0$.
Also, by applying \eqref{eqn: sup of derivative wrt s of mathcal G}
with $(r_{1},r_{2})=(R^{1/\gamma_{\psi}},R)$, we have
\begin{equation}\label{eqn: bound of sup of R Dt G on QR 2nd}
\sup_{(s,y)\in Q_{R}}\left|R\frac{\partial}{\partial s} \mathcal{R}_{a,q}f_{2}(s,y)\right|^{q}
\leq C^{(4)}\mathbb{M}_{t}\left(\int_{\mathbb{R}}
        (\mathbb{M}_{x}\|f(t,x,\theta)\|_{H}^{q})^{\frac{r}{q}}d\theta\right)^{\frac{q}{r}}
\end{equation}
for some constant $C^{(4)}>0$. Hence, by combining
\eqref{eqn:bound for LL}, \eqref{eqn: bound of mathcalG A1 on QR 2nd},
\eqref{eqn: bound of sup of R gamma gradient on QR 2nd} and \eqref{eqn: bound of sup of R Dt G on QR 2nd},
we have the inequality given in \eqref{eqn: double aver 1}.
\end{proof}

If $f$ is a locally integrable function on $\mathbb{R}^{1+d}$, we define
\begin{align}\label{eqn: sharp fct}
f^{\#}(t,x):=\sup_{Q}\frac{1}{|Q|}\int_{Q}|f(r,z)-f_{Q}|drdz,
\end{align}
where the sup is taken all $Q$ containing $(t,x)$ of the type
\begin{align*}
Q=(t-R,t+R)\times B_{R^{1/\gamma_{\psi}}}(x),\quad R>0
\end{align*}
and $f_{Q}=\frac{1}{|Q|}\int_{Q}f(r,z)drdz$.

\textbf{A proof of Theorem \ref{cor: 2nd LP ineq}. }
Since the proofs of \eqref{ineq: 2nd LP ineq} and \eqref{ineq: 2nd LP ineq q=2} are similar,
we only prove \eqref{ineq: 2nd LP ineq}.
Let $-\infty<a<b<\infty$.
Let $r\geq 1$, $q\geq \max\{2,r\}$ and let $p\geq q$.
If $p=q$, then Theorem \ref{cor: 2nd LP ineq} holds
by Lemma \ref{lem: 2nd LP ineq for p equal lambda}.
Suppose that $p>q$.
Note that by applying Jensen's inequality and the definition of $(\mathcal{R}_{a,q}f)_{Q}$,
we obtain that
\begin{align}
&\left(\frac{1}{|Q|}\int_{Q}|\mathcal{R}_{a,q}f(s,y)-(\mathcal{R}_{a,q}f)_{Q}|dsdy\right)^{q}\nonumber\\
&\qquad\leq \frac{1}{|Q|}\int_{Q}|\mathcal{R}_{a,q}f(s,y)-(\mathcal{R}_{a,q}f)_{Q}|^{q}dsdy\nonumber\\
&\qquad\leq \frac{1}{|Q|^{2}}\int_{Q}\int_{Q}
            |\mathcal{R}_{a,q}f(s,y)-\mathcal{R}_{a,q}f(w,z)|^{q}dsdydw dz.\label{eqn: bound of sharp mathcalG 2}
\end{align}
For any $c_{1}\in\mathbb{R}$, $c_{2}\in\mathbb{R}^{d}$
and for $K_{\varphi,\psi}(t,s,x)=L_{\varphi}p_{\psi}(t,s,x)$,
we put
\begin{align*}
\bar{f}(t,x,\cdot)=f(t-c_{1},x-c_{2},\cdot),\quad
\bar{K}_{\varphi,\psi}(t,s,x)=K_{\varphi,\psi}(t-c_{1},s-c_{1},x).
\end{align*}
Then by applying the change of variables, we obtain that
\begin{align*}
&\mathcal{R}_{a,q}f(t-c_{1},x-c_{2})\\
&=\left(\int_{a}^{t-c_{1}}((t-c_{1})-s)^{\frac{q\gamma_{\varphi}}{\gamma_{\psi}}-1}
        \|K_{\varphi,\psi}(t-c_{1},s,\cdot)*f(s,\cdot,\cdot)(x-c_{2})\|_{V}^{q}ds\right)^{\frac{1}{q}}\\
&=\left(\int_{a+c_{1}}^{t}(t-s)^{\frac{q\gamma_{\varphi}}{\gamma_{\psi}}-1}
        \|\bar{K}_{\varphi,\psi}(t,s,\cdot)*\bar{f}(s,\cdot,\cdot)(x)\|_{V}^{q}ds\right)^{\frac{1}{q}}\\
&=\mathcal{R}_{a+c_{1},q}\bar{f}(t,x).
\end{align*}
Therefore, we may assume that $Q=Q_{R}=(-2R,0)\times B_{R^{1/\gamma_{\psi}}}$.
By applying \eqref{eqn: double aver 1}, we have
\begin{align}\label{eqn:esti double average 2}
&\frac{1}{|Q_{R}|^{2}}\int_{Q_{R}}\int_{Q_{R}}
                |\mathcal{R}_{a,q}f(s,y)-\mathcal{R}_{a,q}f(w,z)|^{q}dsdydw dz\nonumber\\
&\qquad\leq C_{1}\mathbb{M}_{t}\left(\int_{\mathbb{R}}
        (\mathbb{M}_{x}\|f(t,x,\theta)\|_{H}^{q})^{\frac{r}{q}}
        d\theta\right)^{{\frac{q}{r}}}
\end{align}
for some constant $C_{1}>0$.
By combining \eqref{eqn: bound of sharp mathcalG 2} and \eqref{eqn:esti double average 2}, we have
\begin{align*}
\frac{1}{|Q|}\int_{Q}|\mathcal{R}_{a,q}f(s,y)-(\mathcal{R}_{a,q}f)_{Q}|dsdy
\leq  C_{1}^{\frac{1}{q}}\left(\mathbb{M}_{t}\left(\int_{\mathbb{R}}
        (\mathbb{M}_{x}\|f(t,x,\theta)\|_{H}^{q})^{\frac{r}{q}}
        d\theta\right)^{{\frac{q}{r}}}\right)^{\frac{1}{q}},
\end{align*}
and then by taking the supremum in $Q$, we have
\begin{equation}\label{eqn: bound of sharp mathcalG 2nd}
(\mathcal{R}_{a,q}f)^{\#}(t,x)
\leq C_{1}^{\frac{1}{q}}\left(\mathbb{M}_{t}\left(\int_{\mathbb{R}}
    (\mathbb{M}_{x}\|f(t,x,\theta)\|_{H}^{q})^{\frac{r}{q}}
    d\theta\right)^{^{\frac{q}{r}}}\right)^{\frac{1}{q}}.
\end{equation}
Then by applying Theorem 5.1 in \cite{Ji-Kim 2025-1}, \eqref{eqn: bound of sharp mathcalG 2nd},
Hardy-Littlewood maximal theorem, we obtain that
 \begin{align}
\|\mathcal{R}_{a,q}f\|_{L^{p}((a,b)\times\mathbb{R}^{d})}^{p}
&\leq A_{1}\|(\mathcal{R}_{a,q}f)^{\#}\|_{L^{p}((a,b)\times\mathbb{R}^{d})}^{p}\nonumber\\
&= A_{1}\int_{\mathbb{R}^{d}}\int_{a}^{b}|(\mathcal{R}_{a,q}f)^{\#}(t,x)|^{p}dtdx\nonumber\\
&\leq A_{1}C_{1}^{\frac{p}{q}}\int_{\mathbb{R}^{d}}\int_{a}^{b}\left(\mathbb{M}_{t}\left(\int_{\mathbb{R}}
        (\mathbb{M}_{x}\|f(t,x,\theta)\|_{H}^{q})^{\frac{r}{q}}
        d\theta\right)^{\frac{q}{r}}\right)^{\frac{p}{q}}dtdx\nonumber\\
&\leq A_{1}C_{1}^{\frac{p}{q}}A_{2}\int_{\mathbb{R}^{d}}\int_{a}^{b}\left(\int_{\mathbb{R}}
        (\mathbb{M}_{x}\|f(t,x,\theta)\|_{H}^{q})^{\frac{r}{q}}
        d\theta\right)^{{\frac{p}{r}}}dtdx.\label{eqn: pf thm 5.2 1}
\end{align}
Note that by the assumption $p>q$ and $q\geq \max\{2,r\}$, we have $p>r$
and so by applying the Minkowski's inequality, we have
\begin{align}
&\int_{\mathbb{R}^{d}}\int_{a}^{b}\left(\int_{\mathbb{R}}
        (\mathbb{M}_{x}\|f(t,x,\theta)\|_{H}^{q})^{\frac{r}{q}}
        d\theta\right)^{{\frac{p}{r}}}dtdx\nonumber\\
&\qquad\leq \int_{a}^{b}\left[\int_{\mathbb{R}}\left(\int_{\mathbb{R}^{d}}
        (\mathbb{M}_{x}\|f(t,x,\theta)\|_{H}^{q})^{\frac{p}{q}}dx\right)^{\frac{r}{p}}
        d\theta\right]^{\frac{p}{r}}dt,\label{eqn:Minkow max}
\end{align}
from which, by applying \eqref{eqn: pf thm 5.2 1} and Hardy-Littlewood maximal theorem, we obtain that
 \begin{align*}
\|\mathcal{R}_{a,q}f\|_{L^{p}((a,b)\times\mathbb{R}^{d})}^{p}
&\leq A_{1}C_{1}^{\frac{p}{q}}A_{2}\int_{a}^{b}\left[\int_{\mathbb{R}}\left(\int_{\mathbb{R}^{d}}
        (\mathbb{M}_{x}\|f(t,x,\theta)\|_{H}^{q})^{\frac{p}{q}}dx\right)^{\frac{r}{p}}
        d\theta\right]^{\frac{p}{r}}dt\\
&\leq A_{1}C_{1}^{\frac{p}{q}}A_{2}A_{3}\int_{a}^{b}\left[\int_{\mathbb{R}}\left(\int_{\mathbb{R}^{d}}
        \|f(t,x,\theta)\|_{H}^{p}dx\right)^{\frac{r}{p}}
        d\theta\right]^{^{\frac{p}{r}}}dt,
\end{align*}
which proves the desired result. \hfill $\blacksquare$

\subsection{A Proof of Lemma \ref{lem:L psi potential}}\label{sec: potential}
Let $K$ be a separable Hilbert space.
Let $\varphi\in\mathfrak{M}_{\gamma}(\mathbb{R}^{d})$ for some $\gamma\equiv\gamma_{\varphi}>0$
satisfying the condition \textbf{(S2)} and let $\psi\in\mathfrak{S}$.
For $f\in C_{\rm c}^{\infty}(\mathbb{R}\times \mathbb{R}^{d};K)$, for a notational convenience, we put
\begin{align}\label{eqn: mathcal G def}
\mathcal{G}f(t,x)&=\int_{-\infty}^{t}L_{\varphi}\mathcal{T}_{\psi}(t,s)f(s,x)ds,\\
M(t,x,s,y)&=1_{(0,\infty)}(t-s)L_{\varphi}p_{\psi}(t,s,x-y).
 \label{eqn: kernel M}
\end{align}
Then for any $f\in C_{\rm c}^{\infty}(\mathbb{R}\times \mathbb{R}^{d};K)$, we obtain that
\begin{align*}
\mathcal{G}f(t,x)
&=\int_{-\infty}^{t}L_{\varphi}\mathcal{T}_{\psi}(t,s)f(s,x)ds\\
&=\int_{-\infty}^{\infty}\int_{\mathbb{R}^{d}}1_{(0,\infty)}(t-s)L_{\varphi}p_{\psi}(t,s,x-y)f(s,y)dyds\\
&=\int_{-\infty}^{\infty}\int_{\mathbb{R}^{d}}M(t,x,s,y)f(s,y)dyds.
\end{align*}
For a complex-valued measurable function $J(t,x,s,y)$ on $\mathbb{R}^{2d+2}$,
the authors in \cite{I. Kim S. Lim K.-H. Kim 2016} considered the following conditions:
there exist a constant $\gamma>0$ and a nonnegative nondecreasing functions $\varphi_{i}$ ($i=1,2,3$) on $(0,\infty)$ such that
\begin{itemize}
  \item [\textbf{(B1)}] for any $a>s$ and $y,z\in\mathbb{R}^{d}$,
  \begin{align*}
  \int_{a}^{\infty}\int_{\mathbb{R}^{d}}|J(t,x,s,y)-J(t,x,s,z)|dxdt
  \leq \varphi_{1}\left(\frac{|y-z|}{(a-s)^{1/\gamma}}\right),
  \end{align*}
  \item [\textbf{(B2)}] for any $a,b,s,r\in\mathbb{R}$ with $a>b\geq \max\{s,r\}$ and $y\in\mathbb{R}^{d}$,
    \begin{align*}
  \int_{a}^{\infty}\int_{\mathbb{R}^{d}}|J(t,x,s,y)-J(t,x,r,y)|dxdt
  \leq \varphi_{2}\left(\frac{|s-r|}{a-b}\right),
  \end{align*}
  \item [\textbf{(B3)}] for any $b>s$, $\rho>0$ and $y\in\mathbb{R}^{d}$,
    \begin{align*}
  \int_{s}^{b}\int_{|x-y|\geq \rho}|J(t,x,s,y)|dxdt\leq \varphi_{3}\left(\frac{(b-s)^{1/\gamma}}{\rho}\right).
  \end{align*}
\end{itemize}
\begin{lemma}\label{lem: esti of grad K}
Let $\varphi\in\mathfrak{M}_{\gamma}(\mathbb{R}^{d})$ for some $\gamma\equiv\gamma_{\varphi}>0$
satisfying the condition \textbf{(S2)} and let $\psi\in\mathfrak{S}$.
Let $N=\min\{N_{\varphi},N_{\psi}\}$.
Assume that $N> d+2+\lfloor\gamma_{\varphi}\rfloor+\lfloor\gamma_{\psi}\rfloor$.
 Then there exist constants $C_{1},C_{2},C_{3}>0$ depending on $d, \mu_{\varphi},\mu_{\psi},\gamma_{\varphi},\gamma_{\psi}$, $\kappa_{\varphi}$ and $\kappa_{\psi}$ such that for any $t>s$ and $x\in\mathbb{R}^{d}\setminus \{0\}$,
\begin{align}
| L_{\varphi}p_{\psi}(t,s,x)|
&\leq C_{1}\left(|x|^{-(\gamma_{\varphi}+d)}\wedge (t-s)^{-\frac{\gamma_{\varphi}+d}{\gamma_{\psi}}}\right) \label{eqn: bound of K},\\
|\nabla L_{\varphi}p_{\psi}(t,s,x)|
&\leq C_{1}\left(|x|^{-(\gamma_{\varphi}+1+d)}\wedge (t-s)^{-\frac{\gamma_{\varphi}+1+d}{\gamma_{\psi}}}\right) \label{eqn: bound of nabla K},\\
\left|\partial_{s}L_{\varphi}p_{\psi}(t,s,x)\right|
&\leq C_{3}\left(|x|^{-(\gamma_{\varphi}+\gamma_{\psi}+d)}\wedge
                      (t-s)^{-\frac{\gamma_{\varphi}+\gamma_{\psi}+d}{\gamma_{\psi}}}\right),\label{eqn: bound of dt nabla K}
\end{align}
where the absolute value of the left hand side means the Euclidean norm
and $a\wedge b=\min\{a,b\}$ for $a,b\in\mathbb{R}^{d}$.
\end{lemma}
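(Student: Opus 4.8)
The plan is to prove all three estimates by Fourier analysis, reducing each to a pointwise bound on the inverse Fourier transform of an explicit symbol. Writing $h=t-s$, we have $L_\varphi p_\psi(t,s,\cdot)(x)=\mathcal{F}^{-1}\!\big(m_0\big)(x)$ with $m_0(\xi)=\varphi(\xi)\exp\!\big(\int_s^t\psi(r,\xi)\,dr\big)$; differentiating under $\mathcal{F}^{-1}$ (legitimate because of the exponential decay established below), $\nabla_x L_\varphi p_\psi(t,s,\cdot)(x)=\mathcal{F}^{-1}\!\big(i\xi\,m_0\big)(x)$ and, using $\partial_s\exp(\int_s^t\psi(r,\xi)dr)=-\psi(s,\xi)\exp(\int_s^t\psi(r,\xi)dr)$, $\partial_s L_\varphi p_\psi(t,s,\cdot)(x)=-\mathcal{F}^{-1}\!\big(\psi(s,\cdot)\,m_0\big)(x)$. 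Thus it suffices to bound $\mathcal{F}^{-1}$ of the three symbols $m_0$, $i\xi m_0$, $\psi(s,\cdot)m_0$, which near infinity grow like $|\xi|^{a}$ with $a=\gamma_\varphi$, $\gamma_\varphi+1$, $\gamma_\varphi+\gamma_\psi$, respectively, and all carry the Gaussian-type factor coming from $\psi$.

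First I would establish a uniform symbol estimate: for each of the three symbols $m$ and every multi-index $\alpha$ with $|\alpha|\le N=\min\{N_\varphi,N_\psi\}$,
\[
|\partial_\xi^\alpha m(\xi)|\le C\,|\xi|^{a-|\alpha|}\exp\!\Big(-\tfrac{\kappa_\psi}{2}\,(t-s)\,|\xi|^{\gamma_\psi}\Big),\qquad \xi\neq 0 .
\]
The exponential factor is immediate from \textbf{(S1)}, which gives $\big|\exp(\int_s^t\psi(r,\xi)dr)\big|\le\exp(-\kappa_\psi(t-s)|\xi|^{\gamma_\psi})$. For the derivatives I would apply the Fa\`a di Bruno formula to $\partial_\xi^\alpha\exp(\int_s^t\psi(r,\cdot)dr)$, bound each inner factor $\int_s^t\partial_\xi^\beta\psi(r,\xi)dr$ by $\mu_\psi(t-s)|\xi|^{\gamma_\psi-|\beta|}$ via \textbf{(S2)}, and absorb the resulting polynomial factors $\big((t-s)|\xi|^{\gamma_\psi}\big)^k$ into half of the exponential using $\sup_{\tau\ge0}\tau^k e^{-\kappa_\psi\tau/2}<\infty$; then combine with $|\partial_\xi^\beta\varphi|\le\mu_\varphi|\xi|^{\gamma_\varphi-|\beta|}$ (and, for the third symbol, $|\partial_\xi^\beta\psi(s,\xi)|\le\mu_\psi|\xi|^{\gamma_\psi-|\beta|}$) by the Leibniz rule. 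This accounts for the dependence of the constants on $d,\mu_\varphi,\mu_\psi,\gamma_\varphi,\gamma_\psi,\kappa_\psi$.

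Second I would prove a transference lemma: if a symbol $m$ satisfies the displayed estimate for all $|\alpha|\le N$, with $a+d>0$ and $N>a+d$, then $|\mathcal{F}^{-1}(m)(x)|\le C\big(|x|^{-(a+d)}\wedge(t-s)^{-(a+d)/\gamma_\psi}\big)$. The $(t-s)$-bound follows from $|\mathcal{F}^{-1}(m)(x)|\le C\|m\|_{L^1}$ and the rescaling $\eta=(t-s)^{1/\gamma_\psi}\xi$, which turns $\int|\xi|^{a}e^{-\kappa_\psi(t-s)|\xi|^{\gamma_\psi}/2}\,d\xi$ into $(t-s)^{-(a+d)/\gamma_\psi}$ times a finite constant. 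For the $|x|$-bound I would use a dyadic Littlewood--Paley decomposition $m=\sum_j m\phi_j$ with $\phi_j$ supported in $\{|\xi|\sim 2^j\}$: the trivial estimate gives $|\mathcal{F}^{-1}(m\phi_j)(x)|\lesssim 2^{j(a+d)}$, while integrating by parts $N$ times in the direction of the largest coordinate of $x$ gives $|\mathcal{F}^{-1}(m\phi_j)(x)|\lesssim |x|^{-N}2^{j(a+d-N)}$; taking the smaller bound on each annulus and summing the two geometric series (convergent because $a+d>0$ for $2^j\lesssim|x|^{-1}$ and $a+d-N<0$ for $2^j\gtrsim|x|^{-1}$) yields $C|x|^{-(a+d)}$.

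Finally I would apply this lemma to $m_0$, $i\xi m_0$, $\psi(s,\cdot)m_0$ with $a=\gamma_\varphi,\ \gamma_\varphi+1,\ \gamma_\varphi+\gamma_\psi$, in each case checking $a+d<N$: since the fractional parts of $\gamma_\varphi$ and $\gamma_\psi$ sum to less than $2$, the largest relevant value satisfies $\gamma_\varphi+\gamma_\psi+d< d+2+\lfloor\gamma_\varphi\rfloor+\lfloor\gamma_\psi\rfloor<N$ by the hypothesis $N>d+2+\lfloor\gamma_\varphi\rfloor+\lfloor\gamma_\psi\rfloor$, and the two milder cases follow a fortiori; this is exactly where the assumption on $N$ is used. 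The main obstacle I expect is the bookkeeping in the symbol estimate --- carefully tracking the Fa\`a di Bruno and Leibniz constants and confirming that $N$ derivatives are precisely enough to run the integration-by-parts argument for the sharpest ($\partial_s$) bound; the harmonic-analysis part is then routine and parallels the kernel estimates in \cite{I. Kim K.-H. Kim 2016} and \cite{Ji-Kim 2025}.
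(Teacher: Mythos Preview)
Your proposal is correct and is precisely the standard Fourier-analytic route for such kernel bounds: symbol estimates via \textbf{(S1)}--\textbf{(S2)}, Leibniz/Fa\`a di Bruno, then a dyadic decomposition with trivial $L^1$ bounds on low frequencies and $N$-fold integration by parts on high frequencies. The paper's own proof of this lemma gives no details at all --- it simply cites Lemma 3.3 of \cite{Ji-Kim 2025} for the gradient estimate and states that the other two follow by the arguments of Lemmas A.2--A.3 in \cite{Ji-Kim 2025-1}; those cited arguments are exactly the ones you outline, so your proof is not an alternative approach but a faithful unpacking of what the paper defers to the references.
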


\begin{proof}
The inequality \eqref{eqn: bound of nabla K} has been proved in Lemma 3.3 of \cite{Ji-Kim 2025}.
The inequalities \eqref{eqn: bound of K} and \eqref{eqn: bound of dt nabla K}
can be proved by the similar arguments used in Lemma A.2 and Lemma A.3 of \cite{Ji-Kim 2025-1}.
\end{proof}

\begin{lemma}\label{lem:A1-A3}
Let $\varphi\in\mathfrak{M}_{\gamma}(\mathbb{R}^{d})$ for some $\gamma\equiv\gamma_{\varphi}>0$
satisfying the condition \textbf{(S2)} and let $\psi\in\mathfrak{S}$.
Assume that $\gamma_{\varphi}=\gamma_{\psi}$ and $N_{\varphi},N_{\psi}>d+2+2\lfloor\gamma\rfloor$.
Let $M$ be the scalar valued function given in \eqref{eqn: kernel M}.
Then for $J=M$, the conditions \textbf{(B1)}-\textbf{(B3)} hold.
\end{lemma}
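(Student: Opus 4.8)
A proof of Lemma \ref{lem:A1-A3}. The plan is to reduce all three conditions to the pointwise kernel estimates of Lemma \ref{lem: esti of grad K}, which under the hypothesis $\gamma_{\varphi}=\gamma_{\psi}=:\gamma$ read, for $t>s$ and $w\neq0$,
\[
|L_{\varphi}p_{\psi}(t,s,w)|\lesssim|w|^{-(\gamma+d)}\wedge(t-s)^{-(\gamma+d)/\gamma},\qquad
|\nabla L_{\varphi}p_{\psi}(t,s,w)|\lesssim|w|^{-(\gamma+1+d)}\wedge(t-s)^{-(\gamma+1+d)/\gamma},
\]
\[
|\partial_{s}L_{\varphi}p_{\psi}(t,s,w)|\lesssim|w|^{-(2\gamma+d)}\wedge(t-s)^{-(2\gamma+d)/\gamma}.
\]
Splitting the $w$-integral at $|w|=(t-s)^{1/\gamma}$ and using that the two regions contribute comparably, one first records the three $L^{1}$-in-space bounds
\[
\int_{\mathbb{R}^{d}}|L_{\varphi}p_{\psi}(t,s,w)|\,dw\lesssim(t-s)^{-1},\qquad
\int_{\mathbb{R}^{d}}|\nabla L_{\varphi}p_{\psi}(t,s,w)|\,dw\lesssim(t-s)^{-1-1/\gamma},\qquad
\int_{\mathbb{R}^{d}}|\partial_{s}L_{\varphi}p_{\psi}(t,s,w)|\,dw\lesssim(t-s)^{-2}.
\]
The hypothesis $\gamma_{\varphi}=\gamma_{\psi}$ is precisely what makes the first exponent equal to the borderline value $-1$; consequently the time integrals below are only logarithmically convergent, and the main work is to use the increment structure in \textbf{(B1)} and \textbf{(B2)} to beat this divergence. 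This is the step I expect to require the most care.

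For \textbf{(B1)}, after substituting $w=x-y$, resp.\ $w=x-z$ (and noting $1_{(0,\infty)}(t-s)\equiv1$ on $t\ge a>s$), I would split $\int_{a}^{\infty}dt$ at the scale $t=s+|y-z|^{\gamma}$. On $a<t<s+|y-z|^{\gamma}$ (which is present only when $\rho:=|y-z|/(a-s)^{1/\gamma}>1$) I bound the difference crudely by $|L_{\varphi}p_{\psi}(t,s,x-y)|+|L_{\varphi}p_{\psi}(t,s,x-z)|$ and apply the first $L^{1}$-bound, so that this part is $\lesssim\int_{a}^{s+|y-z|^{\gamma}}(t-s)^{-1}\,dt=\gamma\ln\rho$, which produces the logarithmic term. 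On $t>\max\{a,s+|y-z|^{\gamma}\}$ I invoke the mean value theorem in the spatial variable, dominate $\sup_{|\zeta|\le|y-z|}|\nabla L_{\varphi}p_{\psi}(t,s,x-\zeta)|$ by treating $|x|\le2|y-z|$ and $|x|>2|y-z|$ separately and using $t-s\ge|y-z|^{\gamma}$, integrate in $x$ to get $\lesssim(t-s)^{-1-1/\gamma}$, and then $|y-z|\int_{s+|y-z|^{\gamma}}^{\infty}(t-s)^{-1-1/\gamma}\,dt\lesssim1$. Altogether this gives \textbf{(B1)} with $\varphi_{1}(\rho)=C(1+\ln_{+}\rho)$.

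For \textbf{(B2)}, assume without loss of generality $s\ge r$. Since $t\ge a>b\ge s\ge r$, the indicator is identically $1$ on the domain, so $M(t,x,s,y)-M(t,x,r,y)=L_{\varphi}p_{\psi}(t,s,x-y)-L_{\varphi}p_{\psi}(t,r,x-y)$; I apply the mean value theorem in the backward-time variable together with the $\partial_{s}$-bound, integrate in $x$ to obtain $|s-r|(t-s)^{-2}$, and then $\int_{a}^{\infty}|s-r|(t-s)^{-2}\,dt=|s-r|(a-s)^{-1}\le|s-r|(a-b)^{-1}$ because $s\le b$. Hence \textbf{(B2)} holds with $\varphi_{2}(\rho)=C\rho$. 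For \textbf{(B3)}, substituting $w=x-y$ and splitting once more at $|w|=(t-s)^{1/\gamma}$ yields $\int_{|w|\ge\rho}|L_{\varphi}p_{\psi}(t,s,w)|\,dw\lesssim\rho^{-\gamma}\wedge(t-s)^{-1}$; integrating this over $s<t<b$ gives $\lesssim1+\ln_{+}\!\big((b-s)^{1/\gamma}/\rho\big)$, so \textbf{(B3)} holds with $\varphi_{3}(\rho)=C(1+\ln_{+}\rho)$ (the cruder choice $C\rho^{\gamma}$ also works). Throughout, the smoothness assumption $N_{\varphi},N_{\psi}>d+2+2\lfloor\gamma\rfloor$ is used only to legitimize Lemma \ref{lem: esti of grad K}, since $d+2+\lfloor\gamma_{\varphi}\rfloor+\lfloor\gamma_{\psi}\rfloor=d+2+2\lfloor\gamma\rfloor$ here; otherwise the argument follows the scheme of \cite{I. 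Kim S. Lim K.-H. Kim 2016}. $\blacksquare$
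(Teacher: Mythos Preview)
Your argument is correct, but for \textbf{(B1)} and \textbf{(B3)} the paper takes a shorter route. In \textbf{(B1)} your worry about the borderline exponent $-1$ is unwarranted once the increment is used: writing
\[
L_{\varphi}p_{\psi}(t,s,x-y)-L_{\varphi}p_{\psi}(t,s,x-z)
=(z-y)\cdot\int_{0}^{1}\nabla L_{\varphi}p_{\psi}\bigl(t,s,x-(\theta y+(1-\theta)z)\bigr)\,d\theta,
\]
the paper integrates in $x$ first, substitutes $x\mapsto x+(\theta y+(1-\theta)z)$ for each fixed $\theta$, and lands directly on $|y-z|\int_{\mathbb{R}^{d}}|\nabla L_{\varphi}p_{\psi}(t,s,x)|\,dx\lesssim|y-z|(t-s)^{-1-1/\gamma}$. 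This is integrable in $t$ over $(a,\infty)$ without any time splitting, yielding $\varphi_{1}(\rho)=C\rho$; your $\sup_{|\zeta|\le|y-z|}$ device and the resulting dyadic split are unnecessary. For \textbf{(B3)} the paper avoids splitting in $t$ by inserting the weight $(|x-y|/\rho)^{\sigma}\ge1$ for some $0<\sigma<\gamma$ and using $\int_{\mathbb{R}^{d}}|w|^{\sigma}|L_{\varphi}p_{\psi}(t,s,w)|\,dw\lesssim(t-s)^{\sigma/\gamma-1}$, which is now integrable on $(s,b)$ and gives $\varphi_{3}(\rho)=C\rho^{\sigma}$. Your \textbf{(B2)} matches the paper's essentially line for line. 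So both proofs follow the Calder\'on--Zygmund template of \cite{I. Kim S. Lim K.-H. Kim 2016}; the paper's version simply exploits the integral mean-value identity and a moment trick to sidestep the logarithmic bookkeeping.
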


\begin{proof}
Firstly we check \textbf{(B1)}. By applying the mean value theorem, the change of variables and \eqref{eqn: bound of nabla K},
we obtain that
\begin{align}
&\int_{\mathbb{R}^{d}}|M(t,x,s,y)-M(t,x,s,z)|dx\nonumber\\
&=\int_{\mathbb{R}^{d}}|L_{\varphi}p_{\psi}(t,s,x-y)-L_{\varphi}p_{\psi}(t,s,x-z)|dx\nonumber\\
&=|z-y|\int_{\mathbb{R}^{d}}|\nabla L_{\varphi}p_{\psi}(t,s,x-(\theta y+(1-\theta)z)|dx\nonumber\\
&=|z-y|\int_{\mathbb{R}^{d}}|\nabla L_{\varphi}p_{\psi}(t,s,x)|dx\nonumber\\
&= |z-y|(t-s)^{\frac{d}{\gamma_{\psi}}}\int_{\mathbb{R}^{d}}|\nabla L_{\varphi}p_{\psi}(t,s,(t-s)^{\frac{1}{\gamma_{\psi}}}x)|dx\nonumber\\
&\leq C_{1} |z-y|(t-s)^{\frac{d}{\gamma_{\psi}}}\int_{\mathbb{R}^{d}}
       \left(|(t-s)^{\frac{1}{\gamma_{\psi}}}x|^{-(\gamma_{\varphi}+1+d)}\wedge (t-s)^{-\frac{\gamma_{\varphi}+1+d}{\gamma_{\psi}}}\right)dx\nonumber\\
&= C_{1} |z-y|(t-s)^{-\frac{\gamma_{\varphi}+1}{\gamma_{\psi}}}\int_{\mathbb{R}^{d}}
       \left(|x|^{-(\gamma_{\varphi}+1+d)}\wedge 1\right)dx.\label{eqn: A1 1}
\end{align}
Note that for any $\alpha\geq 0$ and $\beta>0$ with $\alpha<\beta$, it holds that
\begin{align}\label{eqn: int alpha beta}
\int_{\mathbb{R}^{d}}|x|^{\alpha}\left(|x|^{-(\beta+d)}\wedge 1\right)dx<\infty.
\end{align}
Indeed, by applying the polar coordinates, we obtain that
\begin{align*}
\int_{\mathbb{R}^{d}}|x|^{\alpha}\left(|x|^{-(\beta+d)}\wedge 1\right)dx
&=\int_{|x|\leq 1}|x|^{\alpha}dx+\int_{|x|>1}|x|^{\alpha-(\beta+d)}dx\\
&=\frac{2\pi^{d/2}}{\Gamma\left(\frac{d}{2}\right)}
    \left(\int_{0}^{1}\rho^{\alpha+d-1}d\rho+\int_{1}^{\infty}\rho^{\alpha-\beta-1}d\rho\right)\\
&=\frac{2\pi^{d/2}}{\Gamma\left(\frac{d}{2}\right)}
        \left(\frac{1}{\alpha+d}+\frac{1}{\beta-\alpha}\right).
\end{align*}
Then by applying \eqref{eqn: A1 1} and \eqref{eqn: int alpha beta} with $\alpha=0$ and $\beta=\gamma_{\varphi}+1$, we have
\begin{align*}
\int_{\mathbb{R}^{d}}|M(t,x,s,y)-M(t,x,s,z)|dx\leq C_{2}|z-y|(t-s)^{-\frac{\gamma_{\varphi}+1}{\gamma_{\psi}}},
\end{align*}
from which, by the assumption $\gamma_{\psi}=\gamma_{\varphi}$, we have
\begin{align*}
\int_{a}^{\infty}\int_{\mathbb{R}^{d}}|M(t,x,s,y)-M(t,x,s,z)|dxdt\leq C_{3}|z-y|(a-s)^{-\frac{1}{\gamma_{\psi}}}.
\end{align*}
By taking $\varphi_{1}(t)=C_{3}t$, we have the desired result.

Secondly we check \textbf{(B2)}. For $s\leq \max\{r,s\}\leq b<a<t$,
putting $\tau=\theta s+(1-\theta)r$, $\theta\in (0,1)$, by applying mean value theorem, the change of variables,
\eqref{eqn: bound of dt nabla K} and \eqref{eqn: int alpha beta}
with $\alpha=0$ and $\beta=\gamma_{\varphi}+\gamma_{\psi}$,
we obtain that
\begin{align*}
&\int_{\mathbb{R}^{d}}|M(t,x,s,y)-M(t,x,r,y)|dx\\
&= |s-r|\int_{\mathbb{R}^{d}}|\partial_{s}M(t,x,\tau,y)|dx\\
&=|s-r|(t-\tau)^{\frac{d}{\gamma_{\psi}}}\int_{\mathbb{R}^{d}}|\partial_{s} L_{\varphi}p_{\psi}(t,\tau,(t-\tau)^{\frac{1}{\gamma_{\psi}}}(x-y))|dx\\
&=|s-r|(t-\tau)^{\frac{d}{\gamma_{\psi}}}\int_{\mathbb{R}^{d}}|\partial_{s} L_{\varphi}p_{\psi}(t,\tau,(t-\tau)^{\frac{1}{\gamma_{\psi}}}x)|dx\\
&\leq C_{4}|s-r|(t-\tau)^{\frac{d}{\gamma_{\psi}}}\int_{\mathbb{R}^{d}}
    \left(|(t-\tau)^{\frac{1}{\gamma_{\psi}}}x|^{-(\gamma_{\varphi}+\gamma_{\psi}+d)}\wedge (t-\tau)^{-\frac{\gamma_{\varphi}+\gamma_{\psi}+d}{\gamma_{\psi}}}\right)dx\\
&=C_{4}|s-r|(t-\tau)^{-\frac{\gamma_{\varphi}+\gamma_{\psi}}{\gamma_{\psi}}}\int_{\mathbb{R}^{d}}
    \left(|x|^{-(\gamma_{\varphi}+\gamma_{\psi}+d)}\wedge 1\right)dx\\
&\leq  C_{5}|s-r|(t-b)^{-\frac{\gamma_{\varphi}}{\gamma_{\psi}}-1}.
\end{align*}
Therefore, we obtain that
\begin{align*}
\int_{a}^{\infty}\int_{\mathbb{R}^{d}}|M(t,x,s,y)-M(t,x,r,y)|dxdt
&\leq C_{5} |s-r|\int_{a}^{\infty}(t-b)^{-\frac{\gamma_{\varphi}}{\gamma_{\psi}}-1}dt\\
&=C_{6} |s-r|(a-b)^{-\frac{\gamma_{\varphi}}{\gamma_{\psi}}}.
\end{align*}
By the assumption $\gamma_{\varphi}=\gamma_{\psi}$ and taking $\varphi_{2}(t)=C_{6} t$, we have the desired result.

Finally we check \textbf{(B3)}. Take $\sigma>0$ such that $\sigma<\gamma_{\varphi}$.
Then for any $x,y\in\mathbb{R}^{d}$ satisfying $|x-y|\geq \rho$, we have $\frac{|x-y|}{\rho}\geq 1$ and we have
\begin{align}
\int_{s}^{b}\int_{|x-y|\geq \rho}|M(t,x,s,y)|dxdt
\leq \int_{s}^{b}\int_{|x-y|\geq \rho}\frac{|x-y|^{\sigma}}{\rho^{\sigma}}|M(t,x,s,y)|dxdt.\label{eqn: A3 1}
\end{align}
Note that by applying the change of variables, \eqref{eqn: bound of K} and
\eqref{eqn: int alpha beta} with $\alpha=\sigma$ and $\beta=\gamma_{\varphi}$,
we obtain that
\begin{align}
&\int_{|x-y|\geq \rho}|x-y|^{\sigma}|M(t,x,s,y)|dx\nonumber\\
&\leq \int_{\mathbb{R}^{d}}|x-y|^{\sigma}|L_{\varphi}p_{\psi}(t,s,x-y)|dx\nonumber\\
&=\int_{\mathbb{R}^{d}}|x|^{\sigma}|L_{\varphi}p_{\psi}(t,s,x)|dx\nonumber\\
&=(t-s)^{\frac{d+\sigma}{\gamma_{\psi}}}\int_{\mathbb{R}^{d}}|x|^{\sigma}
|L_{\varphi}p_{\psi}(t,s,(t-s)^{\frac{1}{\gamma_{\psi}}}x)|dx\nonumber\\
&\leq C_{7}(t-s)^{\frac{d+\sigma}{\gamma_{\psi}}}\int_{\mathbb{R}^{d}}|x|^{\sigma}
        \left(|(t-s)^{\frac{1}{\gamma_{\psi}}}x|^{-(\gamma_{\varphi}+d)}\wedge (t-s)^{\frac{\gamma_{\varphi}+d}{\gamma_{\psi}}}\right)dx\nonumber\\
&=C_{7}(t-s)^{\frac{\sigma-\gamma_{\varphi}}{\gamma_{\psi}}}\int_{\mathbb{R}^{d}}|x|^{\sigma}
        \left(|x|^{-(\gamma_{\varphi}+d)}\wedge 1 \right)dx\nonumber\\
&=C_{8}(t-s)^{\frac{\sigma-\gamma_{\varphi}}{\gamma_{\psi}}}.\label{eqn: A3 2}
\end{align}
Therefore, by combining \eqref{eqn: A3 1}, \eqref{eqn: A3 2} and the assumption $\gamma_{\varphi}=\gamma_{\psi}$,
we have
\begin{align*}
\int_{s}^{b}\int_{|x-y|\geq \rho}|M(t,x,s,y)|dxdt
\leq C_{8}\int_{s}^{b}(t-s)^{\frac{\sigma}{\gamma_{\psi}}-1}\rho^{-\sigma}dt
=\frac{C_{8}\gamma_{\psi}}{\sigma}
        (b-s)^{\frac{\sigma}{\gamma_{\psi}}}\rho^{-\sigma}.
\end{align*}
By taking $\varphi_{3}(t)=\frac{C_{8}\gamma}{\sigma}t^{\sigma}$, we have the desired result.
\end{proof}

On the other hand, by considering a multiplication operator,
the scalar-valued function $M$ can be considered as a bounded linear operator
on $K$.

\begin{proposition}\label{prop: mathcal G bdd L2}
Let $\varphi\in\mathfrak{M}_{\gamma}(\mathbb{R}^{d})$ for some $\gamma\equiv\gamma_{\varphi}>0$
satisfying the condition \textbf{(S2)} and let $\psi\in\mathfrak{S}$.
Assume that $\gamma_{\varphi}\le \gamma_{\psi}$.
The operator $\mathcal{G}$ given in \eqref{eqn: mathcal G def}
is bounded on $L^{2}(\mathbb{R}^{d+1};K)$.
\end{proposition}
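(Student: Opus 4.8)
The plan is to establish $L^{2}$-boundedness of the operator $\mathcal{G}$ via the Plancherel theorem, working on the Fourier side in the spatial variable $x$. First I would note that for fixed $t$ and $s$ with $t>s$, the operator $L_{\varphi}\mathcal{T}_{\psi}(t,s)$ acts as a Fourier multiplier with symbol $\varphi(\xi)\exp\!\left(\int_{s}^{t}\psi(r,\xi)\,dr\right)$. Hence, writing $\widehat{f}(s,\xi)=\mathcal{F}_{x}f(s,\cdot)(\xi)$, we have
\[
\mathcal{F}_{x}\bigl(\mathcal{G}f(t,\cdot)\bigr)(\xi)
=\int_{-\infty}^{t}\varphi(\xi)\exp\!\left(\int_{s}^{t}\psi(r,\xi)\,dr\right)\widehat{f}(s,\xi)\,ds .
\]
By Plancherel's theorem in $x$ (applied with values in the Hilbert space $K$),
\[
\|\mathcal{G}f\|_{L^{2}(\mathbb{R}^{d+1};K)}^{2}
=\int_{\mathbb{R}}\int_{\mathbb{R}^{d}}
\left\|\int_{-\infty}^{t}\varphi(\xi)\exp\!\left(\int_{s}^{t}\psi(r,\xi)\,dr\right)\widehat{f}(s,\xi)\,ds\right\|_{K}^{2}d\xi\,dt .
\]
So the problem reduces, for each fixed $\xi\neq 0$, to an $L^{2}(\mathbb{R};K)$-bound for a scalar convolution-type operator in the time variable with kernel $k_{\xi}(t,s)=\mathbf{1}_{(0,\infty)}(t-s)\,\varphi(\xi)\exp\!\left(\int_{s}^{t}\psi(r,\xi)\,dr\right)$.

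The key estimate is that $|k_{\xi}(t,s)|\le \varphi(\xi)\,e^{-\kappa_{\psi}|\xi|^{\gamma_{\psi}}(t-s)}$ for $t>s$, using condition \textbf{(S1)} ($\mathrm{Re}\,\psi(r,\xi)\le -\kappa_{\psi}|\xi|^{\gamma_{\psi}}$) together with the bound $\varphi(\xi)\le \mu_{\varphi}|\xi|^{\gamma_{\varphi}}$ from \eqref{eqn: sym cond 2}. Then I would apply Young's (or Schur's) inequality in the $t$-variable: the operator with kernel $k_{\xi}$ has $L^{1}\to L^{1}$ and $L^{\infty}\to L^{\infty}$ operator norm bounded by
\[
\sup_{t}\int_{-\infty}^{t}\varphi(\xi)\,e^{-\kappa_{\psi}|\xi|^{\gamma_{\psi}}(t-s)}\,ds
=\frac{\varphi(\xi)}{\kappa_{\psi}|\xi|^{\gamma_{\psi}}}
\le \frac{\mu_{\varphi}}{\kappa_{\psi}}\,|\xi|^{\gamma_{\varphi}-\gamma_{\psi}}\le \frac{\mu_{\varphi}}{\kappa_{\psi}}\,|\xi|^{0}\cdot(\text{bounded for }|\xi|\ge 1),
\]
and symmetrically the $s$-integral gives the same value. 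By the Schur test (or interpolation between the $L^{1}$ and $L^{\infty}$ bounds), the $L^{2}(\mathbb{R};K)\to L^{2}(\mathbb{R};K)$ norm of convolution against $k_{\xi}$ is at most $\varphi(\xi)/(\kappa_{\psi}|\xi|^{\gamma_{\psi}})\le (\mu_{\varphi}/\kappa_{\psi})|\xi|^{\gamma_{\varphi}-\gamma_{\psi}}$. Under the hypothesis $\gamma_{\varphi}\le\gamma_{\psi}$ this quantity is bounded uniformly in $\xi$ for $|\xi|\ge 1$; for $|\xi|<1$ one uses instead the sharper bound $\varphi(\xi)/(\kappa_{\psi}|\xi|^{\gamma_{\psi}})$ which, since $\varphi$ is continuous and $\varphi(\xi)\le\mu_{\varphi}|\xi|^{\gamma_{\varphi}}$, also stays bounded as $|\xi|\to 0$. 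Hence there is a constant $C=C(d,\gamma_{\varphi},\gamma_{\psi},\mu_{\varphi},\kappa_{\psi})$ with
\[
\int_{\mathbb{R}}\left\|\int_{-\infty}^{t}k_{\xi}(t,s)\widehat{f}(s,\xi)\,ds\right\|_{K}^{2}dt
\le C^{2}\int_{\mathbb{R}}\|\widehat{f}(t,\xi)\|_{K}^{2}\,dt
\]
for a.e. $\xi$.

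Integrating this in $\xi$ over $\mathbb{R}^{d}$ and applying Plancherel once more yields $\|\mathcal{G}f\|_{L^{2}(\mathbb{R}^{d+1};K)}\le C\|f\|_{L^{2}(\mathbb{R}^{d+1};K)}$ for all $f\in C_{\rm c}^{\infty}(\mathbb{R}\times\mathbb{R}^{d};K)$, and then a density argument extends $\mathcal{G}$ to a bounded operator on all of $L^{2}(\mathbb{R}^{d+1};K)$. I expect the main technical care to go into two places: first, justifying Fubini/interchange of the $ds$-integral with the Fourier transform and with the $L^{2}$-norms (legitimate because $f$ has compact support and the kernels decay, so everything is absolutely convergent), and second, handling the low-frequency region $|\xi|<1$ cleanly so that the multiplier bound $\varphi(\xi)/(\kappa_{\psi}|\xi|^{\gamma_{\psi}})$ is genuinely uniform — this is where the precise form of membership of $\varphi$ in $\mathfrak{M}_{\gamma_{\varphi}}(\mathbb{R}^{d})$ and the hypothesis $\gamma_{\varphi}\le\gamma_{\psi}$ are both used. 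Everything else is a routine application of the Schur test and Plancherel.
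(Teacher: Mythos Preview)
Your overall strategy matches the paper's: apply Plancherel in the spatial variable to reduce to a fixed-$\xi$ operator in time, dominate that kernel using \textbf{(S1)} and the bound $\varphi(\xi)\le\mu_{\varphi}|\xi|^{\gamma_{\varphi}}$ from \textbf{(S2)}, and then show the resulting time operator is bounded on $L^{2}(\mathbb{R};K)$ uniformly in $\xi$. The only methodological difference is that the paper, after the domination step, also Fourier-transforms in $t$ and bounds the explicit multiplier $|\xi|^{2\gamma_{\varphi}}/(u^{2}+\kappa_{\psi}^{2}|\xi|^{2\gamma_{\psi}})$, whereas you invoke the Schur test directly. These give the same constant $\varphi(\xi)/(\kappa_{\psi}|\xi|^{\gamma_{\psi}})$, so your route is equivalent and slightly more elementary.

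There is, however, a genuine gap in your low-frequency step. You assert that $\varphi(\xi)/(\kappa_{\psi}|\xi|^{\gamma_{\psi}})$ ``also stays bounded as $|\xi|\to 0$'' because $\varphi(\xi)\le\mu_{\varphi}|\xi|^{\gamma_{\varphi}}$. But that inequality yields $(\mu_{\varphi}/\kappa_{\psi})|\xi|^{\gamma_{\varphi}-\gamma_{\psi}}$, and under the hypothesis $\gamma_{\varphi}\le\gamma_{\psi}$ the exponent is \emph{nonpositive}; when $\gamma_{\varphi}<\gamma_{\psi}$ strictly this quantity diverges as $|\xi|\to 0$. The lower bound $\varphi(\xi)\ge\kappa_{\varphi}|\xi|^{\gamma_{\varphi}}$ from the definition of $\mathfrak{M}_{\gamma_{\varphi}}$ shows the blow-up is intrinsic, not an artifact of the estimate: in the model case $\psi(r,\xi)=-\kappa_{\psi}|\xi|^{\gamma_{\psi}}$ the fixed-$\xi$ operator norm on $L^{2}_{t}$ is exactly $\varphi(\xi)/(\kappa_{\psi}|\xi|^{\gamma_{\psi}})$. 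So your Schur constant is not uniform in $\xi$, and the argument as written only closes when $\gamma_{\varphi}=\gamma_{\psi}$. (The paper's own proof has the same defect---its boundedness claim for $|\xi|^{2\gamma_{\varphi}}/(u^{2}+\kappa_{\psi}^{2}|\xi|^{2\gamma_{\psi}})$ fails at $u=0$, $|\xi|\to 0$ when $\gamma_{\varphi}<\gamma_{\psi}$---and in fact the proposition is only applied later under the equality $\gamma_{\varphi}=\gamma_{\psi}$.)
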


\begin{proof}
The proof is similar to Part 2 of the proof of Theorem 6.1 in \cite{I. Kim S. Lim K.-H. Kim 2016}.
We denote by $\mathcal{F}_{n}$ the $n$-dimensional Fourier transform.
By applying the Plancherel theorem, the conditions \eqref{eqn: sym cond 2} and \textbf{(S1)}, we obtain that
\begin{align}
\|\mathcal{G}f\|_{L^{2}(\mathbb{R}^{d+1};K)}^{2}
&=\int_{-\infty}^{\infty}\int_{\mathbb{R}^{d}}
    \left\|\int_{-\infty}^{t}L_{\varphi}\mathcal{T}_{\psi}(t,s)f(s,x)ds\right\|_{K}^{2}dxdt\nonumber\\
&=\int_{-\infty}^{\infty}\int_{\mathbb{R}^{d}}
    \left\|\mathcal{F}_{d}\left(\int_{-\infty}^{t}L_{\varphi}
        \mathcal{T}_{\psi}(t,s)f(s,\cdot)ds\right)(\xi)\right\|_{K}^{2}d\xi dt\nonumber\\
&=\int_{-\infty}^{\infty}\int_{\mathbb{R}^{d}}
    \left\|\int_{-\infty}^{t}\varphi(\xi)\exp\left(\int_{s}^{t}\psi(r,\xi)dr\right)\mathcal{F}_{d}(f)(s,\xi)ds\right\|_{K}^{2}d\xi dt\nonumber\\
&\leq\int_{-\infty}^{\infty}\int_{\mathbb{R}^{d}}
    \left(\int_{-\infty}^{t}\left|\varphi(\xi)\exp\left(\int_{s}^{t}\psi(r,\xi)dr\right)\right|
    \left\|\mathcal{F}_{d}(f)(s,\xi)\right\|_{K}ds\right)^{2}d\xi dt\nonumber\\
&\leq \mu_{\varphi}^{2}\int_{-\infty}^{\infty}\int_{\mathbb{R}^{d}}|\xi|^{2\gamma_{\varphi}}
    \left(\int_{-\infty}^{t}e^{-(t-s)\kappa_{\psi}|\xi|^{\gamma_{\psi}}}\|\mathcal{F}_{d}(f)(s,\xi)\|_{K}ds\right)^{2}d\xi dt.
    \label{eqn: mathcal G L2 1}
\end{align}
Note that
\begin{align*}
\int_{-\infty}^{t}e^{-(t-s)\kappa_{\psi}|\xi|^{\gamma_{\psi}}}\|\mathcal{F}_{d}(f)(s,\xi)\|_{K}ds
&=\int_{-\infty}^{\infty}1_{(0,\infty)}(t-s)e^{-(t-s)\kappa_{\psi}|\xi|^{\gamma_{\psi}}}\|\mathcal{F}_{d}(f)(s,\xi)\|_{K}ds\\
&=q(\cdot,\xi)*h(\cdot,\xi)(t),\quad t\in\mathbb{R},
\end{align*}
where $q(t,\xi)=1_{(0,\infty)}(t)e^{-t\kappa_{\psi}|\xi|^{\gamma_{\psi}}}$ and $h(t,\xi)=\|\mathcal{F}_{d}(f)(t,\xi)\|_{K}$.
Then we obtain that for any $u\in\mathbb{R}$,
\begin{align}
\mathcal{F}_{1}\left(\int_{-\infty}^{t}e^{-(t-s)\kappa_{\psi}|\xi|^{\gamma_{\psi}}}\|\mathcal{F}_{d}(f)(s,\xi)\|_{K}ds\right)(u)
&=\mathcal{F}_{1}(q(\cdot,\xi)*h(\cdot,\xi))(u)\nonumber\\
&=\mathcal{F}_{1}q(u,\xi)\mathcal{F}_{1}h(u,\xi)\nonumber\\
&=\frac{1}{iu+\kappa_{\psi}|\xi|^{\gamma_{\psi}}}\mathcal{F}_{1}h(u,\xi).\label{eqn: 1dim FT}
\end{align}
Since $\frac{|\xi|^{2\gamma_{\varphi}}}{u^2+\kappa_{\psi}^2|\xi|^{2\gamma_{\psi}}}$ is bounded
by the assumption $\gamma_{\psi}\leq \gamma_{\varphi}$
(say $\frac{|\xi|^{2\gamma_{\varphi}}}{u^2+\kappa_{\psi}^2|\xi|^{2\gamma_{\psi}}}\leq C$),
by applying \eqref{eqn: mathcal G L2 1}, Fubini theorem, Planchrel theorem and \eqref{eqn: 1dim FT},
we obtain that
\begin{align*}
\|\mathcal{G}f\|_{L^{2}(\mathbb{R}^{d+1};K)}^{2}
&\leq \mu_{\varphi}^{2}\int_{-\infty}^{\infty}\int_{\mathbb{R}^{d}}|\xi|^{2\gamma_{\varphi}}
    \left(\int_{-\infty}^{t}e^{-(t-s)\kappa_{\psi}|\xi|^{\gamma_{\psi}}} \|\mathcal{F}_{d}(f)(s,\xi)\|_{K}ds\right)^{2}d\xi dt\\
&=\mu_{\varphi}^{2}\int_{\mathbb{R}^{d}}\int_{-\infty}^{\infty}|\xi|^{2\gamma_{\varphi}}\left|\mathcal{F}_{1}
    \left(\int_{-\infty}^{t}e^{-(t-s)\kappa_{\psi}|\xi|^{\gamma_{\psi}}}
        \|\mathcal{F}_{d}(f)(s,\xi)\|_{K}ds\right)(u)\right|^{2}du d\xi \\
&=\mu_{\varphi}^{2}\int_{\mathbb{R}^{d}}\int_{-\infty}^{\infty}|\xi|^{2\gamma_{\varphi}}
    \left|\frac{1}{iu+\kappa_{\psi}|\xi|^{\gamma_{\psi}}}\mathcal{F}_{1}h(u,\xi)\right|^{2}du d\xi \\
&=\mu_{\varphi}^{2}\int_{\mathbb{R}^{d}}\int_{-\infty}^{\infty}
        \frac{|\xi|^{2\gamma_{\varphi}}}{u^2+\kappa_{\psi}^2|\xi|^{2\gamma_{\psi}}}\left|\mathcal{F}_{1}h(u,\xi)\right|^{2}du d\xi\\
&\leq \mu_{\varphi}^{2}C\int_{\mathbb{R}^{d}}
        \int_{-\infty}^{\infty}\left|\mathcal{F}_{1}h(u,\xi)\right|^{2}du d\xi\\
&=\mu_{\varphi}^{2}C\int_{\mathbb{R}^{d}}
        \int_{-\infty}^{\infty}\left|h(t,\xi)\right|^{2}dt d\xi\\
&=\mu_{\varphi}^{2}C\int_{\mathbb{R}^{d}}
        \int_{-\infty}^{\infty}\left\|\mathcal{F}_{d}(f)(t,\xi)\right\|_{K}^{2}dt d\xi\\
&=\mu_{\varphi}^{2}C\int_{-\infty}^{\infty}\int_{\mathbb{R}^{d}}\left\|f(t,x)\right\|_{K}^{2}dxdt\\
&=\mu_{\varphi}^{2}C\|f\|_{L^{2}(\mathbb{R}^{d+1};K)}^{2},
\end{align*}
which implies that the operator $\mathcal{G}$ is bounded on $L^{2}(\mathbb{R}^{d+1};K)$.
\end{proof}

\textbf{A proof of Lemma \ref{lem:L psi potential}.}
Let $\mathcal{G}$ be the operator given in \eqref{eqn: mathcal G def}
and let $M$ be the kernel given in \eqref{eqn: kernel M}.
By Proposition \ref{prop: mathcal G bdd L2}, the operator $\mathcal{G}$ is bounded on $L^{2}(\mathbb{R}^{d+1};K)$.
On the other hand, by Lemma \ref{lem:A1-A3},
we see that the kernel $M$ satisfies Assumption 2.1 in \cite{I. Kim S. Lim K.-H. Kim 2016}
and then by Lemma 4.1 in \cite{I. Kim S. Lim K.-H. Kim 2016}, the kernel $M$ satisfies
the H\"{o}rmander condition (3.1) in \cite{I. Kim S. Lim K.-H. Kim 2016}.
Also, we can check that the kernel $M$ satisfies the conditions (i) and (ii) in Definition 3.3 in \cite{I. Kim S. Lim K.-H. Kim 2016}.
So $M$ is a Calder\'{o}n-Zygmund kernel with respect to some filtration of partitions
(see \cite{I. Kim S. Lim K.-H. Kim 2016,Krylov 2001}).
Therefore, by Theorem 4.1 in \cite{Krylov 2001}, the operator $\mathcal{G}$ is bounded on $L^{p}(\mathbb{R}^{d+1};K)$
for $1<p\leq 2$.

We now prove that the operator $\mathcal{G}$ is bounded on $L^{p}(\mathbb{R}^{d+1};K)$ for $p>2$.
We use the duality argument.
Put
\begin{align*}
\widetilde{M}(s,y,t,x):=M(-t,-x,-s,-y)=1_{(0,\infty)}(s-t)L_{\varphi}p_{\widetilde{\psi}}(s,t,y-x),
\end{align*}
where $\widetilde{\psi}(r,\xi):=\psi(-r,\xi)$.
Put
\begin{align*}
\widetilde{\mathcal{G}}g(s,y)=\int_{\mathbb{R}^{d+1}}\widetilde{M}(s,y,t,x)g(t,x)dtdx.
\end{align*}
Let $p>2$ and let $p'$ be the conjugate number of $p$.
Then by Fubini theorem and changing variables $(t,x,s,y)\rightarrow (-t,-x,-s,-y)$, we obtain that
for any $g\in L^{p'}(\mathbb{R}^{d+1};K)$,
\begin{align}
&\int_{\mathbb{R}^{d+1}}\bilin{g(t,x)}{\mathcal{G}f(t,x)}_{K}dtdx\nonumber\\
&=\int_{\mathbb{R}^{d+1}}\bilin{g(t,x)}{\int_{\mathbb{R}^{d+1}}M(t,x,s,y)f(s,y)dsdy}_{K}dtdx\nonumber\\
&=\int_{\mathbb{R}^{d+1}}\bilin{\int_{\mathbb{R}^{d+1}}M(t,x,s,y)g(t,x)dtdx}{f(s,y)}_{K}dsdy\nonumber\\
&=\int_{\mathbb{R}^{d+1}}\bilin{\int_{\mathbb{R}^{d+1}}M(-t,-x,-s,-y)g(-t,-x)dtdx}{f(s,y)}_{K}dsdy\nonumber\\
&=\int_{\mathbb{R}^{d+1}}\bilin{\widetilde{\mathcal{G}}g_{1}(s,y)}{f(s,y)}_{K}dsdy,\label{eqn:duality}
\end{align}
where $g_{1}(t,x)=g(-t,-x)$. Since $1<p'<2$, it holds that
\begin{align*}
\|\widetilde{\mathcal{G}}g_{1}\|_{L^{p'}(\mathbb{R}^{d+1};K)}
\leq C\|g_{1}\|_{L^{p'}(\mathbb{R}^{d+1};K)}
=C\|g\|_{L^{p'}(\mathbb{R}^{d+1};K)},
\end{align*}
from which, by applying \eqref{eqn:duality}, we obtain that for any $f\in L^{p}(\mathbb{R}^{d+1};K)$,
\begin{align*}
\|\mathcal{G}f\|_{L^{p}(\mathbb{R}^{d+1};K)}
&=\sup_{\|g\|_{L^{p'}(\mathbb{R}^{d+1};K)}=1}\left|\int_{\mathbb{R}^{d+1}}
        \bilin{g(t,x)}{\mathcal{G}f(t,x)}_{K}dtdx\right|\\
&=\sup_{\|g\|_{L^{p'}(\mathbb{R}^{d+1};K)}=1}\left|\int_{\mathbb{R}^{d+1}}
        \bilin{\widetilde{\mathcal{G}}g_{1}(s,y)}{f(s,y)}_{K}dsdy\right|\\
&\leq \sup_{\|g\|_{L^{p'}(\mathbb{R}^{d+1};K)}=1}\|\widetilde{\mathcal{G}}g_{1}\|_{L^{p'}(\mathbb{R}^{d+1};K)}
\|f\|_{L^{p}(\mathbb{R}^{d+1};K)}\\
&=C\|f\|_{L^{p}(\mathbb{R}^{d+1};K)}.
\end{align*}
Therefore, the operator $\mathcal{G}$ is bounded on $L^{p}(\mathbb{R}^{d+1};K)$ for $p>2$.
Hence the proof is complete. \hfill $\blacksquare$

\section*{Acknowledgment}
This work was supported by the National Research Foundation of Korea (NRF) grant
funded by the Korea government (MSIT) (RS-2025-00573210)
and the MSIT (Ministry of Science and ICT), Korea, under the ITRC(Information Technology Research Center) support program (IITP-RS-2024-00437284) supervised by the IITP(Institute for Information \& Communications Technology Planning \& Evaluation).

\section*{Data Availability }
Data sharing is not applicable to this article as no datasets were generated or analysed during the current study.

\section*{Conflict of Interest}
The authors declare that they have no conflict of interest regarding this work.



\begin{thebibliography}{00}

\bibitem{Alos-Mazet-Nualart2001}
E. Al\`os, O. Mazet and D. Nualart,
\textit{Stochastic calculus with respect to Gaussian processes,}
Ann. Probab. \textbf{29} (2001), no. 2, 766--801.

\bibitem{Nualart 2003}
E. Alòs and D. Nualart,
\textit{Stochastic integration with respect to the fractional Brownian motion},
Stoch. Stoch. Rep. \textbf{75} (2003), no. 3, 129--152.

\bibitem{Balan 2008}
R. M. Balan and D. Kim,
\textit{The stochastic heat equation driven by a Gaussian noise: germ Markov property},
Commun. Stoch. Anal. \textbf{2} (2008), no. 2, 229--249.

\bibitem{Balan 2009}
R. M. Balan,
\textit{Stochastic heat equation with infinite dimensional fractional noise: $L_{2}$-theory},
Commun. Stoch. Anal. \textbf{3} (2009) no. 1, 45–68.

\bibitem{Balan 2011}
R. M. Balan,
\textit{$L_{p}$-theory for the stochastic heat equation with infinite-dimensional fractional noise},
ESAIM Probab. Stat. \textbf{15} (2011), 110--138.

\bibitem{Bardina 2010}
X. Bardina, M. Jolis and L. Quer-Sardanyons,
\textit{Weak convergence for the stochastic heat equation driven by Gaussian white noise},
Electron. J. Probab. \textbf{15} (2010), no. 39, 1267--1295.

\bibitem{Bergh 1976}
J. Bergh and J. L\"{o}fstr\"{o}m,
{``Interpolation Spaces. An Introduction,''}
Grundlehren der Mathematischen Wissenschaften No. \textbf{223}
Springer-Verlag, Berlin-New York, 1976.

\bibitem{Chen 2015}
 Z. Q. Chen, K.-H. Kim and P. Kim,
\textit{Fractional time stochastic partial differential equations},
Stochastic Process. Appl. \textbf{125} (2015), no. 4, 1470--1499.

\bibitem{Da Prato 2014}
G. Da Prato and J. Zabczyk,
{``Stochastic Equations in Infinite Dimensions, Second edition,''}
Encyclopedia of Mathematics and its Applications \textbf{152}. Cambridge University Press, Cambridge, 2014.


\bibitem{Duncan 2006}
T. E. Duncan, J. Jakubowski and B. Pasik-Duncan,
\textit{Stochastic integration for fractional Brownian motion in a Hilbert space},
 Stoch. Dyn. \textbf{6} (2006), no. 1, 53--75.


\bibitem{Duncan 2002}
T. E. Duncan, B. Pasik-Duncan and B. Maslowski,
\textit{Fractional Brownian motion and stochastic equations in Hilbert spaces},
 Stoch. Dyn. \textbf{2} (2002), no. 2, 225--250.


\bibitem{Grafakos 2008}
L. Grafakos,
{``Classical Fourier Analysis,''}
 Grad. Texts in Math. \textbf{249}, Springer, New York, 2008.

\bibitem{Grafakos 2009}
L. Grafakos,
{``Modern Fourier Analysis,''}
Grad. Texts in Math. \textbf{250}. Springer, New York, 2009.

\bibitem{Grecksch 1999}
W. Grecksch and V. V. Anh,
\textit{A parabolic stochastic differential equation with fractional Brownian motion input,}
Statist. Probab. Lett. \textbf{41} (1999), no. 4, 337--346.

\bibitem{Grecksch 2009}
W. Grecksch, C. Roth and V. V. Anh,
\textit{Q-fractional Brownian motion in infinite dimensions with application to fractional Black-Scholes market,}
Stoch. Anal. Appl. \textbf{27} (2009), no. 1, 149--175.

\bibitem{Grorud 1992}
A. Grorud and E. Pardoux,
\textit{Intégrales hilbertiennes anticipantes par rapport à un processus de Wiener cylindrique et calcul stochastique associé,}
Appl. Math. Optim. \textbf{25} (1992), no. 1, 31--49.

\bibitem{Han 2021}
 J. Han and L. Yan,
\textit{$L_{p}$-theory for the fractional time stochastic heat equation with an infinite-dimensional fractional Brownian motion},
Infin. Dimens. Anal. Quantum Probab. Relat. Top. \textbf{24} (2021), no. 2, 2150010.

\bibitem{Hu 2015}
Y. Hu, J. Huang, D. Nualart and S. Tindel,
 \textit{Stochastic heat equations with general multiplicative Gaussian noises: Hölder continuity and intermittency},
Electron. J. Probab. \textbf{20} (2015), no. 55, 1--50.

\bibitem{Huang 2020}
 J. Huang, D. Nualart, L. Viitasaari and G. Zheng,
 \textit{Gaussian fluctuations for the stochastic heat equation with colored noise},
Stoch. Partial Differ. Equ. Anal. Comput. \textbf{8} (2020), no. 2, 402--421.

\bibitem{Hytonen 2016}
T. Hytönen, J. van Neerven, M. Veraar and L. Weis,
{``Analysis in Banach Spaces, Vol. I. Martingales and Littlewood-Paley theory,''}
A Series of Modern Surveys in Mathematics, \textbf{63}. Springer, Cham, 2016.

\bibitem{Ji-Kim 2025}
 U. C. Ji and J. H. Kim,
\textit{Littlewood-Paley type inequality for evolution
systems associated with pseudo-differential operators},
 Preprint, 2025.


\bibitem{Ji-Kim 2025-1}
 U. C. Ji and J. H. Kim,
\textit{A generalization of Littlewood-Paley type inequality for evolution systems associated with pseudo differential operators},
 Preprint, 2025.


\bibitem{I. Kim 2016}
 I. Kim and K.-H. Kim,
\textit{An $L_{p}$-theory for stochastic partial differential equations driven by Lévy processes with pseudo-differential operators of arbitrary order},
 Stochastic Process. Appl. \textbf{126} (2016), no. 9, 2761--2786.

\bibitem{I. Kim 2019}
 I. Kim, K.-H. Kim and S. Lim,
\textit{A Sobolev space theory for stochastic partial differential equations with time-fractional derivatives},
 Ann. Probab. \textbf{47} (2019), no. 4, 2087--2139.

\bibitem{I. Kim K.-H. Kim 2016}
 I. Kim, K.-H. Kim and S. Lim,
\textit{Parabolic Littlewood-Paley inequality for a class of time-dependent pseudo-differential operators of arbitrary order, and applications to high-order stochastic PDE},
J. Math. Anal. Appl. \textbf{436} (2016), no. 2, 1023--1047.

\bibitem{I. Kim S. Lim K.-H. Kim 2016}
I. Kim, S. Lim and K.-H. Kim
\textit{An $L_{q}(L_{p})$-theory for parabolic pseudo-differential equations: Calderón-Zygmund approach,}
Potential Anal. \textbf{45} (2016), no. 3, 463–483.

\bibitem{Kruse 2014}
R. Kruse,
{``Strong and Weak Approximation of Semilinear Stochastic Evolution Equations,"}
Lecture Notes in Mathematics \textbf{2093}, Springer, Cham, 2014.


\bibitem{Krylov 1994}
N. V. Krylov,
\textit{A generalization of the Littlewood-Paley inequality and some other results related to stochastic partial differential equations},
Ulam Quart. \textbf{2} (1994) no. 4, 16--26.

\bibitem{Krylov 1996}
N. V. Krylov,
\textit{On $L_{p}$-theory of stochastic partial differential equations in the whole space},
SIAM J. Math. Anal. \textbf{27} (1996), no. 2, 313--340.

\bibitem{Krylov 1999}
N. V. Krylov,
\textit{An analytic approach to SPDEs}, in
``Stochastic Partial Differential Equations: Six Perspectives,'' pp. 185--242, Math. Surveys Monogr. \textbf{64}, Amer. Math. Soc., Providence, RI, 1999.

\bibitem{Krylov 2001}
N. V. Krylov,
\textit{The Calderón-Zygmund theorem and its applications to parabolic equations},
Algebra i Analiz \textbf{13} (2001), no. 4, 1–25.

\bibitem{Krylov 2006}
N. V. Krylov,
\textit{On the foundation of the $L_{p}$-theory of stochastic partial differential equations}, in
``Stochastic Partial Differential Equations and Applications—VII,'' pp. 179--191, Lect. Notes Pure Appl. Math. \textbf{245}, Chapman \& Hall/CRC, Boca Raton, FL, 2006.

\bibitem{Liu 2010}
W. Liu and M. R\"{o}ckner,
\textit{SPDE in Hilbert space with locally monotone coefficients},
J. Funct. Anal. \textbf{259} (2010), no. 11, 2902–2922.


\bibitem{Maslowski 2003}
B. Maslowski and D. Nualart,
\textit{Evolution equations driven by a fractional Brownian motion},
J. Funct. Anal. \textbf{202} (2003), no. 1, 277--305.

\bibitem{Mishura 2008}
Y. S. Mishura,
{``Stochastic Calculus for Fractional Brownian Motion and Related Processes,''}
Lecture Notes in Mathematics, Vol. \textbf{1929}, Springer-Verlag, Berlin,  2008.

\bibitem{Mocioalca 2007}
O. Mocioalca and F. Viens,
\textit{Space regularity of stochastic heat equations driven by irregular Gaussian processes},
Commun. Stoch. Anal. \textbf{1} (2007), no. 2, 209--229.

\bibitem{Nualart 2006}
D. Nualart,
{``The Malliavin Calculus and Related Topics, Second Edition,''}
Probability and Its Applications (New York), Springer-Verlag, Berlin, 2006.


\bibitem{Prevot 2007}
C. Pr\'{e}v\^{o}t and M. Röckner,
{``A Concise Course on Stochastic Partial Differential Equations,''}
Lecture Notes in Mathematics \textbf{1905}, Springer, Berlin, 2007.


\bibitem{Rockner 2024}
M. R\"{o}ckner, S. Shang and T. Zhang,
\textit{Well-posedness of stochastic partial differential equations with fully local monotone coefficients},
Math. Ann. \textbf{390} (2024), no. 3, 3419–3469.


\bibitem{Stein 1970}
E. M. Stein,
{``Singular Integrals and Differentiability Properties of Functions,''}
Princeton Mathematical Series No. \textbf{30}, Princeton University Press, Princeton, 1970.

\bibitem{Sugita 1985}
H. Sugita,
\textit{Sobolev spaces of Wiener functionals and Malliavin's calculus},
J. Math. Kyoto Univ. \textbf{25} (1985), no. 1, 31--48.

\bibitem{Tudor2005}
C. A. Tudor,
\textit{It\^o formula for the infinite-dimensional fractional Brownian motion,}
J. Math. Kyoto Univ. \textbf{45} (2005), No. 3, 531--546.

\bibitem{Walsh 1986}
J. B. Walsh,
\textit{An introduction to stochastic partial differential equations,} in
``École d'été de probabilités de Saint-Flour, XIV," pp. 265--439,
Lecture Notes in Math. \textbf{1180}, Springer, Berlin, 1986.

\end{thebibliography}
\end{document}